\keywords{Bishop set theory, Bishop spaces, direct spectrum of Bishop spaces, limits of Bishop spaces}
\newcommand{\B}{\boldsymbol}
\newcommand{\C}[1]{\mathcal{#1}}
\newcommand{\D}[1]{\mathbb{#1}}
\newcommand{\Nat}{{\mathbb N}}
\newcommand{\Real}{{\mathbb R}}
\newcommand{\Fii}{{\mathbb F}}
\newcommand{\id}{\mathrm{id}}
\newcommand{\Const}{\mathrm{Const}}
\newcommand{\TOT}{\Leftrightarrow}
\newcommand{\To}{\Rightarrow}
\newcommand{\ot}{\leftarrow}
\newcommand{\CST}{\mathrm{CST}}
\newcommand{\BISH}{\mathrm{BISH}}
\newcommand{\sto}{\rightsquigarrow}
\newcommand{\eto}{\hookrightarrow}
\newcommand{\Fam}{\textnormal{\texttt{Fam}}}
\newcommand{\prb}{\textnormal{\textbf{pr}}}
\newcommand{\BST}{\mathrm{BST}}
\newcommand{\cof}{\textnormal{\texttt{cof}}}
\newcommand{\Disj}{\B ] \B [}
\newcommand{\pr}{\textnormal{\texttt{pr}}}
\newcommand{\Hom}{\textnormal{\texttt{Hom}}}
\newcommand{\MLTT}{\mathrm{MLTT}}
\newcommand{\Map}{\textnormal{\texttt{Map}}}
\newcommand{\Set}{\mathrm{Set}}
\newcommand{\CSFT}{\mathrm{CSFT}}
\newcommand{\two}{\D 2}
\newcommand{\Bic}{\mathrm{Bic}}
\newcommand{\BR}{\Bic(\Real)}
\newcommand{\BS}{\mathrm{BS}}
\newcommand{\Id}{\mathrm{Id}}
\newcommand{\Ind}{\mathrm{Ind}}
\newcommand{\Bis}{\mathrm{\textbf{Bis}}} 
\newcommand{\emb}{\hookrightarrow}
\newcommand{\Mor}{\mathrm{Mor}}
\newcommand{\lt}{\preccurlyeq}
\newcommand{\mt}{\succcurlyeq}
\newcommand{\Lim}{\mathrm{Lim}}
\newcommand{\fXY}{f \colon X \to Y}
\newcommand{\Cf}{\textnormal{\texttt{Cof}}}
\newcommand{\Even}{\textnormal{\texttt{Even}}}
\newcommand{\Odd}{\textnormal{\texttt{Odd}}}
\newcommand{\Spec}{\textnormal{\texttt{Spec}}}
\newcommand{\Cnt}{\textnormal{\texttt{Cont}}}
\newcommand{\eql}{\textnormal{\texttt{eql}}}
\newcommand{\Eql}{\textnormal{\texttt{Eql}}}
\theoremstyle{plain} %\crefname{satz}{Satz}{S\"atze}
\begin{document}

\title[Direct spectra of Bishop spaces and their limits]{Direct spectra of Bishop spaces and their limits}
%\titlecomment{{\lsuper*}This paper is a major extension of~\cite{Pe17}.}

\author[I.~Petrakis]{Iosif Petrakis}	%required
\address{Mathematics Institute, Ludwig-Maximilians-Universit\"{a}t M\"{u}nchen, 
Theresienstr. 39, 80333, Munich}	%required
\email{petrakis@math.lmu.de}  %optional
%\thanks{thanks 1, optional.}	%optional

% \author[B.~Name2]{Bob Name2}	%optional
% \address{address2; addresses should initially be duplicated, even if
%   authors share an affiliation}	%optional
% \email{name2@email2; ditto for email addresses}  %optional
% \thanks{thanks 2, optional.}	%optional
% 
% \author[C.~Name3]{Carla Name3}	%optional
% \address{address 3}	%optional
% \urladdr{name3@url3\quad\rm{(optionally, a web-page can be specified)}}  %optional
% \thanks{thanks 3, optional.}	%optional

%% etc.

%% required for running head on odd and even pages, use suitable
%% abbreviations in case of long titles and many authors:

%%%%%%%%%%%%%%%%%%%%%%%%%%%%%%%%%%%%%%%%%%%%%%%%%%%%%%%%%%%%%%%%%%%%%%%%%%%

%% the abstract has to PRECEDE the command \maketitle:
%% be sure not to issue the \maketitle command twice!

\begin{abstract}
\noindent We apply fundamental notions of Bishop set theory ($\BST$), an informal theory that complements
Bishop's theory of sets, to the theory of Bishop spaces, a 
function-theoretic approach to constructive topology. Within $\BST$ we develop  
the notions of a direct family of sets, of a direct spectrum of Bishop spaces, of the direct limit of a 
direct spectrum of Bishop spaces, and of the inverse limit of a contravariant direct 
spectrum of Bishop spaces. Within the extension of Bishop's informal system of constructive mathematics $\BISH$ 
with inductive definitions with rules of countably
many premises, we prove the fundamental theorems on the direct and inverse limits of spectra 
of Bishop spaces and the duality principle between them.  
\end{abstract}

\maketitle

%% start the paper here:
\section{Introduction}
\label{S:one}

\subsection{Bishop's set theory}
\label{subsec: bst}

The theory of sets underlying Bishop-style constructive mathematics $(\BISH)$ was only sketched in Chapter 3 of
Bishop's seminal book~\cite{Bi67}. Since Bishop's central aim in~\cite{Bi67} was to show 
%to classical mathematicians 
that a large part of advanced mathematics can be done within a constructive and 
computational framework that does not contradict the classical practice, the inclusion of a 
detailed account of the set-theoretic foundations of $\BISH$ could possibly be against the effective 
delivery of his message.

The Bishop-Cheng measure theory, developed in~\cite{BC72}, was very
different from the measure theory of~\cite{Bi67}, and the inclusion of an enriched version of the former 
into~\cite{BB85}, the book on $\BISH$ that Bishop co-authored with Bridges later, 
affected the corresponding Chapter 3 in two main respects. First, the inductively defined notion of the set 
of Borel sets generated by a given family of complemented subsets of a set $X$, with respect to a set 
of real-valued functions on $X$, was excluded, as unnecessary, and, second, the operations on the 
complemented subsets of a set $X$ were defined differently, and in accordance to the needs 
of the new measure theory. 
 
Yet, in both books many issues were left untouched, a fact that often was a source of confusion.
In many occasions, especially
in the measure theory of~\cite{BC72} and~\cite{BB85}, the powerset was treated as a set, while in the
measure theory of~\cite{Bi67}, Bishop generally avoided the powerset by using appropriate families of 
subsets instead. In later works of Bridges and Richman, like ~\cite{BR87} and~\cite{MRR88}, the powerset was
clearly used as a set, in contrast though, to the predicative spirit of~\cite{Bi67}. The concept of a family of
sets indexed by a (discrete) set, was asked to be defined in~\cite{Bi67} (Exercise 2, p.~72), and a definition, 
attributed to Richman, was given in~\cite{BB85} (Exercise 2, p.~78). An elaborate study of this concept 
within $\BISH$ is missing though, despite its central character in the measure theory of~\cite{Bi67}, its 
extensive use in the theory of Bishop spaces~\cite{Pe15} and in abstract constructive algebra~\cite{MRR88}.
Actually, in~\cite{MRR88} Richman introduced the more general notion of a family of objects of 
a category indexed by some set, but the categorical component in the resulting mixture of Bishop's set theory and 
category theory was not explained in constructive terms\footnote{As it was done in 
the formulation of category theory in homotopy type theory (see Chapter 9 in~\cite{HoTT13}).}.

Bishop briefly discussed some formal aspects of $\BISH$ in~\cite{Bi70}, where a variant
of G\"odel's $T$ was proposed as a formal system for $\BISH$, while in his unpublished work~\cite{Bi68a} 
he elaborated a version of dependent type theory with one universe, in order to formalise $\BISH$.
The various set-theoretic formalisations of $\BISH$, developed by Myhill~\cite{My75}, Friedman~\cite{Fr77}, 
Aczel~\cite{AR10}, Feferman~\cite{Fe79}, Beeson~\cite{Be81}, and Greenleaf~\cite{Gr81}, were, roughly speaking, 
quite influenced by Zermelo-Fraenkel set theory, and ``top-down'' approaches to $\BISH$, with many ``unexpected'' 
features\footnote{A detailed presentation of the unexpected features of these formalisations is going to be
found in~\cite{Pe20}.} with respect to its practice.

The type-theoretic interpretation of Bishop's set theory into the theory of setoids (see especially the 
work of Palmgren~\cite{Pa05}-\cite{PW14}) has become nowadays the standard way to understand Bishop sets.
The identity type of Martin-L\"of's type theory ($\MLTT$) (see~\cite{ML98}), expresses, in a proof-relevant way,
the existence of the least reflexive relation on a type, a fact with no counterpart in Bishop's set theory.
As a consequence, the free setoid on a type is definable (see~\cite{Pa14}, p. 90), and the presentation axiom
in setoids is provable. Moreover, in $\MLTT$ the families of types over a type $I$ form the type $I \to \C U$, 
which belongs to the successor universe $\C U{'}$ of $\C U$. In Bishop's set theory though, where only one
universe of sets is used, the set-character of the totality of all families of sets indexed by some set $I$
is questionable from the predicative point of view (see our comment after Definition~\ref{def: map}).

\subsection{Bishop Set Theory}
\label{subsec: BST}

Bishop set theory $(\BST)$ is an informal, constructive theory of totalities and assignment routines
that serves as a reconstruction of Bishop's set theory. Its aim is, first, to fill in the ``gaps'' in 
Bishop's account of the set theory underlying $\BISH$, and,
%avoiding unexpected features with respect to the practice of $\BISH$, and, 
second, to serve as an intermediate step between Bishop's informal set theory
and a \textit{suitable} i.e., an \textit{adequate} and \textit{faithful}, in the sense of Feferman~\cite{Fe79},
formalisation of $\BISH$. To assure faithfulness, we need to use concepts or principles that appear,
explicitly or implicitly, in $\BISH$.

Next we describe briefly the features of $\BST$ that ``complete'' Bishop's theory of sets in~\cite{Pe20}.\\[2mm]
\textbf{1. Explicit use of a universe of sets.} Bishop used a universe of sets only implicitly. E.g., he ``roughly'' 
describes in~\cite{Bi67}, p.~72, a set-indexed family of sets as 
\begin{quote}
$\ldots$ a rule which assigns to each $t$ in a discrete set $T$ a set $\lambda(t)$.
\end{quote}
Every other rule, or assignment routine mentioned by Bishop is from one given totality, the domain of the rule, 
to some other totality, its codomain. The only way to make the rule of a family of sets compatible with this 
pattern is to employ a totality of sets.
In the unpublished manuscript~\cite{Bi68a} Bishop explicitly used a universe in his formulation 
of dependent type theory as a formal system for $\BISH$.
Here we use an open-ended totality $\D V_0$ of sets, which contains the primitive 
set $\Nat$ and all defined (predicative) sets. $\D V_0$ itself is not a set, but a class. It is a notion instrumental to the definition
of dependent operations, and of a set-indexed family of sets.\\[2mm]
\textbf{2. Clear distinction between sets and classes.} A class is a totality defined through a membership condition 
in which a quantification over $\D V_0$ occurs. The powerset $\C P(X)$ of a set $X$, the totality $\C P^{\Disj}(X)$ 
of complemented subsets of a set $X$, and the totality $\C F(X,Y)$ of partial functions from a set $X$ to a set $Y$ 
are characteristic examples of classes. A class is never used here as the domain of an assignment routine, but only as
a codomain of an assignment routine. \\[2mm]
\textbf{3. Explicit use of dependent operations.} The standard view, even among practicioners of Bishop-style 
constructive mathematicians, is that dependency is not necessary to $\BISH$.
Dependent functions though, do appear explicitly in Bishop's definition of the intersection $\bigcap_{t \in T}
\lambda (t)$ of a family $\lambda$ of subsets of some set $X$ indexed by an inhabited set $T$
(see~\cite{Bi67}, p.~65, and ~\cite{BB85}, p.~70). As we try to show in~\cite{Pe20},
the elaboration of dependency within $\BISH$ is only fruitful 
to it. Dependent functions are not only necessary to the definition of products of families 
of sets indexed by an arbitrary set, but in many areas of constructive mathematics. 
As already mentioned, dependency is formulated in Bishop's type theory~\cite{Bi68a}. The somewhat ``silent'' role of 
dependency within Bishop's set theory is replaced by a central role within $\BST$.\\[2mm]
\textbf{4. Elaboration of the theory of families of sets.} With the use of the universe $\D V_0$, of the notion of 
a non-dependent assignment routine $\lambda_0$ from an index-set $I$ to $\D V_0$, and of a certain dependent 
operation $\lambda_1$, we 
define explicitly in Definition~\ref{def: familyofsets} the notion of a family of sets indexed by $I$. 
Although an $I$-family of sets is a certain function-like object, it can also be understood as an object with level 
the level of a set plus one. The corresponding notion of a ``function'' from an $I$-family $\Lambda$
to an $I$-family $M$ is that of a family-map. Operations between sets generate operations between families of 
sets and their family-maps. If the index-set $I$ is a directed set, the corresponding notion of a family of sets 
over it is that of a direct family of sets. 
%The constructions for families of sets can be generalised appropriately for families of families of sets. 
\textit{Families of subsets} of a 
given set $X$ over an index-set $I$ are special $I$-families that deserve an independent treatment. Families 
of equivalence classes, families of partial functions, families of complemented subsets and direct families 
of subsets are some of the variations of set-indexed families of subsets that are studied in~\cite{Pe20} with 
many applications in Bishop-style constructive mathematics.\\[2mm]
Here we apply the general theory of families of sets, in order \textbf{to develop the theory of spectra of Bishop spaces.} 
A Bishop space is a constructive, function-theoretic alternative to the notion of a topological space. 
A Bishop topology $F$ on a set $X$ is a subset of the set $\D F(X)$ of real-valued functions on $X$
that includes the constant functions and it is closed 
under addition, composition with Bishop continuous functions from $\Real $ to $\Real$, and uniform limits. 
Hence, in contrast to topological spaces, continuity of real-valued functions is a primitive notion and a concept
of open set comes a posteriori. A Bishop topology on a set can be seen as an abstract and 
constructive approach to the ring of continuous functions $C(X)$ of a topological space $X$. 
Associating appropriately a Bishop topology to the set $\lambda_0(i)$ of a family of sets over a set $I$, 
for every $i \in I$, the notion of a spectrum of Bishop spaces is defined. If $I$ is a directed set, we get a 
direct spectrum. The theory of direct spectra of Bishop spaces and their limits is developed here 
in complete analogy to the classical theory of spectra of topological spaces and their limits (see~\cite{Du66}, Appendix
Two). The constructive theory of spectra of other structures, like groups, or rings, or modules, can be developed 
along the same lines.

\subsection{Outline of this paper}
\label{subsec: bst}

We structure this paper\footnote{Here we continue our work in~\cite{Pe19d}, where
we showed the distributivity of dependent sums over dependent products within $\BST$ (there we used the acronym 
$\CSFT$ instead).} as follows:
\begin{enumerate}
\item In Section~\ref{sec: basic} we present the fundamental notions of BST that are used in the subsequent
sections. 
\item In Section~\ref{sec: famsets} we define the families of Bishop sets indexed by some set $I$ and the family-maps
between them. The corresponding $\sum$-and $\prod$-sets are introduced.
\item In Section~\ref{sec: directed} we introduce the notion of a directed set with a modulus of directedness and the 
notion of a cofinal subset of a directed set with a modulus of cofinality. These moduli help us avoid the use of some 
choice-principle in later proofs.
\item In Section~\ref{sec: dirfamsets} we define the families of Bishop sets indexed by some directed set $(I, \lt)$ and the 
family-maps between them. The corresponding $\sum^{\lt}$-and $\prod^{\lt}$-sets are introduced.
\item In Section~\ref{sec: bspaces} we include the basic notions and facts on Bishop spaces that are 
used in the subsequent sections.
\item In Section~\ref{sec: direxternalspectra} we define the notion of a direct spectrum of Bishop spaces, 
the notion of a continuous, 
direct spectrum-map, and we define a canonical Bishop topology on the direct sum of Bishop spaces.
\item In Section~\ref{sec: fameqclass} we define the families of subsets of a given set $X$ 
indexed by some set $I$, and the family-maps between them. We also define sets of subsets of a set $X$ indexed by some set $I$,
and set-indexed families of equivalence classes of an equivalence structure. These notions are used in the definition of 
the direct limit of a (covariant) spectrum of Bishop spaces.
\item In Section~\ref{sec: directlimit} we define the direct limit $\underset{\to} \Lim \C F_i$ of a direct spectrum 
of Bishop spaces, we prove the universal property of the direct limit for $\underset{\to} \Lim \C F_i$
(Proposition~\ref{prp: universaldirect}), the existence of a unique map of the limit spaces from a 
spectrum-map (Theorem~\ref{thm: directspectrummap1}), the cofinality theorem for direct limits 
(Theorem~\ref{thm: cofinal2}), and the existence of a Bishop bijection for the product of spectra of Bishop
spaces (Proposition~\ref{prp: proddirect}).
\item In Section~\ref{sec: inverselimit} we define the inverse limit $\underset{\ot} \Lim \C F_i$ of a contravariant 
direct spectrum 
of Bishop spaces, we prove the universal property of the inverse limit for $\underset{\ot} \Lim \C F_i$
(Proposition~\ref{prp: universalinverse}), the existence of a unique map of the limit spaces from a 
spectrum-map (Theorem~\ref{thm: inverselimitmap}), the cofinality theorem for inverse limits 
(Theorem~\ref{thm: cofinal3}), and the existence of a Bishop morphism for the product of inverse 
spectra of Bishop spaces (Proposition~\ref{prp: prodinverse}). 
\item In Section~\ref{sec: duality} we prove the duality principle between the inverse and direct limits of Bishop spaces
(Theorem~\ref{thm: duality1}). 
\end{enumerate}

The above results form a constructive counterpart to the theory of limits of topological spaces, as this is 
presented in~\cite{Du66}, Appendix Two. As in the classic textbook of
Dugundji, the choice of presentation is \textit{non-categorical} and \textit{purely topological}. 
By the first we mean that 
the language of category theory is avoided, and by the second, that the limit constructions are 
done directly for spaces. Our central aim is to present the basic theory 
of spectra of Bishop spaces \textbf{within $\BST$}. The use of categorical arguments would be consistent 
with our aim, only if the corresponding category theory was formulated within BST. In~\cite{MRR88} Richman 
used the notion of functor, to define the concept of a set-indexed family of sets. 
The problem with Richman's approach was that the category theory involved was not explained in constructive terms, 
and its relation to $\BISH$ was left unclear. Here we avoid the use of categorical concepts in order to establish
our results, as we want to work exclusively within BST. Actually, in~\cite{Pe20}, pp.~80--83 we use the notion of a set-indexed 
family of sets  in order to \textit{define} the notion of a functor between small categories within $\BST$!

The theory of Bishop spaces, that was only sketched by Bishop in~\cite{Bi67}, and revived 
by Bridges in~\cite{Br12}, and Ishihara in~\cite{Is13}, was developed by the author in~\cite{Pe15}-\cite{Pe19c} and 
~\cite{Pe20b},~~\cite{Pe20c}. Since it makes use of inductive definitions with rules of countably many premises,
for their study we work within $\BST^*$, which is $\BST$ extended with such inductive definitions. 
Our concepts and results avoid the use even of countable choice (CC). Although
practicioners of $\BISH$ usually embrace dependent choice, hence CC, using non-sequential or non-choice-based arguments
instead, forces us to formulate ``better''
concepts and find ``better'' proofs. This standpoint was advocated first by Richman in~\cite{Ri01}. There are results 
in~\cite{BB85} that require CC in their proof, but we make no use of them here. E.g., our proofs of the 
cofinality theorems (Theorem~\ref{thm: cofinal2} and Theorem~\ref{thm: cofinal3}) are choice-free as we use a 
proof-relevant definition of a cofinal subset. Notice that the formulation of the universal properties of the 
limits of spectra of Bishop spaces is \textit{impredicative}, as it requires quantification over the \textit{class} 
of Bishop spaces\footnote{A predicative formulation of a universal property can be given, if one is restricted to a given
set-indexed family of Bishop spaces.}. This is one reason we avoided the use of the universal properties in our proofs.

A formal system for $\BISH$ extended with such definitions is Myhill's formal system $\CST^*$
where $\CST^*$ is Myhill's extension of his formal system of constructive set theory 
$\CST$ with inductive definitions (see~\cite{My75}). A variation of $\CST^*$ is Aczel's system CZF together
with a weak version of Aczel's regular extension axiom (REA), 
to accommodate these inductive definitions (see~\cite{AR10}). 

For all basic facts on constructive analysis we refer to~\cite{BB85},
for all results on Bishop spaces that are used here without proof we refer to~\cite{Pe15}, and for all results
on $\BST$ that are not shown here we refer to~\cite{Pe20}.

\section{Fundamentals of Bishop set theory}
\label{sec: basic}

The logical framework of $\BST$ is first-order intuitionistic logic with equality (see~\cite{SW12}, chapter~1). 
The primitive equality between terms is 
denoted by $s := t$\index{$s := t$}, and it is understood as a \textit{definitional}, or \textit{logical}, 
equality\index{definitional equality}\index{logical equality}. I.e., we read the equality $s := t$ as 
``the term $s$ is by definition equal to the term $t$''. If $\phi$ is an appropriate formula, for the standard axiom 
for equality $[a := b \ \& \ \phi(a)] \To \phi(b)$ we use the notation $[a := b \ \& \ \phi(a)] :\To \phi(b)$.
The equivalence notation $:\TOT$ is understood in the same way.
The set $(\Nat, =_{\Nat}, \neq_{\Nat})$ of natural numbers, where its canonical equality is given by\index{$=_{\Nat}$} 
$m =_{\Nat} n :\TOT m := n$, and its canonical inequality by $m \neq_{\Nat} n :\TOT \neg(m =_{\Nat} n)$, is primitive.
The standard Peano-axioms are associated to $\Nat$. 

A global operation $( \cdot, \cdot)$ of pairing\index{pairing} is also considered primitive. I.e., if 
$s, t$ are terms, their pair $(s,t)$ is a new term. The corresponding equality axiom is  
$(s, t) := (s{'}, t{'}) :\TOT s := s{'} \ \& \ t := t{'}$. The $n$-tuples of given terms, for every $n$ larger
than $2$, are definable. The global projection routines $\prb_1(s, t) := s$ and $\prb_2(s, t) := t$ are also considered
primitive. The corresponding global projection routines for any $n$-tuples are definable.

An undefined notion of mathematical construction, or algorithm, or of finite routine is considered as primitive.
The main objects of $\BST$ are totalities\index{totality} and assignment 
routines\index{assignment routine}. Sets are special totalities and 
functions are special assignment routines, where an assignment routine is a special finite routine. 
All other equalities in $\BST$ are equalities on totalities defined though an equality condition.
A predicate\index{predicate on a set} on a set $X$ is a bounded formula $P(x)$ with $x$ a free variable ranging through $X$,
where a formula is bounded\index{bounded formula}, if every quantifier occurring in it is over a given set.

\begin{defi}\label{def: totalities}
 \normalfont (i) 
\itshape A \textit{primitive set}\index{primitive set} $\D A$ is a totality with a given membership 
 $x \in \D A$, and a given equality $x =_{\D A} y$, that satisfies axiomatically the properties of 
 an equivalence relation. The set $\Nat$ of natural numbers is the only primitive set considered here.\\[1mm]
 \normalfont (ii) 
\itshape A $($non-inductive$)$\textit{defined totality}\index{defined totality} $X$ is defined by a membership
condition $x \in X : \TOT \C M_X(x),$ where 
$\C M_X$ is a formula with $x$ as a free variable. \\[1mm]
\normalfont (iii)
 \itshape There is a special ``open-ended'' defined totality $\D V_0$, which is called the universe of $($predicative$)$ 
 sets. $\D V_0$
 is not defined through a membership-condition, but in an open-ended way. When we say that a defined totality $X$ is
 considered to be a set we ``introduce'' $X$ as an element of $\D V_0$. We do not add the corresponding induction, 
 or elimination principle, as we want to leave open the possibility of adding new sets in $\D V_0$. \\[1mm]
 \normalfont (iv) 
\itshape A \textit{defined preset}\index{defined preset} $X$, or simply, a preset\index{preset}, is a defined totality 
$X$ the membership condition $\C M_X$ of which expresses a construction. \\[1mm]
%Formally this is expressed by the requirement 
%that no quantification over $\D V_0$ occurs in $\C M_X$.\\[1mm]
\normalfont (v) 
\itshape A \textit{defined totality $X$ with equality}\index{defined totality with equality}, or simply, a
totality $X$ with equality\index{totality with equality} is a defined totality $X$ equipped with an equality condition 
$x =_X y : \TOT \C E_X(x, y)$, where $\C E_X(x,y)$ is a formula with free variables $x$ and $y$ that 
satisfies the conditions of an equivalence relation.\\[1mm]
\normalfont (vi) 
\itshape A \textit{defined set}\index{defined set} is a preset with a given equality, specified by a bounded formula.\\[1mm]
\normalfont (vii) 
\itshape A \textit{set}\index{set} is either a primitive set, or a defined set.\\[1mm]
\normalfont (viii) 
\itshape A totality is a \textit{class}\index{class}, if it is the universe $\D V_0$, or if 
quantification over $\D V_0$ occurs in its membership condition.
\end{defi}

\begin{defi}\label{def: extsubset}
A bounded formula on a set $X$ is called an \textit{extensional property} on $X$\index{extensional property on a set}, if 
\[\forall_{x, y \in X}\big([x =_{X } y \ \& \ P(x)] \To P(y)\big). \]
The totality $X_P$\index{$P_X$} generated by $P(x)$ is defined by $x \in X_P : \TOT x \in X \ \& \ P(x)$, 
\[ x \in X_P : \TOT x \in X \ \& \ P(x), \]
and the equality 
%(inequality) 
of $X_P$ is inherited from the equality 
%(inequality) 
of $X$. We  also write $X_P := \{x \in X \mid P(x)\}$,
%\[ X_P := \{x \in X \mid P(x)\}. \]
$X_P$ is considered to be a set, and it is called the  
\textit{extensional subset}\index{extensional subset} of $X$ generated by $P$.
\end{defi}

Using the properties of an equivalence relation, it is immediate to show that an equality condition
$\C E_X(x,y)$ on a totality $X$ is an extensional property on the product $X \times X$ i.e., 
$[(x, y) =_{X \times X} (x{'}, y{'}) \ \&\ x =_X y] \To x{'} =_X y{'}$. We consider 
the following extensional subsets of 
$\Nat$:
$$\D 1 := \{x \in \Nat \mid x =_{\Nat} 0\} := \{0\},$$ 
$$\D 2 := \{x \in \Nat \mid x =_{\Nat} 0 \ \vee x =_{\Nat} 1\} := \{0, 1\}.$$
%  \[ \pmb{1} := \{x \in \Nat \mid x =_{\Nat} 0\} := \{0\}, \]
%  \[ \pmb{2} := \{x \in \Nat \mid x =_{\Nat} 0 \ \vee x =_{\Nat} 1\} := \{0, 1\}. \]
Since $n =_{\Nat} m :\TOT n := m$, the property $P(x) :\TOT x =_{\Nat} 0 \ \vee x =_{\Nat} 1$ is extensional.

\begin{defi}\label{def: diagonal}
If $(X, =_X)$ is a set, its \textit{diagonal}\index{diagonal of a set} 
%$D(X, =_X)$
\index{$D(X, =_X)$} 
is the extensional subset of $X \times X$
\[ D(X, =_X) := \{(x, y) \in X \times X \mid x =_X y\}. \]
If $=_X$ is clear from the context, we just write $D(X)$\index{$D(X)$}. 
\end{defi}

\begin{defi}\label{def: ndar}
Let $X, Y$ be totalities. A \textit{non-dependent assignment routine}\index{non-dependent assignment routine}
$f$ from $X$ to 
$Y$, in symbols $f \colon X \sto Y$\index{$f \colon X \sto Y$}, is a finite routine that assigns an element $y$ of $Y$
to each given element $x$ of $X$. 
%i.e., $\C M_B(f(a))$, t i.e., $\C M_A(a)$. 
In this case we write $f(x) := y$. If $g \colon X \sto Y$, let 
\[f := g : \TOT \forall_{x \in X}\big(f(x) := g(x)\big). \]
If $f := g$, we say that  $f$ and $g$ are \textit{definitionally equal}\index{definitionally equal functions}.
If $(X, =_X)$ and $(Y, =_Y)$ are sets, an \textit{operation} from $X$ to $Y$ is a non-dependent assignment routine 
from $X$ to $Y$, while a \textit{function} from $(X, =_X)$ to $(Y, =_Y)$, in symbols 
$f \colon X \to Y$\index{$f \colon X \sto Y$},
is an operation from $(X, =_X)$ to $(Y, =_Y)$ that respects equality i.e.,
\[\forall_{x, x{'} \in X}\big(x =_X x{'} \To f(x) =_Y f(x{'})\big). \]
If $f \colon X \sto Y$ is a function from $X$ to $Y$, we say
that $f$ is a function, without mentioning the expression ``from $X$ to $Y$''. If $X$ is a set, the \textit{identity 
function} $\id_X \colon X \to X$ is defined by the rule $x \mapsto x$, for every $x \in X$.
A function $\fXY$ is an \textit{embedding}\index{embedding}, in symbols 
$f \colon X \hookrightarrow Y$\index{$f \colon X \hookrightarrow Y$}, if 
\[\forall_{x, x{'} \in X}\big( f(x) =_Y f(x{'}) \To x =_X x{'}). \]
% Let the sets $(X, =_X, \neq_X)$ and $(Y, =_Y, \neq_Y)$. A function $f \colon X \to Y$ is strongly extensional,
% \index{strongly extensional function} if
% \[ \forall_{x, x{'} \in X}\big(f(x) \neq_Y f(x{'}) \To x \neq_X x{'}\big). \]
% If $\simeq_X$ is another equality on $X$, we use a new symbol e.g., $X^*$, for the same totality $X$. 
% When we write $f \colon X^* \to Y$, then $f$ is a function from $X$, equipped with the equality 
% $\simeq_X$, to $Y$.
Let $X, Y$ be sets. The totality $\D O(X, Y)$\index{$\D O(X, Y)$} of
 operations from $X$ to $Y$  is equipped with the following canonical equality: 
 \vspace{-2mm}
 \[ f =_{\D O(X, Y)} g : \TOT \forall_{x \in X}\big(f(x) =_Y g(x)\big). \]
% \[ f \neq_{\D O(X, Y)} g : \TOT \exists_{x \in X}\big(f(x) \neq_Y f(x).\big). \]
 The totality $\D O(X, Y)$ is considered to be a set. The set $\D F(X, Y)$\index{$\D F(X, Y)$} of functions
 from $X$ to $Y$  is defined by separation on $\D O(X, Y)$ through the extensional property $P(f) :\TOT
 \forall_{x, x{'} \in X}\big(x =_X x{'} \To f(x) =_Y f(x{'})\big)$. The equality $=_{\D F(X, Y)}$
 is inherited from $=_{\D O(X, Y)}$.
 %the inequality $\neq_{\D F(X, Y)}$ are  and $\neq_{\D O(X, Y)}$, respectively.

\end{defi}

The canonical equality on $\D V_0$ is defined by
\[ X =_{\D V_0} Y :\TOT \exists_{f \in \D F(X,Y)}\exists_{g \in \D F(Y,X)}\big(g \circ f = \id_X \ \& \ 
f \circ g = \id_Y\big).\]
In this case we write $(f, g) : X =_{\D V_0} Y$.

\begin{defi}\label{def: dependops}
Let $I$ be a set and $\lambda_0 \colon I \sto \D V_0$\index{$\lambda_0 \colon I \sto \D V_0$} a non-dependent assignment 
routine from $I$ to $\D V_0$. 
A \textit{dependent operation}\index{dependent operation} $\Phi$ over $\lambda_0$, in symbols
\[ \Phi \colon \bigcurlywedge_{i \in I} \lambda_0 (i), \]
is an assignment routine that assigns to each element $i$ in $I$ an element $\Phi(i)$ in the set
$\lambda_0(i)$. If $i \in I$, we call $\Phi(i)$ the $i$-\textit{component}\index{component of a dependent operation}
of $\Phi$, and we also use the notation $\Phi_i := \Phi(i)$\index{$\Phi(i)$}\index{$\Phi_i$}.
An assignment routine\index{assignment routine} is either a non-dependent assignment routine, or a dependent operation 
over some non-dependent assignment routine from a set to the universe.
If $\Psi \colon \bigcurlywedge_{i \in I} \lambda_0 (i)$, 
$\Phi := \Psi :\TOT \forall_{i \in I}\big(\Phi_i := \Psi_i\big).$
%\[ \Phi := \Psi :\TOT \forall_{i \in I}\big(\Phi_i := \Psi_i\big). \]
If $\Phi := \Psi$, we say that $\Phi$ and $\Psi$ are definitionally equal\index{definitionally
equal dependent operations}. Let 
$\D A(I, \lambda_0)$\index{$\D A(I, \lambda_0)$} be the totality of dependent operations over $\lambda_0$,
equipped with the 
canonical equality $\Phi =_{\D A(I, \lambda_0)} \Psi :\TOT \forall_{i \in I}\big(\Phi_i =_{\lambda_0(i)} \Psi_i\big)$.
%\[ \Phi =_{\D A(I, \lambda_0)} \Psi :\TOT \forall_{i \in I}\big(\Phi_i =_{\lambda_0(i)} \Psi_i\big). \]
The totality $\D A(I, \lambda_0)$ is considered to be a set. 
% If $\neq_{\lambda_0(i)}$ is an inequality on $\lambda_0(i)$,
% for every $i \in I$, the canonical inequality $\neq_{\D A(I, \lambda_0)}$ on $\D A(I, \lambda_0)$ is defined by
% $\Phi \neq_{\D A(I, \lambda_0)} \Psi :\TOT \exists_{i \in I}\big(\Phi_i \neq_{\lambda_0(i)} \Psi_i\big)$.

\end{defi}

\begin{defi}\label{def: subset}
If $X$ is a set, a \textit{subset} of $X$ is a pair $(A, i_A^X)$, where $A$ is a set and $i_A^X: A \emb X$ is
an embedding. If $(A, i_A^X)$ and $(B, i_B^X)$ are subsets of $X$, we say that $A$ is a \textit{subset} of $B$, and
we write $A \subseteq B$, if 
there is $f : A \to B$ such that the following diagram commutes
\begin{center}
\begin{tikzpicture}

\node (E) at (0,0) {$A$};
\node[right=of E] (B) {};
\node[right=of B] (F) {$B$};
%\node[below=of B] (C) {};
\node[below=of B] (A) {$X$.};

\draw[->] (E)--(F) node [midway,above] {$f$};
%\draw[->,bend left] (F) to node [midway,below] {$\lambda_{ji}$} (E);
\draw[right hook->] (E)--(A) node [midway,left] {$i_A^X \ $};
\draw[left hook->] (F)--(A) node [midway,right] {$\ i_B^X$};

\end{tikzpicture}
\end{center} 
In this case we write $f : A \subseteq B$, and usually we write $A$ instead of $(A, i_A^X)$, and 
$i_A$ instead of $i_A^X$, if $X$ is clear from the context.
The totality of the subsets of $X$ is the \textit{powerset} $\C P(X)$, and it is equipped with the equality
$(A, i_A) =_{\C P(X)} (B, i_B) :\TOT A \subseteq B \ \& \ B \subseteq A$. If $f : A \subseteq B$ and 
$g : B \subseteq A$, we write $(f, g) : A =_{\C P(X)} B$.
If $f : A \subseteq B$, then $f$ is an embedding: if $a, a{'} \in A$ are such that $f(a) =_B f(a{'})$, then 
$i_B(f(a)) =_X i_B(f(a{'})) \TOT i_A(a) =_X i_A (a{'})$, which implies $a =_A a{'}$. 
\end{defi}

If 
$(f, g) : A =_{\C P(X)} B$, all the following diagrams commute
\begin{center}
\begin{tikzpicture}

\node (E) at (0,0) {$A$};
\node[right=of E] (B) {};
\node[right=of B] (F) {$B$};
\node[below=of B] (C) {};
\node[below=of C] (A) {$X$.};

\draw[left hook->,bend left] (E) to node [midway,above] {$f$} (F);
\draw[left hook->,bend left] (F) to node [midway,below] {$g$} (E);
\draw[right hook->] (E)--(A) node [midway,left] {$i_A^X \ $};
\draw[left hook->] (F)--(A) node [midway,right] {$ \ i_B^X$};

\end{tikzpicture}
\end{center} 
Moreover, the internal equality of subsets implies their external equality as sets  i.e., 
$$(f, g) : A =_{\C P(X)} B \To (f, g) : A =_{\D V_0} B.$$
If $a \in A$, then $i_A(g(f(a))) =_X i_B(f(a)) =_X i_A (a)$, hence $g(f(a)) =_A a$, and 
$g \circ f =_{\D F(A, A)} \id_A$. Similarly, $f \circ g =_{\D F(B)} \id_B$. Since the membership condition 
of $\C P(X)$ requires quantification over $\D V_0$, the totality $\C P(X)$ is a class.  
If $X_P$ is an extensional subset of $X$, then 
$(X_P, i_{X_P}) \subseteq X$, where $i_{X_P} \colon X_P \to X$ is defined by $x \mapsto x$, for every $x \in X_P$.

\section{Set-indexed families of sets}
\label{sec: famsets}

An $I$-family of sets is an assignment routine $\lambda_0 : I \sto \D V_0$ that 
behaves like a function i.e., if $i =_I j$, then $\lambda_0(i) =_{\D V_0} \lambda_0 (j)$. The 
following definition is an exact formulation of this rough description\footnote{In~\cite{MRR88} Richman 
saw a set $I$ as a category with objects its elements and 
$\Hom_{=_I}(i, j) := \{x \in \{0\} \mid i =_I j\}$, for every $i, j \in I$. 
If we view $\D V_0$ 
as a category with objects its elements and 
$\Hom_{=_{\D V_0}}(X, Y) := \big \{(f, f{'}) : \D F(X, Y) \times \D F(Y, X) \mid (f, f{'}) : 
X =_{\D V_0} Y \big\},$ for every $X, Y \in \D V_0$, then
an $I$-family of sets is a
%n appropriate
functor from the category $I$ to the category $\D V_0$. Notice that in the definitions of $\Hom_{=_I}(i, j)$
and of $\Hom_{=_{\D V_0}}(X, Y)$ the properties $P_I(x) := i =_I j$ and $Q_{X,Y}(f, f{'}) := (f, f{'}) : 
X =_{\D V_0} Y$ are extensional.}.

\begin{defi}\label{def: familyofsets}
If $I$ is a set, 
%and $D(I) := \{(i, j) \in I \times I \mid i =_I j\},$ 
a \textit{family of sets}\index{family of sets} indexed by $I$, or an $I$-\textit{family 
of sets}\index{$I$-family of sets}, is a pair $\Lambda := (\lambda_0, \lambda_1)$, where
$\lambda_0 : I \sto \D V_0$, and\footnote{More accurately, we should write 
$\lambda_1 : \bigcurlywedge_{z \in D(I)}\D F\big(\lambda_0(\pr_1(z)), \lambda_0(\pr_2(z))\big)$.} 
$$\lambda_1 : \bigcurlywedge_{(i, j) \in D(I)}\D F\big(\lambda_0(i), \lambda_0(j)\big), 
\ \ \ \lambda_1(i, j) =: \lambda_{ij}, \ \ \ (i, j) \in D(I),$$
such that the following conditions hold:\\[1mm]
$(a)$ For every $i \in I$, we have that $\lambda_{ii} := \id_{\lambda_0(i)}$.\\[1mm]
$(b)$ If $i =_I j$ and $j =_I k$, the following diagram commutes
\begin{center}
\begin{tikzpicture}

\node (E) at (0,0) {$\lambda_0(j)$};
\node[right=of E] (F) {$\lambda_0(k).$};
%\node[above=of F] (A) {$B$};
\node [above=of E] (D) {$\lambda_0(i)$};
%\node [below=of D] (G) {$V$};

\draw[->] (E)--(F) node [midway,below] {$\lambda_{jk}$};
%\draw[->] (D)--(A) node [midway,above] {$f $};
\draw[->] (D)--(E) node [midway,left] {$\lambda_{ij}$};
\draw[->] (D)--(F) node [midway,right] {$\ \lambda_{ik}$};
%\draw[->] (D)--(E) node [midway,left] {$f$};

\end{tikzpicture}
\end{center}
If $i =_I j$, we call the function $\lambda_{ij}$ the 
\textit{transport map}\footnote{We draw this term from MLTT.} from $\lambda_0 (i)$ to $\lambda_0 (j)$. 
We call the assignment routine
$\lambda_1$ the \textit{modulus of function-likeness of} 
$\lambda_0$\footnote{Since $\lambda_{ii} = \lambda_{ji} \circ \lambda_{ij}$ and 
$\lambda_{jj} = \lambda_{ij} \circ \lambda_{ji}$, we have that 
$(\lambda_{ij}, \lambda_{ji}) : \lambda_0 (i) =_{\D V_0} \lambda_0 (j)$.}. An $I$-family of sets is called
an $I$-\textit{set} of sets, if 
$$\forall_{i, j \in I}\big(\lambda_0(i) =_{\D V_0} \lambda_0(j) \To i =_I j\big).$$
If $A$ is a set, the \textit{constant} $I$-\textit{family} $A$ is the pair 
$\Lambda^A := (\lambda_0^A, \lambda_1^A)$, where $\lambda_0 (i) := A$, for every 
$i \in I$, and $\lambda_1 (i, j) := \id_A$, for every $(i, j) \in D(I)$.

\end{defi}

\begin{defi}\label{def: map}
Let $\Lambda := (\lambda_0, \lambda_1)$ and $M := (\mu_0, \mu_1)$ be $I$-families of sets.
A \textit{family-map} from $\Lambda$ to $M$\index{family-map} is a dependent operation\footnote{In 
the categorical setting of Richman, a family map $\Psi \in \Map_I(\Lambda, M)$ is a natural transformation 
from the functor $\Lambda$ to the functor $M$. In our view, the fact that the most fundamental 
concepts of category theory, that of a functor and of a natural transformation, are formulated in a
natural way in $\BST$ through the notion of a dependent operation explains why category theory 
is so closely connected to $\BST$, or $\MLTT$. Both, $\BST$ and $\MLTT$,
highlight the role of dependent operations.}
$\Psi : \bigcurlywedge_{i \in I}\D F \big(\lambda_0(i), \mu_0(i)\big),$
such that for every $(i, j) \in D(I)$ the following diagram commutes
\begin{center}
\begin{tikzpicture}

\node (E) at (0,0) {$\mu_0(i)$};
\node[right=of E] (F) {$\mu_0(j)$.};
\node[above=of F] (A) {$\lambda_0(j)$};
\node [above=of E] (D) {$\lambda_0(i)$};
%\node [below=of D] (G) {$V$};

\draw[->] (E)--(F) node [midway,below]{$\mu_{ij}$};
\draw[->] (D)--(A) node [midway,above] {$\lambda_{ij}$};
\draw[->] (D)--(E) node [midway,left] {$\Psi_i$};
\draw[->] (A)--(F) node [midway,right] {$\Psi_j$};
%\draw[->] (D)--(E) node [midway,left] {$f$};

\end{tikzpicture}
\end{center}
where $\Psi_i := \Psi(i)$ is the $i$-\textit{component} of $\Psi$, for every $i \in I$.
We denote by $\Map_I(\Lambda, M)$ the totality of family-maps from $\Lambda$ to $M$, which is equipped
with the equality 
$$\Psi =_{\Map_I(\Lambda, M)} \Xi :\TOT \forall_{i \in I}\big(\Psi_i =_{\D F(\lambda_0(i), \mu_0(i))} 
\Xi_i\big).$$
We also write $\Phi : \Lambda \To M$ to denote an element of $\Map_I(\Lambda, M)$.
If $\Psi : \Lambda \To M$ and $\Xi : M \To N$, the \textit{composition family-map} 
$\Xi \circ \Psi : \Lambda \To N$ is defined by 
$(\Xi \circ \Psi)(i) := \Xi_i \circ \Psi_i$
\begin{center}
\begin{tikzpicture}

\node (E) at (0,0) {$\lambda_0(i)$};
\node[right=of E] (F) {$\lambda_0(j)$};
\node[below=of F] (A) {$\mu_0(j)$};
\node[below=of E] (B) {$\mu_0(i)$};
\node[below=of B] (K) {$\nu_0(i)$};
\node[below=of A] (L) {$\nu_0(j)$,};

\draw[->] (E)--(F) node [midway,above] {$\lambda_{ij}$};
\draw[->] (F)--(A) node [midway,left] {$\Psi_j$};
\draw[->] (B)--(A) node [midway,below] {$\mu_{ij}$};
\draw[->] (E) to node [midway,right] {$\Psi_i$} (B);
\draw[->] (B) to node [midway,right] {$\Xi_i$} (K);
\draw[->] (A)--(L) node [midway,left] {$\Xi_j$};
\draw[->] (K)--(L) node [midway,below] {$\nu_{ij}$};
\draw[->,bend right] (E) to node [midway,left] {$(\Xi \circ \Psi)_i  \ $} (K);
\draw[->,bend left] (F) to node [midway,right] {$\  (\Xi \circ \Psi)_j$} (L);

\end{tikzpicture}
\end{center} 
for every $i \in I$. The dependent operation
$$\Id_{\Lambda} : \bigcurlywedge_{i \in I}\D F 
\big(\lambda_0(i), \lambda_0(i)\big), \ \ \ \Id_{\Lambda}(i) := \id_{\lambda_0(i)}, \ \ \ i \in I,$$
is the \textit{identity family-map} from $\Lambda$ to $\Lambda$. 
% If $\Phi \in \Map_I(M, \Lambda)$ and $\Theta \in \Map_I(N, K)$, let 
% $\map_I(\Phi, \Theta) : \Map_I(\Lambda, N) \to \Map_I(M, K)$, defined by
% $\Psi \mapsto \Theta \circ \Psi \circ \Phi.$
The totality of $I$-families is denoted by $\Fam(I)$, and
%and if $\Lambda, M \in \Fam(I)$, then  
$$\Lambda =_{\Fam(I)} M :\TOT 
\exists_{\Phi \in \Map_I(\Lambda, M)}\exists_{\Xi \in \Map_I(M, \Lambda)}\big(\Phi \circ \Xi = \id_M \ \& \
\Xi \circ \Phi = \id_{\Lambda}\big).$$

\end{defi}

The equalities on $\Map_I(\Lambda, M)$ and $\Fam(I)$ are 
equivalence relations. It is natural to accept the totality $\Map(\Lambda, M)$ as a set. If $\Fam(I)$ was a set
though, the constant $I$-family with value $\Fam(I)$ would be
defined though a totality in which it belongs to. From a predicative point of view, this cannot be accepted. The 
membership condition of the totality
$\Fam(I)$ though, does not involve quantification over the universe $\D V_0$, therefore it is also natural not
to consider $\Fam(I)$ to be a class. Hence, $\Fam(I)$ is a totality
``between'' a (predicative) set and a class. For this reason, we say that $\Fam(I)$ is an 
\textit{impredicative set}.

\begin{defi}\label{def: newfamsofsets1}
If $K$ is a set, $\Sigma := (\sigma_0, \sigma_1)$ is 
a $K$-family of sets and $h : I \to K$, 
the $h$-\textit{subfamily} of $\Sigma$ is the pair\index{$\Sigma \circ h$} $\Sigma \circ h :=
(\sigma_0 \circ h, \sigma_1 \circ h)$, where
\[ (\sigma_0 \circ h)(i) := \sigma_0 (h(i)); \ \ \ \ i \in I, \] 
\[ (\sigma_1 \circ h)_{ij} := (\sigma_1 \circ h)(i, j) \colon \sigma_0 (h(i)) \to \sigma_0 (h(j)); \ \ \ \ 
(i, j) \in D(I), \]
\[ (\sigma_1 \circ h)_{ij} := \sigma_{h(i)h(j)}. \]
Clearly, $\Sigma \circ h \in \Fam(I)$, and we write $(\Sigma \circ h)_I\leq \Sigma_K$.
\end{defi}

\begin{defi}\label{def: exteriorunion}
Let $\Lambda := (\lambda_0, \lambda_1)$ be an $I$-family of sets. The\footnote{For the sake of readability we denote 
the totalities of exterior union and dependent functions over $\Lambda$ only with reference to the assignment routine
$\lambda_0$, while $\lambda_1$ is used in the equality formula of both totalities. In this way our notation is 
similar to the notation of the $\Sigma$-type and $\Pi$-type in $\MLTT$.}
\textit{exterior union}\index{exterior union}, or 
\textit{disjoint union}\index{disjoint union}, $\sum_{i \in I}\lambda_0 (i)$ of $\Lambda$ is defined by
$$w \in \sum_{i \in I}\lambda_0 (i) : 
\TOT \exists_{i \in I}\exists_{x \in \lambda_0 (i)}\big(w := (i, x)\big),$$
$$(i, x) =_{\sum_{i \in I}\lambda_0 (i)} (j, y) : \TOT i =_I j \ \& \ \lambda_{ij} (x) =_{\lambda_0 (j)} y.$$
The totality $\prod_{i \in I}\lambda_0(i)$ of \textit{dependent functions over} $\Lambda$ is defined by
$$\Phi \in \prod_{i \in I}\lambda_0(i) :\TOT \Phi \in \D A(I, \lambda_0) \ \& \ \forall_{(i,j) \in D(I)}\big(\Phi_j 
=_{\lambda_0(j)} \lambda_{ij}(\Phi_i)\big),$$
and it is equipped with the equality of $\D A(I, \lambda_0)$.
\end{defi}

The equalities on $\sum_{i \in I}\lambda_0 (i)$ and $\prod_{i \in I}\lambda_0(i)$ are 
equivalence relations, and both these totalities are sets. If $X, Y$ are sets, let $\Lambda(X,Y) 
:= \big(\lambda_0^{X,Y}, \lambda_1^{X,Y}\big)$ be the $\two$-family of $X$ and $Y$, where 
$\lambda_0^{X,Y} : \{0, 1\} \sto \D V_0$ is defined by 
$$\lambda_0^{X,Y}(0) := X \ \& \ 
\lambda_0^{X,Y}(1) := Y, \ \ \ \& \ \ \ \lambda_1^{X,Y}(0,0) := \id_X \ \& \ \lambda_1^{X,Y}(1,1) := \id_Y.$$ 
It is easy to show that the dependent functions over $\Lambda(X,Y)$ are equal in $\D V_0$ to $X \times Y$.
The \textit{first projection}
on $\sum_{i \in I}\lambda_0 (i)$ is the assignment routine $\pr_1^{\Lambda} : \sum_{i \in I}\lambda_0 (i) \sto I$, 
defined by 
$$\pr_1^{\Lambda} (i, x) : = \pr_1 (i, x) := i,$$
for every $(i, x) \in \sum_{i \in I}\lambda_0 (i)$. It is immediate to show that $\pr_1^{\Lambda}$ is a function
from $\sum_{i \in I}\lambda_0 (i)$ to $I$.
Moreover, it is easy to show that the pair $\Sigma^{\Lambda} := (\sigma_0^{\Lambda}, \sigma_1^{\Lambda})$,
where $\sigma_0^{\Lambda}
: \sum_{i \in I}\lambda_0 (i) \sto \D V_0$ is defined by $\sigma_0^{\Lambda}(i, x) := \lambda_0(i)$, and
$\sigma_1^{\Lambda}\big((i,x),(j,y)\big) := \lambda_{ij}$, is a family of sets over $\sum_{i \in I}\lambda_0 (i)$.
The \textit{second projection} on $\sum_{i \in I}\lambda_0 (i)$ is the dependent operation
$$\pr_2^{\Lambda} : \bigcurlywedge_{(i,x) \in \sum_{i \in I}\lambda_0 (i)}\lambda_0(i),$$
$$\pr_2^{\Lambda}(i,x) := \pr_2(i,x) := x,$$
and it is a dependent function over 
$\Sigma^{\Lambda}$. The following facts are easy to show.

\begin{prop}\label{prp: map1}
Let $\Lambda := (\lambda_0, \lambda_1)$, $M := (\mu_0, \mu_1) \in \Fam(I)$, and $\Psi : \Lambda \To M$.\\[1mm]
$(i)$ For every $i \in I$ the assignment routine
$e_i^{\Lambda} : \lambda_0(i) \sto \sum_{i \in I}\lambda_0(i)$, defined by 
$x \mapsto (i, x)$, is an embedding of $\lambda_0(i)$ into $\sum_{i \in I}\lambda_0(i)$.\\[1mm]
$(ii)$ The assignment routine
$\Sigma \Psi : \sum_{i \in I}\lambda_0(i) \sto \sum_{i \in I}\mu_0(i)$, defined by
$\Sigma \Psi (i, x) := (i, \Psi_i (x))$, 
is a function from $\sum_{i \in I}\lambda_0(i)$ to $\sum_{i \in I}\mu_0(i)$, such that for every
$i \in I$ the following diagram commutes
\begin{center}
\begin{tikzpicture}

\node (E) at (0,0) {$\sum_{i \in I}\lambda_0(i)$};
\node[right=of E] (F) {$\sum_{i \in I}\mu_0(i)$.};
\node[above=of F] (A) {$\mu_0(i)$};
\node [above=of E] (D) {$\lambda_0(i)$};
%\node [below=of D] (G) {$V$};

\draw[->] (E)--(F) node [midway,below]{$\Sigma \Psi$};
\draw[->] (D)--(A) node [midway,above] {$\Psi_{i}$};
\draw[->] (D)--(E) node [midway,left] {$e_i^{\Lambda}$};
\draw[->] (A)--(F) node [midway,right] {$e_i^M$};
%\draw[->] (D)--(E) node [midway,left] {$f$};

\end{tikzpicture}
\end{center}
$(iii)$ If $\Psi_i$ is an embedding, for every $i \in I$, then $\Sigma \Psi$ is an
embedding.\\[1mm]
$(iv)$ For every $i \in I$ the assignment routine
$\pi_i^{\Lambda} : \prod_{i \in I}\lambda_0(i) \sto \lambda_0(i)$, defined by
$\Theta \mapsto \Theta_i,$ is a function from $\prod_{i \in I}\lambda_0(i)$ to $\lambda_0(i)$.\\[1mm]
% It would be nice to show that it is a surjection, but it is not easy to show it for $I$ that is not discrete.
$(v)$ The assignment routine
$\Pi \Psi : \prod_{i \in I}\lambda_0(i) \sto \prod_{i \in I}\mu_0(i)$, defined by
$[\Pi \Psi (\Theta)]_i := \Psi_i (\Theta_i)$, for every $i \in I$, is a function 
from $\prod_{i \in I}\lambda_0(i)$ to $\prod_{i \in I}\mu_0(i)$, such that for every $i \in I$ 
the following diagram commutes
\begin{center}
\begin{tikzpicture}

\node (E) at (0,0) {$\prod_{i \in I}\lambda_0(i)$};
\node[right=of E] (F) {$\prod_{i \in I}\mu_0(i)$.};
\node[above=of F] (A) {$\mu_0(i)$};
\node [above=of E] (D) {$\lambda_0(i)$};
%\node [below=of D] (G) {$V$};

\draw[->] (E)--(F) node [midway,below]{$\Pi \Psi$};
\draw[->] (D)--(A) node [midway,above] {$\Psi_{i}$};
\draw[->] (E)--(D) node [midway,left] {$\pi_i^{\Lambda}$};
\draw[->] (F)--(A) node [midway,right] {$\pi_i^M$};
%\draw[->] (D)--(E) node [midway,left] {$f$};

\end{tikzpicture}
\end{center}
$(vi)$ If $\Psi_i$ is an embedding, for every $i \in I$, then $\Pi \Psi$ is an 
embedding.
\end{prop}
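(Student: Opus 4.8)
The plan is to dispatch the six claims one by one; the single structural fact doing all the work throughout is the defining commuting square of a family-map, namely $\mu_{ij}\circ\Psi_i =_{\D F(\lambda_0(i),\mu_0(j))}\Psi_j\circ\lambda_{ij}$ for every $(i,j)\in D(I)$, used together with the fact that each component $\Psi_i$ respects the equality of $\lambda_0(i)$. For $(i)$ I would first check that $e_i^{\Lambda}$ is a function: if $x=_{\lambda_0(i)}x{'}$, then the defining condition of $(i,x)=_{\sum_{i\in I}\lambda_0(i)}(i,x{'})$ is $i=_I i$ together with $\lambda_{ii}(x)=_{\lambda_0(i)}x{'}$, and by condition $(a)$ of Definition~\ref{def: familyofsets} we have $\lambda_{ii}:=\id_{\lambda_0(i)}$, so this reduces to $x=_{\lambda_0(i)}x{'}$. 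The same collapse of $\lambda_{ii}$ to the identity shows the embedding property: if $(i,x)=_{\sum_{i\in I}\lambda_0(i)}(i,x{'})$ then $\lambda_{ii}(x)=x=_{\lambda_0(i)}x{'}$.

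For $(ii)$, well-definedness of $\Sigma\Psi$ is the first place the family-map square enters. Assume $(i,x)=_{\sum_{i\in I}\lambda_0(i)}(j,y)$, i.e.\ $i=_I j$ and $\lambda_{ij}(x)=_{\lambda_0(j)}y$. Since $i=_I j$ we have $(i,j)\in D(I)$, so the square gives $\mu_{ij}(\Psi_i(x))=_{\mu_0(j)}\Psi_j(\lambda_{ij}(x))$; as $\Psi_j$ is a function and $\lambda_{ij}(x)=_{\lambda_0(j)}y$, the right-hand side equals $\Psi_j(y)$, whence $(i,\Psi_i(x))=_{\sum_{i\in I}\mu_0(i)}(j,\Psi_j(y))$, which is exactly $\Sigma\Psi(i,x)=_{\sum_{i\in I}\mu_0(i)}\Sigma\Psi(j,y)$. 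Commutativity of the triangle with $e_i^{\Lambda},\Psi_i,e_i^M$ is then definitional, since both composites send $x$ to $(i,\Psi_i(x))$. Part $(iii)$ reverses this computation: from $\Sigma\Psi(i,x)=_{\sum_{i\in I}\mu_0(i)}\Sigma\Psi(j,y)$ one reads off $i=_I j$ and $\mu_{ij}(\Psi_i(x))=_{\mu_0(j)}\Psi_j(y)$; the square rewrites the left side as $\Psi_j(\lambda_{ij}(x))$, so $\Psi_j(\lambda_{ij}(x))=_{\mu_0(j)}\Psi_j(y)$, and since $\Psi_j$ is an embedding we conclude $\lambda_{ij}(x)=_{\lambda_0(j)}y$, i.e.\ $(i,x)=_{\sum_{i\in I}\lambda_0(i)}(j,y)$.

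Parts $(iv)$--$(vi)$ are the dual statements for $\prod$ and I would argue them pointwise, since the equality of $\prod_{i\in I}\lambda_0(i)$ is inherited from the pointwise equality of $\D A(I,\lambda_0)$. For $(iv)$, functionality of $\pi_i^{\Lambda}$ is immediate: $\Theta=_{\prod_{i\in I}\lambda_0(i)}\Theta{'}$ means $\Theta_k=_{\lambda_0(k)}\Theta{'}_k$ for all $k$, in particular for $k=i$. For $(v)$, the substantive step is that $\Pi\Psi(\Theta)$ actually lies in $\prod_{i\in I}\mu_0(i)$: given $(i,j)\in D(I)$, the coherence $\Theta_j=_{\lambda_0(j)}\lambda_{ij}(\Theta_i)$ of $\Theta$ yields, after applying the function $\Psi_j$ and then the family-map square, $\Psi_j(\Theta_j)=_{\mu_0(j)}\Psi_j(\lambda_{ij}(\Theta_i))=_{\mu_0(j)}\mu_{ij}(\Psi_i(\Theta_i))$, which is precisely the required coherence $[\Pi\Psi(\Theta)]_j=_{\mu_0(j)}\mu_{ij}([\Pi\Psi(\Theta)]_i)$. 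Functionality of $\Pi\Psi$ and commutativity of the square with $\pi_i^{\Lambda},\Psi_i,\pi_i^M$ are then respectively pointwise and definitional. Finally $(vi)$ is the pointwise analogue of $(iii)$: if $\Pi\Psi(\Theta)=_{\prod_{i\in I}\mu_0(i)}\Pi\Psi(\Theta{'})$, then $\Psi_i(\Theta_i)=_{\mu_0(i)}\Psi_i(\Theta{'}_i)$ for each $i$, and since each $\Psi_i$ is an embedding, $\Theta_i=_{\lambda_0(i)}\Theta{'}_i$ for each $i$, i.e.\ $\Theta=_{\prod_{i\in I}\lambda_0(i)}\Theta{'}$.

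I do not expect any genuine obstacle here; as the text itself signals, these are routine verifications. The only point that requires real attention — and the reason $\Psi$ must be a bona fide family-map rather than an arbitrary dependent operation assigning a function $\lambda_0(i)\to\mu_0(i)$ to each $i$ — is the invocation of the commuting square in $(ii)$ and $(v)$, where it is exactly what guarantees compatibility with the $\Sigma$-equality and membership in the $\Pi$-set. Everything else follows from reflexivity of $=_I$, condition $(a)$ (so that $\lambda_{ii}=\id$), and the fact that the $\Psi_i$ are functions, respectively embeddings.
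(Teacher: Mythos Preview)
Your proof is correct and complete. The paper itself does not actually prove this proposition: it introduces it with ``The following facts are easy to show'' and gives no argument, so there is nothing to compare against beyond noting that your verifications are exactly the routine checks the paper is gesturing at, and that your identification of the family-map square $\mu_{ij}\circ\Psi_i=\Psi_j\circ\lambda_{ij}$ as the one nontrivial ingredient (needed precisely in (ii), (iii), and (v)) is on the mark.
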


\section{Directed sets}
\label{sec: directed}

\begin{defi}\label{def: dirset}
Let $I$ be a set and $i \lt_I j$ a binary extensional relation on $I$ i.e., an extensional property on $I \times I$
\[\forall_{i, j, i{'}, j{'} \in I}\big(i =_I i{'} \ \& \ j =_I j{'} \ \& \ i \lt_I j \To i{'} \lt_I j{'}\big).\]
If $i \lt_I j$ is reflexive and transitive, then $(I, \lt_I)$ is called a 
a \textit{preorder}. We call a preorder $(I, \lt_I)$ a \textit{directed set}\index{directed set}, and 
\textit{inverse-directed}\index{inverse-directed set}, respectively, if
\[ \forall_{i, j \in I}\exists_{k \in I}\big(i \lt_I k \ \& \ j \lt_I k\big), \]
\[\forall_{i, j \in I}\exists_{k \in I}\big(i \mt_I k \ \& \ j \mt_I  k\big) .\]
The \textit{covariant diagonal} $D^{\lt}(I)$\index{$D^{\lt}(I)$} of $\lt_I$, the 
\textit{contravariant diagonal} $D^{\mt}(I)$\index{$D^{\mt}(I)$} of $\lt_I$, and
the $\lt_I$-\textit{upper set} $I_{ij}^{\lt}$\index{$I_{ij}^{\lt}$}
of $i, j \in I$\index{$\lt_I$-upper set} are defined, respectively, by 
\[ D^{\lt}(I) := \big\{(i, j) \in I \times I \mid i \lt_I j \big\}, \]
\[ D^{\mt}(I) := \big\{(j, i) \in I \times I \mid j \mt_I i \big\}, \]
\[ I_{ij}^{\lt} := \{k \in I \mid  i \lt_I k \ \& \ j \lt_I k\}. \]

\end{defi}

As $i \lt_I j$ is extensional, $D^{\lt}(I), D^{\mt}(I)$ are extensional 
subsets of $I \times I$, and $I_{ij}^{\lt}$ of $I$.

\begin{defi}\label{def: modulus}
Let $(I, \lt_I)$ be a poset i.e., a preorder such that $\big[i \lt_I j \ \& \ j \lt_I i \big]\To i =_I j$, for every 
$i, j \in I$. A \textit{modulus of directedness}\index{modulus of directedness} for $I$ is a 
function $\delta \colon I \times I \to I$, such
that for every $i, j, k \in I$ the following conditions are satisfied:\\[1mm]
$(\delta_1)$ $i \lt_I \delta(i, j)$ and $j \lt_I \delta(i, j)$.\\[1mm]
$(\delta_2)$ If $i \lt_I j$, then $\delta(i, j) =_I \delta(j, i) =_I j$.\\[1mm]
$(\delta_3)$ $\delta\big(\delta(i, j), k\big) =_I \delta\big(i, \delta(j, k)\big)$.
\end{defi}

In what follows we avoid for simplicity the use of subscripts on the relation symbols.
If $(I, \lt)$ is a directed set and $(J, e) \subseteq I$, where $e : J \hookrightarrow I$, 
and using for simplicity the same symbol $\lt$, if we define
$j \lt j{'} : \TOT e(j) \lt e(j{'}),$
for every $j, j{'} \in J$, then $(J, \lt)$ is only a preordered set. If $J$ is a cofinal subset of $I$, 
which classically it is defined by the condition
$\forall_{i \in I}\exists_{j \in J}\big(i \lt j \big)$, then $(J, \lt)$ becomes a
directed set. To avoid the use of dependent choice, we add in the definition of a cofinal subset $J$
of $I$ a modulus of cofinality for $J$.

\begin{defi}\label{def: cofinal}
Let $(I, \lt)$ be a directed set and $(J, e) \subseteq I$, and let 
$j \lt j{'} : \TOT e(j) \lt e(j{'})$, for every $j, j{'} \in J$. We say that $J$ is 
\textit{cofinal in} $I$, if there is a function\footnote{Categorically speaking, $e$ is a split monomorphism.}
$\cof_J : I \to J$, which we call a \textit{modulus of
cofinality} of $J$ in $I$, that satisfies the following conditions:\\[1mm]
$(\Cf_1)$ $\forall_{j \in J}\big(\cof_J(e(j)) =_J j\big)$.
\begin{center}
\begin{tikzpicture}

\node (E) at (0,0) {$J$};
\node[right=of E] (F) {$I$};
\node[right=of F] (A) {$J$.};

\draw[right hook->] (E)--(F) node [midway,below] {$e$};
\draw[->] (F)--(A) node [midway,below] {$\cof_J$};
%\draw[->] (F)--(A) node [midway,right] {$\ \id_{J}$};
\draw[->,bend left] (E) to node [midway,above] {$\id_J$} (A) ;

\end{tikzpicture}
\end{center}
$(\Cf_2)$  $\forall_{i, i{'} \in I}\big(i \lt i{'} \To \cof_J(i) \lt \cof_J(i{'})\big)$.\\[1mm] 
$(\Cf_3)$ $\forall_{i \in I}\big(i \lt e(\cof_J (i))\big)$.\\[1mm]
We denote the fact that $J$ is cofinal in $I$ by $(J, e, \cof_J) \subseteq^{\cof} I$, or, simpler, 
by $J \subseteq^{\cof} I$. 
\end{defi}

Taking into account the embedding $e$ of $J$ into $I$, condition $(\Cf_3)$ is the exact writing of
the classical defining condition $\forall_{i \in I}\exists_{j \in J}\big(i \lt j \big)$. 
To add condition $(\Cf_1)$ is natural, as $\lt$ is reflexive. If we consider condition $(\Cf_3)$ on $e(j)$,
for some $j \in J$, then by condition $(\Cf_1)$ we get $e(j) \lt e(\cof_J (e(j)))  = e(j)$.
Condition $(\Cf_2)$ is also harmless to add. In the classical setting if $i \lt i{'}$, and $j, j{'} \in J$
such that $i \lt j$ and $i{'} \lt j{'}$, then there is some $i{''} \in I$ such that $j{'} \lt i{''}$ and
$j \lt i{''}$. If $i{''} \lt j{''}$, for some $j{''} \in J$, 
\begin{center}
\resizebox{2.4cm}{!}{%
\begin{tikzpicture}

\node (E) at (0,0) {$i$};
\node[above=of E] (F) {};
\node[right=of F] (A) {$j$};
\node[left=of F] (B) {$i{'}$};
\node[above=of B] (C) {$j{'}$};
\node[above=of C] (D) {};
\node[right=of D] (H) {$i{''}$};
\node[above=of H] (G) {$j{''}$};

\draw[->] (E)--(B) node [midway,below] {};
\draw[->] (E)--(A) node [midway,left] {};
\draw[->] (B)--(C) node [midway,below] {};
\draw[->] (C)--(H) node [midway,right] {};
\draw[->] (A)--(H) node [midway,right] {};
\draw[->] (H)--(G) node [midway,right] {};

\end{tikzpicture}
}
\end{center}
then $j \lt j{''}$. Since $i{'} \lt j{''}$ too,
condition $(\Cf_2)$ is justified. The added conditions $(\Cf_1)$ and $(\Cf_2)$ are used in the proofs of 
Theorem~\ref{thm: cofinal2} and Lemma~\ref{lem: cofinallemma}(ii), respectively. Moreover, they are used
in the proof of Theorem~\ref{thm: cofinal3}.
The extensionality of $\lt$
is also used in the proofs of Theorem~\ref{thm: cofinal2} and Theorem~\ref{thm: cofinal3}.

E.g., if $\Even$ and $\Odd$ denote the sets of even and odd natural numbers, respectively, 
let $e \colon \Even \eto \Nat$, defined by the identity map-rule, and $\cof_{\Even} \colon \Nat \to 2 \Nat$, 
defined by the rule
\[ \cof_{\Even}(n) := \left\{ \begin{array}{ll}
                 n    &\mbox{, $n \in \Even$}\\
                 n+1           &\mbox{, $n \in \Odd$.}
                 \end{array}
          \right.\]           
Then $(\Even, e, \cof_{\Even}) \subseteq \Nat$.

\begin{rem}\label{rem: cofinal1}
If $(I, \lt)$ is a directed set and $(J, e, \cof_J) \subseteq^{\cof} I$, then $(J, \lt)$ is directed.
\end{rem}

\begin{proof}
 Let $j, j{'} \in J$ and let $i \in I$ such that $e(j) \lt i$ and $e(j{'}) \lt i$. Since $i \lt e(\cof_J (i))$, 
 we get $e(j) \lt e(\cof_J (i))$ and $e(j{'}) \lt e(\cof_J (i))$ i.e., $j \lt \cof_J(i)$ and $j{'} \lt \cof_J(i)$.
\end{proof}

\section{Direct families of sets}
\label{sec: dirfamsets}

The next concept is a variation 
of the notion of a set-indexed family of sets\footnote{A directed set $(I, \lt_I)$ can also be seen as
a category with objects
the elements of $I$, and $\Hom_{\lt_I}(i, j) := \{x \in \{0\} \mid i \lt_I j\}.$
If the universe $\D V_0$ is seen as a category with objects its elements and 
$\Hom_{\lt_{\D V_0}}(X, Y) :=  \D F(X, Y),$ an $(I, \lt_I)$-family of sets is a
functor from the category $(I, \lt_I)$ to this new category $\D V_0$.}. A family of sets over a partial order is
also used in the definition of a Kripke model for intuitionistic predicate logic, and the corresponding
transport maps $\lambda_{ij}^{\lt}$ are called \textit{transition functions} (see~\cite{TD88}, p. 85).

\begin{defi}\label{def: dfamilyofsets}
Let $(I, \lt_I)$ be a directed set, and 
$D^{\lt}(I) := \big\{(i, j) \in I \times I \mid i \lt_I j \big\}$
the \textit{diagonal} of $\lt_I$.
%\[ D^{\lt}(I) := \big\{(i, j) \in I \times I \mid i \lt_I j \big\}. \]
A \textit{direct family of sets}\index{direct family of sets indexed by} $(I, \lt_I)$, 
or an $(I, \lt_I)$-\textit{family of sets}\index{$(I, \lt_I)$-family of sets}, 
is a pair $\Lambda^{\lt} := (\lambda_0, \lambda_1^{\lt})$, where
$\lambda_0 : I \sto \D V_0$, and $\lambda_1^{\lt}$\index{$\lambda_1^{\lt}$},
a \textit{modulus of transport maps} for $\lambda_0$,\index{modulus of transport maps} where 
\[ \lambda_1^{\lt} : \bigcurlywedge_{(i, j) \in D^{\lt}(I)}\D F\big(\lambda_0(i), \lambda_0(j)\big), \ \ \ 
\lambda_1^{\lt}(i, j) =: \lambda_{ij}^{\lt}, \ \ \ (i, j) \in D^{\lt}(I), \]
such that the \textit{transport maps}\index{transport map of a direct family of sets} $\lambda_{ij}^{\prec}$
of $\Lambda^{\lt}$ satisfy the following conditions:\\[1mm]
\normalfont (a) 
\itshape
For every $i \in I$, we have that $\lambda_{ii}^{\lt} := \id_{\lambda_0(i)}$.\\[1mm]
\normalfont (b) 
\itshape If $i \lt_I j$ and $j \lt_I k$, the following diagram commutes
\begin{center}
\begin{tikzpicture}

\node (E) at (0,0) {$\lambda_0(j)$};
\node[right=of E] (F) {$\lambda_0(k).$};
%\node[above=of F] (A) {$B$};
\node [above=of E] (D) {$\lambda_0(i)$};
%\node [below=of D] (G) {$V$};

\draw[->] (E)--(F) node [midway,below] {$\lambda_{jk}^{\lt}$};
%\draw[->] (D)--(A) node [midway,above] {$f $};
\draw[->] (D)--(E) node [midway,left] {$\lambda_{ij}^{\lt}$};
\draw[->] (D)--(F) node [midway,right] {$\ \lambda_{ik}^{\lt}$};
%\draw[->] (D)--(E) node [midway,left] {$f$};

\end{tikzpicture}
\end{center}
If $X \in \D V_0$, the \textit{constant} $(I, \lt_I)$-\textit{family}\index{constant family over $(I, \lt_I)$} 
$X$ is the pair $C^{\lt,X} := (\lambda_0^X, \lambda_1^{\lt,X})$, where $\lambda_0^X (i) := X$,  
and $\lambda_1^{\lt,X} (i, j) := \id_X$, for every $i \in I$ and $(i, j) \in  D^{\lt}(I)$.

\end{defi}

Since in general $\lt_I$ is not symmetric, the transport map $\lambda_{ij}^{\lt}$
%from $\lambda_0(i)$ to $\lambda_0(j)$ 
does not necessarily have an inverse. Hence $\lambda_1^{\lt}$ is only a modulus of transport for 
$\lambda_0$, in the sense that determines the transport maps of $\Lambda^{\lt}$, and not necessarily 
a modulus of function-likeness for $\lambda_0$.

\begin{defi}\label{def: dirsumpi}
If $\Lambda^{\lt} := (\lambda_0, \lambda_1^{\lt})$ and $M^{\lt} 
:= (\mu_0, \mu_1^{\lt})$ are $(I, \lt_I)$-families of sets, a
\textit{direct family-map}\index{direct family-map} $\Phi$ from $\Lambda^{\lt}$ to $M^{\lt}$, denoted by 
$\Phi \colon \Lambda^{\lt} \To M^{\lt}$\index{$\Phi \colon \Lambda^{\lt} \To M^{\lt}$},
the set $\Map_{(I, \lt_I)}(\Lambda^{\lt}, M^{\lt})$\index{$\Map_{(I, \lt_I)}(\Lambda^{\lt}, M^{\lt})$},
and the totality $\Fam(I, \lt_I)$\index{$\Fam(I, \lt_I)$} of $(I, \lt_I)$-families are defined as
in Definition~\ref{def: map}.
The \textit{direct sum}\index{direct sum} 
$\sum_{i \in I}^{\lt}\lambda_0(i)$\index{$\sum_{i \in I}^{\lt}\lambda_0(i)$} over 
$\Lambda^{\lt}$ is the totality $\sum_{i \in I}\lambda_0(i)$ equipped with the equality
\[ (i, x) =_{\sum_{i \in I}^{\lt} \lambda_0(i)} (j, y) : \TOT \exists_{k \in I}\big(i \lt_I k \ 
\& \ j \lt_I k \ \& \ \lambda_{ik}^{\lt}(x) =_{\lambda_0(k)} \lambda_{jk}^{\lt}(y)\big). \] 
%If $(I, \lt)$ is a directed set, and  $\Lambda^{\lt} := (\lambda_0, \lambda_1^{\lt})$ is an
% $(I, \lt)$-family of sets, 
The totality $\prod_{i \in I}^{\lt}\lambda_0(i)$\index{$\prod_{i \in I}^{\lt}\lambda_0(i)$} 
of \textit{dependent functions over}\index{dependent functions over $\Lambda^{\lt}$} $\Lambda^{\lt}$ is defined by
%equipped with the equality of $\D A(I, \lambda_0)$, is defined by 
\[ \Phi \in \prod_{i \in I}^{\lt}\lambda_0(i) :\TOT \Phi \in \D A(I, \lambda_0) 
\ \& \ \forall_{(i,j) \in D^{\lt}(I)}\big(\Phi_j =_{\lambda_0(j)} \lambda_{ij}^{\lt}(\Phi_i)\big), \]
and it is equipped with the equality of $\D A(I, \lambda_0)$.

\end{defi}

Clearly, the property $P(\Phi) :\TOT \forall_{(i,j) \in D^{\lt}(I)}\big(\Phi_j 
=_{\lambda_0(j)} \lambda_{ij}^{\lt}(\Phi_i)\big)$
is extensional on $\D A(I, \lambda_0)$,  
the equality on $\prod_{i \in I}^{\lt}\lambda_0(i)$ is an equivalence relation.   
$\prod_{i \in I}^{\lt}\lambda_0(i)$ is considered to be a set.

\begin{prop}\label{prp: direct1}
The relation $(i, x) =_{\sum_{i \in I}^{\lt} \lambda_0(i)} (j, y)$ 
%and $(i, x) =_{\sum_{i \in I}^{\succ} \mu_0(i)} (j, y)$
is an equivalence relation.
\end{prop}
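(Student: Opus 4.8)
The plan is to verify the three defining properties of an equivalence relation in turn, the only substantial one being transitivity. Throughout I write $\sim$ for the relation $=_{\sum_{i \in I}^{\lt} \lambda_0(i)}$ under consideration, and I use freely that $(I, \lt_I)$ is a directed set, hence that $\lt_I$ is reflexive and transitive, together with the two conditions (a) and (b) on the transport maps $\lambda_{ij}^{\lt}$ from Definition~\ref{def: dfamilyofsets}.

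For reflexivity, given $(i,x) \in \sum_{i \in I}\lambda_0(i)$ I would take the witness $k := i$; since $\lt_I$ is reflexive we have $i \lt_I i$, and condition (a) gives $\lambda_{ii}^{\lt} := \id_{\lambda_0(i)}$, so $\lambda_{ii}^{\lt}(x) =_{\lambda_0(i)} \lambda_{ii}^{\lt}(x)$, which is exactly what the defining condition demands. Symmetry is equally immediate: if $k$ witnesses $(i,x) \sim (j,y)$, meaning $i \lt_I k$, $j \lt_I k$ and $\lambda_{ik}^{\lt}(x) =_{\lambda_0(k)} \lambda_{jk}^{\lt}(y)$, then the same $k$ witnesses $(j,y) \sim (i,x)$, using only the symmetry of the equality $=_{\lambda_0(k)}$.

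The heart of the argument is transitivity, and this is the step I expect to carry the real weight. Suppose $k$ witnesses $(i,x) \sim (j,y)$ and $m$ witnesses $(j,y) \sim (l,z)$, so that $\lambda_{ik}^{\lt}(x) =_{\lambda_0(k)} \lambda_{jk}^{\lt}(y)$ and $\lambda_{jm}^{\lt}(y) =_{\lambda_0(m)} \lambda_{lm}^{\lt}(z)$. Since $(I,\lt_I)$ is directed, there is some $n \in I$ with $k \lt_I n$ and $m \lt_I n$, and I would take this $n$ as the witness for $(i,x) \sim (l,z)$; from $i \lt_I k \lt_I n$ and $l \lt_I m \lt_I n$ together with transitivity we get $i \lt_I n$ and $l \lt_I n$. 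It then remains to prove $\lambda_{in}^{\lt}(x) =_{\lambda_0(n)} \lambda_{ln}^{\lt}(z)$. Here I apply the cocycle condition (b) along the four chains $i \lt_I k \lt_I n$, $j \lt_I k \lt_I n$, $j \lt_I m \lt_I n$ and $l \lt_I m \lt_I n$ to rewrite $\lambda_{in}^{\lt}$, $\lambda_{jn}^{\lt}$ and $\lambda_{ln}^{\lt}$ as composites through level $k$ or $m$, and then chain the equalities
\[ \lambda_{in}^{\lt}(x) = \lambda_{kn}^{\lt}\big(\lambda_{ik}^{\lt}(x)\big) = \lambda_{kn}^{\lt}\big(\lambda_{jk}^{\lt}(y)\big) = \lambda_{jn}^{\lt}(y) = \lambda_{mn}^{\lt}\big(\lambda_{jm}^{\lt}(y)\big) = \lambda_{mn}^{\lt}\big(\lambda_{lm}^{\lt}(z)\big) = \lambda_{ln}^{\lt}(z), \]
where the second and fifth equalities use that the transport maps $\lambda_{kn}^{\lt}$ and $\lambda_{mn}^{\lt}$ are functions, hence respect the equalities on $\lambda_0(k)$ and $\lambda_0(m)$ supplied by the two witnesses, while the remaining equalities are instances of (b).

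The one point demanding care is that the whole argument stay choice-free, in keeping with the paper's standing methodology. This is automatic here: transitivity invokes the directedness of $(I,\lt_I)$ exactly once, to produce the single upper bound $n$ of $k$ and $m$, and since the goal $(i,x) \sim (l,z)$ is itself an existential statement, this $n$ may be introduced by ordinary existential elimination, with no need to select witnesses by a choice function. In particular no modulus of directedness is required for this proposition.
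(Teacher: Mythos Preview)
Your proof is correct and follows essentially the same approach as the paper: reflexivity and symmetry are handled in the obvious way, and transitivity is proved by choosing a common upper bound of the two given witnesses via directedness and then chaining equalities using the composition condition (b) on the transport maps. The only cosmetic difference is that for reflexivity you take the witness $k := i$ explicitly (using $\lambda_{ii}^{\lt} := \id_{\lambda_0(i)}$), whereas the paper phrases it as ``there is $k$ with $i \lt_I k$'' and uses reflexivity of $=_{\lambda_0(k)}$; both amount to the same thing.
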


\begin{proof}
If $i \in I$, since $i \lt_I i$, there is $k \in I$
such that $i \lt_I k$, and by the reflexivity of the equality on $ \lambda_0(k)$ we get 
$\lambda_{ik}^{\lt}(x) =_{\lambda_0(k)} \lambda_{ik}^{\lt}(x)$. The symmetry of 
$=_{\sum_{i \in I}^{\lt} \lambda_0(i)}$ follows from the symmetry of the equalities
$=_{\lambda_0(k)}$. To prove transitivity, we suppose that 
\[ (i, x) =_{\sum_{i \in I}^{\lt} \lambda_0(i)} (j, y) : \TOT \exists_{k \in I}\big(i \lt_I k \ \& \ j 
\lt_I k \ \& \ \lambda_{ik}^{\lt}(x) =_{\lambda_0(k)} \lambda_{jk}^{\lt}(y)\big), \]
\[ (j, y) =_{\sum_{i \in I}^{\lt} \lambda_0(i)} (j{'}, z) : \TOT \exists_{k{'} \in I}\big(j \lt_I k{'} 
\ \& \ j {'} \lt_I k{'} \ \& \ \lambda_{jk{'}}^{\lt}(y) =_{\lambda_0(k{'})} 
\lambda_{j{'}k{'}}^{\lt}(z)\big), \]
and we show that
\[ (i, x) =_{\sum_{i \in I}^{\lt} \lambda_0(i)} (j{'}, z) : \TOT \exists_{k{''} \in I}\big(i \lt_I k{''} 
\ \& \ j{'} \lt_I k{''} \ \& \ \lambda_{ik{''}}^{\lt}(x) =_{\lambda_0(k{''})} 
\lambda_{j{'}k{''}}^{\lt}(z)\big). \]
By the definition of a directed set there is $k{''} \in I$ such that $k \lt_I k{''}$ and $k{'} \lt_I k{''}$
\begin{center}
\begin{tikzpicture}

\node (E) at (0,0) {$i$};
\node [right=of E] (F) {};
\node [above=of F] (A) {$k$};
\node [right=of F] (B) {$j$};
\node [right=of B] (C) {};
\node [right=of C] (D) {$j{'}$,};
\node [above=of C] (F) {$k{'}$};
%\node [above=of B] (G) {}
\node [above=of B] (H) {$k{''}$};

\draw[->] (E)--(A) node [midway,below] {};
\draw[->] (B)--(A) node [midway,above] {};
\draw[->] (B)--(F) node [midway,above] {};
\draw[->] (D)--(F) node [midway,above] {};
\draw[->] (A)--(H) node [midway,above] {};
\draw[->] (F)--(H) node [midway,above] {};

\end{tikzpicture}
\end{center}
hence by transitivity $i \lt_I k{''}$ and $j{'} \lt_I k{''}$. Moreover,
\begin{align*}
\lambda_{ik{''}}^{\lt}(x) & \stackrel{i \lt_I k \lt_I k{''}} = \lambda_{kk{''}}^{\lt}
\big(\lambda_{ik}^{\lt}(x)\big) \\
& \ \ \  = \ \ \  \lambda_{kk{''}}^{\lt} \big(\lambda_{jk}^{\lt}(y)\big) \\
& \stackrel{j \lt_I k \lt_I k{''}} = \lambda_{jk{''}}^{\lt} (y) \\
& \stackrel{j \lt_I k{'} \lt_I k{''}} = \lambda_{k{'}k{''}}^{\lt} \big(\lambda_{jk{'}}^{\lt}(y)\big) \\
& \ \ \ = \ \ \ \lambda_{k{'}k{''}}^{\lt} \big(\lambda_{j{'}k{'}}^{\lt}(z)\big) \\
& \stackrel{j{'} \lt_I k{'} \lt_I k{''}} = \lambda_{j{'}k{''}}^{\lt}(z).
  \qedhere
\end{align*}
%The proof for $(i, x) =_{\sum_{i \in I}^{\succ} \mu_0(i)} (j, y)$ is similar.
\end{proof}

Notice that the projection operation from $\sum_{i \in I}^{\lt}\lambda_0(i)$ to $I$ 
is not necessarily a function.

\begin{prop}\label{prp: preorderfamilymap1}
If $(I, \lt_I)$ is a directed set, $\Lambda^{\lt} := (\lambda_0, \lambda_1^{\lt})$, $M^{\lt} 
:= (\mu_0, \mu_1^{\lt})$ are $(I, \lt_I)$-families of sets, and $\Psi^{\lt} : \Lambda^{\lt} \To M^{\lt}$, 
the following hold.\\[1mm]
\normalfont (i) 
\itshape
For every $i \in I$ the operation
$e_i^{\Lambda^{\lt}} : \lambda_0(i) \sto \sum_{i \in I}^{\lt}\lambda_0(i)$, defined by 
$x \mapsto (i, x)$, for every $x \in \lambda_0(i)$, 
is a function from $\lambda_0(i)$ to $\sum_{i \in I}^{\lt}\lambda_0(i)$.\\[1mm]
\normalfont (ii) 
\itshape The operation
$\Sigma^{\lt} \Psi : \sum_{i \in I}^{\lt}\lambda_0(i) \sto \sum_{i \in I}^{\lt}\mu_0(i)$,
defined by
$\big(\Sigma^{\lt} \Psi\big) (i, x) := (i, \Psi_i (x))$, for every $(i, x) \in \sum_{i \in I}^{\lt}\lambda_0(i)$,
is a function from $\sum_{i \in I}^{\lt}\lambda_0(i)$ to $\sum_{i \in I}^{\lt}\mu_0(i)$ such that, for every
$i \in I$, the following left diagram commutes
\begin{center}
\begin{tikzpicture}

\node (E) at (0,0) {$\sum_{i \in I}^{\lt}\lambda_0(i)$};
\node[right=of E] (F) {$\sum_{i \in I}^{\lt}\mu_0(i)$};
\node[above=of F] (A) {$\mu_0(i)$};
\node [above=of E] (D) {$\lambda_0(i)$};

\node [right=of F] (G) {$\prod_{i \in I}^{\lt}\lambda_0(i)$};
\node[right=of G] (K) {$\prod_{i \in I}^{\lt}\mu_0(i)$.};
\node[above=of K] (L) {$\mu_0(i)$};
\node [above=of G] (M) {$\lambda_0(i)$};

\draw[->] (E)--(F) node [midway,below]{$\Sigma^{\lt} \Psi$};
\draw[->] (D)--(A) node [midway,above] {$\Psi_{i}$};
\draw[->] (D)--(E) node [midway,left] {$e_i^{\Lambda^{\lt}}$};
\draw[->] (A)--(F) node [midway,right] {$e_i^{M^{\lt}}$};
%\draw[->] (D)--(E) node [midway,left] {$f$};
\draw[->] (G)--(K) node [midway,below]{$\Pi^{\lt} \Psi$};
\draw[->] (M)--(L) node [midway,above] {$\Psi_{i}$};
\draw[->] (G)--(M) node [midway,left] {$\pi_i^{\Lambda^{\lt}}$};
\draw[->] (K)--(L) node [midway,right] {$\pi_i^{M^{\lt}}$};

\end{tikzpicture}
\end{center}
\normalfont (iii) 
\itshape If $\Psi_i$ is an embedding, for every $i \in I$, then $\Sigma^{\lt} \Psi$ is an embedding.\\[1mm]
\normalfont (iv) 
\itshape For every $i \in I$ the operation
$\pi_i^{\Lambda^{\lt}} : \prod_{i \in I}^{\lt}\lambda_0(i) \sto \lambda_0(i)$, defined by
$\Theta \mapsto \Theta_i$, for every $\Theta \in \prod_{i \in I}^{\lt}\lambda_0(i)$, 
is a function from $\prod_{i \in I}^{\lt}\lambda_0(i)$ to $\lambda_0(i)$.\\[1mm]
\normalfont (v) 
\itshape The operation
$\Pi^{\lt} \Psi : \prod_{i \in I}^{\lt}\lambda_0(i) \sto \prod_{i \in I}^{\lt}\mu_0(i)$, defined by
$[\Pi^{\lt} \Psi (\Theta)]_i := \Psi_i (\Theta_i)$, for every $i \in I$ and $\Theta \in \prod_{i \in I}^{\lt}\lambda_0(i)$,
is a function from $\prod_{i \in I}^{\lt}\lambda_0(i)$ to $\prod_{i \in I}^{\lt}\mu_0(i)$, such that, for every $i \in I$, 
the above right diagram commutes.\\[1mm]
% \begin{center}
% \begin{tikzpicture}
% 
% \node (E) at (0,0) {$\prod_{i \in I}^{\lt}\lambda_0(i)$};
% \node[right=of E] (F) {$\prod_{i \in I}^{\lt}\mu_0(i)$.};
% \node[above=of F] (A) {$\mu_0(i)$};
% \node [above=of E] (D) {$\lambda_0(i)$};
% %\node [below=of D] (G) {$V$};
% 
% 
% \draw[->] (E)--(F) node [midway,below]{$\Pi^{\lt} \Psi$};
% \draw[->] (D)--(A) node [midway,above] {$\Psi_{i}$};
% \draw[->] (E)--(D) node [midway,left] {$\pi_i^{\Lambda^{\lt}}$};
% \draw[->] (F)--(A) node [midway,right] {$\pi_i^{M^{\lt}}$};
% %\draw[->] (D)--(E) node [midway,left] {$f$};
% 
% 
% \end{tikzpicture}
% \end{center}
\normalfont (vi) 
\itshape
If $\Psi_i$ is an embedding, for every $i \in I$, then $\Pi^{\lt} \Psi$ is an 
embedding.
\end{prop}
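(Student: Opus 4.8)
The plan is to prove each of the six clauses by the pattern already used for the non-directed analogue in Proposition~\ref{prp: map1}, the one new ingredient being the naturality square of the direct family-map $\Psi^{\lt}$, namely $\mu_{ij}^{\lt} \circ \Psi_i = \Psi_j \circ \lambda_{ij}^{\lt}$ for every $(i,j) \in D^{\lt}(I)$, which must be made to interact with the directed equality on the direct sum. For (i), to see that $e_i^{\Lambda^{\lt}}$ is a function I would take $x =_{\lambda_0(i)} x{'}$ and produce the witness $k := i$ for the directed equality on $\sum_{i \in I}^{\lt}\lambda_0(i)$: since $i \lt_I i$ and $\lambda_{ii}^{\lt} := \id_{\lambda_0(i)}$, the required identity $\lambda_{ii}^{\lt}(x) =_{\lambda_0(i)} \lambda_{ii}^{\lt}(x{'})$ is just $x =_{\lambda_0(i)} x{'}$. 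I would stress that, in contrast to the non-directed case, $e_i^{\Lambda^{\lt}}$ need not be an embedding, since the equality on the direct sum is coarser.

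For (ii), given $(i,x) =_{\sum_{i \in I}^{\lt}\lambda_0(i)} (j,y)$ with witness $k$ (so $i \lt_I k$, $j \lt_I k$ and $\lambda_{ik}^{\lt}(x) =_{\lambda_0(k)} \lambda_{jk}^{\lt}(y)$), I would reuse the same $k$ as witness for $(i, \Psi_i(x)) =_{\sum_{i \in I}^{\lt}\mu_0(i)} (j, \Psi_j(y))$. Applying the naturality square twice gives $\mu_{ik}^{\lt}(\Psi_i(x)) = \Psi_k(\lambda_{ik}^{\lt}(x))$ and $\mu_{jk}^{\lt}(\Psi_j(y)) = \Psi_k(\lambda_{jk}^{\lt}(y))$, and since $\Psi_k$ is a function the hypothesis $\lambda_{ik}^{\lt}(x) =_{\lambda_0(k)} \lambda_{jk}^{\lt}(y)$ transports to the desired equality in $\mu_0(k)$. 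The commutativity of the left square is definitional, as both composites send $x$ to $(i, \Psi_i(x))$. Clause (iii) then runs the same computation backwards: from $\mu_{ik}^{\lt}(\Psi_i(x)) =_{\mu_0(k)} \mu_{jk}^{\lt}(\Psi_j(y))$ the naturality square yields $\Psi_k(\lambda_{ik}^{\lt}(x)) =_{\mu_0(k)} \Psi_k(\lambda_{jk}^{\lt}(y))$, and the hypothesis that $\Psi_k$ is an embedding cancels $\Psi_k$ to recover $(i,x) =_{\sum_{i \in I}^{\lt}\lambda_0(i)} (j,y)$.

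For (iv)--(vi) the equality on the direct product is the componentwise equality inherited from $\D A(I, \lambda_0)$, so (iv) and (vi) are immediate: the projection preserves componentwise equality, and $\Pi^{\lt}\Psi$ reflects it using, respectively, that each $\Psi_i$ is a function and that each $\Psi_i$ is an embedding. The one genuine verification sits in (v): I must first check that $\Pi^{\lt}\Psi(\Theta)$ really lands in $\prod_{i \in I}^{\lt}\mu_0(i)$, i.e.\ that $i \mapsto \Psi_i(\Theta_i)$ satisfies the compatibility $\Psi_j(\Theta_j) =_{\mu_0(j)} \mu_{ij}^{\lt}(\Psi_i(\Theta_i))$ for every $(i,j) \in D^{\lt}(I)$. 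This follows by applying the function $\Psi_j$ to the compatibility $\Theta_j =_{\lambda_0(j)} \lambda_{ij}^{\lt}(\Theta_i)$ of $\Theta$ and then rewriting $\Psi_j(\lambda_{ij}^{\lt}(\Theta_i))$ as $\mu_{ij}^{\lt}(\Psi_i(\Theta_i))$ via the naturality square; the right diagram commutes definitionally. I expect this well-definedness check in (v) to be the main, and essentially only, obstacle, since it is the single place where the compatibility condition of $\Theta$ and the naturality of $\Psi^{\lt}$ must be combined, whereas everywhere else the naturality square is invoked directly.
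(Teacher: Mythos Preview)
Your proposal is correct and matches the paper's proof essentially line for line: for (i) the paper also takes the witness $k := i$ via reflexivity and $\lambda_{ii}^{\lt} := \id_{\lambda_0(i)}$; for (ii) it reuses the given witness $k$ and the naturality square $\mu_{ik}^{\lt} \circ \Psi_i = \Psi_k \circ \lambda_{ik}^{\lt}$ together with $\Psi_k$ being a function; and for (iii) it runs (ii) backwards using that $\Psi_k$ is an embedding. The only cosmetic difference is that the paper omits (iv)--(vi) and points to the contravariant version proved later in Theorem~\ref{thm: inverselimitmap}, whereas you sketch them directly --- but your well-definedness check in (v) via naturality is exactly what that later proof does, and your remark that $e_i^{\Lambda^{\lt}}$ need not be an embedding is the same observation the paper makes immediately after the proof.
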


\begin{proof}
(i) Let $x, y \in \lambda_0(i)$ with $x =_{\lambda_0 (i)} y$. As $\lt$ is reflexive, if we
take $k := i$, we get $\lambda_{ii}^{\lt}(x) := \id_{\lambda_0(i)}(x) := x =_{\lambda_0(i)} y =:
\id_{\lambda_0(i)}(y) =: \lambda_{ii}^{\lt}(y)$, hence 
$(i, x) =_{\sum_{i \in I}^{\lt}\lambda_0(i)} (i, y)$.\\
(ii) If $(i, x) =_{\sum_{i \in I}^{\lt}\lambda_0(i)} (j, y)$, there is $k \in I$ such that $i \lt_I k$,
$j \lt_I k$ and $\lambda_{ik}^{\lt}(x) =_{\lambda_0(k)} \lambda_{jk}^{\lt}(y)$. We show the following equality: 
\begin{align*}
\big(\Sigma^{\lt} \Psi\big)(i, x) =_{\sum_{i \in I}^{\lt}\mu_0(i)} 
\big(\Sigma^{\lt} \Psi\big)(j, y) & : \TOT (i, \Psi_i (x)) 
=_{\mathsmaller{\sum_{i \in I}^{\lt}\mu_0(i)}} (j, \Psi_j (y))\\
& : \TOT \exists_{k{'} \in I}\big(i, j \lt_I k{'} \ \& \ 
\mu_{ik{'}}^{\lt}(\Psi_i (x)) =_{\mu_0(k{'})} \mu_{jk{'}}^{\lt}(\Psi_j(y))\big).
\end{align*} 
If we take $k{'} := k$, by the commutativity of the following diagrams, and  since $\Psi_k$ is a function,
\begin{center}
\begin{tikzpicture}

\node (E) at (0,0) {$\mu_0(i)$};
\node[right=of E] (F) {$\mu_0(k)$};
\node[above=of F] (A) {$\lambda_0(k)$};
\node [above=of E] (D) {$\lambda_0(i)$};
\node [right=of F] (G) {$\mu_0 (j)$};
\node [above=of G] (H) {$\lambda_0(j)$};
\node [right=of G] (K) {$\mu_0 (k)$};
\node [above=of K] (L) {$\lambda_0(k)$};

\draw[->] (E)--(F) node [midway,below]{$\mu_{ik}^{\lt}$};
\draw[->] (D)--(A) node [midway,above] {$\lambda_{ik}^{\lt}$};
\draw[->] (D)--(E) node [midway,left] {$\Psi_i$};
\draw[->] (A)--(F) node [midway,right] {$\Psi_k$};
\draw[->] (G)--(K) node [midway,below] {$\mu_{jk}^{\lt}$};
\draw[->] (H)--(L) node [midway,above] {$\lambda_{jk}^{\lt}$};
\draw[->] (H)--(G) node [midway,left] {$\Psi_j$};
\draw[->] (L)--(K) node [midway,right] {$\Psi_k$};

\end{tikzpicture}
\end{center}
\[ \mu_{ik}^{\lt}\big(\Psi_i(x)\big) =_{\mu_0 (k)} \ \Psi_k \big(\lambda_{ik}^{\lt}(x)\big)
=_{\mu_0 (k)} \ \Psi_k \big(\lambda_{jk}^{\lt}(y)\big)
=_{\mu_0 (k)} \ \mu_{jk}^{\lt}\big(\Psi_j (y)\big). \]
(iii) If we suppose $\big(\Sigma^{\lt} \Psi\big)(i, x) =_{\mathsmaller{\sum_{i \in I}^{\lt}\mu_0(i)}} 
\big(\Sigma^{\lt} \Psi\big)(j, y)$ i.e.,
$\mu_{ik}^{\lt}(\Psi_i (x)) =_{\mu_0(k)} \mu_{jk}^{\lt}(\Psi_j(y))\big)$,
for some $k \in I$ with $i, j \lt_I k$, by the proof of case (ii) we get  
$\Psi_k \big(\lambda_{ik}^{\lt}(x)\big) =_{\mu_0 (k)}  
\Psi_k \big(\lambda_{jk}^{\lt}(y)\big)$, and since
$\Psi_k$ is an embedding, we get $\lambda_{ik}^{\lt}(x) =_{\lambda_0 (k)} 
\lambda_{jk}^{\lt}(y)$ i.e., $(i, x) =_{\sum_{i \in I}^{\lt}\lambda_0(i)} (j, y)$.\\
(iv)-(vi) Their proof is omitted, since a proof of their contravariant version\footnote{In a contravariant 
family the transport maps are of type $\lambda_0(j) \to \lambda_0(i)$, if $i \lt j$.} 
is given in the proof of Theorem~\ref{thm: inverselimitmap}. 
\end{proof}

Since the transport functions $\lambda_{ik}^{\lt}$ are not in general embeddings, 
we cannot show in general that $e_i^{\Lambda^{\lt}}$ is an embedding, as it is the case for the map 
$e_i^{\Lambda}$ in Proposition~\ref{prp: map1}(i). 
%The study of direct families of sets can be extended following
%the study of set-indexed families of sets.
%developed in the previous sections.

\section{On Bishop spaces}
\label{sec: bspaces}

From now on we work within the extension $\BST^*$ of $\BST$.

\begin{defi}\label{def: cont1}
If $X$ is a set and $\Real$ is the set of real numbers, we denote by $\mathbb{F}(X)$
the set of functions from $X$ to $\Real$, and by $\Const(X)$\index{$\Const(X)$} the 
subset of $\mathbb{F}(X)$ of all constant real functions on $X$. 
If $a \in \Real$, we denote by $\overline{a}^X$\index{$\overline{a}^X$}
the constant function on $X$ with value $a$. We denote by $\Nat^+$ the set of non-zero natural numbers.  
A function $\phi: \Real \rightarrow \Real$ is called \textit{Bishop continuous}, or simply continuous,
if for every $n \in \Nat^+$ there is a function\index{$\omega_{\phi,n}$} $\omega_{\phi, n}:
\mathbb{R}^{+} \rightarrow \mathbb{R}^{+}$,
$\epsilon \mapsto \omega_{\phi, n}(\epsilon)$, which is called a \textit{modulus 
of continuity}\index{modulus of (uniform) continuity} of $\phi$ on $[-n, n]$, such that the following 
condition is satisfied
\[\forall_{x, y \in [-n, n]}(|x - y| < \omega_{\phi, n}(\epsilon) \Rightarrow 
|\phi(x) - \phi(y)| \leq \epsilon),\]
for every $\epsilon > 0$ and every $n \in \Nat^+$. We denote by $\BR$\index{$\BR$} the set of continuous functions 
from $\Real$ to $\Real$, which is equipped with the equality inherited from $\D F(\Real)$.

\end{defi}

We could have defined the modulus of continuity $\omega_{\phi, n}$ as a function from $\Nat^+$ 
to $\Nat^+$. A continuous function $\phi \colon \Real \to \Real$ is uniformly continuous on every bounded 
subset of $\Real$. The latter is an impredicative formulation of uniform continuity, as it 
requires quantification over the class $\C P(\Real)$. The formulation of uniform continuity 
in the Definition~\ref{def: cont1} though, is predicative, since it requires quantification over the sets
$\Nat^+, \D F(\Real^+, \Real^+)$ and $[-n, n]$.

\begin{defi}\label{def: notation1}
If $X$ is a set, $f, g \in \mathbb{F}(X)$, $\epsilon > 0$, and $\Phi \subseteq \mathbb{F}(X)$, 
let\index{$U(X; \Phi, g, \epsilon)$}  
\[ U(X; f, g, \epsilon) :\TOT \forall_{x \in X}\big(|g(x) - f(x)| \leq \epsilon\big),\]
\[ U(X; \Phi, f) :\TOT \forall_{\epsilon > 0}\exists_{g \in \Phi}\big(U(f, g, \epsilon)\big).\]
If the set $X$ is clear from the context, we write simply $U(f, g, \epsilon)$\index{$U(f,g, \epsilon)$} 
and\index{$U(\Phi, f)$} $U(\Phi, f)$, respectively.\index{$U(X; \Phi, f)$}
We denote by $\Phi^*$ the bounded elements of $\Phi$, and its uniform 
closure\index{uniform closure} $\overline{\Phi}$ is defined by\index{$\overline{\Phi}$} 
\[ \overline{\Phi} := \{f \in \D F(X) \mid U(\Phi, f)\}.\]
\end{defi}

A Bishop topology on $X$ is a certain subset of $\D F(X)$. Since the Bishop topologies considered here
are all extensional subsets of $\D F(X)$, we do not mention the embedding $i_F^{\D F(X)} \colon F \eto \D F(X)$, 
which is given in all cases by the identity map-rule.

\begin{defi}\label{def: bishopspace}
A \textit{Bishop space}\index{Bishop space} is a pair $\C F := (X, F)$, where $F$ is an extensional subset of $\D F(X)$,
which is called a \textit{Bishop topology}, or simply a \textit{topology}\index{Bishop topology}
of functions on $X$, that satisfies the following conditions:\\[1mm]
$(\BS_1)$ If $a \in \Real$, then $\overline{a}^X \in F$.\\[1mm]
$(\BS_2)$ If $f, g \in F$, then $f + g \in F$.\\[1mm]
$(\BS_3)$ If $f \in F$ and $\phi \in \Bic(\Real)$, then $\phi \circ f \in F$
\begin{center}
\begin{tikzpicture}

\node (E) at (0,0) {$X$};
\node[right=of E] (F) {$\Real$};
\node[below=of F] (A) {$\Real$.};
%\node [right=of F] (D) {};
%\node [below=of D] (G) {$V$};

\draw[->] (E)--(F) node [midway,above] {$f$};
\draw[->] (E)--(A) node [midway,left] {$F \ni \phi \circ f \ $};
\draw[->] (F)--(A) node [midway,right] {$\phi \in \BR$};
%\draw[->] (F)--(G) node [midway,above right] {$\pi^1$};
%\draw[->,bend left] (A) to node [left] {$v$} (E) ;

\end{tikzpicture}
\end{center}
$(\BS_4)$ $\overline{F} = F$.
%A topology $F$ is called \textit{pseudo-compact}, if every 
%element of $F$ is a bounded function i.e., $F = F^*$. 
\end{defi}

If $F$ is inhabited, then $(\BS_1)$ is provable by $(\BS_3)$.
The set of constant functions
$\Const(X)$ is the \textit{trivial}
topology on $X$, while $\D F(X)$ is the \textit{discrete} topology on $X$. Clearly, if $F$ is a topology on $X$,
then $\Const(X) \subseteq F \subseteq \D F(X)$, and the set of its bounded elements
$F^{*}$ is also a topology on $X$. We denote by $\C F^* := (X, F^*)$ the Bishop space of 
bounded elements of a Bishop topology $F$.
It is easy to see that the pair $\C R := (\Real, \BR)$ is a
Bishop space, which we call the \textit{Bishop space of reals}.
A Bishop topology $F$ is a ring and a lattice; since $|\id_{\Real}| \in \Bic(\Real)$, where $\id_{\Real}$ is the 
identity function on $\Real$, by BS$_{3}$, if $f \in F$, then $|f| \in F$. 
By BS$_{2}$ and BS$_{3}$, and using the following equalities  
\[ f{\cdot}g = \frac{(f + g)^{2} - f^{2} - g^{2}}{2} \in F,\]
\[ f \vee g = \max\{f, g\} = \frac{f + g + |f - g|}{2} \in F, \]  
\[ f \wedge g = \min\{f, g\} = \frac{f + g - |f - g|}{2} \in F,\]
we get similarly that if $f, g \in F$, then $f{\cdot}g, f \vee g, f \wedge g \in F$.
%(see~\cite{BB85}, p.77).
Turning the definitional clauses of a Bishop topology into inductive rules, Bishop defined in~\cite{Bi67}, p.~72,
the least topology including a given subbase $F_{0}$. This inductive definition, which 
is also found in~\cite{BB85}, p.~78, is crucial to the definition of new Bishop topologies from given ones.

\begin{defi}\label{def: Least}
The \textit{Bishop closure} of $F_{0}$, or the \textit{least topology}\index{least Bishop topology} 
$\bigvee{F_{0}}$\index{$\bigvee F_0$} 
generated by some $F_{0} \subseteq \mathbb{F}(X)$, is defined by the following inductive rules:
\[ \frac{f_{0} \in F_{0}}{f_{0} \in \bigvee F_{0}}, \ \ \ \frac{a \in \Real}{\overline{a}^{X} \in
\bigvee F_{0}},
\ \ \ \frac{f, g \in \bigvee F_{0}}{f + g \in \bigvee F_{0}}, \ \ \  \frac{f \in \bigvee F_{0}, \ 
g =_{\D F(X)} f}{g \in \bigvee F_{0}},\]
\[ \frac{f \in \bigvee F_{0}, \ \phi \in \Bic(\Real)}{\phi \circ f \in \bigvee F_{0}},
\ \ \ \ \ \frac{\big(g \in \bigvee F_{0} \ \& \ U(f, g, \epsilon)\big)_{\epsilon > 0}}{f \in \bigvee F_{0}}.\]
We call $\bigvee F_{0}$ the \textit{Bishop closure} of $F_{0}$, and $F_{0}$ a \textit{subbase}\index{subbase} of 
$\bigvee F_{0}$.
\end{defi}

The last, most complex rule
above can be replaced by the rule  
\[ \frac{g_{1} \in \bigvee F_{0} \ \& \ U\big(f, g_{1}, \frac{1}{2}\big), 
\ \  g_{2} \in \bigvee F_{0} \ \& \ 
      U\big(f, g_{2}, \frac{1}{2^{2}}\big), \   \ldots}{f \in \bigvee F_{0}},\]
a rule with countably many premisses. 
The corresponding induction principle $\Ind_{\bigvee F_{0}}$\index{$\Ind_{\bigvee F_0}$} is 
\[\bigg[\forall_{f_{0} \in F_{0}}\big(P(f_{0})\big) \ \& \
\forall_{a \in \mathbb{R}}\big(P(\overline{a}^{X})\big) \ 
\&  \ \forall_{f, g \in \bigvee F_{0}}\big(P(f) \ \& \ P(g) \Rightarrow P(f + g) \]
\[ \& \ \forall_{f \in \bigvee F_0}\forall_{g \in \D F(X)}\big(g =_{\D F(X)} f \ \& \ P(f) \To P(g)\big)\]
\[ \& \ \forall_{f \in \bigvee F_{0}}\forall_{\phi \in \Bic(\mathbb{R})}\big(P(f)
\Rightarrow P(\phi \circ f)\big) \]
\[\ \ \ \ \ \ \ \ \ \ \ \ \ \ \ \ \ \ \ \ \ \& \  \forall_{f \in \bigvee F_{0}}\big( 
 \forall_{\epsilon > 0}\exists_{g \in \bigvee F_{0}}(P(g) \ \& \ U(f, g, \epsilon))
 \Rightarrow P(f)\big)\bigg]\]
 \[\Rightarrow \forall_{f \in \bigvee F_{0}}\big(P(f)\big),\]
where $P$ is any bounded formula.
Next we define the notion of a Bishop morphism\index{Bishop morphism} between Bishop spaces. 
The Bishop morphisms are the arrows in the category of Bishop\index{$\Bis$} spaces $\Bis$.

\begin{defi}\label{def: bmorphism}
If $\C F := (X, F)$ and $\C G = (Y, G)$ are Bishop spaces, a function $h: X \rightarrow Y$ is called
a \textit{Bishop morphism}, if $\forall_{g \in G}(g \circ h \in F)$ 
\begin{center}
\begin{tikzpicture}

\node (E) at (0,0) {$X$};
\node[right=of E] (F) {$Y$};
\node[below=of F] (A) {$\Real$.};
%\node [right=of F] (D) {};
%\node [below=of D] (G) {$V$};

\draw[->] (E)--(F) node [midway,above] {$h$};
\draw[->] (E)--(A) node [midway,left] {$F \ni g \circ h \ $};
\draw[->] (F)--(A) node [midway,right] {$g \in G$};
%\draw[->] (F)--(G) node [midway,above right] {$\pi^1$};
%\draw[->,bend left] (A) to node [left] {$v$} (E) ;

\end{tikzpicture}
\end{center}
We denote by $\Mor(\C F, \C G)$\index{$\Mor(\C F, \C G)$} the set of Bishop morphisms 
from $\C F$ to $\C G$. As $F$ is an extensional subset of $\D F(X)$, $\Mor(\C F, \C G)$ is an
extensional subset of $\D F(X,Y)$.  If $h \in \Mor(\C F, \C G)$, the \textit{induced mapping} 
$h^* \colon G \to F$ from $h$\index{induced mapping from a Bishop morphism}\index{$h^*$}
is defined by the rule
\[ h^*(g) := g \circ h; \ \ \ \ g \in G.\]
\end{defi}

If $\mathcal{F} := (X, F)$ is a Bishop space, then $F = \Mor(\mathcal{F}, \mathcal{R})$,
and one can show inductively that if $\C G := (Y, \bigvee G_0)$, then 
$h: X \rightarrow Y \in \Mor(\mathcal{F}, \C G)$ if and only if
$\forall_{g_{0} \in G_{0}}(g_{0} \circ h \in F)$
\begin{center}
\begin{tikzpicture}

\node (E) at (0,0) {$X$};
\node[right=of E] (F) {$Y$};
\node[below=of F] (A) {$\Real$.};
%\node [right=of F] (D) {};
%\node [below=of D] (G) {$V$};

\draw[->] (E)--(F) node [midway,above] {$h$};
\draw[->] (E)--(A) node [midway,left] {$F \ni g_0 \circ h \ \ $};
\draw[->] (F)--(A) node [midway,right] {$g_0 \in G_0$};
%\draw[->] (F)--(G) node [midway,above right] {$\pi^1$};
%\draw[->,bend left] (A) to node [left] {$v$} (E) ;

\end{tikzpicture}
\end{center}
We call this fundamental fact the\index{$\bigvee$-lifting of morphisms} $\bigvee$-\textit{lifting of morphisms}. 
A Bishop morphism is a \textit{Bishop isomorphism}\index{Bishop isomorphism}, if it is an isomorphism in
the category $\Bis$. We write $\C F \simeq \C G$\index{$\C F \simeq \C G$} to denote that $\C F$ and $\C G$ 
are Bishop isomorphic. If $h \in \Mor(\C F, \C G)$ is a bijection, then 
$h$ is a Bishop isomorphism if and only if it is \textit{open}\index{open morphism} i.e., 
$\forall_{f \in F}\exists_{g \in G}\big(f = g \circ h\big)$. 
%
%According to the $\bigvee$-\textit{lifting of openness}, if $h \in \Mor(\C F, \C G)$ is a surjection, 
%and if $F = \bigvee F_0$ it suffices to prove the openness property only for the subbase $F_0$ of $F$ i.e.,
%\[ \forall_{f_0 \in F_0}\exists_{g \in G}\big(f_0 = g \circ h\big) \To 
%\forall_{f \in \bigvee F_0}\exists_{g \in G}\big(f = g \circ h\big).\]

\begin{defi}\label{def: new} Let $\C F := (X, F), \C G := (Y, G)$ be Bishop spaces, $(A, i_A) \subseteq X$ 
inhabited, and $\phi: X \rightarrow Y$ a surjection. The \textit{product} Bishop space\index{product Bishop space}
$\C F \times \C G := (X \times Y, F \times G)$ of $\C F$\index{$F \times G$}
and $\C G$, the \textit{relative}\index{relative Bishop space} Bishop space $\Fii_{|A} := (A, F_{|A})$ on $A$, and 
the \textit{pointwise exponential Bishop space}\index{$F_{|A}$} \index{$F \to G$}
$\mathcal{F} \rightarrow \mathcal{G} = (\Mor(\mathcal{F}, \mathcal{G}), 
F \rightarrow G)$ are defined, respectively, by 
\[ F \times G := \bigvee \left[\{f \circ \pr_{X}, \mid f \in F\} \cup \{g \circ \pr_{Y} \mid g \in G\}\right] 
=: \bigvee_{f \in F}^{g \in G}f \circ \pr_{X}, g \circ \pr_{Y},\]
\[ F_{|A} = \bigvee \{f_{|A} \mid f \in F\} =: \bigvee_{f \in F}f_{|A}\]
\begin{center}
\begin{tikzpicture}

\node (E) at (0,0) {$A$};
\node[right=of E] (F) {$X$};
\node [right=of F] (B) {$\Real$,};

\draw[right hook->] (E)--(F) node [midway,above] {$i_A$};
\draw[->] (F)--(B) node [midway,above] {$f$};
\draw[->,bend right] (E) to node [midway,below] {$f_{|A}$} (B);

\end{tikzpicture}
\end{center}
\[ F \rightarrow G := \bigvee \big\{\phi_{x, g} \mid x \in X, g \in G \big\} :=
\bigvee_{x \in X}^{g \in G}\phi_{x, g},\]
\[ \phi_{x, g}: \Mor(\mathcal{F}, \mathcal{G}) \rightarrow \mathbb{R}, \ \ \ \ \phi_{x, g}(h) = g(h(x));
\ \ \ \ x \in X, \ g \in G.\]
\end{defi}

One can show inductively the following $\bigvee$-liftings
%of product, relative, and pointwise exponential topology}, respectively i.e.,
\begin{align*}
\bigvee F_{0} \times \bigvee G_{0} & := \bigvee \left[\{f_{0} \circ \pr_{X}, \mid f_{0} \in F_{0}\} \cup
\{g_{0} \circ \pr_{Y} \mid g_{0} \in G_{0}\}\right]\\
& =: \bigvee_{f_{0} \in F_{0}}^{g_{0} \in G_{0}}f_{0} \circ \pr_{X}, g_{0} \circ \pr_{Y},
\end{align*}
\[ \big(\bigvee F_{0}\big)_{|A} = \bigvee \{{f_{0}}_{|A} \mid f_{0} \in F_{0}\} =: \bigvee_{f_{0}
\in F_{0}}{f_{0}}_{|A},\]
\[ F \rightarrow \bigvee G_0 = \bigvee \big\{\phi_{x, g_0} \mid x \in X, g_0 \in G_0 \big\} :=
\bigvee_{x \in X}^{g_0 \in G_0}\phi_{x, g_0}.\]
$F_{|A}$ is the least topology on $A$ that makes $i_A$ a Bishop morphism, and 
the product topology $F \times G$ is the least topology on $X \times Y$ 
that makes the projections $\pr_X$ and $\pr_Y$ Bishop morphisms. The term pointwise exponential Bishop
topology is due to the fact that $F \to G$ behaves like the the classical topology of the pointwise convergence 
on $C(X, Y)$, the set of continuous functions from the topological space $X$ to the topological space $Y$.

\section{Direct spectra of Bishop spaces}
\label{sec: direxternalspectra}

Roughly speaking, if $S$ is a structure on some set, an $S$-spectrum\index{$S$-spectrum} is 
an $I$-family of sets $\Lambda$ such that each set $\lambda_0 (i)$ is equipped with a structure $S_i$, which is
compatible with the transport maps $\lambda_{ij}$ of $\Lambda$. 
Accordingly, a spectrum of Bishop spaces is an $I$-family of sets $\Lambda$ such that each set $\lambda_0 (i)$
is equipped with a Bishop topology $F_i$, which is compatible with the transport maps of $\Lambda$. As expected, 
in the case of a spectrum of Bishop spaces this compatibility condition is that the transport
maps $\lambda_{ij}$ are Bishop morphisms i.e. $\lambda_{ij} \in \Mor(\C F_i, \C F_j)$. It is natural to associate to 
$\Lambda$ an $I$-family of sets $\Phi := (\phi_0^{\Lambda}, \phi_1^{\Lambda})$ such that $\C F_i := 
\big(\lambda_0 (i), \phi_0^{\Lambda} (i)\big)$ is the Bishop space corresponding to $i \in I$.
If $i =_I j$, and if we put no restriction to the definition of $\phi_{ij}^{\Lambda} : F_i \to F_j$,
we need to add extra data in the definition of a map between spectra of
Bishop spaces. Since the map $\lambda_{ji}^* : F_i \to F_j$, where $\lambda_{ji}^*$ is the element 
of $\D F(F_i, F_j)$ induced by the Bishop morphism $\Lambda_{ji} \in \Mor(\C F_j, \C F_i)$, is 
generated by the data of $\Lambda$, it is natural to define $\phi_{ij} := \lambda_{ji}^*$. 
In this way proofs of properties of maps between spectra of Bishop spaces become easier. 
%If $X$ is a set, we use the notation $\D F(X) := \D F(X, \Real)$, and 
Every subset of $\D F(X)$ considered 
in this section is an extensional subset of it.

\begin{defi}\label{def: spectrum}
Let $\Lambda := (\lambda_0, \lambda_1)$. A \textit{family of Bishop topologies associated to}
$\Lambda$\index{family of Bishop topologies associated
to a family of sets} is a pair $\Phi^{\Lambda} := \big(\phi_0^{\Lambda}, \phi_1^{\Lambda}\big)$, where
$\phi_0^{\Lambda} \colon I \sto \D V_0$ and $\phi^{\Lambda}_1 \colon \bigcurlywedge_{(i,j) \in D(I)}\D F\big(\phi_0^{\Lambda}(i), \phi_0^{\Lambda}(j)\big)$, such that the following conditions hold:\\[1mm]
\normalfont (i)
\itshape  $\phi_0^{\Lambda}(i) := F_i \subseteq \D F(\lambda_0(i))$, and $\C F_i := (\lambda_0(i), F_i)$
is a Bishop space, for every $i \in I$.\\[1mm]
\normalfont (ii) 
\itshape $\lambda_{ij} \in \Mor(\C F_i, \C F_j)$, for every $(i, j) \in D(I)$.\\[1mm]
\normalfont (iii) $\phi_1^{\Lambda}(i, j) := \lambda_{ji}^*$, for every $(i, j) \in D(I)$,
where, if $f \in F_i$, the induced map $\lambda_{ji}^* \colon F_i \to F_j$ from $\lambda_{ji}$ is defined by 
 $\lambda_{ji}^* (f) := f \circ \lambda_{ji}$, for every $f \in F_i$.\\[1mm]
The structure $S(\Lambda) := (\lambda_0, \lambda_1, \phi_0^{\Lambda}, \phi_1^{\Lambda})$
is called a \textit{spectrum of Bishop spaces} over $I$\index{spectrum over a set}, or an 
$I$-\textit{spectrum}\index{$I$-spectrum} with Bishop spaces $(\C F_i)_{i \in I}$ and Bishop isomorphisms
$(\lambda_{ij})_{(i,j) \in D(I)}$. 

If $M := (\mu_0, \mu_1) \in \Fam(I)$ and 
$S(M) := (\mu_0, \mu_1, \phi_0^M, \phi_1^M)$ is an $I$-spectrum with Bishop spaces $(\C G_i)_{i \in I}$ and
Bishop isomorphisms $(\mu_{ij})_{(i,j) \in D(I)}$, 
a \textit{spectrum-map} $\Psi$\index{spectrum-map} from $S(\Lambda)$ to $S(M)$\index{spectrum-map}, in
symbols $\Psi \colon S(\Lambda) \To S(M)$\index{$\Phi \colon S(\Lambda) \To S(M)$}, is a family-map
$\Psi \colon \Lambda \To M$. A spectrum-map $\Phi \colon S(\Lambda) \To S(M)$ is called \textit{continuous}\index{continuous spectrum-map}, if 
$\Psi_i \in\Mor(\C F_i, \C G_i)$, for every $i \in I$.

\end{defi}

As in the case of a family of Bishop spaces associated to an $I$-family of sets,
the family of Bishop spaces associated to an $(I, \lt)$-family of sets is defined in a minimal way from 
the data of $\Lambda^{\lt}$. According to these
data, the corresponding functions $\phi_{ij}^{\lt}$ behave necessarily in a contravariant manner i.e., 
$\phi_{ij}^{\Lambda^{\lt}} \colon F_j \to F_i$. Moreover, the transport maps $\lambda_{ij}^{\lt}$ are
Bishop morphisms, and not necessarily Bishop isomorphisms.

\begin{defi}\label{def: preorderedspectrum}
Let $(I, \lt)$ be a directed set, and let $\Lambda^{\lt} := (\lambda_0, \lambda_1^{\lt}) \in \Fam(I, \lt)$. 
A \textit{family of Bishop topologies associated to} $\Lambda^{\lt}$ is a pair $\Phi^{\Lambda^{\lt}} := 
\big(\phi_0^{\Lambda^{\lt}}, \phi_1^{\Lambda^{\lt}}\big)$, where $\phi_0^{\Lambda^{\lt}} : I \sto \D V_0$
and $\phi_1^{\Lambda^{\lt}} : \bigcurlywedge_{(i,j) \in \lt(I)} \D F \big(\phi_0^{\Lambda^{\lt}}(j), 
\phi_0^{\Lambda^{\lt}}(i)\big)$, 
such that the following conditions hold:\\[1mm]
\normalfont (i)
\itshape  $\phi_0^{\Lambda^{\lt}}(i) := F_i \subseteq \D F(\lambda_0(i))$,
and $\C F_i := (\lambda_0(i), F_i)$ is a Bishop space, for every $i \in I$.\\[1mm]
\normalfont (ii)
\itshape $\lambda_{ij}^{\lt} \in \Mor(\C F_i, \C F_j)$, for every $(i, j) \in D^{\lt}(I)$.\\[1mm]
\normalfont (iii)
\itshape $\phi_1^{\Lambda^{\lt}}(i, j) := \big(\lambda_{ij}^{\lt}\big)^*$, 
for every $(i, j) \in D^{\lt}(I)$, where, if $f \in F_j$, 
%then 
$\big(\lambda_{ij}^{\lt}\big)^* (f) := f \circ \lambda_{ij}^{\lt}$.\\[1mm]
The structure $S(\Lambda^{\lt}) := (\lambda_0, \lambda_1^{\lt}, \phi_0^{\Lambda^{\lt}}, \phi_1^{\Lambda^{\lt}})$
is called a \textit{direct spectrum}\index{direct spectrum}\index{$S(\Lambda^{\lt})$} over $(I, \lt)$, or an 
$(I, \lt)$-\textit{spectrum} with Bishop spaces $(\C F_i)_{i \in I}$ and Bishop morphisms
 $(\lambda_{ij}^{\lt})_{(i,j) \in D^{\lt}(I)}$. 
 
If $M^{\lt} := (\mu_0, \mu_1^{\lt}) \in \Fam(I, \lt_I)$ and
%$\Phi^{M} := \big(\phi_0^{M}, \phi_1^{M}\big)$ is a family of Bishop topologies associated to $M$ and 
$S(M^{\lt}) := (\mu_0, \mu_1, \phi_{0}^{M^{\lt}}, \phi_{1}^{M^{\lt}})$ is an $(I, \lt)$-spectrum
with Bishop spaces $(\C G_i)_{i \in I}$ and Bishop morphisms $(\mu_{ij}^{\lt})_{(i,j) \in D^{\lt}(I)}$, 
a \textit{direct spectrum-map}\index{direct spectrum-map} $\Psi$ from $S(\Lambda^{\lt})$ to $S(M^{\lt})$,
 in symbols $\Psi \colon S(\Lambda^{\lt}) \To S(M^{\lt})$,\index{$\Psi \colon S(\Lambda^{\lt}) \To S(M^{\lt})$} 
is a direct family-map $\Psi : \Lambda^{\lt} \To M^{\lt}$. The totality of direct spectrum-maps 
from $S(\Lambda^{\lt})$ to $S(M^{\lt})$ is denoted by $\Map_{(I, \lt_I)}(S(\Lambda^{\lt}), 
S(M^{\lt}))$\index{$\Map_{(I, \lt_I)}(S(\Lambda^{\lt}), S(M^{\lt}))$} and it is equipped with the 
equality of $\Map_I(\Lambda^{\lt}, M^{\lt})$.
A direct spectrum-map $\Psi : S(\Lambda^{\lt}) \To S(M^{\lt})$
is called \textit{continuous}, if\index{continuous direct spectrum-map} 
$\forall_{i \in I}\big(\Psi_i \in\Mor(\C F_i, \C G_i)\big)$, and let $\Cnt_{(I, \lt_I)}(S(\Lambda^{\lt}), S(M^{\lt}))$
be their totality, equipped with the equality of $\Map_I(\Lambda^{\lt}, M^{\lt})$. The 
totality $\Spec(I, \lt_I)$\index{$\Spec(I, \lt_I)$} of direct spectra over $(I, \lt_I)$ is equipped with an 
equality defined similarly to the equality on $\Spec(I)$.
A \textit{contravariant} direct spectrum\index{contravariant direct spectrum}\index{$S(\Lambda^{\mt})$} 
 $S(\Lambda^{\mt}) := (\lambda_0, \lambda_1^{\mt} ; \phi_0^{\Lambda^{\mt}}, \phi_1^{\Lambda^{\mt}})$
over $(I, \lt)$, a \textit{contravariant} direct spectrum-map $\Psi : S(\Lambda^{\mt}) \To S(M^{\mt})$, and 
their totalities 
 $\Map_{(I, \lt_I)}(S(\Lambda^{\mt}), S(M^{\mt}))$\index{$\Map_{(I, \lt_I)}(S(\Lambda^{\mt}), S(M^{\mt}))$}, 
 $\Spec(I, \mt_I)$\index{$\Spec(I, \mt_I)$} are defined similarly.
\end{defi}

The following is straightforward to show.

\begin{rem}\label{rem: preorderedspectrum1}
 Let $(I, \lt)$ be a directed set,  
 $S(\Lambda^{\lt}) := (\lambda_0, \lambda_1 ; \phi_0^{\Lambda^{\lt}}, \phi_1^{\Lambda^{\lt}}) \in \Spec(I, \lt_I)$  
 with Bishop spaces $(\C F_i)_{i \in I}$ and Bishop morphisms 
 $(\lambda_{ij}^{\lt})_{(i,j) \in D^{\lt}(I)}$, 
 $S(M^{\lt}) := (\mu_0, \mu_1, \phi_0^{M^{\lt}}, \phi_1^{M^{\lt}}) \in \Spec(I, \lt_I)$ 
 with Bishop spaces $(\C G_i)_{i \in I}$ and Bishop morphisms $(\mu_{ij}^{\lt})_{(i,j) \in D^{\lt}(I)}$,
 and $\Psi \colon S(\Lambda^{\lt}) \To S(M^{\lt})$. Then $\Phi^{\Lambda^{\lt}} := 
 \big(\phi_0^{\Lambda^{\lt}}, \phi_1^{\Lambda^{\lt}}\big)$ is an $(I, \mt)$-family of sets, defined in the obvious dual way, 
 and if $\Psi$ is continuous, then, for every $(i, j) \in D^{\lt}(I)$,
 %and  $g \in G_j$, we have that 
 %$$g \circ \Psi_j \circ \lambda_{ij}^{\lt} \ =_{F_i} \ g \circ \mu_{ij}^{\lt} \circ \Psi_j$$ 
 the following diagram commutes
\begin{center}
\begin{tikzpicture}

\node (E) at (0,0) {$F_j$};
\node[right=of E] (K) {};
\node[right=of K] (F) {$F_i$.};
\node[above=of F] (A) {$\mathlarger{\mathlarger{G_i}}$};
\node [above=of E] (D) {$G_j$};

\draw[->] (E)--(F) node [midway,below]{$\big(\lambda_{ij}^{\lt}\big)^*$};
\draw[->] (D)--(A) node [midway,above] {$\big(\mu_{ij}^{\lt}\big)^*$};
\draw[->] (D)--(E) node [midway,left] {$\big(\Psi_j \big)^*$};
\draw[->] (A)--(F) node [midway,right] {$\big(\Psi_i \big)^*$};

\end{tikzpicture}
\end{center}
%(iii) If $\Psi$ is continuous and
\end{rem}

\begin{rem}\label{rem: beforetopondirectedsigma}
Let $(I, \lt)$ be a directed set and
$S(\Lambda^{\lt}) := (\lambda_0, \lambda_1, \phi_0^{\Lambda^{\lt}}, \phi_1^{\Lambda^{\lt}}) \in \Spec(I, D^{\lt}(I))$  
with Bishop spaces $(\C F_i)_{i \in I}$ and Bishop morphisms 
$(\lambda_{ij}^{\lt})_{(i,j) \in \ D^{\lt}(I)}$. If $\Theta \in \prod_{i \in I}^{\mt}F_i$, 
the following operation is a function
\[ f_{\Theta} : \bigg(\sum_{i \in I}^{\lt}\lambda_0 (i) \bigg) \sto \Real, \ \ \ \ f_{\Theta}(i, x) := 
\Theta_i(x), \ \ \ \ 
(i,x) \in \sum_{i \in I}^{\lt}\lambda_0 (i). \]
 
\end{rem}

\begin{proof}
Let $(i, x) =_{\mathsmaller{\sum_{i \in I}^{\lt}\lambda_0 (i)}} (j, y) :\TOT
\exists_{k \mt i, j}\big(\lambda_{ik}^{\lt}(x) =_{\mathsmaller{\lambda_0 (k)}} 
\lambda_{jk}^{\lt}(y)\big)$. Since 
$\Theta_i = \phi_{ki}^{\mt}(\Theta_k) := (\lambda_{ik}^{\lt})^*(\Theta_k) := 
\Theta_k \circ \lambda_{ik}^{\lt},$
and similarly $\Theta_j = \Theta_k \circ \lambda_{jk}^{\lt}$, we have that
$\Theta_i(x) = \big[\Theta_k \circ \lambda_{ik}^{\lt}\big](x) := \Theta_k\big(\lambda_{ik}^{\lt}(x)\big)
= \Theta_k\big(\lambda_{jk}^{\lt}(y)\big) := \big[\Theta_k \circ \lambda_{jk}^{\lt}\big](y) = 
\Theta_j(y)$.
% \[ \Theta_i(x) = \big[\Theta_k \circ \lambda_{ik}^{\lt}\big](x) := \Theta_k\big(\lambda_{ik}^{\lt}(x)\big)
% = \Theta_k\big(\lambda_{jk}^{\lt}(y)\big) := \big[\Theta_k \circ \lambda_{jk}^{\lt}\big](y) = 
% \Theta_j(y).  \]
\end{proof}

\begin{defi}\label{def: topondirectedsigma}
Let $(I, \lt)$ be a directed set and
$S(\Lambda^{\lt}) := (\lambda_0, \lambda_1, \phi_0^{\Lambda^{\lt}}, \phi_1^{\Lambda^{\lt}}) \in \Spec(I, \lt_I)$  
with Bishop spaces $(\C F_i)_{i \in I}$ and Bishop morphisms 
$(\lambda_{ij}^{\lt})_{(i,j) \in D^{\lt}(I)}$.
The Bishop space 
\[ \sum_{i \in I}^{\lt}\C F_i := \bigg(\sum_{i \in I}^{\lt}\lambda_0 (i), \int_{i \in I}^{\lt}F_i \bigg) 
\ \ \ \ \mbox{where} \ \ 
\int_{i \in I}^{\lt}F_i := \bigvee_{\mathsmaller{\Theta \in \prod_{i \in I}^{\mt}F_i}}f_{\Theta}, \]
is the \textit{sum Bishop space} of $S(\Lambda^{\lt})$\index{sum Bishop space of $S^{\lt}$}. If $S^{\mt}$ is 
a contravariant direct spectrum over $(I, \lt)$, the sum Bishop space of $S(\Lambda^{\mt})$ is defined 
dually.
\end{defi}

\begin{lem}\label{lem: preorderdependent}
Let $S(\Lambda^{\lt}) := (\lambda_0, \lambda_1^{\lt}, \phi_0^{\Lambda^{\lt}}, \phi_1^{\Lambda^{\lt}}),   
S(M^{\lt}) := (\mu_0, \mu_1^{\lt}, \phi_0^{M^{\lt}}, \phi_1^{M^{\lt}}) \in \Spec(I, \lt_I)$ with
Bishop spaces $(\C F_i)_{i \in I}$ and with Bishop spaces $(\C G_i)_{i \in I}$, respectively, 
and let $\Psi : S(\Lambda^{\lt}) \to S(M^{\lt})$ be continuous.  
If $H \in \prod_{i \in I}^{\mt}G_i$, the dependent operation
$H^* : \bigcurlywedge_{i \in I}F_i$, defined by $H^*_i := \Psi^*_i(H_i) := H_i \circ \Psi_i$, for every $i \in I$, 
is in $\prod_{i \in I}^{\mt}F_i$.
\end{lem}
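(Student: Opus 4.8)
The plan is to check, in turn, the two clauses that define membership in $\prod_{i \in I}^{\mt}F_i$ (Definition~\ref{def: dirsumpi}, read for the contravariant family): first that $H^*$ is a genuine dependent operation in $\D A(I, \phi_0^{\Lambda^{\lt}})$, i.e. that each component $H^*_i$ really lands in $F_i$; and second that $H^*$ satisfies the contravariant compatibility condition with respect to the pullback transport maps $(\lambda_{ij}^{\lt})^*$. The two hypotheses on $\Psi$ enter at different points: continuity is what secures the first clause, while the family-map (naturality) condition, combined with the compatibility of $H$, secures the second.

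For the first clause I would invoke continuity of $\Psi$. Since $\Psi$ is continuous, $\Psi_i \in \Mor(\C F_i, \C G_i)$ for every $i \in I$, which by Definition~\ref{def: bmorphism} means $g \circ \Psi_i \in F_i$ for every $g \in G_i$. Taking $g := H_i \in G_i$ yields $H^*_i := H_i \circ \Psi_i \in F_i$, so the dependent operation $H^* : \bigcurlywedge_{i \in I}F_i$ is well defined.

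For the second clause I first record the two available compatibility conditions. By Remark~\ref{rem: preorderedspectrum1}, $\Phi^{\Lambda^{\lt}}$ is an $(I, \mt)$-family with transport maps $(\lambda_{ij}^{\lt})^*$, so unwinding the $\mt$-product, membership $H \in \prod_{i \in I}^{\mt}G_i$ amounts to $H_i =_{G_i} H_j \circ \mu_{ij}^{\lt}$ for every $(i,j) \in D^{\lt}(I)$, and what must be shown for $H^*$ is the analogous $H^*_i =_{F_i} H^*_j \circ \lambda_{ij}^{\lt}$ for every $(i,j) \in D^{\lt}(I)$. The other ingredient is that $\Psi$ is a direct family-map, so its naturality square (Definition~\ref{def: map}, applied to the $\lt$-families) gives $\Psi_j \circ \lambda_{ij}^{\lt} = \mu_{ij}^{\lt} \circ \Psi_i$ for every $(i,j) \in D^{\lt}(I)$. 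The verification is then the direct computation
\begin{align*}
H^*_j \circ \lambda_{ij}^{\lt} &:= (H_j \circ \Psi_j) \circ \lambda_{ij}^{\lt} = H_j \circ \big(\Psi_j \circ \lambda_{ij}^{\lt}\big) = H_j \circ \big(\mu_{ij}^{\lt} \circ \Psi_i\big)\\
&= \big(H_j \circ \mu_{ij}^{\lt}\big) \circ \Psi_i =_{F_i} H_i \circ \Psi_i =: H^*_i,
\end{align*}
where the third equality is the naturality of $\Psi$ and the $=_{F_i}$ step is the compatibility of $H$ precomposed with $\Psi_i$ (the pointwise equality $H_i =_{G_i} H_j \circ \mu_{ij}^{\lt}$ on $\mu_0(i)$ is preserved upon evaluation at $\Psi_i(x) \in \mu_0(i)$ for $x \in \lambda_0(i)$).

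I expect no serious obstacle, since the argument is a short diagram chase; the only genuine care needed is bookkeeping the variance. One must keep straight that the associated topology families run contravariantly, so that $\prod^{\mt}$ rather than $\prod^{\lt}$ is the relevant totality, that the transport maps on topologies are the pullbacks $(\lambda_{ij}^{\lt})^*$ and $(\mu_{ij}^{\lt})^*$, and that continuity and naturality are doing logically distinct jobs—the former placing each $H^*_i$ in $F_i$, the latter gluing the components across $\lt$.
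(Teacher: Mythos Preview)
Your proposal is correct and follows essentially the same approach as the paper: the paper's proof is exactly the chain of equalities you wrote for the second clause, using the naturality square $\Psi_j \circ \lambda_{ij}^{\lt} = \mu_{ij}^{\lt} \circ \Psi_i$ together with $H_i = H_j \circ \mu_{ij}^{\lt}$. If anything, you are slightly more explicit than the paper in separately verifying that each $H^*_i$ lands in $F_i$ via continuity of $\Psi_i$, whereas the paper treats this as implicit in the statement of the lemma.
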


\begin{proof}
If $i \lt j$, we need to show that $H_i^* = (\lambda_{ij}^{\lt})^*(H_j^*) = H_j^* \circ \lambda_{ij}^{\lt}$. Since
$H \in \prod_{i \in I}^{\mt}G_i$, we have that $H_i = H_j \circ \mu_{ij}^{\lt}$, and by the continuity of $\Psi$
and the commutativity of the diagram
\begin{center}
\begin{tikzpicture}

\node (E) at (0,0) {$\mu_0(i)$};
\node[right=of E] (F) {$\mu_0(j)$,};
\node[above=of F] (A) {$\lambda_0(j)$};
\node [above=of E] (D) {$\lambda_0(i)$};
%\node [below=of D] (G) {$V$};

\draw[->] (E)--(F) node [midway,below]{$\mu_{ij}^{\lt}$};
\draw[->] (D)--(A) node [midway,above] {$\lambda_{ij}^{\lt}$};
\draw[->] (D)--(E) node [midway,left] {$\Psi_i$};
\draw[->] (A)--(F) node [midway,right] {$\Psi_j$};
%\draw[->] (D)--(E) node [midway,left] {$f$};

\end{tikzpicture}
\end{center}
%we have that
\[ H_j^* \circ \lambda_{ij}^{\lt} := \Psi_j^*(H_j) \circ \lambda_{ij}^{\lt}
:= \big(H_j \circ \Psi_j \big) \circ \lambda_{ij}^{\lt}
:= H_j(0) \circ \big(\Psi_j \circ \lambda_{ij}^{\lt}\big) \]
\[ = H_j \circ \big(\mu_{ij}^{\lt} \circ \Psi_i \big)
= \big(H_j \circ \mu_{ij}^{\lt}\big) \circ \Psi_i
= H_i \circ \Psi_i
:= \Psi^*_i(H_i)
:= H^*_i. \qedhere \]
\end{proof}

\begin{prop}\label{prp: Spectrum1}
Let $S(\Lambda^{\lt}) := (\lambda_0, \lambda_1^{\lt}, \phi_0^{\Lambda^{\lt}}, \phi_1^{\Lambda^{\lt}})$ and  
$S(M^{\lt}) := (\mu_0, \mu_1^{\lt}, \phi_0^{M^{\lt}}, \phi_1^{M^{\lt}})$ be spectra over $(I, \lt_I)$, 
and let $\Psi : S(\Lambda^{\lt}) \To S(M^{\lt})$. \\[1mm]
\normalfont (i)
\itshape If $i \in I$, then $e_i^{\Lambda^{\lt}} \in \Mor\big(\C F_i, \sum_{i \in I}^{\lt}\C F_i\big)$.\\[1mm]
\normalfont (ii)
\itshape If $\Psi$ is continuous, then 
$\Sigma^{\lt} \Psi \in \Mor\big(\sum_{i \in I}^{\lt}\C F_i, \sum_{i \in I}^{\lt}\C G_i \big)$.\\[1mm]
\end{prop}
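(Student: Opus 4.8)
The plan is to reduce both claims to the subbase of the sum topology $\int_{i \in I}^{\lt}F_i$ via the $\bigvee$-lifting of morphisms, and then to identify the relevant composites with generators of that topology. Throughout I may use that $e_i^{\Lambda^{\lt}}$ and $\Sigma^{\lt}\Psi$ are genuine functions, which is already secured by Proposition~\ref{prp: preorderfamilymap1}(i) and (ii), so that composing them with the generators $f_{\Theta}$, $f_H$ of Remark~\ref{rem: beforetopondirectedsigma} is well-behaved.

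For (i), recall that $\int_{i \in I}^{\lt}F_i := \bigvee_{\Theta \in \prod_{i \in I}^{\mt}F_i} f_{\Theta}$, so by the $\bigvee$-lifting of morphisms it suffices to check that $f_{\Theta} \circ e_i^{\Lambda^{\lt}} \in F_i$ for each $\Theta \in \prod_{i \in I}^{\mt}F_i$. First I would compute directly: for $x \in \lambda_0(i)$,
\[ \big(f_{\Theta} \circ e_i^{\Lambda^{\lt}}\big)(x) = f_{\Theta}(i, x) = \Theta_i(x), \]
so that $f_{\Theta} \circ e_i^{\Lambda^{\lt}} = \Theta_i$. Since membership $\Theta \in \prod_{i \in I}^{\mt}F_i$ entails $\Theta_i \in F_i$, this composite lies in $F_i$, and (i) follows at once. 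Note that $\Psi$ plays no role here.

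For (ii), the same strategy applies with the target topology $\int_{i \in I}^{\lt}G_i := \bigvee_{H \in \prod_{i \in I}^{\mt}G_i} f_H$: by the $\bigvee$-lifting it is enough to show $f_H \circ \Sigma^{\lt}\Psi \in \int_{i \in I}^{\lt}F_i$ for every $H \in \prod_{i \in I}^{\mt}G_i$. Computing as before, for $(i, x) \in \sum_{i \in I}^{\lt}\lambda_0(i)$ one gets
\[ \big(f_H \circ \Sigma^{\lt}\Psi\big)(i, x) = f_H\big(i, \Psi_i(x)\big) = H_i\big(\Psi_i(x)\big) = \big(H_i \circ \Psi_i\big)(x). \]
Here Lemma~\ref{lem: preorderdependent} supplies precisely what is needed: since $\Psi$ is continuous and $H \in \prod_{i \in I}^{\mt}G_i$, the dependent operation $H^*$ with $H^*_i := H_i \circ \Psi_i$ lies in $\prod_{i \in I}^{\mt}F_i$. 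Consequently $f_H \circ \Sigma^{\lt}\Psi = f_{H^*}$, and $f_{H^*}$ is itself a generator of $\int_{i \in I}^{\lt}F_i$, hence belongs to it. Applying the $\bigvee$-lifting once more gives (ii).

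The genuinely load-bearing input is Lemma~\ref{lem: preorderdependent}, which in turn rests on the continuity of $\Psi$ together with the contravariant compatibility defining $\prod^{\mt}$; once it is in hand, both parts collapse to the elementary identities $f_{\Theta} \circ e_i^{\Lambda^{\lt}} = \Theta_i$ and $f_H \circ \Sigma^{\lt}\Psi = f_{H^*}$ followed by an invocation of the $\bigvee$-lifting. The only delicate point, apart from keeping the covariant index $\lt$ and the contravariant index $\mt$ straight between the sum spaces and the $\prod^{\mt}$-indexed generators, is making sure the composites are formed with honest functions so that the pointwise computations are legitimate — but this is exactly what Proposition~\ref{prp: preorderfamilymap1} provides.
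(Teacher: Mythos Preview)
Your proof is correct and follows essentially the same approach as the paper: both parts are reduced via the $\bigvee$-lifting of morphisms to checking the composites on the subbase generators, yielding the identities $f_{\Theta} \circ e_i^{\Lambda^{\lt}} = \Theta_i$ for (i) and $f_H \circ \Sigma^{\lt}\Psi = f_{H^*}$ for (ii), with Lemma~\ref{lem: preorderdependent} supplying $H^* \in \prod_{i \in I}^{\mt}F_i$ in the second case.
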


\begin{proof}
(i) By the $\bigvee$-lifting of morphisms it suffices to show that 
$\forall_{\Theta \in \prod_{i \in I}^{\mt}F_i}\big(f_{\Theta} \circ e_i^{\Lambda^{\lt}}
\in F_i\big)$.
If $x \in \lambda_0(i)$, then
$\big(f_{\Theta} \circ e_i^{\Lambda^{\lt}}\big)(x) := f_{\Theta}(i, x) 
:= \Theta_i(x),$ hence
$f_{\Theta} \circ e_i^{\Lambda^{\lt}} := \Theta_i \in F_i$.\\
(ii) By the $\bigvee$-lifting of morphisms and Definition~\ref{def: topondirectedsigma} it suffices to show that 
\[ \forall_{H \in \prod_{i \in I}^{\mt}G_i}\bigg(g_{H} \circ \Sigma^{\lt} \Psi \in \int_{i \in I}^{\lt}F_i\bigg). \]
If $i \in I$ and $x \in \lambda_0(i)$, and if $H^* \in \prod_{i \in I}^{\mt}F_i$, defined 
in Lemma~\ref{lem: preorderdependent}, then
%then 
$ \big(g_{H} \circ \Sigma^{\lt} \Psi\big)(i, x) := g_{H}(i, \Psi_i(x))
:= H_i (\Psi_i(x))
=: (H_i \circ \Psi_i)(x)
:= f_{H^*}(i, x),
$
and $g_{H} \circ \Sigma^{\lt} \Psi := f_{H^*} \in \int_{i \in I}^{\lt}F_i$.
\end{proof}

\section{On families of equivalence classes}
\label{sec: fameqclass}

Roughly speaking, a family of subsets of a set $X$ indexed by some set $I$ is an assignment routine 
$\lambda_0 : I \sto \C P(X)$ that behaves like a function i.e., if $i =_I j$, then $\lambda_0(i) =_{\C P(X)} 
\lambda_0 (j)$. The following definition is a formulation of this rough description that reveals the witnesses of the 
equality $\lambda_0(i) =_{\C P(X)} \lambda_0 (j)$. This is done ``internally'', through the embeddings of the subsets 
into $X$. The equality $\lambda_0(i) =_{\D V_0} \lambda_0 (j)$, which 
is defined ``externally'' through the transport maps, follows, and a family of subsets
is also a family of sets. From the theory of families of subsets (see~\cite{Pe20}, chapter 4)
we present here only what is relevant to the topological part of this paper.

\begin{defi}\label{def: famofsubsets}
Let $X$ and $I$ be sets. A \textit{family of subsets}\index{family of subsets} of $X$ indexed by $I$,
or an $I$-\textit{family of subsets}\index{$I$-family of subsets} of $X$, is a triplet
$\Lambda(X) := (\lambda_0, \C E^X, \lambda_1)$, where
\index{$\Lambda(X)$}\index{$\C E^X$}
$\lambda_0 : I \sto \D V_0$,
\[ \C E^X : \bigcurlywedge_{i \in I}\D F\big(\lambda_0(i), X\big), \ \ \ \ \C E^X(i) =: \C E_i^X; \ \ \ \ i \in I, \]
\[ \lambda_1 : \bigcurlywedge_{(i, j) \in D(I)}\D F\big(\lambda_0(i), \lambda_0(j)\big), \ \ \ \ 
\lambda_1(i, j) =: \lambda_{ij}; \ \ \ \ (i, j) \in D(I), \]
such that the following conditions hold:\\[1mm]
\normalfont (a) 
\itshape For every $i \in I$, the function $\C E_i^X : \lambda_0(i) \to X$ is an embedding.\\[1mm]
\normalfont (b) 
\itshape For every $i \in I$, we have that $\lambda_{ii} := \id_{\lambda_0(i)}$.\\[1mm]
\normalfont (c) 
\itshape For every $(i, j) \in D(I)$ we have that 
$\C E_i^X = \C E_j^X \circ \lambda_{ij}$ and $\C E_j^X = \C E_i^X \circ \lambda_{ji}$
\begin{center}
\resizebox{4cm}{!}{%
\begin{tikzpicture}

\node (E) at (0,0) {$\lambda_0(i)$};
\node[right=of E] (B) {};
\node[right=of B] (F) {$\lambda_0(j)$};
\node[below=of B] (C) {};
\node[below=of C] (A) {$X$.};

\draw[left hook->,bend left] (E) to node [midway,above] {$\lambda_{ij}$} (F);
\draw[left hook->,bend left] (F) to node [midway,below] {$\lambda_{ji}$} (E);
\draw[right hook->] (E)--(A) node [midway,left] {$\C E_{i}^X \ $};
\draw[left hook->] (F)--(A) node [midway,right] {$ \ \C E_j^X$};

\end{tikzpicture}
}
\end{center} 
%We say that 
$\C E^X$ is a modulus of embeddings\index{modulus of embeddings} for $\lambda_0$, and $\lambda_1$ 
a modulus of transport maps for $\lambda_0$. Let 
$\Lambda := (\lambda_0, \lambda_1)$ be the $I$-family of sets 
that corresponds to $\Lambda(X)$\index{the $I$-family that corresponds to $\Lambda(X)$}.
If $(A, i_A^X) \in \C P(X)$, the \textit{constant} $I$-\textit{family of subsets}\index{constant family of subsets} 
$A$\index{$C^A_X$}
is the pair 
$C^{A}(X) := (\lambda_0^{A}, \C E^{X,A}, \lambda_1^A)$, where $\lambda_0 (i) := A$, $\C E_i^{X,A} := i_A^X$, and 
$\lambda_1 (i, j) := \id_A$, for every $i \in I$ and $(i, j) \in D(I)$ 

\end{defi}

\begin{defi}\label{def: subfammap}
If $\Lambda(X) := (\lambda_0, \C E^X, \lambda_1), M(X) := (\mu_0, \C Z^X, \mu_1)$ and 
$N(X) := (\nu_0, \C H^X, \nu_1)$ are $I$-families of subsets of $X$, a 
\textit{family of subsets-map}\index{family of subsets-map}
$\Psi \colon \Lambda(X) \To M(X)$\index{$\Psi \colon \Lambda(X) \To M(X)$}
from $\Lambda(X)$ to $M(X)$ is a dependent operation
$\Psi : \bigcurlywedge_{i \in I}\D F\big(\lambda_0(i), \mu_0(i)\big)$, where $\Psi(i) =: \Psi_i$,
for every $i \in I$, such that, for every $i \in I$, the following diagram commutes\footnote{Trivially, for 
every $i \in I$ the map $\Psi_i \colon \lambda_0(i) \to \mu_0(i)$ is an embedding.}
\begin{center}
\begin{tikzpicture}

\node (E) at (0,0) {};
\node[right=of E] (F) {$X$.};
\node[above=of E] (A) {$\lambda_0 (i)$};
\node [right=of F] (B) {};
\node [above=of B] (G) {$\mu_0 (i)$};

\draw[right hook->] (A)--(F) node [midway,left] {$\C E_{i}^X \ $};
\draw[left hook->] (G)--(F) node [midway,right] {$\ \C Z_i^X$};
\draw[right hook->] (A)--(G) node [midway,above] {$\Psi_{i}$};

\end{tikzpicture}
\end{center}
The totality $\Map_I(\Lambda(X), M(X))$\index{$\Map_I(\Lambda(X), M(X))$} of family of subsets-maps 
from $\Lambda(X)$ to $M(X)$ is
equipped with the pointwise equality. 
If $\Psi \colon \Lambda(X) \To M(X)$ and $\Xi \colon M(X) \To N(X)$, the \textit{composition family of subsets-map} 
\index{composition family of subsets-map} 
$\Xi \circ \Psi \colon \Lambda(X) \To N(X)$ is defined by 
$(\Xi \circ \Psi)(i) := \Xi_i \circ \Psi_i$,
\begin{center}
\begin{tikzpicture}

\node (E) at (0,0) {$\lambda_0(i)$};
\node[right=of E] (B) {$\mu_0(i)$};
\node[right=of B] (F) {$\nu_0(i)$};
\node[below=of B] (C) {$X$,};

\draw[left hook->] (E)--(C) node [midway,left] {$\C E_i \ $};
\draw[right hook->] (B)--(C) node [midway,right] {$\C Z_i$};
\draw[right hook->] (F)--(C) node [midway,right] {$ \ \C  H_{i} $};
\draw[->] (E)--(B) node [midway,above] {$\Psi_i$};
\draw[->] (B)--(F) node [midway,above] {$\Xi_i$};
\draw[->,bend left=50] (E) to node [midway,above] {$(\Xi \circ \Psi)_{i}$} (F);

\end{tikzpicture}
\end{center} 
for every $i \in I$. The identity family of subsets-map $\Id_{\Lambda(X)} \colon \Lambda(X) \To \Lambda(X)$ 
and the equality on the totality $\Fam(I,X)$\index{$\Fam(I,X)$} 
of $I$-families of subsets of $X$ are defined as in Definition~\ref{def: map}.
\end{defi}

We see no obvious reason, like the one for $\Fam(I)$, not to consider 
$\Fam(I, X)$ to be a set. In the case of $\Fam(I)$ the constant $I$-family $\Fam(I)$ would be in $\Fam(I)$, while
the constant $I$-family $\Fam(I, X)$ is not clear how could be seen as a family of subsets of $X$. If 
$\nu_0(i) := \Fam(I, X)$, for every $i \in I$, we need to define a modulus of embeddings $\C N^X_i \colon \Fam(I, X) 
\eto X$, for every $i \in I$. From the given data one could define the assignment routine $\C N^X_i$ by the rule 
$\C N^X_i\big(\Lambda(X)\big) := \C E_i^X(u_i)$, if it is known that $u_i \in \lambda_0(i)$. Even in that case, the 
assignment routine $\C N^X_i$ cannot be shown to satisfy the expected properties. If $\C N^X_i$ was defined 
by the rule $\C N^X_i\big(\Lambda(X)\big) := x_0$, for some $x_0 \in X$, then it cannot be an embedding.

\begin{defi}\label{def: setofsubsets}
If $I, X \in \D V_0$, a \textit{set of subsets}\index{set of subsets} of $X$ indexed by $I$, or an $I$-\textit{set 
of subsets}\index{$I$-set of subsets} of $X$, is triplet $\Lambda(X) := (\lambda_0, \C E^X, \lambda_1) \in \Fam(I, X)$
such that the following condition is satisfied:
\[ Q(\Lambda(X)) :\TOT \forall_{i, j \in I}\big(\lambda_0(i) =_{\C P(X)} \lambda_0(j) \To i =_I j\big). \]
Let $\Set(I, X)$ be their totality, equipped with the canonical equality on $\Fam(I, X)$. 
\end{defi}

\begin{rem}\label{rem: Isetssubsets1}
If $\Lambda(X) \in \Set(I, X)$ and $M(X) \in \Fam(I, X)$ such that $\Lambda(X) =_{\Fam(I, X)} M(X)$, then 
$M(X) \in \Set(I, X)$. 
\end{rem}

\begin{proof}
Let $\Phi \colon \Lambda(X) \To M(X)$ and $\Psi \colon M(X) \To \Lambda(X)$ such that 
$(\Phi, \Psi) \colon \Lambda(X) =_{\Fam(I, X)} M(X)$. Let also $(f, g) \colon \mu_0(i) =_{\C P(X)} \mu_0(j)$. It 
suffices to show that $\lambda_0(i) =_{\C P(X)} \lambda_0(j)$.  
 \begin{center}
\resizebox{9cm}{!}{%
\begin{tikzpicture}

\node (E) at (0,0) {$\mu_0(i)$};
\node[right=of E] (B) {};
\node[right=of B] (F) {$\mu_0(j)$};
\node[below=of B] (C) {};
\node[below=of C] (A) {$X$};
\node[right=of F] (K) {$\lambda_0(j)$};
\node[left=of E] (L) {$\lambda_0(i)$};

\draw[left hook->,bend left] (E) to node [midway,above] {$f$} (F);
\draw[left hook->,bend left] (F) to node [midway,below] {$g$} (E);
\draw[right hook->] (E)--(A) node [midway,left] {$\C Z^X_i \ $};
\draw[left hook->] (F)--(A) node [midway,right] {$ \ \C Z^X_j$};

\draw[left hook->,bend left] (L) to node [midway,above] {$\Phi_i$} (E);
\draw[left hook->,bend left] (E) to node [midway,below] {$\Psi_i$} (L);
\draw[right hook->,bend right=50] (L) to node [midway,left] {$\C E^X_i \ \ $} (A);

\draw[left hook->,bend left] (F) to node [midway,above] {$\Psi_j$} (K);
\draw[left hook->,bend left] (K) to node [midway,below] {$\Phi_j$} (F);
\draw[left hook->,bend left=50] (K) to node [midway,right] {$\ \C E^X_j $} (A);

%\draw[->,bend right=50] (E) to node [midway,left] {$(\Psi \circ \Xi)_k  \ $} (K);
\end{tikzpicture}
}
\end{center}
If we define $f{'} := \Psi_j \circ f \circ \Phi_i$ and $g{'} := \Psi_i \circ g \circ \Phi_j$, it is straightforward to 
show that $(f{'}, g{'}) \colon \lambda_0(i) =_{\C P(X)} \lambda_0(j)$, hence $i =_I j$.
\end{proof}

By the previous remark $Q(\Lambda(X))$ is an extensional property on $\Fam(I,X)$. Since $\Set(I,X)$ is defined by 
separation on $\Fam(I,X)$, and since we see no objection to consider $\Fam(I,X)$ to be a set, we also see no objection  
to consider $\Set(I,X)$ to be a set.

\begin{defi}\label{def: sublambdaI}
Let $\Lambda(X) := (\lambda_0, \C E^X, \lambda_1) \in \Fam(I,X)$. Let the equality 
$=_I^{\Lambda(X)}$\index{$=_I^{\Lambda(X)}$} on $I$
%induced by $\Lambda(X)$
\index{equality on the index-set induced by a family of subsets}
given by $ i =_I^{\Lambda(X)} j : \TOT \lambda_0 (i) =_{\C P(X)} \lambda_0 (j)$, for every $i, j \in I$.
%\[ i =_I^{\Lambda(X)} j : \TOT \lambda_0 (i) =_{\C P(X)} \lambda_0 (j). \]
The \textit{set $\lambda_0 I(X)$ of subsets of $X$
generated by $\Lambda(X)$}\index{set of subsets generated by a family of subsets}\index{$\lambda_0 I(X)$} is 
the totality $I$ equipped with the equality $=_I^{\Lambda(X)}$. We write $\lambda_0(i) \in \lambda_0I(X)$,
instead of $i \in I$, when $I$ is equipped with the equality $=_I^{\Lambda(X)}$. 
The new operation $\lambda_0^* : I \sto I$ from $(I, =_I)$ to $(I, =_I^{\Lambda(X)})$ is 
defined by the identity-rule.
% $\lambda_0^* : I \sto I$ from $(I, =_I)$ to $(I, =_I^{\Lambda})$, defined by $i \mapsto i$, for every $i \in I$, 
% is denoted by
% $\lambda_0^* : I \sto \lambda_0 I$, and its definition is rewritten as $\lambda_0^*(i) := \lambda_0(i)$,
% for every $i \in I$. 
\end{defi}

%If $I$ is equipped with the equality $=_I^{\Lambda(X)}$, then $\Lambda(X)$ becomes an 
%$I$-set of subsets of $X$, and the operation 
Clearly, $\lambda_0^*$ is a function. The following is easy to show (see also~\cite{Pe20}, section 3.7).

\begin{prop}\label{prp: subFamtoset1}
Let $\Lambda(X) := (\lambda_0, \C E^X, \lambda_1) \in \Set(I, X)$, and let $Y$ be a set.
If $f \colon I \to Y$, there is a unique function $\lambda_0 f \colon \lambda_0 I(X) \to Y$ such that the 
following diagram commutes
\begin{center}
\begin{tikzpicture}

\node (E) at (0,0) {$\lambda_0 I(X)$};
%\node[above=of F] (A) {$B$};
\node [above=of E] (D) {$I$};
\node[right=of D] (F) {$Y.$};
%\node [below=of D] (G) {$V$};

\draw[dashed, ->] (E)--(F) node [midway,right] {$\ \lambda_0 f$};
%\draw[->] (D)--(A) node [midway,above] {$f $};
\draw [->]  (D)--(E) node [midway,left] {$\lambda_0$};
\draw[->] (D)--(F) node [midway,above] {$f$};
%\draw[->] (D)--(E) node [midway,left] {$f$};

\end{tikzpicture}
\end{center}
Conversely, if $f \colon I \sto Y$ and $f^* \colon \lambda_0 I(X)  \to Y$ such that 
the corresponding diagram commutes, then $f$ is a function and $f^*$ is equal to the function from 
$\lambda_0 I(X)$ to $Y$ generated by $f$.
\end{prop}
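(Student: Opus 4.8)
The plan is to observe first that, on the underlying totality $I$, the two equalities $=_I$ and $=_I^{\Lambda(X)}$ are logically equivalent, and that this single fact carries the whole proof. Indeed, since $\Lambda(X)$ is a family of subsets, its transport maps witness $(\lambda_{ij}, \lambda_{ji}) : \lambda_0(i) =_{\C P(X)} \lambda_0(j)$ whenever $i =_I j$ (condition (c) of Definition~\ref{def: famofsubsets} says exactly that $\lambda_{ij} : \lambda_0(i) \subseteq \lambda_0(j)$ and $\lambda_{ji} : \lambda_0(j) \subseteq \lambda_0(i)$), so $i =_I j \To i =_I^{\Lambda(X)} j$; conversely, since $\Lambda(X) \in \Set(I, X)$, the property $Q(\Lambda(X))$ of Definition~\ref{def: setofsubsets} gives $i =_I^{\Lambda(X)} j \To i =_I j$. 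Hence a non-dependent assignment routine $I \sto Y$ respects $=_I$ if and only if it respects $=_I^{\Lambda(X)}$, while the underlying totalities of $I$ and $\lambda_0 I(X)$ coincide.

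For the forward direction, I would define $\lambda_0 f$ on $\lambda_0 I(X)$ by the \emph{same} assignment routine as $f$, i.e. $(\lambda_0 f)(\lambda_0(i)) := f(i)$ for every $i \in I$. To see that $\lambda_0 f$ is a function $\lambda_0 I(X) \to Y$, suppose $\lambda_0(i) =_{\lambda_0 I(X)} \lambda_0(j)$, that is $i =_I^{\Lambda(X)} j$; by $Q(\Lambda(X))$ this yields $i =_I j$, and since $f$ is a function $f(i) =_Y f(j)$, i.e. $(\lambda_0 f)(\lambda_0(i)) =_Y (\lambda_0 f)(\lambda_0(j))$. Commutativity is immediate, since $\lambda_0^*$ acts by the identity-rule on underlying elements, so $(\lambda_0 f \circ \lambda_0^*)(i) := (\lambda_0 f)(\lambda_0(i)) := f(i)$ for every $i \in I$. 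For uniqueness, if $g : \lambda_0 I(X) \to Y$ also satisfies $g(\lambda_0^*(i)) =_Y f(i)$ for every $i \in I$, then $g(\lambda_0(i)) =_Y f(i) =_Y (\lambda_0 f)(\lambda_0(i))$; as every element of $\lambda_0 I(X)$ is of the form $\lambda_0(i)$ with $i$ in the underlying totality $I$, we conclude $g =_{\D F(\lambda_0 I(X), Y)} \lambda_0 f$.

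For the converse, I assume only that $f : I \sto Y$ is an operation and that $f^* : \lambda_0 I(X) \to Y$ is a function with $f^*(\lambda_0^*(i)) =_Y f(i)$ for every $i \in I$. To show $f$ is a function, take $i =_I j$; then $\lambda_0(i) =_{\C P(X)} \lambda_0(j)$ by the function-likeness of $\lambda_0$ established above, i.e. $\lambda_0(i) =_{\lambda_0 I(X)} \lambda_0(j)$, and since $f^*$ is a function we get $f(i) =_Y f^*(\lambda_0(i)) =_Y f^*(\lambda_0(j)) =_Y f(j)$. Thus $f$ is a function, so the function $\lambda_0 f$ of the first part is defined, and the equalities $f^*(\lambda_0(i)) =_Y f(i) =_Y (\lambda_0 f)(\lambda_0(i))$, holding for every $i \in I$, give $f^* =_{\D F(\lambda_0 I(X), Y)} \lambda_0 f$.

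I expect no genuine obstacle here: all the mathematical content is packaged into the equivalence of $=_I$ and $=_I^{\Lambda(X)}$ established in the first step. The only point requiring care is bookkeeping — keeping track of which equality each map is required to respect, and in particular noticing that the two halves of the statement use the two implications of that equivalence asymmetrically (the forward direction invokes $Q(\Lambda(X))$, while the converse invokes the function-likeness of $\lambda_0$).
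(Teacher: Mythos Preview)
Your argument is correct and complete. The paper does not give a proof of this proposition at all; it merely states that the result ``is easy to show'' and refers to \cite{Pe20}, section~3.7, so there is no approach to compare against, but your reduction to the logical equivalence of $=_I$ and $=_I^{\Lambda(X)}$ (via condition~(c) of Definition~\ref{def: famofsubsets} in one direction and $Q(\Lambda(X))$ in the other) is exactly the natural route and would certainly be accepted as the intended one.
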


Although a family of equivalence classes is not, in general, a set of subsets, 
we can define functions on them, if we use appropriate functions on their index-set.

\begin{defi}\label{def: equivstructure}
If $X$ is a set and $R_X(x, x{'})$ is an extensional property on $X \times X$ that 
satisfies the conditions of an equivalence relation, we call the pair $(X,R_X)$  
an \textit{equivalence structure}\index{equivalence structure}. If $(Y, S_Y)$ is an equivalence structure,
a function $f \colon X \to Y$ is an \textit{equivalence preserving function}\index{equivalence preserving
function}, or an $(R_X, S_Y)$-function\index{$(R_X, S_Y)$-function}, if
\[ \forall_{x, x{'} \in X}\big(R(x, x{'}) \To S(f(x), f(x{'}))\big). \]
If, for every $x,x{'} \in X$, the converse implication holds, we say that  
$f$ is an $(R_X, S_Y)$-embedding\index{$(R_X, S_Y)$-embedding}.
Let $\D F(R_X, S_Y)$\index{$\D F(R_X, S_Y)$} be the set of $(R_X, S_Y)$-functions\footnote{By the extensionality of $S_Y$
the property of being an $(R_X, S_Y)$-function is extensional on $\D F(X, Y)$.}. 
\end{defi}

\begin{prop}\label{prp: equivstr1}
If $(X, R_X)$ is an  equivalence structure, let $R(X) := \big(\rho_0, \C R^X, \rho_1\big)$, where\index{$R(X)$}
$\rho_0 \colon X \sto \D V_0$ is defined by 
$\rho_0(x) := \{y \in X \mid R_X(y, x)\}$, for every $x \in X$,
%\[ \rho_0(x) := \{y \in X \mid R_X(y, x)\}, \]
and the dependent operations 
$\C R^X \colon \bigcurlywedge_{x \in X}\D F\big(\rho_0(x), X\big)$, 
$\rho_1 \colon \bigcurlywedge_{(x,x{'}) \in D(X)}\D F\big(\rho_0(x), \rho_0(x{'})\big)$ are defined by 
\[ \C R^X_x \colon \rho_0(x) \eto X \ \ \ \ y \mapsto y; \ \ \ \ y \in \rho_0(x), \]
\[ \rho_1(x,x{'}) := \rho_{xx{'}} \colon \rho_0(x) \to \rho_0(x{'}) \ \ \ \ y \mapsto y; \ \ \ \ y \in 
\rho_0(x). \] 
Then $R(X) \in \Fam(X, X)$, and $\forall_{x x{'} \in X}\big(\rho_0(x) =_{\C P(X)} \rho_0(x{'}) \To 
R(x, x{'})\big)$.

\end{prop}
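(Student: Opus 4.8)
The plan is to verify the three clauses of Definition~\ref{def: famofsubsets} for $R(X)$, and then to deduce the implication from the reflexivity of $R_X$. First I would check that all the data are well-defined. For each $x \in X$ the totality $\rho_0(x) = \{y \in X \mid R_X(y, x)\}$ is an extensional subset of $X$: since $R_X$ is extensional on $X \times X$, whenever $y =_X y{'}$ and $R_X(y, x)$ we have $(y, x) =_{X \times X} (y{'}, x)$, hence $R_X(y{'}, x)$, so the property $y \mapsto R_X(y, x)$ is extensional on $X$ and $\rho_0(x)$ is a set with $\C R^X_x$ the inclusion. For the transport maps, if $x =_X x{'}$, then extensionality of $R_X$ in its second argument (from $(y, x) =_{X \times X} (y, x{'})$) shows $R_X(y, x) \To R_X(y, x{'})$, so $\rho_{xx{'}} : y \mapsto y$ genuinely maps $\rho_0(x)$ into $\rho_0(x{'})$ and is a function.

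Granting this, the clauses are immediate. Clause (a) holds because the inclusion $\C R^X_x$ of an extensional subset is an embedding. Clause (b) holds because $\rho_{xx}$ is the map $y \mapsto y$ on $\rho_0(x)$, which is $\id_{\rho_0(x)}$. For clause (c), fix $(x, x{'}) \in D(X)$; by symmetry of $=_X$ also $(x{'}, x) \in D(X)$, so $\rho_{x{'}x}$ is defined, and for $y \in \rho_0(x)$ both required triangles collapse to identities on elements, $(\C R^X_{x{'}} \circ \rho_{xx{'}})(y) = y = \C R^X_x(y)$ and symmetrically for the other composite. Hence $R(X) \in \Fam(X, X)$.

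For the second assertion, suppose $\rho_0(x) =_{\C P(X)} \rho_0(x{'})$, so in particular $\rho_0(x) \subseteq \rho_0(x{'})$, witnessed by some $f : \rho_0(x) \to \rho_0(x{'})$ with $\C R^X_{x{'}} \circ f = \C R^X_x$. The decisive step is to use reflexivity of $R_X$: since $R_X(x, x)$ holds we have $x \in \rho_0(x)$. Applying $f$ gives $f(x) \in \rho_0(x{'})$ with $\C R^X_{x{'}}(f(x)) = \C R^X_x(x)$; as both embeddings are inclusions this reads $f(x) =_X x$. But $f(x) \in \rho_0(x{'})$ means exactly $R_X(f(x), x{'})$, and extensionality of $R_X$ in its first argument then yields $R_X(x, x{'})$. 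The computations are routine throughout; the only real content is this use of reflexivity to place the index point $x$ inside its own class, which is what transfers the $\C P(X)$-equality of classes back to a relation between representatives, while extensionality of $R_X$ is invoked in both coordinates---in the second for well-definedness of the transport maps and in the first for the concluding step.
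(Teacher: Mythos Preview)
Your proof is correct. The first part, verifying that $R(X)\in\Fam(X,X)$, matches the paper's argument (you are in fact more explicit about clauses (a)--(c), which the paper leaves implicit after checking well-definedness of $\rho_0(x)$ and $\rho_{xx{'}}$).

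For the implication $\rho_0(x)=_{\C P(X)}\rho_0(x{'})\To R_X(x,x{'})$ you take a slightly different and more economical route than the paper. The paper picks an arbitrary $y\in\rho_0(x)$, obtains $R_X(y,x{'})$ from $f(y)=_X y$ and extensionality, and then appeals to \emph{symmetry} ($R_X(y,x)\To R_X(x,y)$) and \emph{transitivity} to conclude $R_X(x,x{'})$. You instead specialise immediately to $y:=x$ via reflexivity, so that $R_X(f(x),x{'})$ together with $f(x)=_X x$ and extensionality in the first argument yields $R_X(x,x{'})$ directly, without invoking symmetry or transitivity. Your argument thus isolates reflexivity as the single equivalence-relation axiom actually needed for this direction (beyond extensionality), whereas the paper's version uses all three axioms; on the other hand the paper's version shows in passing the stronger fact that every $y\in\rho_0(x)$ also lies in $\rho_0(x{'})$.
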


\begin{proof}
By the extensionality of $R_X$ the set $\rho_0(x)$ is a well-defined extensional subset of $X$. If $x =_X x{'}$ and 
$R_X(y, x)$, then by the extensionality of $R_X$ we get $R_X(y, x{'})$, hence $\rho_{xx{'}}$ is well-defined. 
Let $(f, g) \colon \rho_0(x) =_{\C P(X)} \rho_0(x{'})$
\begin{center}
\resizebox{4cm}{!}{%
\begin{tikzpicture}

\node (E) at (0,0) {$\rho_0(x)$};
\node[right=of E] (B) {};
\node[right=of B] (F) {$\rho_0(x{'})$};
\node[below=of B] (C) {};
\node[below=of C] (A) {$X$.};

\draw[left hook->,bend left] (E) to node [midway,above] {$f$} (F);
\draw[left hook->,bend left] (F) to node [midway,below] {$g$} (E);
\draw[right hook->] (E)--(A) node [midway,left] {$\C R^X_x \ $};
\draw[left hook->] (F)--(A) node [midway,right] {$ \ \C R^X_{x{'}}$};

\end{tikzpicture}
}
\end{center} 
If $y \in \rho_0(x) :\TOT R_X(y, x)$, then $f(y) \in \rho_0(x{'}) :\TOT R_X(f(y), x{'})$, and by the commutativity of 
the corresponding above diagram we get $f(y) =_X y$. Hence by the extensionality of $R_X$ we get $R_X(y, x{'})$.
Since $R_X(y, x)$ implies $R_X(x, y)$, by transitivity we get $R_X(x, x{'})$.
\end{proof}

\begin{cor}\label{cor: corequivstr1}
 Let $\Eql(X) := \big(\eql_0^X, \C E^X, \eql_1^X\big)$\index{$\Eql(X)$} be the $X$-family of subsets of $X$ 
 induced by the equivalence relation $=_X$\index{$\eql_0^X(x)$} i.e., $\eql_0^X(x) := \{y \in X \mid y =_X x\}$. 
 Then $\Eql(X) \in \Set(X,X)$.
\end{cor}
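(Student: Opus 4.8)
The plan is to obtain the corollary as an immediate instance of Proposition~\ref{prp: equivstr1}, by taking the equivalence relation $R_X$ to be the canonical equality $=_X$ of $X$ itself. First I would observe that $(X, =_X)$ is an equivalence structure in the sense of Definition~\ref{def: equivstructure}: the equality $=_X$ satisfies the conditions of an equivalence relation by the very definition of a set, and it is an extensional property on $X \times X$, as noted after Definition~\ref{def: extsubset}. Hence Proposition~\ref{prp: equivstr1} applies with $R_X := \, =_X$.

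With this choice, the set $\rho_0(x) := \{y \in X \mid R_X(y, x)\}$ produced by the proposition is exactly $\{y \in X \mid y =_X x\} =: \eql_0^X(x)$, and the accompanying moduli $\C R^X$ and $\rho_1$ coincide with $\C E^X$ and $\eql_1^X$, both being given by the identity map-rule. Thus $R(X) = \Eql(X)$, and the first conclusion of the proposition yields $\Eql(X) \in \Fam(X, X)$.

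It then remains to verify the separating property $Q(\Eql(X))$ of Definition~\ref{def: setofsubsets}, namely $\forall_{x, x{'} \in X}\big(\eql_0^X(x) =_{\C P(X)} \eql_0^X(x{'}) \To x =_X x{'}\big)$. But this is precisely the second conclusion of Proposition~\ref{prp: equivstr1} in the present instance, since there $R(x, x{'})$ is $x =_X x{'}$. As $\Set(X, X)$ is defined by separation on $\Fam(X, X)$ through $Q$, we conclude $\Eql(X) \in \Set(X, X)$.

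The only point deserving a word is the identification of the data of $R(X)$ with those of $\Eql(X)$, which is immediate from the definitions; there is essentially no obstacle, as the substance of the result has already been established in the proof of Proposition~\ref{prp: equivstr1}.
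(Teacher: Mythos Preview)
Your proposal is correct and takes essentially the same approach as the paper, which simply says ``It follows immediately from Proposition~\ref{prp: equivstr1}.'' You have merely spelled out in detail the identification $R(X) = \Eql(X)$ when $R_X := {=_X}$ and checked explicitly that the second conclusion of that proposition yields the separating property $Q(\Eql(X))$.
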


\begin{proof}
It follows immediately from Proposition~\ref{prp: equivstr1}. 
\end{proof}

\section{Direct limit of Bishop spaces}
\label{sec: directlimit}

If $X$ is a set, by Corollary~\ref{cor: corequivstr1} the family $\Eql(X) := \big(\eql_0^X, \C E^X, 
\eql_1^X\big) \in \Set(X, X)$,
where $\eql_0^X(x) := \{y \in X \mid y =_X x\}$. Consequently, if $f \colon X \to Y$ , there is unique 
$\eql_0 f \colon \eql_0 X(X) \to Y$ such that the following diagram commutes
\begin{center}
\begin{tikzpicture}

\node (E) at (0,0) {$\eql_0 X(X)$};
%\node[above=of F] (A) {$B$};
\node [above=of E] (D) {$X$};
\node[right=of D] (F) {$Y,$};
%\node [below=of D] (G) {$V$};

\draw[dashed, ->] (E)--(F) node [midway,right] {$ \ \eql_0 f$};
%\draw[->] (D)--(A) node [midway,above] {$f $};
\draw [->] (D)--(E) node [midway,left] {$\eql^*_0$};
\draw[->] (D)--(F) node [midway,above] {$f$};
%\draw[->] (D)--(E) node [midway,left] {$f$};

\end{tikzpicture}
\end{center}
where $\eql_0 X(X)$ is the totality $X$ with the equality $x =_{\eql_0 X (X)} x{'} 
:\TOT \eql_0^X(x) =_{\C P(X)} \eql_0^X(x{'})$. As $\Eql(X) \in \Set(X, X)$, we get $\eql_0^X(x) 
=_{\C P(X)} \eql_0^X(x{'}) \TOT x =_X x{'}$.
The map $\eql_0^* \colon X \to \eql_0 X(X)$ is defined by the identity map-rule, written in the 
form $x \mapsto \eql_0^X(x)$, for every $x \in X$. We use the set $\eql_0 X(X)$ to define the direct
limit of a direct spectrum of Bishop spaces\footnote{In this way, our definition of the direct limit of a direct spectrum of
Bishop spaces is in complete analogy to the corresponding definition for topological spaces.}. 
In what follows we avoid including the superscript $X$ in our notation.

\begin{defi}\label{def: Lim}
Let $S(\Lambda^{\lt}) := (\lambda_0, \lambda_1^{\lt}, \phi_0^{\Lambda^{\lt}}, 
\phi_1^{\Lambda^{\lt}}) \in \Spec(I, \lt_I)$ and 
%the operation
$ \eql_0 \colon \sum_{i \in I}^{\lt} \lambda_0(i) \sto  \D V_0$, defined by 
\[ \eql_0 (i, x) := \bigg\{(j, y) \in \sum_{i \in I}^{\lt} \lambda_0(i) \mid (j, y)
=_{\mathsmaller{\sum_{i \in I}^{\lt} \lambda_0(i)}} (i, x)\bigg\}; \ \ \ \ (i, x) \in 
\sum_{i \in I}^{\lt} \lambda_0(i).
\]
The \textit{direct limit} $\underset{\to} \Lim \lambda_0 (i)$\index{$\underset{\to} \Lim \lambda_0 (i)$} 
of\index{direct limit of a spectrum} $S(\Lambda^{\lt})$ is the set
\[  \underset{\to} \Lim \lambda_0 (i) := \eql_0 \sum_{i \in I}^{\lt} \lambda_0(i)
\bigg(\sum_{i \in I}^{\lt} \lambda_0(i)\bigg), \]
\[ \eql_0 (i, x) =_{\underset{\to} \Lim \lambda_0 (i)} \eql_0 (j, y) : \TOT 
 \eql_0 (i, x) =_{\C P \big(\sum_{i \in I}^{\lt} \lambda_0(i)\big)} \eql_0 (j, y)
\TOT (i, x) =_{\sum_{i \in I}^{\lt} \lambda_0(i)} (j, y). \]
We write $\eql_0^{\Lambda^{\lt}}$\index{$\eql_0^{\Lambda^{\prec}}$} when we need to express 
the dependence of $\eql_0$ from $\Lambda^{\lt}$.
\end{defi}

\begin{rem}\label{rem: directLim1}
If $S(\Lambda^{\lt}) := (\lambda_0, \lambda_1^{\lt}, \phi_0^{\Lambda^{\lt}},
\phi_1^{\Lambda^{\lt}}) \in \Spec(I, \lt_I)$ and $i \in I$, the operation
$\eql_i \colon \lambda_0 (i) \sto 
\underset{\to} \Lim \lambda_0 (i)$, defined by $\eql_i (x) := \eql_0 (i, x)$, for every $x \in \lambda_0 (i)$,
is a function.
\end{rem}

\begin{proof}
 If $x, x{'} \in \lambda_0 (i)$ such that $x =_{\lambda_0 (i)} x{'}$, then
 \begin{align*}
  \eql_i (x) =_{\underset{\to} \Lim \lambda_0 (i)} \eql_i (x{'}) & : \TOT 
  \eql_0 (i, x) =_{\underset{\to} \Lim \lambda_0 (i)} \eql_0 (i, x{'})\\
  & \TOT (i, x) =_{\sum_{i \in I}^{\lt} \lambda_0(i)} (i, x{'})\\
  & : \TOT \exists_{k \in I}\big(i \lt k \ \& \ \lambda_{ik}^{\lt}(x) =_{\lambda_0(k)}
  \lambda_{ik}^{\lt}(x{'})\big),
 \end{align*}
which holds, since $\lambda_{ik}^{\lt}$ is a function, and hence 
if $x =_{\lambda_0 (i)} x{'}$, then
$\lambda_{ik}^{\lt}(x) =_{\lambda_0(k)} \lambda_{ik}^{\lt}(x{'})$, for every $k \in I$ such that
$i \lt k$. Such a $k \in I$ always exists e.g., one can take $k := i$.
\end{proof}

\begin{defi}\label{def: Limtop}
Let $S(\Lambda^{\lt}) := (\lambda_0, \lambda_1^{\lt}, \phi_0^{\Lambda^{\lt}}, 
\phi_1^{\Lambda^{\lt}}) \in \Spec(I, \lt_I)$ with Bishop spaces $(F_i)_{i \in I}$ and Bishop morphisms 
$(\lambda_{ij}^{\lt})_{(i,j) \in D^{\lt}(I)}$.
The \textit{direct limit of} $S(\Lambda^{\lt})$\index{direct limit of a direct spectrum} 
is the Bishop space\index{$\underset{\to} \Lim \C F_i$}\index{$\underset{\to} \Lim F_i$}
\[ \underset{\to} \Lim \C F_i := \big(\underset{\to} \Lim \lambda_0 (i), \underset{\to} \Lim F_i\big), 
\ \ \ \ \mbox{where} \] 
\[ \underset{\to} \Lim F_i := 
%\omega \sum_{i \in I}^{\lt} F_i := 
%\omega \bigvee_{\mathsmaller{\Theta \in \prod_{i \in I}^{\mt}F_i}} f_{\Theta} 
\bigvee_{\mathsmaller{\Theta \in \prod_{i \in I}^{\mt}F_i}}\eql_0  f_{\Theta}, \]
\[ \eql_0 f_{\Theta} \big(\eql_0 (i, x)\big) := f_{\Theta}(i, x) := \Theta_i(x); \ \ \ \ \eql_0 (i, x)
\in \underset{\to} \Lim \lambda_0 (i) \]
\begin{center}
\begin{tikzpicture}

\node (E) at (0,0) {$\underset{\to} \Lim \lambda_0 (i)$};
%\node[above=of F] (A) {$B$};
\node [above=of E] (D) {$\sum_{i \in I}^{\lt} \lambda_0(i)$};
\node[right=of D] (F) {$\Real$.};
%\node [below=of D] (G) {$V$};

\draw[dashed, ->] (E)--(F) node [midway,right] {$ \ \eql_0 f_{\Theta}$};
%\draw[->] (D)--(A) node [midway,above] {$f $};
\draw [->] (D)--(E) node [midway,left] {$\eql^*_0$};
\draw[->] (D)--(F) node [midway,above] {$f_{\Theta}$};
%\draw[->] (D)--(E) node [midway,left] {$f$};

\end{tikzpicture}
\end{center}\end{defi}

\begin{rem}\label{rem: Limtop1}
 If $(I, \lt)$ is a directed set, $\C G := (Y, G)$ is a Bishop space, and 
 $S(\Lambda^{\lt, Y})$
 is the constant direct spectrum over $(I, \lt_I)$ with Bishop space $\C G$ and Bishop morphism $\id_Y$, the direct limit 
 $\underset{\to} \Lim \C G$ of $S(\Lambda^{\lt, Y})$
 is Bishop-isomorphic to $\C G$. Moreover, every Bishop 
 space is Bishop-isomorphic to the direct limit of a direct spectrum over any given directed set.
\end{rem}

% \begin{proof}
%  The proof is straightforward.
% \end{proof}
% 
% 
% 

\begin{prop}[Universal property of the direct limit]\label{prp: universaldirect}
 If $S(\Lambda^{\lt}) := (\lambda_0, \lambda_1^{\lt}, \phi_0^{\Lambda^{\lt}}, \phi_1^{\Lambda^{\lt}}) \in \Spec(I, \lt_I)$ 
with Bishop spaces $(\C F_i)_{i \in I}$ and Bishop morphisms $(\lambda_{ij}^{\lt})_{(i,j) \in \lt(I)}$, 
its direct limit $\underset{\to} \Lim \C F_i$ satisfies the universal property of
direct limits\index{universal property of direct limits} i.e., \\[1mm]
\normalfont (i)
\itshape For every $i \in I$, we have that $\eql_i \in \Mor(\C F_i, \underset{\to} \Lim \C F_i)$.\\[1mm]
\normalfont (ii)
\itshape If $i \lt_I j$, the following left diagram commutes
\begin{center}
\begin{tikzpicture}

\node (E) at (0,0) {$\underset{\to} \Lim \lambda_0(i)$};
\node[below=of E] (F) {};
\node [right=of F] (B) {$\lambda_0(j)$};
\node [left=of F] (C) {$\lambda_0(i)$};
\node [right=of B] (D) {$\lambda_0(i)$};
\node[right=of D] (K) {};
\node[right=of K] (L) {$\lambda_0(j)$.};
\node[above=of K] (M) {$Y$};

\draw[->] (B)--(E) node [midway,right] {$ \ \eql_j$};
\draw[->] (C)--(E) node [midway,left] {$\eql_i \ $};
\draw[->] (C)--(B) node [midway,below] {$\lambda_{ij}^{\lt}$};

\draw[->] (L)--(M) node [midway,right] {$ \ \varepsilon_j$};
\draw[->] (D)--(M) node [midway,left] {$\varepsilon_i \ $};
\draw[->] (D)--(L) node [midway,below] {$\lambda_{ij}^{\lt}$};

\end{tikzpicture}
\end{center}
\normalfont (iii)
\itshape If $\C G := (Y, G)$ is a Bishop space and 
$\varepsilon_i : \lambda_0(i) \to Y \in \Mor(\C F_i, \C G)$, for every $i \in I$, are such that if 
$i \lt j$, the
above right diagram commutes,
there is a unique function
$h \colon \underset{\to} \Lim \lambda_0(i) \to Y \in \Mor(\underset{\to} \Lim \C F_i, \C G)$ such that 
the following diagrams commute
\begin{center}
\begin{tikzpicture}

\node (E) at (0,0) {$Y$};
\node[below=of E] (F) {};
\node [right=of F] (B) {$\lambda_0(j)$,};
\node [left=of F] (C) {$\lambda_0(i)$};
\node[below=of F] (G) {$\underset{\to} \Lim \lambda_0(i)$.};

\draw[->] (B)--(E) node [midway,right] {$ \ \  \varepsilon_j$};
\draw[->] (C)--(E) node [midway,left] {$\varepsilon_i \ $};
\draw[->] (C)--(B) node [midway,below] {$\lambda_{ij}^{\lt} \ \ \ \ \ \  $};
\draw[->] (B)--(G) node [midway,right] {$ \ \eql_j$};
\draw[->] (C)--(G) node [midway,left] {$\eql_i \ $};
\draw[dashed,->] (G)--(E) node [midway,near end] {$\ \ \ h$};

\end{tikzpicture}
\end{center}
\end{prop}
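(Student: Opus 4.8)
The plan is to prove the three parts in turn, reducing each morphism claim to a statement about the subbasic generators $\eql_0 f_{\Theta}$ of $\underset{\to}\Lim F_i$ via the $\bigvee$-lifting of morphisms; the substance lies in part (iii).

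For (i), the operation $\eql_i$ is a function by Remark~\ref{rem: directLim1}, so by the $\bigvee$-lifting it suffices to show $\eql_0 f_{\Theta}\circ\eql_i \in F_i$ for each $\Theta\in\prod_{i\in I}^{\mt}F_i$. Unwinding the definitions, $(\eql_0 f_{\Theta}\circ\eql_i)(x)=f_{\Theta}(i,x)=\Theta_i(x)$, so $\eql_0 f_{\Theta}\circ\eql_i=\Theta_i\in F_i$. For (ii), I would unfold the asserted identity $\eql_j(\lambda_{ij}^{\lt}(x))=\eql_i(x)$ into the direct-sum equality $(j,\lambda_{ij}^{\lt}(x))=_{\sum_{i\in I}^{\lt}\lambda_0(i)}(i,x)$ and witness it with $k:=j$: then $j\lt j$ by reflexivity, $i\lt j$ by hypothesis, and $\lambda_{jj}^{\lt}=\id_{\lambda_0(j)}$ collapses the required equation to $\lambda_{ij}^{\lt}(x)=\lambda_{ij}^{\lt}(x)$.

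For (iii) I would first glue the cocone $(\varepsilon_i)_{i\in I}$ into one operation $\varepsilon\colon\sum_{i\in I}^{\lt}\lambda_0(i)\sto Y$, $\varepsilon(i,x):=\varepsilon_i(x)$, and check it respects $=_{\sum_{i\in I}^{\lt}\lambda_0(i)}$: if $(i,x)=_{\sum_{i\in I}^{\lt}\lambda_0(i)}(j,y)$ with witness $k$, the compatibility $\varepsilon_k\circ\lambda_{ik}^{\lt}=\varepsilon_i$ and $\varepsilon_k\circ\lambda_{jk}^{\lt}=\varepsilon_j$ together with the fact that $\varepsilon_k$ is a function give $\varepsilon_i(x)=\varepsilon_k(\lambda_{ik}^{\lt}(x))=\varepsilon_k(\lambda_{jk}^{\lt}(y))=\varepsilon_j(y)$, so $\varepsilon$ is a function. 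Since $\underset{\to}\Lim\lambda_0(i)$ is $\eql_0$ applied to the set of equivalence classes of $=_{\sum_{i\in I}^{\lt}\lambda_0(i)}$ (a set of subsets, as in Corollary~\ref{cor: corequivstr1}), the factorisation property of Proposition~\ref{prp: subFamtoset1}, in the $\eql_0 f$ form used just before Definition~\ref{def: Lim}, yields a unique function $h:=\eql_0\varepsilon$ with $h(\eql_0(i,x))=\varepsilon_i(x)$; in particular $h\circ\eql_i=\varepsilon_i$ for every $i$, and $h$ is the only function with this property, giving both existence and uniqueness.

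It remains to show $h\in\Mor(\underset{\to}\Lim\C F_i,\C G)$, which is the crux. I would verify $g\circ h\in\underset{\to}\Lim F_i$ directly for each $g\in G$. Put $\Theta_i:=g\circ\varepsilon_i$; then $\Theta_i\in F_i$ because $\varepsilon_i\in\Mor(\C F_i,\C G)$, and $\Theta\in\prod_{i\in I}^{\mt}F_i$ since for $i\lt k$ the compatibility gives $\Theta_k\circ\lambda_{ik}^{\lt}=g\circ(\varepsilon_k\circ\lambda_{ik}^{\lt})=g\circ\varepsilon_i=\Theta_i$, which is exactly the contravariant coherence condition defining $\prod^{\mt}$ (cf. Remark~\ref{rem: beforetopondirectedsigma}). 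Computing on generators, $(g\circ h)(\eql_0(i,x))=g(\varepsilon_i(x))=\Theta_i(x)=\eql_0 f_{\Theta}(\eql_0(i,x))$, so $g\circ h=\eql_0 f_{\Theta}\in\underset{\to}\Lim F_i$. The main obstacle is precisely this last step: one must produce, for each $g$, the correct $\Theta\in\prod_{i\in I}^{\mt}F_i$ turning $g\circ h$ into a single subbasic function, and the contravariant coherence of $\Theta$ is exactly where the compatibility of the cocone with the transport maps is consumed; the well-definedness of $\varepsilon$ is a secondary technicality handled by the same compatibility.
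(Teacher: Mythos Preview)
Your proposal is correct and follows essentially the same route as the paper: the $\bigvee$-lifting reduction in (i), the unfolding of the direct-sum equality in (ii), and in (iii) the definition $h(\eql_0(i,x)):=\varepsilon_i(x)$, its well-definedness via the cocone compatibility at a common upper bound $k$, and the verification $g\circ h=\eql_0 f_{\Theta}$ with $\Theta_i:=g\circ\varepsilon_i\in\prod_{i\in I}^{\mt}F_i$. The only cosmetic difference is that in (ii) you pick the witness $k:=j$ directly (using $\lambda_{jj}^{\lt}=\id$), whereas the paper takes an arbitrary $k\succcurlyeq j$ and invokes the composition law; your choice is marginally simpler but the argument is the same.
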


\begin{proof}
For the proof of (i), we use the $\bigvee$-lifting of morphisms. We have that
\[ \eql_i \in \Mor(\C F_i, \underset{\to} \Lim \C F_i) \TOT \forall_{\Theta \in 
\prod_{i \in I}^{\lt}F_i}\big(\eql_0 f_{\Theta} \circ \eql_i \in F_i\big). \]
If $x \in \lambda_0 (i)$, then 
$\big(\eql_0 f_{\Theta} \circ \eql_i\big)(x) := \eql_0 f_{\Theta}\big(\eql_0 (i, x)\big)
:= f_{\Theta} (i, x) := \Theta_i(x)$
hence $\eql_0 f_{\Theta} \circ \eql_i := \Theta_i \in F_i$. For the proof of (ii), if $x \in \lambda_0 (i)$, then 
\begin{align*}
 \eql_j (\lambda_{ij}^{\lt}(x)) =_{\underset{\to} \Lim \lambda_0 (i)} \eql_i (x) & : \TOT
 \eql_0 \big(j, \lambda_{ij}^{\lt}(x)\big) =_{\underset{\to} \Lim \lambda_0 (i)} \eql_0 (i, x)\\
 & \TOT \big(j, \lambda_{ij}^{\lt}(x)\big) =_{\sum_{i \in I}^{\lt}\lambda_0 (i)} (i, x)\\
 & : \TOT \exists_{k \in I} \big(i \lt k \ \& \ j \lt k \ \& \ \lambda_{ik}^{\lt}(x) =_{\lambda_0 (k)} 
 \lambda_{jk}^{\lt}(\lambda_{ij}^{\lt}(x))\big),
\end{align*}
which holds, since if $k \in I$ with $j \lt k$, the equality  
$ \lambda_{ik}^{\lt}(x) =_{\lambda_0 (k)} \lambda_{jk}^{\lt}(\lambda_{ij}^{\lt}(x))$ holds by
the definition of a direct family of sets, and by the definition of a directed set such a $k$ always exists.  To prove  
(iii) let the operation $h \colon \underset{\to} \Lim \lambda_0(i) \sto Y$, defined by 
$h\big(\eql_0(i, x)\big) := \varepsilon_i(x)$, for every $\eql_0(i, x) \in \underset{\to} \Lim \lambda_0(i)$. 
First we show that $h$ is a function. Let 
$$ \eql_0 (i, x) =_{\underset{\to} \Lim \lambda_0 (i)} \eql_0 (j, y) \TOT 
\exists_{k \in I}\big(i, j \lt k \ \& \ \lambda_{ik}^{\lt}(x) =_{\lambda_0(k)}
  \lambda_{jk}^{\lt}(y)\big).$$
By the supposed commutativity of the following diagrams
\begin{center}
\begin{tikzpicture}

\node (E) at (0,0) {$Y$};
\node[below=of E] (F) {};
\node [right=of F] (B) {$\lambda_0(k)$};
\node [left=of F] (C) {$\lambda_0(i)$};
\node [right=of B] (D) {$\lambda_0(j)$};
\node[right=of D] (K) {};
\node[right=of K] (L) {$\lambda_0(k)$};
\node[above=of K] (M) {$Y$};

\draw[->] (B)--(E) node [midway,right] {$ \ \varepsilon_k$};
\draw[->] (C)--(E) node [midway,left] {$\varepsilon_i \ $};
\draw[->] (C)--(B) node [midway,below] {$\lambda_{ik}^{\lt}$};

\draw[->] (L)--(M) node [midway,right] {$ \ \varepsilon_k$};
\draw[->] (D)--(M) node [midway,left] {$\varepsilon_j \ $};
\draw[->] (D)--(L) node [midway,below] {$\lambda_{jk}^{\lt}$};

\end{tikzpicture}
\end{center}
we get
$ h\big(\eql_0(i, x)\big) := \varepsilon_i(x) = \varepsilon_k\big(\lambda_{ik}^{\lt}(x)\big) =
\varepsilon_k\big(\lambda_{jk}^{\lt}(y)\big) = \varepsilon_j(y) := h\big(\eql_0(j, y)\big)$.
Next we show that $h$ is a Bishop morphism. By definition
%the $\bigvee$-lifting of morphisms we have that
$h \in \Mor(\underset{\to} \Lim \C F_i, \C G) \TOT \forall_{g \in G}\big(g \circ h \in 
\underset{\to} \Lim F_i\big)$.
If $g \in G$, we show that the dependent operation 
$\Theta_g : \bigcurlywedge_{i \in I}F_i $, defined by $\Theta_g (i) := g \circ \varepsilon_i$, for every $i \in I$,
is well-defined, since $\varepsilon_i \in \Mor(\C F_i, \C G)$, and that $\Theta_g \in \prod_{i \in I}^{\mt}F_i$. 
To prove the latter, if $i \lt k$, we show that $\Theta_g (i) = \Theta_g (k) 
\circ \lambda_{ik}^{\lt}$. By the commutativity of the above left diagram we have that
$\Theta_g (k) \circ \lambda_{ik}^{\lt} := (g \circ \varepsilon_k) \circ \lambda_{ik}^{\lt} :=
g \circ (\varepsilon_k \circ \lambda_{ik}^{\lt}) = g \circ \varepsilon_i := \Theta_g (i)$, 
Hence $f_{\Theta_g} \in \underset{\to} \Lim F_i$. Since
$(g \circ h)\big(\eql_0(i, x)\big) := g(\varepsilon_i(x)) := (g \circ \varepsilon_i)(x) :=
[\Theta_g (i)](x) := f_{\Theta_g}\big((\eql_0(i, x)\big)$, 
we get $g \circ h := \eql_0 f_{\Theta_g} \in \underset{\to} \Lim F_i$. The uniqueness of $h$, and the commutativity
of the diagram in property (iii) follow immediately. 
\end{proof}

The uniqueness of $\underset{\to} \Lim \lambda_0 (i)$, up to Bishop isomorphism, is shown easily from 
its universal property.
Note that if $i, j \in I$, $x \in \lambda_0 (i)$ and $y \in \lambda_0 (j)$, we have that
\begin{align*}
 \eql_i (x) =_{\underset{\to} \Lim \lambda_0 (i)} \eql_j (y) & : \TOT 
 \eql_0 (i, x) =_{\underset{\to} \Lim \lambda_0 (i)} \eql_0 (j, y)\\
 & \TOT (i, x) =_{\sum_{i \in I}^{\lt} \lambda_0(i)} (j, y)\\
 & : \TOT \exists_{k \in I}\big(i \lt k \ \& \ j \lt k \ \& \ \lambda_{ik}^{\lt}(x) =_{\lambda_0 (k)} 
\lambda_{jk}^{\lt}(y)\big).
\end{align*}

\begin{defi}\label{def: representative}
Let $S^{\lt} := (\lambda_0, \lambda_1^{\lt} ; \phi_0^{\Lambda^{\lt}}, \phi_1^{\Lambda^{\lt}})$ be 
a direct spectrum over $(I, \lt)$. 
If $i \in I$, an element $x$ of $\lambda_0 (i)$ is a
\textit{representative} of $\eql_0(z) \in \underset{\to} \Lim \lambda_0 (i)$,
if ${\eql}_i (x) =_{\mathsmaller{\underset{\to} \Lim \lambda_0 (i)}} \eql_0(z)$.
\end{defi}

Although an element $\eql_0(z) \in \underset{\to} \Lim \lambda_0 (i)$ may not have a representative
in every $\lambda_0 (i)$, it surely has one at some $\lambda_0 (i)$. Actually, the following holds. 

\begin{prop}\label{prp: representative1}
For every $n \geq 1$ and every $\eql_0(z_1), \ldots, \eql_0(z_n) \in \underset{\to}
\Lim \lambda_0 (i)$
there are $i \in I$ and $x_1, \ldots, x_n \in \lambda_0 (i)$ such that $x_l$ represents $\eql_0 (z_l)$, for
every $l \in \{1, \ldots, n\}$.
\end{prop}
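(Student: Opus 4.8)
The plan is to collapse all $n$ elements onto a single index by exploiting the directedness of $(I, \lt_I)$. First I would unfold the data: by the definition of the direct sum each $z_l$ is a pair $z_l := (i_l, y_l)$ with $i_l \in I$ and $y_l \in \lambda_0(i_l)$. By Definition~\ref{def: representative} together with Remark~\ref{rem: directLim1}, to say that some $x \in \lambda_0(i)$ represents $\eql_0(z_l)$ is exactly to say that $(i, x) =_{\sum_{i \in I}^{\lt}\lambda_0(i)} (i_l, y_l)$. So the entire task reduces to producing one index $i$ together with elements $x_1, \ldots, x_n \in \lambda_0(i)$ satisfying these $n$ equalities in the direct sum.

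The natural candidate for $i$ is a common upper bound of $i_1, \ldots, i_n$. I would obtain such a $k \in I$ by induction on $n$: for $n = 1$ take $k := i_1$; and given a common upper bound $k{'}$ of $i_1, \ldots, i_{n-1}$, I would apply the directedness clause to $k{'}$ and $i_n$ to get some $k$ with $k{'} \lt_I k$ and $i_n \lt_I k$, whence $i_l \lt_I k$ for every $l$ by transitivity. As this uses the existential directedness clause only finitely many (namely $n-1$) times, no choice principle is invoked. With $k$ fixed I would set $i := k$ and $x_l := \lambda_{i_l k}^{\lt}(y_l) \in \lambda_0(k)$, the transport of $y_l$ along $i_l \lt_I k$.

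It then remains to verify that $x_l$ represents $\eql_0(z_l)$, i.e. that $(k, x_l) =_{\sum_{i \in I}^{\lt}\lambda_0(i)} (i_l, y_l)$. Unfolding the equality on the direct sum, this amounts to exhibiting some $m \in I$ with $i_l \lt_I m$, $k \lt_I m$, and $\lambda_{i_l m}^{\lt}(y_l) =_{\lambda_0(m)} \lambda_{k m}^{\lt}(x_l)$. I would take $m := k$: reflexivity gives $k \lt_I k$, we already have $i_l \lt_I k$, and, since $\lambda_{kk}^{\lt} := \id_{\lambda_0(k)}$, the required equation reduces to $\lambda_{i_l k}^{\lt}(y_l) =_{\lambda_0(k)} x_l$, which holds by the very definition of $x_l$. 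This settles all $n$ equalities simultaneously.

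I do not expect a serious obstacle; the argument is essentially bookkeeping on top of directedness. The one point deserving care is the constructive one flagged above: the common upper bound must be produced by finitely many applications of the directedness clause rather than by a choice principle, which is unproblematic precisely because $n$ is a fixed natural number. If $(I, \lt_I)$ happened to carry a modulus of directedness in the sense of Definition~\ref{def: modulus}, one could even write $k$ down explicitly as an iterated value of $\delta$, rendering the construction manifestly choice-free.
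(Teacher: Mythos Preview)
Your proposal is correct and follows essentially the same approach as the paper: find a common upper bound $k$ of the indices, transport each $y_l$ to $\lambda_0(k)$, and verify representation via the defining equality on the direct sum. The only cosmetic difference is that the paper takes an arbitrary $k{'} \mt k, i_l$ as witness and invokes the composition law $\lambda_{kk{'}}^{\lt} \circ \lambda_{i_l k}^{\lt} = \lambda_{i_l k{'}}^{\lt}$, whereas your choice $m := k$ with $\lambda_{kk}^{\lt} = \id_{\lambda_0(k)}$ makes the verification marginally cleaner.
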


\begin{proof}
The proof is by induction on $\Nat^+$. We present only the case $n := 2$. Let $z := (j, y), z{'} := 
(j{'}, y{'}) \in \sum_{i \in I}^{\lt} \lambda_0(i)$, and $k \in I$ with $j \lt k$ and $j{'} 
\lt k$. By definition we have that $\lambda_{jk}^{\lt}(y) \in \lambda_0 (k)$ and 
$\lambda_{j{'}k}^{\lt}(y{'}) \in \lambda_0 (k)$. We show that $\lambda_{jk}^{\lt}(y)$ represents $\eql_0(z)$
and $\lambda_{j{'}k}^{\lt}(y{'})$ represents $\eql_0(z{'})$. By the equivalences right before 
Definition~\ref{def: representative} for the first representation 
we need to show that
\[ {\eql}_k \big(\lambda_{jk}^{\lt}(y)\big) =_{\underset{\to} \Lim \lambda_0 (i)} {\eql}_j (y) 
\TOT \exists_{k{'} \in I}\big(k \lt k{'} \ \& \ j \lt k{'} \ \& \ 
\lambda_{kk{'}}^{\lt}(\lambda_{jk}^{\lt}(y)) =_{\lambda_0 (k{'})} \lambda_{jk{'}}^{\lt}(y)\big). \]
By the composition of the transport maps it suffices to take any $k{'} \in I$ with $k \lt k{'} \ \& \ j \lt k{'}$,
and for the second representation it suffices to take any $k{''} \in I$ with
$k \lt k{''} \ \& \ j{'} \lt k{''}$.
\end{proof}

\begin{thm}\label{thm: directspectrummap1}
Let $S(\Lambda^{\lt}) := (\lambda_0, \lambda_1^{\lt}, \phi_0^{\Lambda^{\lt}}, \phi_1^{\Lambda^{\lt}})
\in \Spec(I, \lt_I)$ with Bishop spaces $(\C F_i)_{i \in I}$ and Bishop morphisms $(\lambda_{ij}^{\lt})_{(i,j) \in D^{\lt}(I)}$, 
$S(M^{\lt}) := (\mu_0, \mu_1^{\lt}, \phi_0^{M^{\lt}}, \phi_1^{M^{\lt}}) \in \Spec(I, \lt_I)$ 
with Bishop spaces $(\C G_i)_{i \in I}$
and Bishop morphisms $(\mu_{ij}^{\lt})_{(i,j) \in D^{\lt}(I)}$, and $\Psi \colon S(\Lambda^{\lt}) \To S(M^{\lt})$. \\[1mm]
\normalfont (i)
\itshape There is a unique function $\Psi_{\to} \colon \underset{\to} \Lim \lambda_0 (i) \to 
\underset{\to} \Lim \mu_0 (i)$ such that, for every $i \in I$, the following diagram commutes
\begin{center}
\begin{tikzpicture}

\node (E) at (0,0) {$\underset{\to} \Lim \lambda_0 (i)$};
\node[right=of E] (F) {$\underset{\to} \Lim \mu_0 (i)$.};
\node[above=of F] (A) {$\mu_0(i)$};
\node [above=of E] (D) {$\lambda_0(i)$};
%\node [below=of D] (G) {$V$};

\draw[dashed,->] (E)--(F) node [midway,below]{$\Psi_{\to}$};
\draw[->] (D)--(A) node [midway,above] {$\Psi_i$};
\draw[->] (D)--(E) node [midway,left] {$\eql_i^{\Lambda^{\lt}}$};
\draw[->] (A)--(F) node [midway,right] {$\eql_i^{M^{\lt}}$};
%\draw[->] (D)--(E) node [midway,left]\eql_0^{\Lambda^{\lt}f$};

\end{tikzpicture}
\end{center}
\normalfont (ii)
\itshape If $\Psi$ is continuous, then 
$\Psi_{\to} \in \Mor(\underset{\to} \Lim \C F_i, \underset{\to} \Lim \C G_i)$.\\[1mm]
\normalfont (iii)
\itshape If $\Psi_i$ is an embedding for every $i \in I$,
then $\Psi_{\to}$ is an embedding.
% $(iv)$ If $\psi_i^{\prec}$ is a quotient map, for every $i \in I$, then $\psi_{\to}^{\prec}$ is a
% quotient map.\\
% $(v)$ If $\psi_i^{\prec}$ is a Bishop isomorphism, for every $i \in I$, then $\psi_{\to}^{\prec}$ 
% is a Bishop isomorphism.
\end{thm}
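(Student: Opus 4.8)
The plan is to define $\Psi_{\to}$ on representatives and then verify each of the three claims by computations that run parallel to those already carried out for the direct sum in Proposition~\ref{prp: preorderfamilymap1} and Proposition~\ref{prp: Spectrum1}. For (i) I would set
\[ \Psi_{\to}\big(\eql_0^{\Lambda^{\lt}}(i, x)\big) := \eql_0^{M^{\lt}}\big(i, \Psi_i(x)\big), \qquad \eql_0^{\Lambda^{\lt}}(i,x) \in \underset{\to}\Lim \lambda_0(i), \]
which forces the displayed diagram to commute, since then $\Psi_{\to}\big(\eql_i^{\Lambda^{\lt}}(x)\big) := \eql_i^{M^{\lt}}\big(\Psi_i(x)\big)$ by definition. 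The substance of (i) is to check that this operation respects the equality of the direct limit, i.e. that it is a function. So I would suppose $\eql_0^{\Lambda^{\lt}}(i,x) =_{\underset{\to}\Lim\lambda_0(i)} \eql_0^{\Lambda^{\lt}}(j,y)$; unfolding the equality of the limit (which coincides with that of $\sum_{i\in I}^{\lt}\lambda_0(i)$) yields some $k \in I$ with $i \lt k$, $j \lt k$ and $\lambda_{ik}^{\lt}(x) =_{\lambda_0(k)} \lambda_{jk}^{\lt}(y)$. Taking the same $k$ as a witness on the $M$-side and using the spectrum-map commutativity $\mu_{ik}^{\lt} \circ \Psi_i = \Psi_k \circ \lambda_{ik}^{\lt}$ together with the fact that $\Psi_k$ is a function, I compute
\[ \mu_{ik}^{\lt}\big(\Psi_i(x)\big) = \Psi_k\big(\lambda_{ik}^{\lt}(x)\big) = \Psi_k\big(\lambda_{jk}^{\lt}(y)\big) = \mu_{jk}^{\lt}\big(\Psi_j(y)\big), \]
which is exactly $\eql_0^{M^{\lt}}(i,\Psi_i(x)) =_{\underset{\to}\Lim\mu_0(i)} \eql_0^{M^{\lt}}(j,\Psi_j(y))$. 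Uniqueness is immediate: any morphism making the diagram commute agrees with $\Psi_{\to}$ on every representative, and by Proposition~\ref{prp: representative1} every element of the limit has one.

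For (ii), assuming $\Psi$ continuous, I would invoke the $\bigvee$-lifting of morphisms: since $\underset{\to}\Lim G_i = \bigvee_{H \in \prod_{i\in I}^{\mt}G_i} \eql_0 g_H$, it suffices to show $\eql_0 g_H \circ \Psi_{\to} \in \underset{\to}\Lim F_i$ for each $H \in \prod_{i\in I}^{\mt}G_i$. By Lemma~\ref{lem: preorderdependent}, the dependent operation $H^*$ with $H^*_i := H_i \circ \Psi_i$ lies in $\prod_{i\in I}^{\mt}F_i$, so $\eql_0 f_{H^*} \in \underset{\to}\Lim F_i$. Evaluating on a representative,
\[ \big(\eql_0 g_H \circ \Psi_{\to}\big)\big(\eql_0^{\Lambda^{\lt}}(i,x)\big) := g_H\big(i,\Psi_i(x)\big) := H_i\big(\Psi_i(x)\big) = H^*_i(x) =: f_{H^*}(i,x) =: \eql_0 f_{H^*}\big(\eql_0^{\Lambda^{\lt}}(i,x)\big), \]
so $\eql_0 g_H \circ \Psi_{\to} = \eql_0 f_{H^*} \in \underset{\to}\Lim F_i$, exactly as in Proposition~\ref{prp: Spectrum1}(ii).

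For (iii), assuming each $\Psi_i$ is an embedding, I would suppose $\Psi_{\to}\big(\eql_0^{\Lambda^{\lt}}(i,x)\big) =_{\underset{\to}\Lim\mu_0(i)} \Psi_{\to}\big(\eql_0^{\Lambda^{\lt}}(j,y)\big)$. Unfolding as in (i) gives $k$ with $i \lt k$, $j \lt k$ and $\mu_{ik}^{\lt}(\Psi_i(x)) =_{\mu_0(k)} \mu_{jk}^{\lt}(\Psi_j(y))$; rewriting both sides through the spectrum-map commutativity yields $\Psi_k\big(\lambda_{ik}^{\lt}(x)\big) =_{\mu_0(k)} \Psi_k\big(\lambda_{jk}^{\lt}(y)\big)$, and since $\Psi_k$ is an embedding I obtain $\lambda_{ik}^{\lt}(x) =_{\lambda_0(k)} \lambda_{jk}^{\lt}(y)$, i.e. $\eql_0^{\Lambda^{\lt}}(i,x) =_{\underset{\to}\Lim\lambda_0(i)} \eql_0^{\Lambda^{\lt}}(j,y)$. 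I expect the only genuinely delicate point to be the well-definedness of $\Psi_{\to}$ in (i): it is the single place where the compatibility of $\Psi$ with the transport maps is essential, and it must be handled without choosing a representative, which is precisely why the proof-relevant witness $k$ furnished by the equality of the direct sum is used uniformly across all three parts.
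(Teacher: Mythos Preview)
Your proposal is correct and follows essentially the same route as the paper's own proof: the same definition of $\Psi_{\to}$ on representatives, the same well-definedness computation via the spectrum-map squares and the witness $k$ supplied by the directed-sum equality, the same use of Lemma~\ref{lem: preorderdependent} together with the $\bigvee$-lifting for (ii), and the same embedding argument for (iii). The only cosmetic difference is that you spell out uniqueness via Proposition~\ref{prp: representative1}, whereas the paper leaves it implicit in the definition.
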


\begin{proof}
(i) The following well-defined operation $\Psi_{\to} \colon \underset{\to} \Lim \lambda_0 (i) \sto 
\underset{\to} \Lim \mu_0 (i)$, given by 
\[ \Psi_{\to} \big(\eql_0^{\Lambda^{\lt}}(i, x)\big) := \eql_0^{M^{\lt}}(i, \Psi_i (x));  \ \ \ 
\ \eql_0^{\Lambda^{\lt}}(i, x) \in 
\underset{\to} \Lim \lambda_0 (i) \]
is a function,  
%$\underset{\to} \Lim \lambda_0 (i)$ to $\underset{\to} \Lim \mu_0 (i)$, 
since, if  
$
 \eql_0^{\Lambda^{\lt}} (i, x) =_{\mathsmaller{\underset{\to} \Lim \lambda_0 (i)}} \eql_0^{\Lambda^{\lt}}(j, y)   \TOT 
 (i, x)=_{\mathsmaller{\sum_{i \in I}^{\lt} \lambda_0(i)}} (j, y)$, which is equivalent to
 $\exists_{k \in I}\big(i \lt k \ \& \ j \lt k \ \& \ \lambda_{ik}^{\lt}(x) 
 =_{\mathsmaller{\lambda_0 (k)}} \lambda_{jk}^{\lt}(y)\big)$, 
we show that
%We show that
\begin{align*}
 \Psi_{\to} \big(\eql_0^{\Lambda^{\lt}}(i, x)\big) =_{\mathsmaller{\underset{\to} \Lim \mu_0 (i)}} 
 \Psi_{\to} \big(\eql_0^{\Lambda^{\lt}}(j, y)\big) & : \TOT \eql_0^{M^{\lt}}\big(i, \Psi_i (x)\big)
 =_{\mathsmaller{\underset{\to} \Lim \mu_0 (i)}}  \eql_0^{M^{\lt}}\big(j, \Psi_j (y)\big)\\
 & \TOT (i, \Psi_i (x)) =_{\mathsmaller{\sum_{i \in I}^{\lt}\mu_0 (i)}} (j, \Psi_j (y))\\
 & : \TOT \exists_{i \in I}\big(i, j \lt k \ \& \ \mu_{ik}^{\lt}(\Psi_i (x)) 
 =_{\mathsmaller{\mu_0 (k)}} \mu_{jk}^{\lt}(\Psi_j (y)\big).
\end{align*}
By the commutativity of the following diagrams,  and since $\Psi_k$ is a function, 
\begin{center}
\begin{tikzpicture}

\node (E) at (0,0) {$\mu_0(i)$};
\node[right=of E] (F) {$\mu_0(k)$};
\node[above=of F] (A) {$\lambda_0(k)$};
\node [above=of E] (D) {$\lambda_0(i)$};
\node [right=of F] (G) {$\mu_0 (j)$};
\node [above=of G] (H) {$\lambda_0(j)$};
\node [right=of G] (K) {$\mu_0 (k)$,};
\node [above=of K] (L) {$\lambda_0(k)$};

\draw[->] (E)--(F) node [midway,below]{$\mu_{ik}^{\lt}$};
\draw[->] (D)--(A) node [midway,above] {$\lambda_{ik}^{\lt}$};
\draw[->] (D)--(E) node [midway,left] {$\Psi_i$};
\draw[->] (A)--(F) node [midway,right] {$\Psi_k$};
\draw[->] (G)--(K) node [midway,below] {$\mu_{jk}^{\lt}$};
\draw[->] (H)--(L) node [midway,above] {$\lambda_{jk}^{\lt}$};
\draw[->] (H)--(G) node [midway,left] {$\Psi_j$};
\draw[->] (L)--(K) node [midway,right] {$\Psi_k$};

\end{tikzpicture}
\end{center}
we get
$\mu_{ik}^{\lt}\big(\Psi_i(x)\big) =_{\mathsmaller{\mu_0 (k)}} \ \Psi_k \big(\lambda_{ik}^{\lt}(x)\big)
 =_{\mathsmaller{\mu_0 (k)}} \ \Psi_k \big(\lambda_{jk}^{\lt}(y)\big)
 =_{\mathsmaller{\mu_0 (k)}} \ \mu_{jk}^{\lt}\big(\Psi_j (y)\big)
$.\\
% The proof is similar to the proof of $\sigma^{\prec} psi^{\prec}$ being a morphism, since omega is a set
%of subsets.
(ii) By the $\bigvee$-lifting of morphisms it suffices to show that 
$\forall_{H \in \prod_{i \in I}^{\mt}G_i}\big((\eql_0^{M^{\lt}}g_H)
\circ \Psi_{\to} \in \underset{\to} \Lim F_i \big).$
By Definition~\ref{def: Limtop} we have that
\[ \big((\eql_0^{M^{\lt}}g_H) \circ  \Psi_{\to}\big)\big(\eql_0^{\Lambda^{\lt}}(i, x)\big) 
:= \big(\eql_0^{\Lambda^{\lt}}g_H\big)\big(\eql_0^{M^{\lt}}(i, \Psi_i (x)) \big)
:= g_H(i, \Psi_i(x)) \]
\[ := H_i (\Psi_i(x))
=: (H_i \circ \Psi_i)(x)
 := H_i^* (x)
:= f_{H^*}(i, x)
:= \big(\eql_0^{\Lambda^{\lt}}f_{H^*}\big)\big(\eql_0^{\Lambda^{\lt}}(i, x)\big), \]
where $H^* \in \prod_{i \in I}^{\mt}F_i$ is defined in Lemma~\ref{lem: preorderdependent}, and
$(\eql_0^{M^{\lt}}g_{H^*}) \circ \Psi_{\to} := \eql_0^{\Lambda^{\lt}}f_{H^*} \in \underset{\to} \Lim F_i$. \\
(iii) If $\Psi_{\to} \big(\eql_0^{\Lambda^{\lt}}(i, x) =_{\mathsmaller{\underset{\to} \Lim \mu_0 (i)}} 
 \Psi_{\to} \big(\eql_0^{\Lambda^{\lt}}(j, y)\big)$ i.e., 
$\mu_{ik}^{\lt}(\Psi_i (x)) =_{\mathsmaller{\mu_0(k)}} \mu_{jk}^{\lt}(\Psi_j(y))\big)$,
for some $k \in I$ with $i, j \lt k$, by the proof of case (ii) we get  
$\Psi_k \big(\lambda_{ik}^{\lt}(x)\big) =_{\mathsmaller{\mu_0 (k)}}  
\Psi_k \big(\lambda_{jk}^{\lt}(y)\big)$, and since
$\Psi_k$ is an embedding, we conclude that $\lambda_{ik}^{\lt}(x) =_{\mathsmaller{\lambda_0 (k)}} 
\lambda_{jk}^{\lt}(y)$ i.e., $(i, x) =_{\mathsmaller{\sum_{i \in I}^{\lt}\lambda_0(i)}} (j, y)$.
\end{proof}

%Next proposition is straightforward to show.

\begin{prop}\label{prp: transitivity}
Let $S(\Lambda^{\lt}) := (\lambda_0, \lambda_1^{\lt}, \phi_0^{\Lambda^{\lt}}, \phi_1^{\Lambda^{\lt}}) \in 
\Spec(I, \lt_I)$ with Bishop spaces $(F_i)_{i \in I}$ and morphisms $(\lambda_{ij}^{\lt})_{(i,j) \in D^{\lt}(I)}$,
%let
$S(M^{\lt}) := (\mu_0, \mu_1^{\lt}, \phi_0^{M^{\lt}}, \phi_1^{M^{\lt}}) \in \Spec(I, \lt_I)$ with 
Bishop spaces $(G_i)_{i \in I}$
and morphisms $(\mu_{ij}^{\lt})_{(i,j) \in D^{\lt}(I)}$, and let
$S(N^{\lt}) := (\nu_0, \nu_1^{\lt}, \phi_0^{N^{\lt}}, \phi_1^{N^{\lt}}) \in \Spec(I, \lt_I)$ with 
Bishop spaces $(H_i)_{i \in I}$
and morphisms $(\nu_{ij}^{\lt})_{(i,j) \in D^{\lt}(I)}$. If 
$\Psi \colon S(\Lambda^{\lt}) \To S(M^{\lt})$ and $\Xi \colon S(M^{\lt}) \To S(N^{\lt})$, then 
$(\Xi \circ \Psi)_{\to} :=  \Xi_{\to} \circ \Psi_{\to}$
\begin{center}
\begin{tikzpicture}

\node (E) at (0,0) {$\lambda_0(i)$};
\node[right=of E] (H) {};
\node[right=of H] (F) {$\mu_0(i)$};
\node[below=of E] (A) {$\underset{\to} \Lim \lambda_0 (i)$};
\node[below=of F] (B) {$\underset{\to} \Lim \mu_0 (i)$};
\node[right=of F] (K) {};
\node[right=of K] (G) {$\nu_0(i)$};
\node [below=of G] (C) {$\underset{\to} \Lim \nu_0 (i)$.};

\draw[->] (E)--(F) node [midway,above] {$ \Psi_i$};
\draw[->] (F)--(G) node [midway,above] {$\Xi_i$};
\draw[->] (B)--(C) node [midway,below] {$\Xi_{\to} \ \ $};
\draw[->] (A)--(B) node [midway,below] {$ \ \ \Psi_{\to}$};
\draw[->] (E)--(A) node [midway,left] {$\eql_i^{\Lambda^\lt}$};
\draw[->] (F) to node [midway,left] {$\eql_i^{M^\lt}$} (B);
%\draw[->, bend right] (B) to node [midway,right] {$e_{B_Y}^{-1}$} (F);
\draw[->] (G)--(C) node [midway,right] {$\eql_i^{N^\lt}$};
\draw[->,bend right] (A) to node [midway,below] {$(\Xi \circ \Psi)_{\to}$} (C) ;
\draw[->,bend left] (E) to node [midway,above] {$(\Xi \circ \Psi)_i$} (G) ;

\end{tikzpicture}
\end{center}

\end{prop}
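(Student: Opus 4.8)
The plan is to reduce the whole statement to a one-line pointwise computation on the direct limit, exploiting two facts established earlier: equality of functions is pointwise (Definition~\ref{def: ndar}), and every element of $\underset{\to} \Lim \lambda_0(i)$ is of the form $\eql_0^{\Lambda^{\lt}}(i, x)$ for some $i \in I$ and $x \in \lambda_0(i)$.

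First I would observe that $\Xi \circ \Psi \colon S(\Lambda^{\lt}) \To S(N^{\lt})$ is itself a direct spectrum-map, being the composition of the direct family-maps $\Psi$ and $\Xi$ in the sense of Definition~\ref{def: dirsumpi}, so that $(\Xi \circ \Psi)_i := \Xi_i \circ \Psi_i$ for every $i \in I$. Hence Theorem~\ref{thm: directspectrummap1}(i) applies to it, and the function $(\Xi \circ \Psi)_{\to} \colon \underset{\to} \Lim \lambda_0(i) \to \underset{\to} \Lim \nu_0(i)$ is well-defined by the rule $(\Xi \circ \Psi)_{\to}\big(\eql_0^{\Lambda^{\lt}}(i, x)\big) := \eql_0^{N^{\lt}}\big(i, (\Xi \circ \Psi)_i(x)\big)$. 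Likewise $\Psi_{\to}$ and $\Xi_{\to}$ are the well-defined functions furnished by Theorem~\ref{thm: directspectrummap1}(i) applied to $\Psi$ and to $\Xi$, and their composite $\Xi_{\to} \circ \Psi_{\to}$ is a function of the same type as $(\Xi \circ \Psi)_{\to}$.

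Then I would evaluate both functions on an arbitrary $\eql_0^{\Lambda^{\lt}}(i, x) \in \underset{\to} \Lim \lambda_0(i)$, unfolding the defining rules in turn. For the composite,
\[ \big(\Xi_{\to} \circ \Psi_{\to}\big)\big(\eql_0^{\Lambda^{\lt}}(i, x)\big) := \Xi_{\to}\big(\Psi_{\to}(\eql_0^{\Lambda^{\lt}}(i, x))\big) := \Xi_{\to}\big(\eql_0^{M^{\lt}}(i, \Psi_i(x))\big) := \eql_0^{N^{\lt}}\big(i, \Xi_i(\Psi_i(x))\big), \]
whereas for $(\Xi \circ \Psi)_{\to}$, using $(\Xi \circ \Psi)_i := \Xi_i \circ \Psi_i$,
\[ (\Xi \circ \Psi)_{\to}\big(\eql_0^{\Lambda^{\lt}}(i, x)\big) := \eql_0^{N^{\lt}}\big(i, (\Xi_i \circ \Psi_i)(x)\big) := \eql_0^{N^{\lt}}\big(i, \Xi_i(\Psi_i(x))\big). \]
The two right-hand sides are the very same element of $\underset{\to} \Lim \nu_0(i)$, so the two functions agree on every element of $\underset{\to} \Lim \lambda_0(i)$; by the pointwise definition of equality on $\D F\big(\underset{\to} \Lim \lambda_0(i), \underset{\to} \Lim \nu_0(i)\big)$ this yields $(\Xi \circ \Psi)_{\to} = \Xi_{\to} \circ \Psi_{\to}$.

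Finally, the commutativity of the two inner squares of the displayed diagram is precisely the content of Theorem~\ref{thm: directspectrummap1}(i) for $\Psi$ and for $\Xi$, while the commutativity of the two outer ``bent'' arrows follows by pasting these squares together with the identity just proved. The entire argument is a matter of unwinding definitions and appealing to the pointwise character of function equality; the only step requiring genuine care is the invocation of Theorem~\ref{thm: directspectrummap1} to guarantee that each of the three induced maps is a bona fide, well-defined function on the direct limit, and once that is in place no real obstacle remains.
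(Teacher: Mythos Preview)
Your proof is correct. The paper itself gives no proof of this proposition at all---it is stated with the diagram and left as immediate---and your pointwise computation using the defining rule of $(-)_{\to}$ from Theorem~\ref{thm: directspectrummap1}(i) is precisely the natural verification that the reader is expected to supply.
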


\begin{defi}\label{def: directspectrummap2}
Let  $S(\Lambda^{\lt}) := (\lambda_0, \lambda_1^{\lt}, \phi_0^{\Lambda^{\lt}}, \phi_1^{\Lambda^{\lt}}) 
\in \Spec(I, \lt_I)$
and $(J, e, \cof_J) \subseteq^{\cof} I$, a cofinal subset of $I$ with modulus of cofinality $e \colon J \eto I$.
The relative spectrum of 
$S(\Lambda^{\lt})$ to $J$\index{relative direct spectrum} is the $e$-subfamily 
$S(\Lambda^{\lt}) \circ e := \big(\lambda_0 \circ e, \lambda_1 \circ e, \phi_0^{\Lambda^{\lt}} \circ e, 
\phi_1^{\Lambda^{\lt}} \circ e \big)$ of $S(\Lambda^{\lt})$, where $\Phi^{\Lambda^{\lt}} \circ e 
:= \big(\phi_0^{\Lambda^{\lt}} \circ e, \phi_1^{\Lambda^{\lt}} \circ e \big)$ is the $e$-subfamily 
of $\Phi^{\Lambda^{\lt}}$ (see Definition~\ref{def: newfamsofsets1}).
\end{defi}

\begin{lem}\label{lem: cofinallemma}
Let $S(\Lambda^{\lt}) := (\lambda_0, \lambda_1^{\lt}, \phi_0^{\Lambda^{\lt}}, \phi_1^{\Lambda^{\lt}}) 
\in \Spec(I, \lt_I)$  with Bishop spaces $(\C F_i)_{i \in I}$ and Bishop morphisms $(\lambda_{ij}^{\lt})_{(i,j) \in D^{\lt}(I)}$,
$(J, e, \cof_J) \subseteq^{\cof} I$, and $S(\Lambda^{\lt}) \circ e := \big(\lambda_0 \circ e,
\lambda_1 \circ e, \phi_0^{\Lambda^{\lt}} \circ e, \phi_1^{\Lambda^{\lt}} \circ e \big)$ the relative spectrum of 
$S(\Lambda^{\lt})$ to $J$.\\[1mm]
\normalfont (i)
\itshape  If $\Theta \in \prod_{i \in I}^{\mt} F_i$, then $\Theta^J \in \prod_{j \in J}^{\mt} F_j$, where
for every $j \in J$ we define $\Theta^J_j := \Theta_{e(j)}.$\\[1mm]
\normalfont (ii)
\itshape If $H^J \in \prod_{j \in J}^{\mt} F_j$, then $H \in \prod_{i \in I}^{\mt} F_i$, 
where, for every $i \in I$, let $H_i := H^J_{\cof_J(i)} \circ \lambda_{i e(\cof_J(i))}^{\lt}$
\begin{center}
\begin{tikzpicture}

\node (E) at (0,0) {$\Real$.};
\node[above=of E] (F) {$\lambda_0 (e(\cof_J(i)))$};
\node[left=of F] (B) {};
\node[left=of B] (A) {$\lambda_0 (i)$};

\draw[->] (F)--(E) node [midway,right] {$H^J_{\cof_J(i)}$};
\draw[->] (A)--(E) node [midway,left] {$H_i \ \ \ $};
\draw[->] (A)--(F) node [midway,above] {$\lambda_{i e(\cof_J(i))}^{\lt}$};

\end{tikzpicture}
\end{center}
\end{lem}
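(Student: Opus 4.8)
The plan is to verify, in each case, the single defining clause of a contravariant dependent function. Recall from Definition~\ref{def: dirsumpi} and the computation in Remark~\ref{rem: beforetopondirectedsigma} that membership $\Theta \in \prod_{i \in I}^{\mt}F_i$ amounts to the requirement that $\Theta_i =_{F_i} \Theta_k \circ \lambda_{ik}^{\lt}$ whenever $i \lt k$; and that for the relative spectrum $S(\Lambda^{\lt}) \circ e$, whose fiber over $j \in J$ is $F_{e(j)}$ and whose transport maps are the $\lambda_{e(j)e(j{'})}^{\lt}$, membership $H^J \in \prod_{j \in J}^{\mt}F_j$ means $H^J_j =_{F_{e(j)}} H^J_{j{'}} \circ \lambda_{e(j)e(j{'})}^{\lt}$ whenever $j \lt j{'}$ in $J$, i.e.\ whenever $e(j) \lt e(j{'})$.

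For (i), I would first note that $\Theta^J_j := \Theta_{e(j)} \in F_{e(j)}$, so $\Theta^J$ is a dependent operation of the correct type. Then, given $j \lt j{'}$ in $J$ — which by definition of the induced order on $J$ means $e(j) \lt e(j{'})$ in $I$ — I would apply the defining property of $\Theta$ to the comparable pair $e(j) \lt e(j{'})$, obtaining $\Theta_{e(j)} = \Theta_{e(j{'})} \circ \lambda_{e(j)e(j{'})}^{\lt}$, which is exactly $\Theta^J_j = \Theta^J_{j{'}} \circ \lambda_{e(j)e(j{'})}^{\lt}$. This is the required compatibility, so (i) is immediate.

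For (ii), I would first check $H_i \in F_i$ for each $i$. By $(\Cf_3)$ we have $i \lt e(\cof_J(i))$, so the transport map $\lambda_{i e(\cof_J(i))}^{\lt}$ is defined and, being a Bishop morphism $\lambda_{ie(\cof_J(i))}^{\lt} \in \Mor(\C F_i, \C F_{e(\cof_J(i))})$, it pulls back $H^J_{\cof_J(i)} \in F_{e(\cof_J(i))}$ into $F_i$; hence $H$ is a dependent operation $\bigcurlywedge_{i \in I}F_i$. The substantive step is the compatibility clause: for $i \lt i{'}$ I must show $H_i = H_{i{'}} \circ \lambda_{ii{'}}^{\lt}$. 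Setting $k := \cof_J(i)$ and $k{'} := \cof_J(i{'})$, condition $(\Cf_2)$ gives $k \lt k{'}$, hence $e(k) \lt e(k{'})$, while $(\Cf_3)$ gives $i \lt e(k)$ and $i{'} \lt e(k{'})$. Two applications of functoriality of transport (condition (b) of a direct family) together with the defining property of $H^J$ at $k \lt k{'}$ then yield
\[ H_i := H^J_k \circ \lambda_{ie(k)}^{\lt} = \big(H^J_{k{'}} \circ \lambda_{e(k)e(k{'})}^{\lt}\big) \circ \lambda_{ie(k)}^{\lt} = H^J_{k{'}} \circ \lambda_{ie(k{'})}^{\lt}, \]
using $\lambda_{e(k)e(k{'})}^{\lt} \circ \lambda_{ie(k)}^{\lt} = \lambda_{ie(k{'})}^{\lt}$ (valid since $i \lt e(k) \lt e(k{'})$), and on the other side
\[ H_{i{'}} \circ \lambda_{ii{'}}^{\lt} := H^J_{k{'}} \circ \lambda_{i{'}e(k{'})}^{\lt} \circ \lambda_{ii{'}}^{\lt} = H^J_{k{'}} \circ \lambda_{ie(k{'})}^{\lt}, \]
using $\lambda_{i{'}e(k{'})}^{\lt} \circ \lambda_{ii{'}}^{\lt} = \lambda_{ie(k{'})}^{\lt}$ (valid since $i \lt i{'} \lt e(k{'})$). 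Comparing the two displays gives $H_i = H_{i{'}} \circ \lambda_{ii{'}}^{\lt}$, as required.

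The only real obstacle is the bookkeeping of order relations: one must confirm each of $i \lt e(k)$, $e(k) \lt e(k{'})$, $i{'} \lt e(k{'})$ and $i \lt i{'}$ before composing transport maps, and this is precisely where the moduli conditions $(\Cf_2)$ and $(\Cf_3)$ are used, with no appeal to any choice principle. I expect the membership $H_i \in F_i$ to be the step a reader is most likely to take for granted, so I would flag explicitly that it relies on the transport maps being Bishop morphisms.
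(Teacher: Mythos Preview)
Your proposal is correct and follows essentially the same approach as the paper's proof: both verify the compatibility clause directly, using $(\Cf_3)$ to obtain $i \lt e(\cof_J(i))$ and $(\Cf_2)$ to obtain $\cof_J(i) \lt \cof_J(i{'})$, then reduce both sides to the common expression $H^J_{\cof_J(i{'})} \circ \lambda_{i\, e(\cof_J(i{'}))}^{\lt}$ via the composition law for transport maps. The only cosmetic difference is that the paper writes a single chain from $H_{i{'}} \circ \lambda_{ii{'}}^{\lt}$ down to $H_i$, whereas you compute each side separately and match them in the middle.
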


\begin{proof}
(i) It suffices to show that if $j \lt j{'} :\TOT e(j) \lt e(j{'})$, then $\Theta^J_j = \Theta^J_{j{'}} \circ 
\lambda_{j j{'}}^{\lt}$. Since $\Theta \in \prod_{i \in I}^{\mt} F_i$ we have that
$\Theta^J_j := \Theta_{e(j)} = \Theta_{e(j{'})} \circ \lambda_{e(j) e(j{'})}^{\lt} =: 
\Theta^J_{j{'}} \circ \lambda_{j j{'}}^{\lt}$.\\[1mm]
(ii) By definition $H^J_{\cof_J(i)} \in F_{\cof_J(i)} := F_{e(\cof_J(i))}$, and since $i \lt e(\cof_J(i))$, 
we get $H_i \in \Mor(\C F_i, \C R) = \C F_i$ i.e., $H : \bigcurlywedge_{i \in I}F_i$. Next we show that if
$i \lt i{'}$, then $H_i = H_{i{'}} \circ \lambda_{i i{'}}^{\lt}$. By $(\Cf_3)$ and $(\Cf_2)$ we have that
\begin{equation}\label{eq: eq1}
i \lt i{'} \lt e(\cof_J(i{'})),
\end{equation}
and $i \lt i{'} \To \cof_J(i) \lt \cof_J(i{'}) :\TOT e(\cof_J(i)) \lt e(\cof_J(i{'})),$
hence we also get 
\begin{equation}\label{eq: eq2}
i \lt e(\cof_J(i)) \lt e(\cof_J(i{'})).
\end{equation}
Since  $H^J \in \prod_{j \in J}^{\mt} F_j$, we have that
\begin{align*}
H_{i{'}} \circ \lambda_{i i{'}}^{\lt} & := \big[H^J_{\cof_J(i{'})} \circ \lambda_{i{'} e(\cof_J(i{'}))}^{\lt}\big] \circ 
\lambda_{ii{'}}^{\lt}\\
& := H^J_{\cof_J(i{'})} \circ \big[\lambda_{i{'} e(\cof_J(i{'}))}^{\lt} \circ \lambda_{ii{'}}^{\lt}\big]\\
& \stackrel{(\ref{eq: eq1})} =  H^J_{\cof_J(i{'})} \circ \lambda_{i e(\cof_J(i{'}))}^{\lt}\\
& \stackrel{(\ref{eq: eq2})} = H^J_{\cof_J(i{'})} \circ \big[\lambda_{e(\cof_J(i)) e(\cof_J(i{'}))}^{\lt} \circ
\lambda_{i e(\cof_J(i))}^{\lt} \big]\\
& := \big[H^J_{\cof_J(i{'})} \circ \lambda_{e(\cof_J(i)) e(\cof_J(i{'}))}^{\lt}\big] \circ
\lambda_{i e(\cof_J(i))}^{\lt}\\
& := \big[H^J_{\cof_J(i{'})} \circ \lambda_{\cof_J(i) \cof_J(i{'})}^{\lt}\big] \circ
\lambda_{i e(\cof_J(i))}^{\lt}\\
& := H^J_{\cof_J(i)} \circ \lambda_{i e(\cof_J(i))}^{\lt}\\
& =: H_i.  \qedhere
\end{align*}
\end{proof}

\begin{thm}\label{thm: cofinal2}
Let $S(\Lambda^{\lt}) := (\lambda_0, \lambda_1^{\lt}, \phi_0^{\Lambda^{\lt}}, \phi_1^{\Lambda^{\lt}})
\in \Spec(I, \lt_I)$,
$(J, e, \cof_J) \subseteq^{\cof} I$, and $S(\Lambda^{\lt}) \circ e := \big(\lambda_0 \circ e, \lambda_1 
\circ e, \phi_0^{\Lambda^{\lt}} \circ e, \phi_1^{\Lambda^{\lt}} \circ e \big)$ the relative spectrum of 
$S(\Lambda^{\lt})$ to $J$.
Then 
\[ \underset{\to} \Lim \C F_j \simeq \underset{\to} \Lim \C F_i. \]
\end{thm}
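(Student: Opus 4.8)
The plan is to exhibit two mutually inverse Bishop morphisms between the two limit spaces directly on representatives. Write $\eql_0^I$ and $\eql_0^J$ for the limit-class operations of $S(\Lambda^{\lt})$ and of the relative spectrum $S(\Lambda^{\lt}) \circ e$, respectively. I would define $\phi \colon \underset{\to}\Lim \lambda_0(e(j)) \sto \underset{\to}\Lim \lambda_0(i)$ by $\phi\big(\eql_0^J(j, x)\big) := \eql_0^I(e(j), x)$, and $\psi \colon \underset{\to}\Lim \lambda_0(i) \sto \underset{\to}\Lim \lambda_0(e(j))$ by $\psi\big(\eql_0^I(i, x)\big) := \eql_0^J\big(\cof_J(i), \lambda_{i\,e(\cof_J(i))}^{\lt}(x)\big)$; the transport map here is legitimate because $(\Cf_3)$ gives $i \lt e(\cof_J(i))$.

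First I would check that $\phi$ is a function: if $\eql_0^J(j,x) =\eql_0^J(j',x')$, witnessed by some $k\in J$ with $\lambda_{e(j)e(k)}^{\lt}(x) =_{\lambda_0(e(k))} \lambda_{e(j')e(k)}^{\lt}(x')$ (recall $(\lambda_1\circ e)_{jk}^{\lt} = \lambda_{e(j)e(k)}^{\lt}$), then $e(k)$ witnesses $\eql_0^I(e(j),x) = \eql_0^I(e(j'),x')$ at once. The well-definedness of $\psi$ is the main obstacle, and the only place the conditions on $\cof_J$ are really used: given a witness $k\in I$ for $\eql_0^I(i,x) = \eql_0^I(i',x')$, I would take $m := \cof_J(k)$; then $(\Cf_3)$ gives $k \lt e(m)$, while $(\Cf_2)$ applied to $i \lt k$ and $i' \lt k$ gives $\cof_J(i) \lt m$ and $\cof_J(i') \lt m$, so $m$ is an upper bound of $\cof_J(i),\cof_J(i')$ in $J$. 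The required transport equality then follows by factoring $\lambda_{i\,e(m)}^{\lt}$ through $\lambda_{ik}^{\lt}$ and $\lambda_{i'\,e(m)}^{\lt}$ through $\lambda_{i'k}^{\lt}$ via clause (b) of a direct family and the fact that $\lambda_{k\,e(m)}^{\lt}$ is a function.

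Next I would verify $\phi$ and $\psi$ are mutually inverse. For $\phi\circ\psi = \id$, the class $\eql_0^I\big(e(\cof_J(i)), \lambda_{i\,e(\cof_J(i))}^{\lt}(x)\big)$ equals $\eql_0^I(i,x)$ by taking $k := e(\cof_J(i))$ as witness and using $\lambda_{kk}^{\lt} = \id$. For $\psi\circ\phi = \id$, condition $(\Cf_1)$ gives $\cof_J(e(j)) =_J j$; writing $j_0 := \cof_J(e(j))$, so that $e(j_0) =_I e(j)$, I would check $\eql_0^J\big(j_0, \lambda_{e(j)\,e(j_0)}^{\lt}(x)\big) = \eql_0^J(j, x)$ directly, taking $j_0$ itself as bounding index and using $\lambda_{e(j_0)e(j_0)}^{\lt} = \id$ together with the reflexivity and extensionality of $\lt$.

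Finally, both maps are Bishop morphisms by the $\bigvee$-lifting of morphisms, and this is where Lemma~\ref{lem: cofinallemma} does the work. For $\phi$ and any $\Theta \in \prod_{i\in I}^{\mt}F_i$, one computes $\big(\eql_0^I f_\Theta\big) \circ \phi = \eql_0^J f_{\Theta^J}$ with $\Theta^J$ as in Lemma~\ref{lem: cofinallemma}(i), so the composite lies in $\underset{\to}\Lim F_j$; for $\psi$ and any $H \in \prod_{j\in J}^{\mt}F_{e(j)}$, one computes $\big(\eql_0^J f_H\big) \circ \psi = \eql_0^I f_{H^I}$ with $H^I$ as in Lemma~\ref{lem: cofinallemma}(ii), so the composite lies in $\underset{\to}\Lim F_i$. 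Being mutually inverse Bishop morphisms, $\phi$ and $\psi$ constitute a Bishop isomorphism, giving $\underset{\to}\Lim \C F_j \simeq \underset{\to}\Lim \C F_i$.
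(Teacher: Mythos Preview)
Your proposal is correct and follows essentially the same approach as the paper: the maps $\phi$ and $\psi$ you define coincide with the paper's $\phi$ and $\theta$, and the verifications of well-definedness, mutual inverseness, and Bishop-morphism status (via Lemma~\ref{lem: cofinallemma}(i),(ii)) are the same. The only cosmetic difference is that the paper additionally spells out that $\phi$ is an embedding and a surjection before constructing the inverse, which is redundant once you establish mutual inverses directly as you do.
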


\begin{proof}
We define the operation $\phi : \underset{\to} \Lim \lambda_0 (j) \sto \underset{\to} 
\Lim \lambda_0 (i)$ by
$\phi \big(\eql_0^{\Lambda^{\lt} \circ e}(j, y)\big) := \eql_0^{\Lambda^{\lt}}(e(j), y)$
\begin{center}
\begin{tikzpicture}

\node (E) at (0,0) {$\underset{\to} \Lim \lambda_0 (j)$};
\node[right=of E] (A) {};
\node[right=of A] (F) {$\underset{\to} \Lim \lambda_0 (i)$,};
%\node[above=of F] (A) {$B$};
\node [above=of A] (D) {$\lambda_0 (j)$};
%\node [below=of D] (G) {$V$};

\draw[->] (E)--(F) node [midway,below] {$\phi$};
%\draw[->] (D)--(A) node [midway,above] {$f $};
\draw[->] (D)--(E) node [midway,left,near start] {$\eql_j^{\Lambda^{\lt} \circ e} \ $};
\draw[->] (D)--(F) node [midway,right,near start] {$ \ \eql_{e(j)}^{\Lambda^{\lt}}$};
%\draw[->] (D)--(E) node [midway,left] {$f$};

\end{tikzpicture}
\end{center}
for every $\eql_0^{\Lambda^{\lt} \circ e}(j, y) \in \underset{\to} \Lim \lambda_0 (j)$, 
where, if $j \in J$ and $y \in \lambda_0 (j)$, we have that
\[ \eql_0^{\Lambda^{\lt} \circ e}(j, y) := \bigg\{(j{'}, y{'}) \in \sum_{j \in J}^{\lt}\lambda_0 (j) 
\mid (j{'}, y{'})
=_{\mathsmaller{\sum_{j \in J}^{\lt}\lambda_0 (j)}} (j, y) \bigg\}, \]
\[ \eql_0^{\Lambda^{\lt}}(e(j), y) := \bigg\{(i, x) \in \sum_{i \in I}^{\lt}\lambda_0 (i) \mid (i, x)
=_{\mathsmaller{\sum_{i \in I}^{\lt}\lambda_0 (i)}} (e(j), y) \bigg\}. \]
First we show that $\phi$ is a function.
 %from $\underset{\to} \Lim \lambda_0 (j)$ to  $\underset{\to} \Lim \lambda_0 (i)$. 
 By definition we have that
\begin{align*}
\eql_0^{\Lambda^{\lt} \circ e}(j, y) =_{\mathsmaller{\underset{\to} \Lim \lambda_0 (j)}} 
\eql_0^{\Lambda^{\lt} \circ e}(j{'}, y{'}) & \TOT 
(j, y) =_{\mathsmaller{\sum_{j \in J}^{\lt}\lambda_0 (j)}} (j{'}, y{'}) \\
& \TOT \exists_{j{''} \in J}\big(j, j{'} \lt j{''} \ \& \ \lambda_{jj{''}}^{\lt} (y) 
=_{\mathsmaller{\lambda_0 (j{''})}} \lambda_{j{'}j{''}}^{\lt} (y{'})\big)  \ \ \ (1)
\end{align*}
\begin{align*}
\eql_0^{\Lambda^{\lt}}(e(j), y) =_{\mathsmaller{\underset{\to} \Lim \lambda_0 (i)}} 
\eql_0^{\Lambda^{\lt}}(e(j{'}), y{'}) & \TOT (e(j), y) 
=_{\mathsmaller{\sum_{i \in I}^{\lt}\lambda_0 (i)}} (e(j{'}), y{'}) \\
& \TOT \exists_{k \in I}\big(e(j), e(j{'}) \lt k \ \& \ \lambda_{e(j)k}^{\lt} (y) =_{\mathsmaller{\lambda_0 (k)}}
\lambda_{e(j{'})k}^{\lt} (y{'})\big). \  (2)
\end{align*}
Taking $k := e(j{''})$, we see that (1) implies (2), and hence $\phi$ is a function. To show that $\phi$ is an embedding,
%$\underset{\to} \Lim \lambda_0 (j)$ into $\underset{\to} \Lim \lambda_0 (i)$, 
we show that (2) implies (1). Since $e(j), e(j{'}) \lt k \lt e(\cof_J(k))$, 
we get $j, j{'} \lt \cof_J(k) := j{''}$.
By the commutativity of the following diagrams 
\begin{center}
\begin{tikzpicture}

\node (E) at (0,0) {$\lambda_0(e(j))$};
\node[below=of E] (F) {$\lambda_0(k)$};
\node[right=of F] (H) {};
\node[right=of H] (G) {$\lambda_0(e(\cof_J(k)))$};
\node[right=of G] (A) {$\lambda_0(k)$};
\node[above=of A] (B) {$\lambda_0(e(j{'}))$};
\node[right=of A] (D) {};
\node[right=of D] (C) {$\lambda_0(e(\cof_J(k)))$};

\draw[->] (E)--(F) node [midway,left] {$\lambda_{e(j)k}^{\lt}$};
\draw[->] (E)--(G) node [midway,right, near start] {$\ \ \ \lambda_{e(j)e(\cof_J(k))}^{\lt}$};
\draw[->] (F)--(G) node [midway,below] {$\lambda_{ke(\cof_J(k))}^{\lt}$};
\draw[->] (B)--(A) node [midway,left] {$\lambda_{e(j{'})k}^{\lt}$};
\draw[->] (A)--(C) node [midway,below] {$\lambda_{ke(\cof_J(k))}^{\lt}$};
\draw[->] (B)--(C) node [midway,right, near start] {$\ \ \ \lambda_{e(j{'})e(\cof_J(k))}^{\lt}$};

\end{tikzpicture}
\end{center}
%we have that 
\begin{align*}
\lambda_{jj{''}}^{\lt} (y) & := \lambda_{e(j)e(\cof_J(k))}^{\lt}(y)\\
& = \big[\lambda_{ke(\cof_J(k))} \circ \lambda_{e(j)k}^{\lt}\big] (y) \\  
& = \lambda_{ke(\cof_J(k))} \big(\lambda_{e(j)k}^{\lt}(y)\big)\\ 
& = \lambda_{ke(\cof_J(k))} \big(\lambda_{e(j{'})k}^{\lt}(y{'})\big)\\
& =: \big[\lambda_{ke(\cof_J(k))} \circ \lambda_{e(j{'})k}^{\lt}\big](y{'})\\
& = \lambda_{e(j{'})e(\cof_J(k))}^{\lt}(y{'})\\
& := \lambda_{j{'} j{''}}(y{'}).
\end{align*}
By the $\bigvee$-lifting of morphisms we have that 
\[ \phi \in \Mor(\underset{\to} \Lim \C F_j, \underset{\to} \Lim \C F_i) : \TOT
\forall_{\mathsmaller{\Theta \in \prod_{i \in I}^{\mt}F_i}}\big(\eql_0 f_{\Theta}
\circ \phi \in \underset{\to} \Lim F_j\big). \]
If $\Theta \in \prod_{i \in I}^{\mt}F_i$, we have that 
\[  (\eql_0 f_{\Theta} \circ \phi) \big(\eql_0^{\Lambda^{\lt} \circ e}(j, y)\big) := 
 (\eql_0 f_{\Theta})\big(\eql_0^{\Lambda^{\lt}}(e(j), y)\big) \]
\[ := \Theta_{e(j)}(y) := \Theta^J_j(y) := (\eql_0 f_{\Theta^{J}})\big(\eql_0^{\Lambda^{\lt} \circ e}(j, y)\big), \]
where $\Theta^{J} \in \prod_{j \in J}^{\lt} F_j$ is defined in Lemma~\ref{lem: cofinallemma}(i). 
Hence, $\eql_0 f_{\Theta} \circ \phi = \eql_0 f_{\Theta^{J}} \in \underset{\to} \Lim F_j$.
Next we show that $\phi$ is a surjection. If $\eql_0^{\Lambda^{\lt}}(i, x) \in 
\underset{\to} \Lim \lambda_0 (i)$, we find $\eql_0^{\Lambda^{\lt} \circ e}(j, y) \in 
\underset{\to} \Lim \lambda_0 (j)$ such that 
$\phi \big(\eql_0^{\Lambda^{\lt} \circ e}(j, y)\big) := \eql_0^{\Lambda^{\lt}}(e(j), y) 
=_{\mathsmaller{\underset{\to} \Lim \lambda_0 (i)}} \eql_0^{\Lambda^{\lt}}(i, x)$ i.e., 
we find $k \in I$ such that
$i, e(j) \lt k$ and $\lambda_{ik}^{\lt}(x) =_{\mathsmaller{\lambda_0 (k)}} \lambda_{e(j)k}^{\lt}(y)$.
If $j := \cof_J(i)$, by $(\Cf_3)$ we have that $i \lt e(\cof_J(i))$,
and by the reflexivity of $\lt$ we have that $e(\cof_J(i)) \lt e(\cof_J(i)) =: k$. If 
$y := \lambda_{i e(\cof_J(i))}{^\lt}(x) \in \lambda_0(e(\cof_J(i))) := (\lambda_0 \circ e)(\cof_J(i))$, then
%we have that
\[ \lambda_{e(\cof_J(i))e(\cof_J(i))}^{\lt}\big(\lambda_{ie(\cof_J(i))}^{\lt}(x)\big) 
=_{\mathsmaller{\lambda_0 (k)}} \lambda_{ie(\cof_J(i))}^{\lt}(x). \]
We could use the $\bigvee$-lifting of openness to show that $\phi$ is an open morphism, and hence a 
Bishop isomorphism, but it is better to define directly its inverse Bishop morphism using the previous
proof of the surjectivity
 of $\phi$. Let the operation $\theta \colon \underset{\to} \Lim \lambda_0(i) \sto \underset{\to} \Lim \lambda_0(j)$,
 defined by 
\[ \theta \big(\eql_0^{\Lambda^{\lt}}(i, x)\big) := \eql_0^{\Lambda^{\lt} \circ e}\big(\cof_J(i), 
\lambda_{ie(\cof_J(i))}(x)\big);  \ \ \ \
\eql_0^{\Lambda^{\lt}}(i, x) \in \underset{\to} \Lim \lambda_0(i). \]
First we show that $\theta$ is a function. We have that 
\[ \eql_0^{\Lambda^{\lt}}(i, x)  =_{\mathsmaller{\underset{\to} \Lim \lambda_0(i)}} 
\eql_0^{\Lambda^{\lt}}(i{'}, x{'}) \TOT
\exists_{k \in I}\big(i \lt k \ \& \ i{'} \lt k \ \& \ \lambda_{ik}^{\lt}(x) =_{\mathsmaller{\lambda_0(k)}}
\lambda_{i{'}k}^{\lt}(x{'})\big), \]
\[ \eql_0^{\Lambda^{\lt} \circ e}\big(\cof_J(i), \lambda_{ie(\cof_J(i))}(x)\big) =_{\mathsmaller{\underset{\to}
\Lim \lambda_0(j)}} \eql_0^{\Lambda^{\lt} \circ e}\big(\cof_J(i{'}), \lambda_{i{'}e(\cof_J(i{'}))}(x{'})\big) \TOT \]
\[ \exists_{j{'} \in J}\bigg(\cof_J(i) \lt j{'} \ \& \ \cof_J(i{'}) \lt j{'} \ \&  \] 
\[ \lambda^{\lt}_{e(\cof_J(i))e(j{'})}\big(\lambda^{\lt}_{ie(\cof_J(i))}(x)\big) =_{\mathsmaller{\lambda_0(e(j{'})}}
\lambda^{\lt}_{e(\cof_J(i{'}))e(j{'})}\big(\lambda^{\lt}_{i{'}e(\cof_J(i{'}))}(x{'})\bigg). \]
If $j{'} := \cof_J(k)$, then by $(\Cf_2)$ we get $\cof_J(i) \lt j{'}$ and $\cof_J(i{'}) \lt j{'}$. 
Next we show that
\[ \lambda^{\lt}_{e(\cof_J(i))e(\cof_J(k))}\big(\lambda^{\lt}_{ie(\cof_J(i))}(x)\big) 
=_{\mathsmaller{\lambda_0(e(\cof_J(k))}}
\lambda^{\lt}_{e(\cof_J(i{'}))e(\cof_J(k))}\big(\lambda^{\lt}_{i{'}e(\cof_J(i{'}))}(x{'})\big). \]
By the following order relations, the two terms of the required equality are written as  
\begin{center}
\begin{tikzpicture}

\node (E) at (0,0) {$e(\cof_J(k))$};
\node[above=of E] (F) {$k$};
\node[right=of F] (A) {$e(\cof_J(i{'}))$};
\node[left=of F] (B) {$e(\cof_J(i))$};
\node[above=of B] (C) {$i$};
\node[above=of A] (D) {$i{'}$};
\

\draw[->] (F)--(E) node [midway,below] {};
\draw[->] (B)--(E) node [midway,left] {};
\draw[->] (A)--(E) node [midway,below] {};
\draw[->] (C)--(B) node [midway,right] {};
\draw[->] (D)--(A) node [midway,right] {};
\draw[->] (D)--(F) node [midway,right] {};
\draw[->] (C)--(F) node [midway,right] {};

\end{tikzpicture}
\end{center}
$\lambda^{\lt}_{ie(\cof_J(k))}(x) = \lambda^{\lt}_{ke(\cof_J(k))}\big(\lambda^{\lt}_{ik}(x)\big)$, 
and $\lambda^{\lt}_{i{'}e(\cof_J(k))}(x{'}) = \lambda^{\lt}_{ke(\cof_J(k))}\big(\lambda^{\lt}_{i{'}k}(x{'})\big)$.
By the equality 
$\lambda_{ik}^{\lt}(x) =_{\mathsmaller{\lambda_0(k)}} \lambda_{i{'}k}^{\lt}(x{'})$ we get the required equality.
Next we show that
\[ \theta \in \Mor\big(\underset{\to} \Lim \C F_i, \underset{\to} \Lim \C F_j\big) \TOT \forall_{H^J 
\in \prod_{j \in J}^{\mt}F_j}\bigg(\eql_0 f_{H^J} \circ \theta \in \bigvee_{\Theta \in 
\prod_{i \in I}^{\mt}F_i}\eql_0 f_{\Theta}\bigg). \]
If we fix $H^J \in \prod_{j \in J}^{\mt} F_j$, and if $H \in \prod_{i \in I}^{\mt}F_i$, defined 
in Lemma~\ref{lem: cofinallemma}(ii), then 
%by $H_i := H^J_{\cof_J(i)} \circ \lambda_{i e(\cof_J(i))}^{\lt}$, 
%for every $i \in I$, then
\begin{align*}
\big(\eql_0 f_{H^J} \circ \theta \big)\big(\eql_0^{\Lambda^{\lt}}(i,x)\big) & 
:= \eql_0 f_{H^J}\bigg(\eql_0^{\Lambda^{\lt} \circ e}\big(\cof_J(i), \lambda_{ie(\cof_J(i))}(x)\big)\bigg)\\
& := f_{H^J}\big(\cof_J(i), \lambda_{ie(\cof_J(i))}(x)\big)\\
& := H^J_{\cof_J(i)}\big(\lambda_{ie(\cof_J(i))}(x)\big)\\
& := \big[H^J_{\cof_J(i)} \circ \lambda_{ie(\cof_J(i))}\big](x)\\
& := H_i(x)\\
& := f_H(i,x)\\
& := \eql_0 f_H \big(\eql_0^{\Lambda^{\lt}}(i,x)\big),
\end{align*}
hence $\eql_0 f_{H^J} \circ \theta := \eql_0 f_H \in \underset{\to} \Lim F_i$. Next we show that
$\phi$ and $\theta$ are inverse to each other.
% We have that 
\begin{align*}
\phi\big(\theta\big(\eql_0^{\Lambda^{\lt}}(i,x)\big)\big) & := \phi\big(\eql_0^{\Lambda^{\lt} 
\circ e}\big(\cof_J(i), \lambda_{ie(\cof_J(i))}(x)\big) \\
& := \eql_0^{\Lambda^{\lt}}\big(e(\cof_J(i)), \lambda_{ie(\cof_J(i))}(x)\big),
\end{align*}
which is equal to $\eql_0^{\Lambda^{\lt}}(i,x)$ if and only if there is $k \in I$ with $i \lt k$ and 
$e(\cof_J(i)) \lt k$ and 
\[ \lambda_{ik}^{\lt}(x) =_{\mathsmaller{\lambda_0(k)}} 
\lambda^{\lt}_{e(\cof_J(i))k}\big(\lambda_{ie(\cof_J(i))}(x)\big), \]
which holds for every such $k \in I$. As by $(\Cf_3)$ we have that $i \lt e(\cof_J(i))$, the existence 
of such a $k \in I$ follows trivially. Similarly,
\begin{align*}
\theta\big(\phi\big(\eql_0^{\Lambda^{\lt} \circ e}(j,y)\big)\big) & := 
\theta\big(\eql_0^{\Lambda^{\lt}}\big(e(j), y)\big)\\
& := \eql_0^{\Lambda^{\lt} \circ e}\big(\cof_J(e(j)), \lambda_{e(j)e(\cof_J(e(j)))}(y)\big),
\end{align*}
which is equal to $\eql_0^{\Lambda^{\lt} \circ e}(j,y)$ if and only if there is $j{'} \in J$ 
with $j \lt j{'}$, $(\cof_J(e(j))) \lt j{'}$ and 
\[ \lambda_{e(j)e(j{'})}^{\lt}(y) =_{\mathsmaller{\lambda_0(e(j{'}))}} 
\lambda^{\lt}_{e(\cof_J(e(j)))e(j{'})}\big(\lambda_{e(j)e(\cof_J(e(j)))}(y)\big), \]
which holds for every such $j{'} \in J$. As by $(\Cf_1)$ we have that $j =_J \cof_J(e(j))$, the existence of such a 
$j{'} \in J$ follows trivially. 
\end{proof}

Notice that there is an alternative way for proving Theorem~\ref{thm: cofinal2} by 
appealing to the universal property (Proposition~\ref{prp: universaldirect}).
Next, we use for simplicity the same symbol for different orderings.

\begin{prop}\label{prp: prodspec}
If $(I, \lt), (J, \lt)$ are directed sets, $i \in I$ and $j \in J$, let
\[ (i, j) \lt (i{'}, j{'}) : \TOT i \lt i{'} \ \& \ j \lt j{'}. \]
If $(K, i_K, \cof_K) \subseteq^{\cof} I$ and $(L, i_L, \cof_L) \subseteq^{\cof} J$, let 
$i_{K \times L} : K \times L \hookrightarrow I \times J$ and $\cof_{K \times L} : I \times J \to K \times L$, 
defined, for every $k \in K$ and $l \in L$, by
\[ i_{K \times L}(k, l) := \big(i_K(k), i_L(l)\big) \ \ \ \& \ \ \ \cof_{K \times L}(i, j) := \big(\cof_K(i),
\cof_L(j)\big). \]
Let $\Lambda^{\lt} := (\lambda_0, \lambda_1^{\lt}) \in \Fam(I, \lt)$  
and $M^{\lt} := (\mu_0, \mu_1^{\lt}) \in \Fam(J, \lt)$. Let also 
$S(\Lambda^{\lt}) := (\lambda_0, \lambda_1^{\lt}, \phi_0^{\Lambda^{\lt}},  \phi_1^{\Lambda^{\lt}}) \in \Spec(I, \lt)$ 
with Bishop spaces $(\C F_i)_{i \in I}$ and morphisms $(\lambda_{ii{'}})_{(i, i{'})
\in \lt(I)}$,
and $S(M^{\lt}) := (\mu_0, \mu_1^{\lt}, \phi_0^{M^{\lt}},  \phi_1^{M^{\lt}}) \in \Spec(J, \lt)$ 
with Bishop spaces $(\C G_j)_{j \in J}$ and morphisms $(\mu_{jj{'}})_{(j, j{'}) \in \lt(J)}^{\lt}$.\\[1mm]
\normalfont (i)
\itshape $(I \times J, \lt)$ is a directed set, and $(K \times L, i_{K \times L}, \cof_{K \times L}) 
\subseteq^{\cof} I \times J$.\\[1mm]
\normalfont (ii)
\itshape The pair $\Lambda^{\lt} \times M^{\lt} := (\lambda_0 \times \mu_0 , (\lambda_1 \times \mu_1)^{\lt}) \in 
\Fam(I \times J, \lt)$, where  
\[ (\lambda_0 \times \mu_0)\big((i, j)\big) := \lambda_0 (i) \times \mu_0(j), \]
%and, for every $i \lt i{'}$ and $j \lt j{'}$, 
\[ (\lambda_1 \times \mu_1)^{\lt}\big((i, j), (i{'}, j{'})\big) := 
(\lambda_1 \times \mu_1)^{\lt}_{(i, j), (i{'}, j{'})}, \]
\[ (\lambda_1 \times \mu_1)^{\lt}_{(i, j), (i{'}, j{'})}\big((x, y)\big) := 
\big(\lambda_{ii{'}}^{\lt}(x), \mu_{jj{'}}^{\lt}(y)\big). \]
\normalfont (iii)
\itshape The structure 
$S(\Lambda^{\lt} \times M^{\lt}) := \big(\lambda_0 \times \mu_0, \lambda_1^{\lt} \times \mu_1^{\lt} ; 
\phi_0^{\Lambda^{\lt} \times M^{\lt}},  \phi_1^{\Lambda^{\lt} \times M^{\lt}}\big) \in \Spec(I \times J, \lt)$
with Bishop spaces $(F_i \times G_j)_{(i, j) \in I \times J}$ and Bishop morphisms 
$(\lambda_1 \times \mu_1)_{\big((i,j)(i{'},j{'})\big) \in D^{\lt}(I \times J)}$, where
\[ \phi_0^{\Lambda^{\lt} \times M^{\lt}}(i, j) := F_i \times G_j, \]
\[ \phi_1^{\Lambda^{\lt} \times M^{\lt}}\big((i, j), (i{'}, j{'})\big) := 
[(\lambda_1 \times \mu_1)_{(i,j)(i{'},j{'})}^{\lt}]^* : F_{i{'}} \times G_{j{'}} \to F_i \times G_j.\]
\end{prop}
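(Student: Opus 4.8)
The plan is to establish each of the three parts by reducing it coordinatewise to the corresponding property of the factors $(I,\lt)$ and $(J,\lt)$ and of the spectra $S(\Lambda^{\lt})$ and $S(M^{\lt})$, with the only genuine work occurring in the morphism clause of part (iii).

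For part (i) I would first check that $(I\times J,\lt)$ is a directed set. Reflexivity, transitivity and extensionality of the relation $(i,j)\lt(i',j'):\TOT i\lt i'\ \&\ j\lt j'$ all hold coordinatewise from the corresponding properties of $\lt_I$ and $\lt_J$. For directedness, given $(i,j)$ and $(i',j')$, I would use the directedness of $I$ to get $k$ with $i\lt k$ and $i'\lt k$, and that of $J$ to get $l$ with $j\lt l$ and $j'\lt l$; then $(k,l)$ dominates both. To prove $(K\times L,i_{K\times L},\cof_{K\times L})\subseteq^{\cof}I\times J$ I would note that $i_{K\times L}$ is an embedding (immediate from $i_K,i_L$ being embeddings) and verify the three conditions directly: $(\Cf_1)$ reduces to $\cof_K(i_K(k))=_K k$ and $\cof_L(i_L(l))=_L l$; $(\Cf_2)$ follows by applying $(\Cf_2)$ in each coordinate; and $(\Cf_3)$ follows from $i\lt i_K(\cof_K(i))$ and $j\lt i_L(\cof_L(j))$.

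For part (ii) I would verify the two axioms of a direct family of sets for $\Lambda^{\lt}\times M^{\lt}$. Condition (a) is immediate, since $\lambda_{ii}^{\lt}=\id_{\lambda_0(i)}$ and $\mu_{jj}^{\lt}=\id_{\mu_0(j)}$ give $(\lambda_1\times\mu_1)^{\lt}_{(i,j),(i,j)}=\id_{\lambda_0(i)\times\mu_0(j)}$. Condition (b), the commutativity of the transitivity triangle for $(i,j)\lt(i',j')\lt(i'',j'')$, follows by evaluating on a pair $(x,y)$ and using coordinatewise the functoriality of $\lambda_1^{\lt}$ and $\mu_1^{\lt}$.

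Part (iii) carries the real content. The Bishop space attached to $(i,j)$ is the product Bishop space $\C F_i\times\C G_j$ of Definition~\ref{def: new}, so clause (i) of the spectrum definition holds and clause (iii) is merely the chosen definition of $\phi_1^{\Lambda^{\lt}\times M^{\lt}}$. The obstacle is clause (ii): one must show $(\lambda_1\times\mu_1)^{\lt}_{(i,j),(i',j')}\in\Mor(\C F_i\times\C G_j,\C F_{i'}\times\C G_{j'})$. I would apply the $\bigvee$-lifting of morphisms to the subbase of $F_{i'}\times G_{j'}$. Writing $T:=(\lambda_1\times\mu_1)^{\lt}_{(i,j),(i',j')}$, for a subbasic function $f\circ\pr_X$ with $f\in F_{i'}$ one computes $\big((f\circ\pr_X)\circ T\big)(x,y)=f\big(\lambda_{ii'}^{\lt}(x)\big)$, so $(f\circ\pr_X)\circ T=(f\circ\lambda_{ii'}^{\lt})\circ\pr_X$; since $\lambda_{ii'}^{\lt}\in\Mor(\C F_i,\C F_{i'})$ we have $f\circ\lambda_{ii'}^{\lt}\in F_i$, whence this composite is a subbasic generator of $F_i\times G_j$. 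The symmetric computation handles $g\circ\pr_Y$ with $g\in G_{j'}$ via $\mu_{jj'}^{\lt}\in\Mor(\C G_j,\C G_{j'})$. By the $\bigvee$-lifting, $T$ is a Bishop morphism, which settles clause (ii) and completes the proof.
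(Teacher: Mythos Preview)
Your proposal is correct and follows essentially the same approach as the paper: part (i) is treated coordinatewise (the paper simply declares it ``immediate''), part (ii) is the coordinatewise verification of the identity and composition axioms, and part (iii) is exactly the paper's argument via the $\bigvee$-lifting of morphisms applied to the subbasic functions $f\circ\pr_X$ and $g\circ\pr_Y$, yielding $(f\circ\lambda_{ii'}^{\lt})\circ\pr_X$ and $(g\circ\mu_{jj'}^{\lt})\circ\pr_Y$ in $F_i\times G_j$.
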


\begin{proof}
(i) is immediate to show. For the proof of (ii) we have that 
$(\lambda_1 \times \mu_1)^{\lt}_{(i, j), (i, j)}\big((x, y)\big) := 
\big(\lambda_{ii}^{\lt}(x), \mu_{jj}^{\lt}(y)\big) := (x, y),$
and if $(i, j) \lt (i{'}, j{'}) \lt (i{''}, j{''})$, then the commutativity of the 
\begin{center}
\begin{tikzpicture}

\node (E) at (0,0) {$\lambda_0(i) \times \mu_0(j)$};
\node[below=of E] (F) {$\lambda_0(i{'}) \times \mu_0(j{'})$};
\node[right=of F] (H) {};
\node[right=of H] (G) {$\lambda_0(i{''}) \times \mu_0(j{''})$};

\draw[->] (E)--(F) node [midway,left] {$\mathsmaller{(\lambda_1 \times \mu_1)^{\lt}_{(i, j), (i{'}, j{'})}}$};
\draw[->] (E)--(G) node [midway,right, near start] {$\ \ \ \mathsmaller{(\lambda_1 \times \mu_1)^{\lt}_{(i, j), (i{''}, j{''})}}$};
\draw[->] (F)--(G) node [midway,below] {$\mathsmaller{(\lambda_1 \times \mu_1)^{\lt}_{(i{'}, j{'}), (i{''}, j{''})}}$};

\end{tikzpicture}
\end{center}
above diagram follows from the equalities $\lambda_{ii{''}}^{\lt} = \lambda_{i{'}i{''}}^{\lt} \circ \lambda_{ii{'}}^{\lt}$
and $\mu_{jj{''}}^{\lt} = \mu_{j{'}j{''}}^{\lt} \circ \mu_{jj{'}}^{\lt}$.\\
(iii) We show that $(\lambda_1 \times \mu_1)_{(i,j)(i{'},j{'})}^{\lt} \in \Mor(\C F_i \times \C G_j, 
\C F_{i{'}} \times \C G_{j{'}})$. By the $\bigvee$-lifting of morphisms it suffices to show that
% to show that 
$\forall_{f \in F_{i{'}}}\big((f \circ \pi_1) \circ  (\lambda_1 \times
\mu_1)_{(i,j)(i{'},j{'})}^{\lt} \in F_i \times G_j\big)$ and $\forall_{g \in G_{j{'}}}\big((g \circ \pi_2) \circ 
(\lambda_1 \times \mu_1)_{(i,j)(i{'},j{'})}^{\lt} \in F_i \times G_j \big).$
If $f \in F_{i{'}}$, then $(f \circ \pi_1) \circ  (\lambda_1 \times \mu_1)_{(i,j)(i{'},j{'})}^{\lt} := (f \circ 
\lambda_{ii{'}}^{\lt}) \circ \pi_1 \in F_i \times G_j$, as $f \circ \lambda_{ii{'}}^{\lt} \in F_i$ and
$[(f \circ \pi_1) \circ  (\lambda_1 \times \mu_1)_{(i,j)(i{'},j{'})}^{\lt}](x, y) := 
(f \circ \pi_1)\big(\lambda_{ii{'}}^{\lt}(x), \mu_{jj{'}}^{\lt}(y)\big) := f \big(\lambda_{ii{'}}^{\lt}(x)\big)
:= [(f \circ \lambda_{ii{'}}^{\lt}) \circ \pi_1](x, y)$.
%\begin{align*}
%[(f \circ \pi_1) \circ  (\lambda_1 \times \mu_1)_{(i,j)(i{'},j{'})}^{\lt}](x, y) & := 
%(f \circ \pi_1)\big(\lambda_{ii{'}}^{\lt}(x), \mu_{jj{'}}^{\lt}(y)\big)\\
%& := f \big(\lambda_{ii{'}}^{\lt}(x)\big)\\
%& := [(f \circ \lambda_{ii{'}}^{\lt}) \circ \pi_1](x, y).
%\end{align*}
If $g \in G_{j{'}}$, we get $(g\circ \pi_2) \circ  (\lambda_1 \times \mu_1)_{(i,j)(i{'},j{'})}^{\lt} := 
(g \circ \lambda_{jj{'}}^{\lt}) \circ \pi_2 \in F_i \times G_j$.
\end{proof}

\begin{lem}\label{lem: prodlemma}
Let $S(\Lambda^{\lt}) := (\lambda_0, \lambda_1^{\lt},
\phi_0^{\Lambda^{\lt}},  \phi_1^{\Lambda^{\lt}}) \in \Spec(I, \lt)$ 
with Bishop spaces $(\C F_i)_{i \in I}$ and Bishop morphisms
$(\lambda_{ii{'}})_{(i, i{'})\in D^{\lt}(I)}^{\lt}$,
$S(M^{\lt}) := (\mu_0, \mu_1^{\lt}, \phi_0^{M^{\lt}},  \phi_1^{M^{\lt}}) \in \Spec(J, \lt)$ 
with Bishop spaces $(\C G_j)_{j \in J}$ and Bishop morphisms $(\mu_{jj{'}})_{(j, j{'}) \in D^{\lt}(J)}$,
$\Theta \in \prod_{i \in I}^{\mt} F_i$ and $\Phi \in \prod_{j \in J}^{\mt} G_j$. Then 
\[ \Theta_1 \in \prod_{(i, j) \in I \times J}^{\mt} F_i \times G_j \ \ \ \& \ \ \ \ \Phi_2 \in
\prod_{(i, j) \in I \times J}^{\mt} F_i \times G_j, \]
\[ \Theta_1 (i, j) := \Theta_i \circ \pi_1 \in F_i \times G_j \ \ \ \& \ \ \
\Phi_2 (i, j) := \Phi_j \circ \pi_2 \in F_i \times G_j; \ \ \ \ (i, j) \in I \times J. \]
\end{lem}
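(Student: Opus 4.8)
The plan is to unfold what membership in $\prod_{(i,j) \in I \times J}^{\mt} F_i \times G_j$ requires for the product spectrum $S(\Lambda^{\lt} \times M^{\lt})$ of Proposition~\ref{prp: prodspec}, and then to verify the conditions directly for each of $\Theta_1$ and $\Phi_2$. By the contravariant reading of Definition~\ref{def: dirsumpi} used in Lemma~\ref{lem: preorderdependent}, an element $\Xi$ of $\prod_{(i,j) \in I \times J}^{\mt} F_i \times G_j$ is a dependent operation with $\Xi_{(i,j)} \in F_i \times G_j$ such that, for every $(i,j) \lt (i{'},j{'})$,
$$\Xi_{(i,j)} = \Xi_{(i{'},j{'})} \circ (\lambda_1 \times \mu_1)^{\lt}_{(i,j),(i{'},j{'})}.$$
So for each of $\Theta_1$ and $\Phi_2$ I would check first that the prescribed component lies in $F_i \times G_j$, and second the displayed contravariant compatibility.

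For $\Theta_1$, the first point is immediate from the product topology (Definition~\ref{def: new}): as $\Theta_i \in F_i$, the composite $\Theta_1(i,j) = \Theta_i \circ \pi_1$ is one of the defining generators $f \circ \pi_1$ of $F_i \times G_j$, hence lies in it. For the second point, I would fix $(i,j) \lt (i{'},j{'})$, so that $i \lt i{'}$ and $j \lt j{'}$, and evaluate the right-hand side at an arbitrary $(x,y) \in \lambda_0(i) \times \mu_0(j)$. Using the explicit transport map $(\lambda_1 \times \mu_1)^{\lt}_{(i,j),(i{'},j{'})}(x,y) = (\lambda_{ii{'}}^{\lt}(x), \mu_{jj{'}}^{\lt}(y))$ from Proposition~\ref{prp: prodspec}(ii), the first projection discards the second coordinate and the expression reduces to $(\Theta_{i{'}} \circ \lambda_{ii{'}}^{\lt})(x)$. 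Since $\Theta \in \prod_{i \in I}^{\mt} F_i$ and $i \lt i{'}$, the defining condition gives $\Theta_i = \Theta_{i{'}} \circ \lambda_{ii{'}}^{\lt}$, so this equals $\Theta_i(x) = \Theta_1(i,j)(x,y)$, which is exactly what is needed.

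The argument for $\Phi_2$ is the mirror image: $\Phi_2(i,j) = \Phi_j \circ \pi_2$ is a generator $g \circ \pi_2$ of $F_i \times G_j$, and when evaluating the transported component the second projection now discards the first coordinate, reducing the right-hand side to $(\Phi_{j{'}} \circ \mu_{jj{'}}^{\lt})(y)$, which equals $\Phi_j(y)$ because $\Phi \in \prod_{j \in J}^{\mt} G_j$ and $j \lt j{'}$ force $\Phi_j = \Phi_{j{'}} \circ \mu_{jj{'}}^{\lt}$. I do not expect a genuine obstacle here: the computations are short, and the only thing requiring care is matching each factor of the product to the correct projection and invoking the compatibility of $\Theta$ along the $I$-coordinate (resp. of $\Phi$ along the $J$-coordinate), while the other coordinate is absorbed harmlessly by the projection.
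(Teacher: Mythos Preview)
Your proposal is correct and follows essentially the same approach as the paper: unfold the compatibility condition for membership in $\prod_{(i,j)\in I\times J}^{\mt} F_i\times G_j$, evaluate pointwise using the explicit product transport map $(\lambda_1\times\mu_1)^{\lt}_{(i,j),(i',j')}(x,y)=(\lambda_{ii'}^{\lt}(x),\mu_{jj'}^{\lt}(y))$, and reduce via the projection to the hypothesis $\Theta_i=\Theta_{i'}\circ\lambda_{ii'}^{\lt}$ (resp.\ $\Phi_j=\Phi_{j'}\circ\mu_{jj'}^{\lt}$). Your write-up is in fact slightly more explicit than the paper's, since you also spell out why $\Theta_i\circ\pi_1$ lies in $F_i\times G_j$ as a subbase element.
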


\begin{proof}
We prove that $\Theta_1 \in \prod_{(i, j) \in I \times J}^{\mt} F_i \times G_j$, and for 
$\Phi_2$ we proceed similarly. If $(i, j) \lt (i{'}, j{'})$, we need to show that 
$\Theta_1 (i, j) = \Theta_1 (i{'}, j{'}) \circ (\lambda_1 \times \mu_1)^{\lt}_{(i, j), (i{'}, j{'})}$.
Since $\Theta \in \prod_{i \in I}^{\mt} F_i$, we have that 
$\Theta_i = \Theta_{i{'}} \circ \lambda_{i i{'}}^{\lt}$.
If $x \in \lambda_0(i)$ and $y \in \mu_0(j)$, we have that
\begin{align*}
\big[\Theta_1 (i{'}, j{'}) \circ (\lambda_1 \times \mu_1)^{\lt}_{(i, j), (i{'}, j{'})}](x, y) & := 
\big[\Theta_{i{'}} \circ \pi_1\big]\big(\lambda_{ii{'}}^{\lt}(x), \mu_{jj{'}}^{\lt}(y)\big)\\ 
& := \Theta_{i{'}}\big(\lambda_{ii{'}}^{\lt}(x)\big)\\
& =: \big[\big(\Theta_{i{'}} \circ \lambda_{ii{'}}^{\lt}\big) \circ \pi_1\big](x, y)\\
& := \big(\Theta_i \circ \pi_1\big)(x, y)\\
& := \big[\Theta_1(i, j)\big](x, y). \qedhere
\end{align*}
\end{proof}

\begin{prop}\label{prp: proddirect}
If $S(\Lambda^{\lt}) := (\lambda_0, \lambda_1^{\lt}, \phi_0^{\Lambda^{\lt}},  \phi_1^{\Lambda^{\lt}}) \in \Spec(I, \lt)$ 
with Bishop spaces $(\C F_i)_{i \in I}$ and Bishop morphisms $(\lambda_{ii{'}})_{(i, i{'})\in D^{\lt}(I)}^{\lt}$,
and $S(M^{\lt}) := (\mu_0, \mu_1^{\lt}, \phi_0^{M^{\lt}},  \phi_1^{M^{\lt}}) \in \Spec(J, \lt)$ 
with Bishop spaces $(\C G_j)_{j \in J}$ and Bishop morphisms $(\mu_{jj{'}}^{\lt})_{(j, j{'}) \in D^{\lt}(J)}$,
there is a bijection
\[\theta : \underset{\to} \Lim \big(\lambda_0(i) \times \mu_0 (j)\big) \to \underset{\to} \Lim 
\lambda_0(i) \times 
\underset{\to} \Lim \mu_0(j) \in \Mor \big(\underset{\to} \Lim (\C F_i \times \C G_j), \underset{\to} 
\Lim \C F_i \times \underset{\to} \Lim \C G_j\big). \]
\end{prop}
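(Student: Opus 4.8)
The plan is to define the comparison map directly on representatives and then verify, in turn, that it is well defined, injective, surjective, and a Bishop morphism, using Proposition~\ref{prp: prodspec} and Lemma~\ref{lem: prodlemma}. The source is the direct limit of the product spectrum $S(\Lambda^{\lt} \times M^{\lt}) \in \Spec(I \times J, \lt)$ of Proposition~\ref{prp: prodspec}(iii), and I would set
\[ \theta\big(\eql_0((i,j),(x,y))\big) := \big(\eql_0(i,x), \eql_0(j,y)\big), \]
for every $\eql_0((i,j),(x,y)) \in \underset{\to} \Lim \big(\lambda_0(i) \times \mu_0(j)\big)$. The single fact behind every verification is that, by the definition of the product order $(i,j) \lt (i{'},j{'}) :\TOT i \lt i{'} \ \& \ j \lt j{'}$ and of the product transport map $(\lambda_1 \times \mu_1)^{\lt}_{(i,j),(k,l)}(x,y) = \big(\lambda_{ik}^{\lt}(x), \mu_{jl}^{\lt}(y)\big)$, an equality in $\sum_{(i,j) \in I \times J}^{\lt}\big(\lambda_0(i) \times \mu_0(j)\big)$ witnessed by a pair $(k,l)$ splits into an equality in $\sum_{i \in I}^{\lt}\lambda_0(i)$ witnessed by $k$ together with an equality in $\sum_{j \in J}^{\lt}\mu_0(j)$ witnessed by $l$, and conversely two such component witnesses $k$ and $l$ reassemble into the single witness $(k,l)$.

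First I would run this splitting to get well-definedness: if the pairs $((i,j),(x,y))$ and $((i{'},j{'}),(x{'},y{'}))$ are equal in the product direct sum via a witness $(k,l)$, then $\lambda_{ik}^{\lt}(x) =_{\lambda_0(k)} \lambda_{i{'}k}^{\lt}(x{'})$ and $\mu_{jl}^{\lt}(y) =_{\mu_0(l)} \mu_{j{'}l}^{\lt}(y{'})$, whence $\eql_0(i,x) = \eql_0(i{'},x{'})$ and $\eql_0(j,y) = \eql_0(j{'},y{'})$, so $\theta$ respects the product equality of the target. Reading the same chain of equivalences backwards gives injectivity: two component witnesses $k$ and $l$ reassemble into $(k,l)$, which witnesses the equality of the two classes in the source. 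Surjectivity is immediate, since every element of $\underset{\to} \Lim \lambda_0(i) \times \underset{\to} \Lim \mu_0(j)$ has the form $\big(\eql_0(i,x), \eql_0(j,y)\big)$ and hence equals $\theta\big(\eql_0((i,j),(x,y))\big)$.

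For continuity I would appeal to the $\bigvee$-lifting of morphisms. Since the target topology is $\big(\underset{\to} \Lim F_i\big) \times \big(\underset{\to} \Lim G_j\big) = \bigvee\big[\{\eql_0 f_{\Theta} \circ \pr_1 \mid \Theta \in \prod_{i \in I}^{\mt}F_i\} \cup \{\eql_0 f_{\Phi} \circ \pr_2 \mid \Phi \in \prod_{j \in J}^{\mt}G_j\}\big]$, it suffices to show that $(\eql_0 f_{\Theta} \circ \pr_1) \circ \theta$ and $(\eql_0 f_{\Phi} \circ \pr_2) \circ \theta$ lie in $\underset{\to} \Lim (F_i \times G_j)$. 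Evaluating on a representative gives
\[ \big((\eql_0 f_{\Theta} \circ \pr_1) \circ \theta\big)\big(\eql_0((i,j),(x,y))\big) = \Theta_i(x) = f_{\Theta_1}\big((i,j),(x,y)\big), \]
where $\Theta_1 \in \prod_{(i,j) \in I \times J}^{\mt}(F_i \times G_j)$ is the dependent function $\Theta_1(i,j) := \Theta_i \circ \pi_1$ of Lemma~\ref{lem: prodlemma}; hence the composite equals $\eql_0 f_{\Theta_1} \in \underset{\to} \Lim(F_i \times G_j)$, and symmetrically the second composite equals $\eql_0 f_{\Phi_2}$ with $\Phi_2$ from the same lemma. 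The computations themselves are short, so the only real obstacle is bookkeeping the product order faithfully; in particular the passage between the product witness $(k,l)$ and the pair of component witnesses $k, l$ must be carried out by hand, but it introduces no use of choice, since each witness is produced explicitly from the other.
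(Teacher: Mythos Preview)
Your proposal is correct and follows essentially the same route as the paper's proof: the same definition of $\theta$ on representatives, the same witness-splitting argument (which the paper packages as a single chain of biconditionals establishing that $\theta$ is an embedding, whereas you separate well-definedness from injectivity), the same observation that surjectivity is immediate, and the same appeal to the $\bigvee$-lifting of morphisms together with Lemma~\ref{lem: prodlemma} for continuity.
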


\begin{proof}
Let the operation  $\theta : \underset{\to} \Lim \big(\lambda_0(i) \times \mu_0 (j)\big) \sto 
\underset{\to} \Lim \lambda_0(i) \times \underset{\to} \Lim \mu_0(j)$ be defined by 
\[ \theta\big(\eql_0^{\Lambda^{\lt} \times M^{\lt}}\big((i, j), (x, y)\big)\big) := \big(\eql_0^{\Lambda^{\lt}}(i, x), 
\eql_0^{M^{\lt}}(j, y)\big). \]
First we show that $\theta$ is an embedding as follows:
%of $\underset{\to} \Lim \big(\lambda_0(i) \times \mu_0 (j)\big)$
%into $\Lim \lambda_0(i) \times \underset{\to} \Lim \mu_0(j)$. 
\begin{align*}
& \  \eql_0^{\Lambda^{\lt} \times M^{\lt}}\big((i, j), (x, y)\big) = \eql_0^{\Lambda^{\lt} 
\times M^{\lt}}\big((i{'}, j{'}), (x{'}, y{'})\big) :\TOT \\
& : \TOT \exists_{(k, l) \in I \times J}\big((i, j), (i{'}, j{'}) \lt (k, l) \ \& \ 
(\lambda_1 \times \mu_1)_{(i,j)(k,l)}^{\lt}(x, y) = 
(\lambda_1 \times \mu_1)_{(i{'},j{'})(k, l)}^{\lt}(x{'}, y{'})\big)\\
& : \TOT \exists_{(k, l) \in I \times J}\big((i, j), (i{'}, j{'}) \lt (k, l) \ \& \ 
\big(\lambda_{ik}^{\lt}(x), \mu_{jl}^{\lt}(y)\big) = \big(\lambda_{i{'}k}^{\lt}(x{'}), 
\mu_{j{'}k}^{\lt}(y{'})\big)\\
& :\TOT \exists_{k \in I}\big(i, i{'} \lt k \ \& \ \lambda_{ik}^{\lt}(x) = \lambda_{i{'}k}^{\lt}(x{'})\big)
\ \& \
\exists_{l \in J}\big(j, j{'} \lt l \ \& \ \lambda_{jk}^{\lt}(y) = \lambda_{j{'}k}^{\lt}(y{'})\big)\\
& : \TOT \eql_0^{\Lambda^{\lt}}(i, x) = \eql_0^{\Lambda^{\lt}}(i{'}, x{'}) \ \ \& \ \ \eql_0^{M^{\lt}}(j, y) = 
\eql_0^{M^{\lt}}(j{'}, y{'})\\
& : \TOT \big(\eql_0^{\Lambda^{\lt}}(i, x), \eql_0^{M^{\lt}}(j, y)\big) = \big(\eql_0^{\Lambda^{\lt}}(i{'}, x{'}), 
\eql_0^{M^{\lt}}(j{'}, y{'})\big)\\
& : \TOT \theta\big(\eql_0^{\Lambda^{\lt} \times M^{\lt}}\big((i, j), (x, y)\big)\big) = 
\theta\big(\eql_0^{\Lambda^{\lt} \times M^{\lt}}\big((i{'}, j{'}), (x{'}, y{'})\big)\big).
\end{align*}
The fact that $\theta$ is a surjection 
%onto $\Lim \lambda_0(i) \times \underset{\to} \Lim \mu_0(j)$
is immediate to show. 
By definition of the direct limit and by the $\vee$-lifting of the product Bishop topology we have that
\[ \underset{\to} \Lim (\C F_i \times \C G_j) := \bigg(\underset{\to} \Lim \big(\lambda_0(i) \times 
\mu_0 (j)\big), \bigvee_{\Xi \in \prod_{(i, j) \in I \times J}^{\mt}F_i \times 
G_j}\eql_0 f_{\Xi}\bigg),\]
\[ \underset{\to} \Lim \C F_i \times \underset{\to} \Lim \C G_j := \bigg(\underset{\to} \Lim \lambda_0(i) 
\times \underset{\to} \Lim \mu_0(j), \bigvee_{\Theta \in \prod_{i \in I}^{\mt}F_i}^{H 
\in \prod_{j \in J}^{\mt}G_j}\eql_0 f_{\Theta} \circ \pi_1, \eql_0 f_{H} \circ \pi_2 \bigg). \]
To show that $\theta \in \Mor \big(\underset{\to} \Lim (\C F_i \times \C G_j), \underset{\to} \Lim \C F_i 
\times \underset{\to} \Lim \C G_j\big)$ it suffices to show that
\[ \forall_{\Theta \in \prod_{i \in I}^{\mt}F_i}\forall_{H \in \prod_{j 
\in J}^{\mt}G_j}\big(\eql_0 f_{\Theta} \circ \pi_1) \circ \theta \in \underset{\to} \Lim 
(F_i \times G_j) \ \& \ (\eql_0 f_{H} \circ \pi_2) \circ \theta \in \underset{\to} \Lim (F_i 
\times G_j)\big). \]
If $\Theta \in \prod_{i \in I}^{\mt}F_i$, we show that $(\eql_0 f_{\Theta} \circ \pi_1) 
\circ \theta \in \underset{\to} \Lim (F_i \times G_j)$  
From the equalities
\begin{align*}
[(\eql_0 f_{\Theta^{\mt}} \circ \pi_1) \circ \theta]\big(\eql_0^{\Lambda^{\lt} \times M^{\lt}}\big((i, j),
(x, y)\big)\big) & := (\eql_0 f_{\Theta^{\mt}} \circ \pi_1)\big(\eql_0^{\Lambda^{\lt}}(i, x), 
\eql_0^{M^{\lt}}(j, y)\big)\\
& := \eql_0 f_{\Theta^{\mt}}\big(\eql_0^{\Lambda^{\lt}}(i, x)\big)\\
& := \Theta_i(x)\\
& =: \big(\Theta_i \circ \pi_1\big)(x, y)\\
& := \big[\Theta_1(i, j)\big](x, y)\\
& := \eql_0 f_{\Theta_1}\big(\eql_0^{\Lambda^{\lt} \times M^{\lt}}\big((i, j), (x, y)\big)\big),
\end{align*}
where $\Theta_1 \in \prod_{i \in I}^{\mt} F_i \times G_j$ is defined in Lemma~\ref{lem: prodlemma},
we conclude that $(\eql_0 f_{\Theta^{\mt}} \circ \pi_1) \circ \theta := \eql_0 f_{\Theta_1} \in
\underset{\to} \Lim (F_i \times G_j)$. For the second case we work similarly.
\end{proof}

\section{Inverse limit of Bishop spaces}
\label{sec: inverselimit}

\begin{defi}\label{def: inverselimit}
If $S(\Lambda^{\mt}) := (\lambda_0, \lambda_1^{\mt}, \phi_0^{\Lambda^{\mt}}, \phi_1^{\Lambda^{\mt}})$ is 
in $\Spec(I, \mt)$ i.e., a contravariant $(I, \lt)$-spectrum, with Bishop spaces $(\C F_i)_{i \in I}$ and Bishop morphisms 
$(\lambda_{ji}^{\mt})_{(i,j) \in D^{\lt}(I)}$, the 
\textit{inverse limit} of $S(\Lambda^{\mt})$\index{inverse limit of a contravariant direct spectrum of 
Bishop spaces} is the Bishop space
\[ \underset{\ot} \Lim \C F_i := \big(\underset{\ot} \Lim \lambda_0 (i), \ \underset{\ot} \Lim F_i\big), \]
\[ \underset{\ot} \Lim \lambda_0 (i) := \prod_{i \in I}^{\mt}\lambda_0 (i) \ \ \ \& \ \ \ \
\underset{\ot} \Lim F_i := \bigvee_{i \in I}^{f \in F_i}f \circ \pi_i^{\Lambda^{\mt}}. \]
\end{defi}

We write $\pi_i$ instead of $\pi_i^{\Lambda^{\mt}}$ for the 
function $\pi_i^{\Lambda^{\mt}} : \prod_{i \in I}^{\mt}\lambda_0 (i) \to \lambda_0(i)$, which is 
defined, as its dual $\pi_i^{\Lambda^{\lt}}$ in the Proposition~\ref{prp: preorderfamilymap1}(iv), by
the rule $\Phi \mapsto \Phi_i$, for every $i \in I$.

\begin{prop}[Universal property of the inverse limit]\label{prp: universalinverse}
 If  $S(\Lambda^{\mt}) := (\lambda_0, \lambda_1^{\mt}, \phi_0^{\Lambda^{\mt}}, \phi_1^{\Lambda^{\mt}})$ 
is in $\Spec(I, \mt)$ with Bishop spaces $(\C F_i)_{i \in I}$ and 
Bishop morphisms $(\lambda_{ji}^{\mt})_{(i,j) \in \lt(I)}$, its inverse limit $\underset{\ot} \Lim \C F_i$
satisfies the universal property of inverse limits:\\[1mm]
\normalfont (i)
\itshape  For every $i \in I$, we have that $\pi_i \in \Mor(\underset{\ot} \Lim \C F_i, \C F_i)$.\\[1mm]
\normalfont (ii)
\itshape If $i \lt j$, the following left diagram commutes 
\begin{center}
\begin{tikzpicture}

\node (E) at (0,0) {$\prod_{i \in I}^{\mt}\lambda_0(i)$};
\node[below=of E] (F) {};
\node [right=of F] (B) {$\lambda_0(j)$};
\node [left=of F] (C) {$\lambda_0(i)$};
\node [right=of B] (D) {$\lambda_0(i)$};
\node[right=of D] (K) {};
\node[right=of K] (L) {$\lambda_0(j)$.};
\node[above=of K] (M) {$Y$};

\draw[->] (E)--(B) node [midway,right] {$ \ \  \pi_j$};
\draw[->] (E)--(C) node [midway,left] {$\pi_i \ $};
\draw[->] (B)--(C) node [midway,below] {$\lambda_{ji}^{\mt}$};

\draw[->] (M)--(L) node [midway,right] {$ \ \varpi_j$};
\draw[->] (M)--(D) node [midway,left] {$\varpi_i \ $};
\draw[->] (L)--(D) node [midway,below] {$\lambda_{ji}^{\mt}$};

\end{tikzpicture}
\end{center}
\normalfont (iii)
\itshape If $\C G := (Y, G)$ is a Bishop space and 
$\varpi_i : Y \to \lambda_0(i) \in \Mor(\C G, \C F_i)$, for every $i \in I$, are such that if 
$i \lt j$,
the above right diagram commutes, there is a 
unique function $h : Y \to \prod_{i \in I}\lambda_0(i) \in \Mor(\C G, \underset{\ot} \Lim \C F_i)$
such that the following diagrams commute
\begin{center}
\begin{tikzpicture}

\node (E) at (0,0) {$Y$};
\node[below=of E] (F) {};
\node [right=of F] (B) {$\lambda_0(j)$,};
\node [left=of F] (C) {$\lambda_0(i)$};
\node[below=of F] (G) {$\prod_{i \in I}\lambda_0(i)$};

\draw[->] (E)--(B) node [midway,right] {$ \ \  \varpi_j$};
\draw[->] (E)--(C) node [midway,left] {$\varpi_i \ $};
\draw[left hook->] (B)--(C) node [midway,below] {$\lambda_{ji}^{\mt} \ \ \ \ \ \  $};
\draw[->] (G)--(B) node [midway,right] {$ \ \  \pi_j$};
\draw[->] (G)--(C) node [midway,left] {$\pi_i \ $};
\draw[dashed,->] (E)--(G) node [midway,near start] {$\ \ \ h$};

\end{tikzpicture}
\end{center}
\end{prop}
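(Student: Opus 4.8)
The plan is to handle the three parts in order, with part~(iii) carrying essentially all the content. Throughout I recall that an element $\Phi$ of $\underset{\ot}\Lim\lambda_0(i) := \prod_{i \in I}^{\mt}\lambda_0(i)$ is a \emph{thread}, i.e.\ a dependent operation in $\D A(I, \lambda_0)$ satisfying $\Phi_i =_{\lambda_0(i)} \lambda_{ji}^{\mt}(\Phi_j)$ whenever $i \lt j$ (the compatibility condition defining $\prod_{i \in I}^{\mt}$, the contravariant dual of Definition~\ref{def: dirsumpi}), and that the equality on this set is the componentwise one inherited from $\D A(I, \lambda_0)$. That each $\pi_i$ is a function is already recorded, being the dual of Proposition~\ref{prp: preorderfamilymap1}(iv).

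For part~(i), by the definition of a Bishop morphism it suffices to check that $f \circ \pi_i \in \underset{\ot}\Lim F_i$ for every $f \in F_i$; this is immediate, since $f \circ \pi_i$ is by construction one of the generators of $\underset{\ot}\Lim F_i = \bigvee_{i \in I}^{f \in F_i} f \circ \pi_i$. Part~(ii) is then a direct unfolding of the thread condition: for $\Phi \in \underset{\ot}\Lim\lambda_0(i)$ and $i \lt j$ one has $(\lambda_{ji}^{\mt} \circ \pi_j)(\Phi) := \lambda_{ji}^{\mt}(\Phi_j) = \Phi_i =: \pi_i(\Phi)$, the middle step being exactly the defining compatibility of a thread.

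For part~(iii) I would define $h \colon Y \sto \prod_{i \in I}^{\mt}\lambda_0(i)$ by $[h(y)]_i := \varpi_i(y)$, for every $i \in I$ and $y \in Y$. The first and most important step is to verify that $h(y)$ is genuinely a thread: it lies in $\D A(I, \lambda_0)$ since $\varpi_i(y) \in \lambda_0(i)$, and for $i \lt j$ the required equality $[h(y)]_i =_{\lambda_0(i)} \lambda_{ji}^{\mt}([h(y)]_j)$ reads $\varpi_i(y) = \lambda_{ji}^{\mt}(\varpi_j(y))$, which is precisely the assumed commutativity $\lambda_{ji}^{\mt} \circ \varpi_j = \varpi_i$ of the given right diagram. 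That $h$ respects equality follows componentwise from each $\varpi_i$ being a function, and the commutativity $\pi_i \circ h = \varpi_i$ holds by construction, since $(\pi_i \circ h)(y) := [h(y)]_i := \varpi_i(y)$.

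To see that $h$ is a Bishop morphism I would use the $\bigvee$-lifting of morphisms: for a generator $f \circ \pi_i$ of $\underset{\ot}\Lim F_i$ one computes $(f \circ \pi_i) \circ h = f \circ (\pi_i \circ h) = f \circ \varpi_i$, which belongs to $G$ because $\varpi_i \in \Mor(\C G, \C F_i)$ and $f \in F_i$. Uniqueness is forced by the commutativity relations: if $h{'}$ also satisfies $\pi_i \circ h{'} = \varpi_i$ for all $i$, then $[h{'}(y)]_i = \varpi_i(y) = [h(y)]_i$ for every $i$ and $y$, so $h{'}(y)$ and $h(y)$ agree in every component, whence $h{'} = h$. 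I expect no serious obstacle; the only point demanding care is keeping the contravariant direction of the transport maps straight, so that the thread condition and the hypothesis on the $\varpi_i$ are matched correctly (both push a higher-index datum down along $\lambda_{ji}^{\mt}$), and reading off the correct generating family in each application of the $\bigvee$-lifting.
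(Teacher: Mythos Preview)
Your proposal is correct and follows essentially the same route as the paper's own proof: both define $h$ by $[h(y)]_i := \varpi_i(y)$, verify the thread condition from the commutativity hypothesis on the $\varpi_i$, check that $h$ is a function componentwise, and obtain $h \in \Mor(\C G, \underset{\ot}\Lim \C F_i)$ via the $\bigvee$-lifting of morphisms by computing $(f \circ \pi_i) \circ h = f \circ \varpi_i \in G$. Your treatment of uniqueness is slightly more explicit than the paper's, which simply states that commutativity and uniqueness follow immediately, but there is no substantive difference.
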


\begin{proof}
The condition
$\pi_i \in \Mor(\underset{\ot} \Lim \C F_i, \C F_i) :\TOT \forall_{f \in F_i}\bigg(f \circ \pi_i \in
\bigvee_{i \in I}^{f \in F_i}f \circ \pi_i\bigg)$ is trivially satisfied, and (i) follows.
For (ii), the required equality $\lambda_{ji}^{\mt}\big(\pi_j(\Phi)\big) 
=_{\mathsmaller{\lambda_0(i)}} \pi_i(\Phi) :\TOT \lambda_{ji}^{\mt}(\Phi_j) 
= _{\mathsmaller{\lambda_0(i)}} \Phi_i$ holds by 
the definition of $\prod_{i \in I}^{\mt}\lambda_0(i)$.
To show (iii), let the operation $h : Y \sto \prod_{i \in I}^{\mt}\lambda_0(i)$, defined by 
$h(y) := \Phi_y$, where $\Phi_y(i) := \varpi_i(y)$, for every $y \in Y$ and $i \in I$.
First we show that $h$ is well-defined i.e., $h(y) \in \prod_{i \in I}^{\mt}\lambda_0(i)$. If $i \lt j$, by the 
supposed commutativity of the above right diagram we have that
$\lambda_{ji}^{\mt}\big(\Phi_y(j)\big) :=  \lambda_{ji}^{\mt}\big(\varpi_j(y)\big) = \varpi_i(y) 
:= \Phi_y(i)$.
Next we show that $h$ is a function.
 %from $Y$ to $\prod_{i \in I}^{\mt}\lambda_0(i)$. 
If $y =_Y y{'}$, 
the last formula in the following equivalences
\[ \Phi_y =_{\mathsmaller{\prod_{i \in I}^{\mt}\lambda_0(i)}} \Phi_{y{'}} :\TOT \forall_{i \in I}\big(\Phi_y(i)
=_{\mathsmaller{\lambda_0(i)}} \Phi_{y{'}}(i)\big) : \TOT \forall_{i \in I}\big(\varpi_i(y)
=_{\mathsmaller{\lambda_0(i)}} \varpi_i(y{'})\big) \]
holds by the fact that $\varpi_i$ is a function, for every $i \in I$. By the 
$\bigvee$-lifting of morphisms we have that 
$h \in \Mor(\C G, \underset{\ot} \Lim \C F_i) \TOT \forall_{i \in I}\forall_{f \in F_i}\big((f \circ \pi_i)
\circ h \in G\big)$. 
If $i \in I$, $f \in F_i$, and $y \in Y$, then
\[ \big[(f \circ \pi_i) \circ h \big](y) := (f \circ \pi_i)(\Phi_y) := f\big(\Phi_y(i)\big) := 
f\big(\varpi_i(y)\big) =: (f \circ \varpi_i)(y), \]
hence $(f \circ \pi_i) \circ h := f \circ \varpi_i \in G$, since $\varpi_i \in \Mor(\C G, \C F_i)$. The required
commutativity of the last diagram above, and the uniqueness of $h$ follow immediately.
\end{proof}

The uniqueness of $\underset{\ot} \Lim \lambda_0 (i)$, up to Bishop isomorphism, follows easily from 
its universal property. Next follows the inverse analogue to the Theorem~\ref{thm: directspectrummap1}.

\begin{thm}\label{thm: inverselimitmap}
Let $S(\Lambda^{\mt}) := (\lambda_0, \lambda_1^{\mt}, \phi_0^{\Lambda^{\mt}}, \phi_1^{\Lambda^{\mt}})$ be in
$\Spec(I, \mt)$ with Bishop spaces $(\C F_i)_{i \in I}$ and Bishop morphisms 
$(\lambda_{ji}^{\mt})_{(i,j) \in D^{\lt}(I)}$, and $S(M^{\mt}) := (\mu_0, \mu_1, \phi_{0}^{M^{\mt}}, \phi_{1}^{M^{\mt}})$
in $\Spec(I, \mt)$ with Bishop spaces $(\C G_i)_{i \in I}$ and Bishop morphisms 
$(\mu_{ji}^{\mt})_{(i,j) \in D^{\lt}(I)}$, and $\Psi \colon S(\Lambda^{\mt}) \To S(M^{\mt})$.\\[1mm]
\normalfont (i)
\itshape There is a unique function $\Psi_{\ot} \colon \underset{\ot} \Lim \lambda_0 (i) \to 
\underset{\ot} \Lim \mu_0 (i)$ such that, for every $i \in I$, the following diagram commutes
\begin{center}
\begin{tikzpicture}

\node (E) at (0,0) {$\underset{\ot} \Lim \lambda_0 (i)$};
\node[right=of E] (F) {$\underset{\ot} \Lim \mu_0 (i)$.};
\node[above=of F] (A) {$\mu_0(i)$};
\node [above=of E] (D) {$\lambda_0(i)$};
%\node [below=of D] (G) {$V$};

\draw[dashed,->] (E)--(F) node [midway,below]{$\Psi_{\ot}$};
\draw[->] (D)--(A) node [midway,above] {$\Psi_i$};
\draw[->] (E)--(D) node [midway,left] {$\pi^{\Lambda^{\mt}}_i$};
\draw[->] (F)--(A) node [midway,right] {$\pi^{M^{\mt}}_i$};
%\draw[->] (D)--(E) node [midway,left] {$f$};

\end{tikzpicture}
\end{center}
\normalfont (ii)
\itshape If $\Psi$ is continuous, then 
$\Psi_{\ot} \in \Mor(\underset{\ot} \Lim \C F_i, \underset{\ot} \Lim \C G_i)$.\\[1mm]
\normalfont (iii)
\itshape If $\Psi_i$ is an embedding, for every $i \in I$,
then $\Psi_{\ot}$ is an embedding.
\end{thm}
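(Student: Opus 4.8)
The plan is to mirror the proof of Theorem~\ref{thm: directspectrummap1}, dualising sums to products and reversing the direction of all transport maps. Throughout I keep in mind the defining data of the inverse limit: an element $\Phi \in \prod_{i \in I}^{\mt}\lambda_0(i)$ is a dependent function whose components satisfy $\Phi_i =_{\lambda_0(i)} \lambda_{ji}^{\mt}(\Phi_j)$ whenever $i \lt j$, and $\pi_i^{\Lambda^{\mt}}$ sends $\Phi$ to $\Phi_i$. Since equality on $\prod_{i \in I}^{\mt}\lambda_0(i)$ is componentwise, each $\pi_i^{\Lambda^{\mt}}$ is automatically a function; this is the contravariant form of Proposition~\ref{prp: preorderfamilymap1}(iv), and I would record it at the outset since it is needed for the diagram in (i) to make sense.

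For (i), I would define $\Psi_{\ot}(\Phi)$ to be the dependent operation with $[\Psi_{\ot}(\Phi)]_i := \Psi_i(\Phi_i)$, for every $i \in I$. This is forced by the demanded commutativity $\pi_i^{M^{\mt}} \circ \Psi_{\ot} = \Psi_i \circ \pi_i^{\Lambda^{\mt}}$, which therefore also delivers uniqueness at once. The substantive step is to verify that $\Psi_{\ot}(\Phi)$ really lands in $\prod_{i \in I}^{\mt}\mu_0(i)$: for $i \lt j$ I compute, using first the membership condition $\Phi_i = \lambda_{ji}^{\mt}(\Phi_j)$ and then the commutativity of the spectrum-map square $\Psi_i \circ \lambda_{ji}^{\mt} = \mu_{ji}^{\mt} \circ \Psi_j$,
\[ [\Psi_{\ot}(\Phi)]_i := \Psi_i(\Phi_i) = \Psi_i\big(\lambda_{ji}^{\mt}(\Phi_j)\big) = \mu_{ji}^{\mt}\big(\Psi_j(\Phi_j)\big) =: \mu_{ji}^{\mt}\big([\Psi_{\ot}(\Phi)]_j\big), \]
which is exactly the compatibility condition for membership in $\prod_{i \in I}^{\mt}\mu_0(i)$. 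That $\Psi_{\ot}$ respects equality is immediate: if $\Phi_i =_{\lambda_0(i)} \Phi_i{'}$ for all $i$, then $\Psi_i(\Phi_i) =_{\mu_0(i)} \Psi_i(\Phi_i{'})$, since each $\Psi_i$ is a function.

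For (ii), assuming $\Psi_i \in \Mor(\C F_i, \C G_i)$ for every $i$, I would invoke the $\bigvee$-lifting of morphisms: as $\underset{\ot} \Lim G_i = \bigvee_{i \in I}^{g \in G_i} g \circ \pi_i^{M^{\mt}}$, it suffices to check $(g \circ \pi_i^{M^{\mt}}) \circ \Psi_{\ot} \in \underset{\ot} \Lim F_i$ for all $i \in I$ and $g \in G_i$. Evaluating at $\Phi$ gives $g\big([\Psi_{\ot}(\Phi)]_i\big) = g(\Psi_i(\Phi_i)) = (g \circ \Psi_i)(\Phi_i)$, whence $(g \circ \pi_i^{M^{\mt}}) \circ \Psi_{\ot} = (g \circ \Psi_i) \circ \pi_i^{\Lambda^{\mt}}$; continuity makes $g \circ \Psi_i$ an element of $F_i$, so the right-hand side is one of the generators of $\underset{\ot} \Lim F_i$. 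For (iii), if every $\Psi_i$ is an embedding and $\Psi_{\ot}(\Phi) =_{\prod_{i \in I}^{\mt}\mu_0(i)} \Psi_{\ot}(\Phi{'})$, then $\Psi_i(\Phi_i) =_{\mu_0(i)} \Psi_i(\Phi_i{'})$ for each $i$, hence $\Phi_i =_{\lambda_0(i)} \Phi_i{'}$, and therefore $\Phi =_{\prod_{i \in I}^{\mt}\lambda_0(i)} \Phi{'}$.

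I do not expect a genuine obstacle here, since every part collapses to a short direct computation once the variances are fixed; the single point demanding care is the bookkeeping of arrows, namely that in the contravariant convention the spectrum-map square reads $\Psi_i \circ \lambda_{ji}^{\mt} = \mu_{ji}^{\mt} \circ \Psi_j$, and that it is precisely this square, combined with the defining condition of the product, that drives $\Psi_{\ot}(\Phi)$ into the inverse limit in step (i).
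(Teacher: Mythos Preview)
Your proposal is correct and follows essentially the same route as the paper: the same componentwise definition $[\Psi_{\ot}(\Phi)]_i := \Psi_i(\Phi_i)$, the same use of the spectrum-map square $\Psi_i \circ \lambda_{ji}^{\mt} = \mu_{ji}^{\mt} \circ \Psi_j$ together with the compatibility condition on $\Phi$ to show well-definedness, the same $\bigvee$-lifting computation $(g \circ \pi_i^{M^{\mt}}) \circ \Psi_{\ot} = (g \circ \Psi_i) \circ \pi_i^{\Lambda^{\mt}}$ for (ii), and the same componentwise embedding argument for (iii). The paper's proof is organised identically, differing only in notation (it writes $\Theta$ for the generic element of the inverse limit).
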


\begin{proof}
(i) Let the assignment routine $\Psi_{\ot} \colon \underset{\ot} \Lim \lambda_0 (i) \sto 
\underset{\ot} \Lim \mu_0 (i)$, defined by 
\[ \Theta \mapsto \Psi_{\ot}(\Theta), \ \ \ \  \big[ \Psi_{\ot}(\Theta) \big]_i := \Psi_i(\Theta_i);  \ \ \ \ 
\Theta \in \underset{\ot} \Lim \lambda_0(i), \ i \in I. \]
First we show that $\Psi_{\ot}$ is well-defined i.e., $\Psi_{\ot}(\Theta) \in \prod_{i \in I}^{\mt}\mu_0 (i).$
If $i \lt j$, since $\Theta \in \prod_{i \in I}^{\mt}\lambda_0 (i)$, we have that 
$\Theta_i = \lambda_{ji}^{\mt}(\Theta_j)$, and since $\Psi \colon S(\Lambda^{\mt}) \To S(M^{\mt})$
%the following diagram commutes
\begin{center}
\begin{tikzpicture}

\node (E) at (0,0) {$\mu_0(i)$};
\node[right=of E] (F) {$\mu_0(j)$,};
\node[above=of E] (A) {$\lambda_0(i)$};
\node [right=of A] (D) {$\lambda_0(j)$};

\draw[->] (F)--(E) node [midway,below]{$\mu_{ji}^{\mt}$};
\draw[->] (D)--(A) node [midway,above] {$\lambda_{ji}^{\mt}$};
\draw[->] (A)--(E) node [midway,left] {$\Psi_i$};
\draw[->] (D)--(F) node [midway,right] {$\Psi_j$};

\end{tikzpicture}
\end{center}
%Hence we have that
$\big[\Psi_{\ot}(\Theta)\big]_i := \Psi_i(\Theta_i)
= \Psi_i\big(\lambda_{ji}^{\mt}(\Theta_j)\big)
=: \big(\Psi_i \circ \lambda_{ji}^{\mt}\big)(\Theta_j) 
= \big(\mu_{ji}^{\mt} \circ \Psi_j\big)(\Theta_j) 
 := \mu_{ji}^{\mt}\big(\Psi_j(\Theta_j)\big)
=: \mu_{ji}^{\mt}\big(\big[\Psi_{\ot}(\Theta)\big]_j\big).
$
%Next we show that 
$\Psi_{\ot}$ is a function: 
%from $\underset{\ot} \Lim \lambda_0 (i)$ to 
%$\underset{\ot} \Lim \mu_0 (i)$;
%\begin{align*}
$\Theta =_{\mathsmaller{\underset{\ot} \Lim \lambda_0 (i)}} \Phi : \TOT
\forall_{i \in I}\big(\Theta_i =_{\mathsmaller{\lambda_0 (i)}} \Phi_i\big)
 \To \forall_{i \in I}\big(\Psi_i(\Theta_i) =_{\mathsmaller{\mu_0 (i)}} \Psi_i(\Phi_i)\big) 
: \TOT \forall_{i \in I}\big(\big[\Psi_{\ot}(\Theta)\big]_i =_{\mathsmaller{\mu_0 (i)}}
 \big[\Psi_{\ot}(\Phi)\big]_i\big)
 : \TOT \Psi_{\ot}(\Theta) =_{\mathsmaller{\underset{\ot} \Lim \mu_0 (i)}}  \Psi_{\ot}(\Phi)$.
The commutativity of the diagram and the uniqueness of $\Psi_{\ot}$ are immediate to show.\\
(ii) By the $\bigvee$-lifting of morphisms
%we have that
$ \Psi_{\ot} \in \Mor(\underset{\ot} \Lim \C F_i, \underset{\ot} \Lim \C G_i) \TOT 
\forall_{i \in I}\forall_{g \in G_i}\big((g \circ \pi_i^{M^{\mt}}) \circ \Psi_{\ot}
\in \underset{\ot} \Lim F_i\big). $
If $i \in I$ and $g \in G_i$, then
$[(g \circ \pi_i^{M^{\mt}}) \circ \Psi_{\ot}](\Theta) :=
g \big([\Psi_{\ot}(\Theta)\big]_i\big)
:= g \big(\Psi_{i}(\Theta_i)\big)
=: \big(g \circ \Psi_{i}\big)(\Theta_i)
:= \big[\big(g \circ \Psi_{i}\big) \circ \pi_i^{\Lambda^{\mt}}\big](\Theta),
$
and $g \circ \Psi_{i} \in F_i$, by the continuity of $\Psi$, hence
$(g \circ \pi_i^{M^{\mt}}) \circ \Psi_{\ot} := 
\big(g \circ \Psi_{i}\big) \circ \pi_i^{\Lambda^{\mt}} \in \underset{\ot} \Lim F_i$.\\
(iii) By definition we have that
$
\Psi_{\ot}(\Theta) =_{\mathsmaller{\underset{\ot} \Lim \mu_0 (i)}}  \Psi_{\ot}(\Phi)
: \TOT \forall_{i \in I}\big(\Psi_i(\Theta_i) =_{\mathsmaller{\mu_0 (i)}} \Psi_i(\Phi_i)\big)
\To $
$\forall_{i \in I}\big(\Theta_i =_{\mathsmaller{\lambda_0 (i)}} \Phi_i\big)
: \TOT \Theta =_{\mathsmaller{\underset{\ot} \Lim \lambda_0 (i)}} \Phi.$
\end{proof}

\begin{prop}\label{prp: transitivity2}
If $S(\Lambda^{\mt}) := (\lambda_0, \lambda_1^{\mt}, \phi_0^{\Lambda^{\mt}}, \phi_1^{\Lambda^{\mt}})$,  
$S(M^{\mt}) := (\mu_0, \mu_1^{\mt}, \phi_0^{M^{\mt}}, \phi_1^{M^{\mt}})$ and
$S(N^{\mt} ):= (\nu_0, \nu_1^{\mt}, \phi_0^{N^{\mt}}, \phi_1^{N^{\mt}})$
are in $\Spec(I, \mt)$, and if $\Psi \colon S(\Lambda^{\mt}) \To S(M^{\mt})$ and 
$\Xi \colon S(M^{\mt}) \To S(N^{\mt})$, then 
$(\Xi \circ \Psi)_{\ot} :=  \Xi_{\ot} \circ \Psi_{\ot}$
\begin{center}
\begin{tikzpicture}

\node (E) at (0,0) {$\lambda_0(i)$};
\node[right=of E] (H) {};
\node[right=of H] (F) {$\mu_0(i)$};
\node[below=of E] (A) {$\underset{\ot} \Lim \lambda_0 (i)$};
\node[below=of F] (B) {$\underset{\ot} \Lim \mu_0 (i)$};
\node[right=of F] (K) {};
\node[right=of K] (G) {$\nu_0(i)$};
\node [below=of G] (C) {$\underset{\ot} \Lim \nu_0 (i)$.};

\draw[->] (E)--(F) node [midway,above] {$ \Psi_i$};
\draw[->] (F)--(G) node [midway,above] {$\Xi_i$};
\draw[->] (B)--(C) node [midway,below] {$\Xi_{\ot} \ \ $};
\draw[->] (A)--(B) node [midway,below] {$ \ \ \Psi_{\ot}$};
\draw[->] (A)--(E) node [midway,left] {$\pi^{\Lambda^{\lt}}_i$};
\draw[->] (B) to node [midway,left] {$\pi^{M^{\lt}}_i$} (F);
%\draw[->, bend right] (B) to node [midway,right] {$e_{B_Y}^{-1}$} (F);
\draw[->] (C)--(G) node [midway,right] {$\pi^{N^{\lt}}_i$};
\draw[->,bend right] (A) to node [midway,below] {$(\Xi \circ \Psi)_{\ot}$} (C) ;
\draw[->,bend left] (E) to node [midway,above] {$(\Xi \circ \Psi)_i$} (G) ;

\end{tikzpicture}
\end{center}
\end{prop}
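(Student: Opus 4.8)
The plan is to establish the definitional equality $(\Xi \circ \Psi)_{\ot} := \Xi_{\ot} \circ \Psi_{\ot}$ by an entirely component-wise computation, since both sides are functions into $\underset{\ot} \Lim \nu_0(i) := \prod_{i \in I}^{\mt}\nu_0(i)$, whose elements are dependent operations compared componentwise. By the proof of Theorem~\ref{thm: inverselimitmap}(i) the maps $\Psi_{\ot}$ and $\Xi_{\ot}$ are given by the rules $\big[\Psi_{\ot}(\Theta)\big]_i := \Psi_i(\Theta_i)$ and $\big[\Xi_{\ot}(\Phi)\big]_i := \Xi_i(\Phi_i)$, for every $i \in I$, while the composition spectrum-map $\Xi \circ \Psi \colon S(\Lambda^{\mt}) \To S(N^{\mt})$ is the family-map with $i$-component $(\Xi \circ \Psi)_i := \Xi_i \circ \Psi_i$, as in Definition~\ref{def: map}. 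Applying Theorem~\ref{thm: inverselimitmap}(i) once more to $\Xi \circ \Psi$ produces $(\Xi \circ \Psi)_{\ot}$ with rule $\big[(\Xi \circ \Psi)_{\ot}(\Theta)\big]_i := (\Xi \circ \Psi)_i(\Theta_i)$.

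First I would fix an arbitrary $\Theta \in \underset{\ot} \Lim \lambda_0(i)$ and an arbitrary $i \in I$, and simply unwind the three rules:
\[ \big[(\Xi_{\ot} \circ \Psi_{\ot})(\Theta)\big]_i := \big[\Xi_{\ot}\big(\Psi_{\ot}(\Theta)\big)\big]_i := \Xi_i\big(\big[\Psi_{\ot}(\Theta)\big]_i\big) := \Xi_i\big(\Psi_i(\Theta_i)\big) =: (\Xi_i \circ \Psi_i)(\Theta_i) =: \big[(\Xi \circ \Psi)_{\ot}(\Theta)\big]_i. \]
Since the $i$-components agree for every $i \in I$, the dependent operations $(\Xi_{\ot} \circ \Psi_{\ot})(\Theta)$ and $(\Xi \circ \Psi)_{\ot}(\Theta)$ are definitionally equal; and since $\Theta$ was arbitrary, the two functions are definitionally equal, which is the assertion.

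The computation itself is routine, so the only points worth checking are the well-definedness facts that make the displayed chain meaningful, and these are already available: $\Psi_{\ot}(\Theta) \in \prod_{i \in I}^{\mt}\mu_0(i)$ and $\Xi_{\ot}\big(\Psi_{\ot}(\Theta)\big) \in \prod_{i \in I}^{\mt}\nu_0(i)$ hold by the well-definedness parts of Theorem~\ref{thm: inverselimitmap}(i) applied to $\Psi$ and to $\Xi$, while $\Xi \circ \Psi$ is a bona fide spectrum-map because family-maps compose. Thus I expect no genuine obstacle; the only subtlety is bookkeeping of the definitional equalities. Alternatively, one could bypass the componentwise argument and invoke the uniqueness clause of Theorem~\ref{thm: inverselimitmap}(i): it suffices to verify that $\Xi_{\ot} \circ \Psi_{\ot}$ satisfies the defining commutativity $\pi_i^{N^{\mt}} \circ (\Xi_{\ot} \circ \Psi_{\ot}) = (\Xi \circ \Psi)_i \circ \pi_i^{\Lambda^{\mt}}$, which follows by pasting the two commuting squares $\pi_i^{N^{\mt}} \circ \Xi_{\ot} = \Xi_i \circ \pi_i^{M^{\mt}}$ and $\pi_i^{M^{\mt}} \circ \Psi_{\ot} = \Psi_i \circ \pi_i^{\Lambda^{\mt}}$.
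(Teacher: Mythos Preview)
Your proof is correct. The paper actually states this proposition without proof, treating it as immediate from the definitions (the same is done for the analogous Proposition~\ref{prp: transitivity} on direct limits); your componentwise unwinding of the rule $\big[\Psi_{\ot}(\Theta)\big]_i := \Psi_i(\Theta_i)$ from Theorem~\ref{thm: inverselimitmap}(i), together with $(\Xi \circ \Psi)_i := \Xi_i \circ \Psi_i$, is exactly the routine verification the paper leaves to the reader.
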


\begin{thm}\label{thm: cofinal3}
Let  $S(\Lambda^{\mt}) := (\lambda_0, \lambda_1^{\mt}, \phi_0^{\Lambda^{\mt}}, \phi_1^{\Lambda^{\mt}})$
in $\Spec(I, \mt)$, $(J, e, \cof_J)$ a cofinal subset of $I$, and $S(\Lambda^{\mt}) 
\circ e :=
\big(\lambda_0 \circ e, \lambda_1 \circ e, \phi_0^{\Lambda^{\mt} \circ e}, \phi_1^{\Lambda^{\mt} \circ e}\big)$ 
the relative spectrum of $S(\Lambda^{\mt})$ to $J$. Then $\underset{\ot} \Lim \C F_j \simeq \underset{\ot} \Lim \C F_i$. 
%\[ \underset{\ot} \Lim \C F_j \simeq \underset{\ot} \Lim \C F_i.\]
\end{thm}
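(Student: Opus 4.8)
The plan is to realize the isomorphism by the two operations dual to those in Theorem~\ref{thm: cofinal2}: the \emph{restriction} of a thread over $I$ to $J$, and its \emph{extension} back to $I$ using the cofinality data. Recall that $\underset{\ot} \Lim \lambda_0(i) = \prod_{i \in I}^{\mt}\lambda_0(i)$, and that for the relative spectrum $S(\Lambda^{\mt}) \circ e$ the corresponding set is $\prod_{j \in J}^{\mt}\lambda_0(e(j))$ with transport maps $\lambda_{e(j{'})e(j)}^{\mt}$ for $j \lt j{'}$. I would define the operations
\[ \phi \colon \underset{\ot} \Lim \lambda_0(i) \sto \underset{\ot} \Lim \lambda_0(j), \qquad \phi(\Theta)_j := \Theta_{e(j)}, \]
\[ \theta \colon \underset{\ot} \Lim \lambda_0(j) \sto \underset{\ot} \Lim \lambda_0(i), \qquad \theta(\Phi)_i := \lambda_{e(\cof_J(i))i}^{\mt}\big(\Phi_{\cof_J(i)}\big), \]
where $\theta(\Phi)_i$ is typable because $i \lt e(\cof_J(i))$ by $(\Cf_3)$ and $\Phi_{\cof_J(i)} \in \lambda_0(e(\cof_J(i)))$.

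First I would verify that $\phi(\Theta)$ and $\theta(\Phi)$ really are threads. For $\phi$ this is immediate: if $j \lt j{'}$, i.e. $e(j) \lt e(j{'})$, then the contravariant thread condition on $\Theta$ gives $\Theta_{e(j)} = \lambda_{e(j{'})e(j)}^{\mt}(\Theta_{e(j{'})})$, which is the thread condition for $\phi(\Theta)$ over $J$. The well-definedness of $\theta$ is the contravariant analogue of Lemma~\ref{lem: cofinallemma}(ii) and is the crux of the argument: for $i \lt i{'}$ one must show $\theta(\Phi)_i = \lambda_{i{'}i}^{\mt}(\theta(\Phi)_{i{'}})$. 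Using $(\Cf_3)$ (to get $i{'} \lt e(\cof_J(i{'}))$) and the composition law $\lambda_{i{'}i}^{\mt}\circ\lambda_{e(\cof_J(i{'}))i{'}}^{\mt} = \lambda_{e(\cof_J(i{'}))i}^{\mt}$, the right-hand side becomes $\lambda_{e(\cof_J(i{'}))i}^{\mt}(\Phi_{\cof_J(i{'})})$; using $(\Cf_2)$ (to get $\cof_J(i) \lt \cof_J(i{'})$), the thread condition on $\Phi$ over $J$, and the composition law once more through the chain $i \lt e(\cof_J(i)) \lt e(\cof_J(i{'}))$, the left-hand side reduces to the same term. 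That $\phi$ and $\theta$ respect the canonical equalities is then routine, these being componentwise.

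Next I would check that $\phi$ and $\theta$ are mutually inverse and continuous. For $\theta\circ\phi = \id$: since $i \lt e(\cof_J(i))$ and $\Theta$ is a thread, $\theta(\phi(\Theta))_i = \lambda_{e(\cof_J(i))i}^{\mt}(\Theta_{e(\cof_J(i))}) = \Theta_i$. For $\phi\circ\theta = \id$: by $(\Cf_1)$ we have $\cof_J(e(j)) =_J j$, hence $e(\cof_J(e(j))) =_I e(j)$, so $\lambda_{e(\cof_J(e(j)))e(j)}^{\mt}$ coincides with $\id_{\lambda_0(e(j))}$ and $\phi(\theta(\Phi))_j = \Phi_{\cof_J(e(j))} = \Phi_j$. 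Continuity follows from the $\bigvee$-lifting of morphisms: a subbasic element of $\underset{\ot}\Lim F_j$ is $f\circ\pi_j$ with $f \in F_{e(j)}$, and $(f\circ\pi_j)\circ\phi = f\circ\pi_{e(j)} \in \underset{\ot}\Lim F_i$; dually, a subbasic element of $\underset{\ot}\Lim F_i$ is $g\circ\pi_i$ with $g\in F_i$, and $(g\circ\pi_i)\circ\theta = \big(g\circ\lambda_{e(\cof_J(i))i}^{\mt}\big)\circ\pi_{\cof_J(i)}$, which lies in $\underset{\ot}\Lim F_j$ because $\lambda_{e(\cof_J(i))i}^{\mt} \in \Mor(\C F_{e(\cof_J(i))}, \C F_i)$ forces $g\circ\lambda_{e(\cof_J(i))i}^{\mt} \in F_{e(\cof_J(i))}$.

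I expect the well-definedness of the extension operation $\theta$ to be the main obstacle, exactly as the corresponding step in Lemma~\ref{lem: cofinallemma}(ii) and Theorem~\ref{thm: cofinal2}: it is here that all three clauses $(\Cf_1)$--$(\Cf_3)$ and the extensionality of $\lt$ are genuinely used to reconcile the two descriptions of $\theta(\Phi)_i$ through the transport-composition bookkeeping. Everything else is a componentwise verification once this is in place. As in Theorem~\ref{thm: cofinal2}, one could alternatively derive the isomorphism abstractly from the universal property of the inverse limit (Proposition~\ref{prp: universalinverse}), but the explicit mutually inverse pair $(\phi,\theta)$ keeps the proof internal to $\BST$ and free of choice.
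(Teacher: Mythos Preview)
Your approach is essentially the paper's, with the names of the two maps interchanged: the paper calls your restriction $\theta$ and your extension $\phi$, but defines them by exactly the same rules and verifies the same properties in the same order (well-definedness of the extension via the contravariant analogue of Lemma~\ref{lem: cofinallemma}(ii), mutual inverseness, and continuity via the $\bigvee$-lifting on subbasic elements).

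One point needs correction. In your verification of $\phi\circ\theta = \id$, you argue that since $e(\cof_J(e(j))) =_I e(j)$ the transport $\lambda_{e(\cof_J(e(j)))e(j)}^{\mt}$ ``coincides with $\id_{\lambda_0(e(j))}$'' and that $\Phi_{\cof_J(e(j))} = \Phi_j$. In $\BST$ this is not literally true: $\lambda_0$ is only an operation, so $\lambda_0(e(\cof_J(e(j))))$ and $\lambda_0(e(j))$ need not be the same set, and the only transport guaranteed to be the identity is $\lambda_{ii}^{\mt}$ for the \emph{definitional} equality $i := i$. The paper closes this step differently: from $(\Cf_1)$ one has $j =_J \cof_J(e(j))$, hence by extensionality of $\lt$ applied to $j \lt j$ one gets $j \lt \cof_J(e(j))$, and then the \emph{thread condition on $\Phi$} itself yields $\Phi_j = \lambda_{e(\cof_J(e(j)))e(j)}^{\mt}(\Phi_{\cof_J(e(j))})$. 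Your conclusion is correct, but the justification should go through the thread condition rather than through an identity-of-transports claim.
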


\begin{proof}
If $\Theta \in \prod_{j \in J}^{\mt}\lambda_0(j)$, then, if $j \lt j{'}$, we have that
$\Theta_j = \lambda_{j{'}j}^{\mt}\big(\Theta_{j{'}}\big) := \lambda_{e(j{'})e(j)}^{\mt}\big(\Theta_{j{'}}\big)$.
If $i \in I$, then $\cof_J(i) \in J$ and $\Theta_{\cof_J(i)} \in \lambda_0(e(\cof_J(i)))$. Since
$i \lt e(\cof_J(i))$, we define the operation  
$\phi : \underset{\ot} \Lim \lambda_0 (j) \sto \underset{\ot} 
\Lim \lambda_0 (i)$, by the rule $\Theta \mapsto \phi(\Theta)$, for every $\Theta \in \underset{\ot} \Lim \lambda_0 (j)$,
where
\[ [\phi(\Theta)]_i := \lambda_{e(\cof_J(i)) i}^{\mt}\big(\Theta_{\cof_J(i)}\big) \in \lambda_0(i); \ \ \ \
i \in I. \]
First we show that $\phi$ is well-defined i.e., $\phi(\Theta) \in \prod_{i \in I}^{\mt}\lambda_0(i)$ i.e., 
for every $i, i{'} \in I$, 
$i \lt i{'} \To [\phi(\Theta)]_i = \lambda_{i{'}i}^{\mt}\big([\phi(\Theta)]_{i{'}}\big)$.
Working as in the proof of Lemma~\ref{lem: cofinallemma}(ii), we get
\begin{align*}
\lambda_{i{'}i}^{\mt}\big([\phi(\Theta)]_{i{'}}\big) & := 
\lambda_{i{'}i}^{\mt}\big(\lambda_{e(\cof_J(i{'})) i{'}}^{\mt}\big(\Theta_{\cof_J(i{'})}\big)\big)\\
& =: \big[\lambda_{i{'}i}^{\mt} \circ \lambda_{e(\cof_J(i{'})) i{'}}^{\mt}\big]\big(\Theta_{\cof_J(i{'})}\big)\\
& \stackrel{(\ref{eq: eq1})} = \lambda_{e(\cof_J(i{'})) i}^{\mt} \big(\Theta_{\cof_J(i{'})}\big)\\
& \stackrel{(\ref{eq: eq2})} = \big[\lambda_{e(\cof_J(i))i}^{\mt} \circ 
\lambda_{e(\cof_J(i{'})) e(\cof_J(i))}^{\mt}\big] \big(\Theta_{\cof_J(i{'})}\big)\\
& := \lambda_{e(\cof_J(i))i}^{\mt}\big(\lambda_{e(\cof_J(i{'})) 
e(\cof_J(i))}^{\mt}\big(\Theta_{\cof_J(i{'})}\big)\big)\\
& = \lambda_{e(\cof_J(i))i}^{\mt}\big(\Theta_{\cof_J(i)}\big)\\
& := [\phi(\Theta)]_i.
\end{align*}
To show that $\phi$ is a function we consider the following equivalences:
\begin{align*}
\phi(\Theta) =_{\mathsmaller{\underset{\ot} \Lim \lambda_0 (i)}} \phi(H) & :\TOT
\forall_{i \in I}\big(\big[\phi(\Theta)\big]_i =_{\mathsmaller{\lambda_0 (i)}} \big[\phi(H)\big]_i\big)\\
& : \TOT \forall_{i \in I}\big(\lambda_{e(\cof_J(i)) i}^{\mt}\big(\Theta_{\cof_J(i)}\big)
=_{\mathsmaller{\lambda_0 (i)}} \lambda_{e(\cof_J(i)) i}^{\mt}\big(H_{\cof_J(i)}\big)\big), \ \ \ (1)
\end{align*}
\[ \Theta =_{\mathsmaller{\underset{\ot} \Lim \lambda_0 (j)}} H : 
\TOT \forall_{j \in J}\big(\Theta_j =_{\mathsmaller{\lambda_0 (e(j))}} H_j\big) \ \ \ \ \ (2). \]
To show that (1) $\To$ (2) we use the fact that $e(\cof_J(j)) = j$, and since $j \lt j$, by the 
extensionality of $\lt$ we get $j \lt e(\cof_J(j))$. Since  
$\Theta_j = \lambda_{e(\cof_J(i)) i}^{\mt}\big(\Theta_{\cof_J(i)}\big)$, and 
$H_j = \lambda_{e(\cof_J(i)) i}^{\mt}\big(H_{\cof_J(i)}\big)$, we get (2).
By the $\bigvee$-lifting of morphisms
$\phi \in \Mor(\underset{\ot} \Lim \C F_j, \underset{\ot} \Lim \C F_i) \TOT \forall_{i \in I}\forall_{f \in F_i}
\big((f \circ \pi_i^{S^{\mt}}) \circ \phi \in \underset{\ot} \Lim F_j\big)$. 
If $\Theta \in \prod_{j \in J}^{\mt}\lambda_0(j)$, we have that
\begin{align*}
[(f \circ \pi_i^{S^(\Lambda{\mt})}) \circ \phi](\Theta) & := f\big(\pi_i^{S(\Lambda^{\mt})}(\phi(\Theta))\big)\\
& := f\big(\big[\phi(\Theta)\big]_i\big)\\
& := f\big(\lambda_{e(\cof_J(i)) i}^{\mt}\big(\Theta_{\cof_J(i)}\big)\big)\\
& =: \big(f \circ \lambda_{e(\cof_J(i)) i}^{\mt}\big)\big(\Theta_{\cof_J(i)}\big)\\
& =: \big[\big(f \circ \lambda_{e(\cof_J(i)) i}^{\mt}\big) \circ \pi_{\cof_J(i)}^{S(\Lambda^{\mt}) \circ e}\big](\Theta),
\end{align*}
hence 
$\big(f \circ \pi_i^{S(\Lambda^{\mt})}\big) \circ \phi :=
\big(f \circ \lambda_{e(\cof_J(i)) i}^{\mt}\big) \circ \pi_{\cof_J(i)}^{S(\Lambda^{\mt}) \circ e}
\in \underset{\ot} \Lim F_j$, 
as by definition $\lambda_{e(\cof_J(i)) i}^{\mt} \in \Mor(\C F_{e(\cof_J(i))}, \C F_i)$, and hence
\begin{center}
\begin{tikzpicture}

\node (E) at (0,0) {$\lambda_0(e(\cof_J(i)))$};
\node[right=of E] (H) {};
\node[right=of H] (F) {$\lambda_0(i)$};
\node[below=of F] (A) {$\Real$};

\draw[->] (E)--(F) node [midway,above] {$\lambda_{e(\cof_J(i)) i}^{\mt} $};
\draw[->] (F)--(A) node [midway,right] {$f$};
\draw[->] (E)--(A) node [midway,left] {$f \circ \lambda_{e(\cof_J(i)) i}^{\mt} \ \  $};

\end{tikzpicture}
\end{center}
$f \circ \lambda_{e(\cof_J(i)) i}^{\mt} \in F_{e(\cof_J(i))} := F_{\cof_J(i)}$. 
Let the operation $\theta \colon \underset{\ot} \Lim \lambda_0 (i) \sto \underset{\ot} \Lim \lambda_0(j)$,
defined by the rule $H \mapsto \theta(H) := H^J$, for every  
$H \in \prod_{i \in I}^{\mt}\lambda_0(i) $, where
$H_j^J := H_{e(j)} \in \lambda_0(e(j))$, for every $j \in J$. We show that $H^J \in \prod_{j \in J}\lambda_0(j)$. 
If $j \lt j{'}$, then 
\[ H_j^J = \lambda_{e(j{'})e(j)}^{\mt}\big(H_{j{'}}^J\big) :\TOT   
H_{e(j)} = \lambda_{e(j{'})e(j)}^{\mt}\big(H_{e(j{'})}\big), \]
which holds by the hypothesis $H \in \prod_{i \in I}^{\mt}\lambda_0(i)$. Moreover,
we have that $\phi(H^J) = H :\TOT \forall_{i \in I}\big(\big[\phi(H^J)\big]_i =_{\mathsmaller{\lambda_0(i)}} H_i\big)$.
If $i \in I$, and since $i \lt e(\cof_J(i))$, we have that 
\[ \big[\phi(H^J)\big]_i := \lambda_{e(\cof_J(i)) i}^{\mt}\big(H^J_{\cof_J(i)}\big)
:= \lambda_{e(\cof_J(i)) i}^{\mt}\big(H_{e(\cof_J(i))}\big)
= H_i. \]
It is immediate to show that $\theta$ is a function. Moreover, $\theta(\phi(\Theta) = \Theta$, as if $j \in J$, then
\[ \phi(\Theta)_j^J := \phi(\Theta)_{e(j)}
:= \lambda^{\mt}_{e(\cof_J(e(j))e(j)}\big(\Theta_{\cof_J(e(j))}\big)
= \Theta_j,
\]
as by hypothesis $\Theta_j = \lambda_{e(j{'})e(j)}^{\mt}(\Theta_{j{'}})$, with $j \lt j{'}$, and by $(\Cf_1)$ we 
have that
$j =_J (\cof_J(e(j))$, hence by the extensionality of $\lt$ we get $j \lt (\cof_J(e(j))$. Finally,
$\theta \in \Mor \big(\underset{\ot} \Lim \C F_i, \underset{\ot} \Lim \C F_j \big) \TOT \forall_{j
\in J}\forall_{f \in F_j}\big(\big(f \circ \pi_j^{S(\Lambda^{\mt})}\big) \circ \theta \in \underset{\ot} 
\Lim F_i\big)$, which follows from 
\begin{align*}
\big[\big(f \circ \pi_j^{S(\Lambda^{\mt})}\big) \circ \theta \big](H) & := \big(f \circ
\pi_j^{S(\Lambda^{\mt})}\big)(H^J)\\
& := f\big(H^J_j\big)\\
& := f \big(H_{e(j)}\big)\\
& =: \big(f \circ \pi_{e(j)}^{S(\Lambda^{\mt})}\big)(H). \qedhere
\end{align*}
\end{proof}

\begin{prop}\label{prp: prodinverse}
If $(I, \lt), (J, \lt)$ are directed sets, $S(\Lambda^{\mt}) := (\lambda_0, \lambda_1^{\mt},
\phi_0^{\Lambda^{\mt}},  \phi_1^{\Lambda^{\mt}})$ is in $\Spec(I, \mt)$ 
with Bishop spaces $(\C F_i)_{i \in I}$ and Bishop morphisms $(\lambda_{i{'}i}^{\mt})_{(i, i{'}) \in D^{\lt}(I)}$,
and $S(M^{\mt}) := (\mu_0, \mu_1^{\mt}, \phi_0^{M^{\mt}},  \phi_1^{M^{\mt}})$ is in $\Spec(J, \mt)$
with Bishop spaces $(\C G_j)_{j \in J}$ and Bishop morphisms 
$(\mu_{j{'}j}^{\mt})_{(j, j{'}) \in D^{\lt}(J)}$, there is a function
\[ \times : \prod_{i \in I}^{\mt}\lambda_0 (i) \times \prod_{j \in J}^{\mt}\mu_0 (j) \to 
\prod_{(i, j) \in I \times J}^{\mt}\lambda_0 (i) \times \mu_0 (j) \in \Mor\big(\underset{\ot} 
\Lim \C F_i \times \underset{\ot} \Lim \C G_j, \underset{\ot} \Lim (\C F_i \times \C G_j)\big). \]
%which is also an open Bishop morphism.
\end{prop}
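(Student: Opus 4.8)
The plan is to construct the map $\times$ explicitly and then verify, in order, that it lands in the correct dependent product, that it respects equality, and that it is a Bishop morphism via the $\bigvee$-lifting. Given $\Theta \in \prod_{i \in I}^{\mt}\lambda_0(i)$ and $H \in \prod_{j \in J}^{\mt}\mu_0(j)$, I would define $\times(\Theta, H)$ to be the dependent operation $W$ over $\lambda_0 \times \mu_0$ given by $W_{(i,j)} := (\Theta_i, H_j)$, for every $(i,j) \in I \times J$. First I would check that $W \in \prod_{(i,j) \in I \times J}^{\mt}\lambda_0(i) \times \mu_0(j)$, which is the contravariant analogue of the verification in Lemma~\ref{lem: prodlemma}. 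Indeed, if $(i,j) \lt (i{'}, j{'})$, i.e. $i \lt i{'}$ and $j \lt j{'}$, then the transport map of the contravariant product spectrum (the dual of Proposition~\ref{prp: prodspec}) sends $(x, y) \mapsto \big(\lambda_{i{'}i}^{\mt}(x), \mu_{j{'}j}^{\mt}(y)\big)$, and since membership in $\prod_{i \in I}^{\mt}\lambda_0(i)$ and $\prod_{j \in J}^{\mt}\mu_0(j)$ gives $\Theta_i = \lambda_{i{'}i}^{\mt}(\Theta_{i{'}})$ and $H_j = \mu_{j{'}j}^{\mt}(H_{j{'}})$, we get $W_{(i,j)} = \big(\lambda_{i{'}i}^{\mt}(\Theta_{i{'}}), \mu_{j{'}j}^{\mt}(H_{j{'}})\big)$, which is precisely the transport of $W_{(i{'},j{'})}$, as required.

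Next I would observe that $\times$ is a function. The equality on the domain $\prod_{i \in I}^{\mt}\lambda_0(i) \times \prod_{j \in J}^{\mt}\mu_0(j)$ is componentwise, so if $\Theta =_{\mathsmaller{\prod_{i \in I}^{\mt}\lambda_0(i)}} \Theta{'}$ and $H =_{\mathsmaller{\prod_{j \in J}^{\mt}\mu_0(j)}} H{'}$, then $\Theta_i =_{\lambda_0(i)} \Theta{'}_i$ and $H_j =_{\mu_0(j)} H{'}_j$ for all $i,j$, whence $W_{(i,j)} = (\Theta_i, H_j) =_{\lambda_0(i) \times \mu_0(j)} (\Theta{'}_i, H{'}_j)$, i.e. $\times(\Theta, H) =_{\mathsmaller{\prod_{(i,j)}^{\mt}\lambda_0(i)\times\mu_0(j)}} \times(\Theta{'}, H{'})$.

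For the morphism property I would use the $\bigvee$-lifting twice. Writing $\pi_{(i,j)}$ for the inverse-limit projection onto $\lambda_0(i) \times \mu_0(j)$ and $\pr_1, \pr_2$ for the projections of the relevant product Bishop spaces, I would first show that $\pi_{(i,j)} \circ \times \in \Mor\big(\underset{\ot} \Lim \C F_i \times \underset{\ot} \Lim \C G_j, \C F_i \times \C G_j\big)$. Since $F_i \times G_j = \bigvee\{f \circ \pr_1, g \circ \pr_2 \mid f \in F_i, g \in G_j\}$, by the $\bigvee$-lifting it suffices to check the two families of generators; a direct computation gives, for $f \in F_i$, that $(f \circ \pr_1) \circ (\pi_{(i,j)} \circ \times) = (f \circ \pi_i^{\Lambda^{\mt}}) \circ \pr_1$, and, for $g \in G_j$, that $(g \circ \pr_2) \circ (\pi_{(i,j)} \circ \times) = (g \circ \pi_j^{M^{\mt}}) \circ \pr_2$, both of which lie in $\underset{\ot} \Lim F_i \times \underset{\ot} \Lim G_j$ because $f \circ \pi_i^{\Lambda^{\mt}} \in \underset{\ot} \Lim F_i$ and $g \circ \pi_j^{M^{\mt}} \in \underset{\ot} \Lim G_j$. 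Finally, the subbase of $\underset{\ot} \Lim(F_i \times G_j)$ is $\{h \circ \pi_{(i,j)} \mid (i,j) \in I \times J, \ h \in F_i \times G_j\}$, so a second application of the $\bigvee$-lifting reduces $\times \in \Mor$ to checking $h \circ \pi_{(i,j)} \circ \times = h \circ (\pi_{(i,j)} \circ \times) \in \underset{\ot} \Lim F_i \times \underset{\ot} \Lim G_j$ for every $h \in F_i \times G_j$; this follows immediately from the morphism property of $\pi_{(i,j)} \circ \times$ just established.

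The two equality/well-definedness checks are routine. I expect the main obstacle to be the bookkeeping in the morphism step: one must keep the two distinct product structures apart (the fibrewise product $\lambda_0(i) \times \mu_0(j)$ versus the product $\prod_{i \in I}^{\mt}\lambda_0(i) \times \prod_{j \in J}^{\mt}\mu_0(j)$ of inverse limits), and organise the argument as two nested $\bigvee$-liftings so that the generators of the target inverse-limit topology are pulled back correctly to generators of the domain product topology, rather than attempting to verify the morphism condition against the full topology $F_i \times G_j$ directly.
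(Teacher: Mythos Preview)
Your proposal is correct and follows the same approach the paper intends: the paper's own proof simply reads ``We proceed as in the proof of Proposition~\ref{prp: proddirect},'' and your construction of $\times(\Theta,H)_{(i,j)} := (\Theta_i,H_j)$ together with the well-definedness check and the $\bigvee$-lifting verification is exactly the contravariant counterpart of that argument. One minor simplification: by the $\bigvee$-lifting of the $\prod^{\mt}$-topology noted just before Theorem~\ref{thm: duality1}, the subbase of $\underset{\ot}\Lim(F_i\times G_j)$ can be taken to consist only of $(f\circ\pr_1)\circ\pi_{(i,j)}$ and $(g\circ\pr_2)\circ\pi_{(i,j)}$, so a single $\bigvee$-lifting (rather than two nested ones) already suffices; but your two-step organisation is perfectly valid.
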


\begin{proof}
We proceed as in the proof of Proposition~\ref{prp: proddirect}. 
\end{proof}

\section{Duality between direct and inverse limits}
\label{sec: duality}

\begin{prop}\label{prp: plus}
Let $\C F := (X, F), \C G := (Y, G)$ and $\C H := (Z, H)$ be Bishop spaces, and let
$\lambda \in \Mor(\C G, \C H),
\mu \in \Mor(\C H, \C G)$. We define the mappings
\[ \lambda^{+} \colon \Mor(\C H, \C F) \to \Mor(\C G, \C F),  \ \ \ \  \lambda^{+} (\phi) := \phi 
\circ \lambda; \ \ \ \ \phi \in \Mor(\C H, \C F), \]
\[ \mu^{-} \colon \Mor(\C F, \C H) \to \Mor(\C F, \C G), \ \ \ \ \mu^{-} (\theta) := \mu \circ \theta; \ \ \ \ 
\theta \in \Mor(\C F, \C H), \]
\begin{center}
\begin{tikzpicture}

\node (E) at (0,0) {$Y$};
\node[above=of E] (F) {$Z$};
\node[right=of F] (A) {$X$};
\node[right=of A] (B) {$Z$};
\node[below=of B] (C) {$Y$.};

\draw[->] (E)--(F) node [midway,left] {$\lambda$};
\draw[->] (F)--(A) node [midway,above] {$\phi$};
\draw[->] (E)--(A) node [midway,right] {$\mathsmaller{\phi \circ \lambda}$};
\draw[->] (A)--(C) node [midway,left] {$\mathsmaller{\mu \circ \theta} \ $};
\draw[->] (A)--(B) node [midway,above] {$\theta$};
\draw[->] (B)--(C) node [midway,right] {$\mu$};

\end{tikzpicture}
\end{center}
%Then the mappings 
\[ ^{+} \colon \Mor(\C G, \C H) \to \Mor(\C H \to \C F, \C G \to \C F) , \ \ \ \ \lambda \mapsto \lambda^+; \ \ \ \ 
\lambda \in \Mor(\C G, \C H), \]
\[ ^{-} \colon \Mor(\C H, \C G) \to \Mor(\C F \to \C H, \C F \to \C G), \ \ \ \ \mu \mapsto \mu^-; \ \ \ \ 
\mu \in \Mor(\C H, \C G). \]
Then $^{+} \in \Mor\big(\C G \to \C H, (\C H \to \C F) \to (\C G \to \C F)\big)$ and 
$^{-} \in \Mor\big(\C H \to\C G, (\C F \to \C H) \to (\C F \to \C G)\big)$.
\end{prop}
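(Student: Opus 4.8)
The plan is to prove the assertion for $^{+}$ in full; the statement for $^{-}$ follows by the dual computation. The only tools needed are the $\bigvee$-lifting of morphisms and the induction principle $\Ind_{\bigvee F_{0}}$ for a least topology. Recall that the codomain space $(\C H \to \C F) \to (\C G \to \C F)$ carries the least topology generated by the subbasic functions $\Phi_{\psi, u}$, where $\psi \in \Mor(\C H, \C F)$, $u \in (G \to F)$, and $\Phi_{\psi, u}(\Xi) := u(\Xi(\psi))$ for $\Xi \in \Mor(\C H \to \C F, \C G \to \C F)$. That $^{+}$ and each $\lambda^{+}$ respect the relevant equalities is routine, so I concentrate on continuity. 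As a preliminary step I would confirm that each $\lambda^{+}$ indeed lands in $\Mor(\C H \to \C F, \C G \to \C F)$: by the $\bigvee$-lifting applied to the subbase $\{\phi_{y, f} \mid y \in Y, f \in F\}$ of $G \to F$, this reduces to the identity $\phi_{y, f} \circ \lambda^{+} = \phi_{\lambda(y), f} \in (H \to F)$, which holds since $(\phi_{y,f}\circ\lambda^{+})(\varphi) = f\big(\varphi(\lambda(y))\big)$ for $\varphi \in \Mor(\C H, \C F)$.

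For the main claim I would again invoke the $\bigvee$-lifting of morphisms: to establish $^{+} \in \Mor\big(\C G \to \C H, (\C H \to \C F) \to (\C G \to \C F)\big)$ it suffices to check that $\Phi_{\psi, u} \circ {}^{+} \in (G \to H)$ for every subbasic $\Phi_{\psi, u}$. A direct computation gives, for $\lambda \in \Mor(\C G, \C H)$,
\[ \big(\Phi_{\psi, u} \circ {}^{+}\big)(\lambda) = \Phi_{\psi, u}(\lambda^{+}) = u\big(\lambda^{+}(\psi)\big) = u(\psi \circ \lambda). \]
Thus, with $\psi$ fixed, I must show that the function $t_{u} \colon \lambda \mapsto u(\psi \circ \lambda)$ belongs to $G \to H$ for every $u \in (G \to F)$.

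Since $u$ ranges over the \emph{entire} topology $G \to F$, not merely its subbase, I would prove this by induction on $u$ using $\Ind_{\bigvee F_{0}}$, with the bounded predicate $P(u) :\TOT t_{u} \in (G \to H)$. The base case is the key step: for $u = \phi_{y, f}$ we have $t_{u}(\lambda) = f\big(\psi(\lambda(y))\big) = (f \circ \psi)(\lambda(y))$, and because $\psi \in \Mor(\C H, \C F)$ we have $f \circ \psi \in H$; hence $t_{u} = \phi_{y,\, f \circ \psi}$ is exactly a subbasic element of $G \to H$, so $P(\phi_{y,f})$ holds. The remaining clauses use only that $G \to H$ is a Bishop topology: for a constant $u$ the function $t_u$ is constant $(\BS_1)$; one has $t_{u_1 + u_2} = t_{u_1} + t_{u_2}$ $(\BS_2)$ and $t_{\chi \circ u} = \chi \circ t_{u}$ for $\chi \in \Bic(\Real)$ $(\BS_3)$; and if $u$ is a uniform limit of the $u_n$, then from $U(u_n, u, \epsilon)$ one gets $U(t_{u_n}, t_{u}, \epsilon)$ by evaluating at the morphisms $\psi \circ \lambda$, so $t_u \in \overline{G \to H} = G \to H$ $(\BS_4)$; the equality-respecting clause is immediate. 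By $\Ind_{\bigvee F_0}$ we obtain $P(u)$ for all $u \in (G \to F)$, and the $\bigvee$-lifting then yields the result.

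The case of $^{-}$ is entirely dual. Here the reduction leads to $\big(\Phi_{\theta, v} \circ {}^{-}\big)(\mu) = v(\mu \circ \theta)$, and in the base case $v = \phi_{x, g}$ one computes $\mu \mapsto g\big(\mu(\theta(x))\big) = \phi_{\theta(x),\, g}$, a subbasic element of $H \to G$; the closure steps are identical. I expect the main obstacle to be conceptual rather than computational: the codomain is a \emph{double} exponential, so a single application of the $\bigvee$-lifting peels off only the outer layer and leaves a quantifier over all of $G \to F$; the necessity of the inner induction, and the realization that its base case is discharged precisely by the morphism property of $\psi$ (respectively $\mu$) absorbing the test function $f$ (respectively leaving $g$ fixed at the shifted evaluation point $\theta(x)$), is the heart of the argument.
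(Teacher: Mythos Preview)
Your argument is correct, but it works harder than necessary compared with the paper. The paper exploits the $\bigvee$-lifting of the exponential topology, namely $F \to \bigvee G_0 = \bigvee_{x,\,g_0}\phi_{x,g_0}$, to rewrite the subbase of the \emph{outer} exponential: since $G \to F = \bigvee_{y,f}\phi_{y,f}$, one has
\[
(H \to F) \to (G \to F) \;=\; \bigvee_{\varphi \in \Mor(\C H,\C F)}^{e \in G \to F}\phi_{\varphi,e}
\;=\; \bigvee_{\varphi \in \Mor(\C H,\C F)}^{y \in Y,\ f \in F}\phi_{\varphi,\,\phi_{y,f}}.
\]
A single application of the $\bigvee$-lifting of morphisms then reduces the problem directly to your base case $u = \phi_{y,f}$, and the computation $\phi_{\varphi,\phi_{y,f}}\circ{}^{+} = \phi_{y,\,f\circ\varphi}$ finishes it. In other words, the induction you run over $u \in G \to F$ is exactly what the $\bigvee$-lifting of the exponential topology packages once and for all; invoking that lifting eliminates the need for $\Ind_{\bigvee F_0}$ here. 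Your route has the merit of being self-contained and of making explicit why closure under $\BS_1$--$\BS_4$ propagates, but the paper's approach is shorter and reuses a general tool already available.
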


\begin{proof}
By definition and the $\bigvee$-lifting of the exponential topology we have that
\[ \C G \to \C H := \bigg(\Mor(\C G, \C H), \bigvee_{y \in Y}^{h \in H}\phi_{y, h}\bigg), \ \ \ \
\C H \to \C F := \bigg(\Mor(\C H, \C F), \bigvee_{z \in Z}^{f\in F}\phi_{z, f}\bigg), \]
\[ \C G \to \C F := \bigg(\Mor(\C G, \C F), \bigvee_{y \in Y}^{f \in F}\phi_{y, f}\bigg), \]
\[ (\C H \to \C F) \to (\C G \to \C F) := \bigg(\Mor((\C H \to \C F), (\C G \to \C F)), 
\bigvee_{\varphi \in \Mor(\C H, \C F)}^{e \in G \to F}\phi_{\varphi, e}\bigg), \]
%where
\[ \bigvee_{\varphi \in \Mor(\C H, \C F)}^{e \in G \to F}\phi_{\varphi, e} = 
\bigvee_{\varphi \in \Mor(\C H, \C F)}^{y \in Y, f \in F}\phi_{\varphi, \phi_{y, f}}. \]
By the  $\bigvee$-lifting of morphisms we have that 
\[ ^{+} \in \Mor\big(\C G \to \C H, (\C H \to \C F) \to (\C G \to \C F)\big) \TOT 
\forall_{\varphi \in \Mor(\C H, \C F)}\forall_{y \in Y}\forall_{f \in F}\big(\phi_{\varphi, 
\phi_{y, f}} \circ \ ^{+} \in G \to H\big). \]
If $\lambda \in \Mor(\C G, \C H)$, we have that
$ [\phi_{\varphi, \phi_{y, f}} \circ \ ^{+}](\lambda) := \phi_{\varphi, \phi_{y, f}}(\lambda^{+})
 := (\phi_{y, f} \circ \lambda^+)(\varphi) := (\phi_{y, f}(\varphi \circ \lambda)
 := [f \circ (\varphi \circ \lambda)](y) := [(f \circ \varphi) \circ \lambda](y)
=: \phi_{y, f \circ \varphi}(\lambda)$
%\begin{align*}
%[\phi_{\varphi, \phi_{y, f}} \circ \ ^{+}](\lambda) & := \phi_{\varphi, \phi_{y, f}}(\lambda^{+})\\
%& := (\phi_{y, f} \circ \lambda^+)(\varphi)\\
%& := (\phi_{y, f}(\varphi \circ \lambda)\\
%& := [f \circ (\varphi \circ \lambda)](y)\\
%& := [(f \circ \varphi) \circ \lambda](y)\\
%& := \phi_{y, f \circ \varphi}(\lambda)
%\end{align*}
i.e., $\phi_{\varphi, \phi_{y, f}} \circ \ ^{+} :=  \phi_{y, f \circ \varphi} \in G \to H$,
since $\varphi\in \Mor(\C H, \C F)$ and hence $f \circ \varphi \in H$. For the mapping $^{-}$ we work similarly.
\end{proof}

With the use of the exponential Bishop topology we can get a contravatiant spectrum
from a covariant one, and vice versa.

\begin{prop}\label{prp: dirtoproj}
\normalfont (A)
\itshape Let $S(\Lambda^{\lt}) := (\lambda_0, \lambda_1^{\lt}, \phi_0^{\Lambda^{\lt}}, \phi_1^{\Lambda^{\lt}}) \in \Spec(I, \lt)$ and $\C F := (X, F)$ a Bishop space. \\[1mm]
\normalfont (i)
\itshape If $S(\Lambda^{\lt}) \to \C F := (\mu_0, \mu_1^{\mt}, \phi_0^{M^{\mt}}, \phi_1^{M^{\mt}})$, where 
$M^{\mt} := (\mu_0, \mu_1^{\mt})$ 
is a contravariant direct family of sets over $(I, \lt)$ with $\mu_0 (i) := \Mor(\C F_i, \C F)$ and 
\[ \mu_1^{\mt}(i, j) :=  (\lambda_{ij}^{\lt})^{+} \colon \Mor(\C F_j, \C F) \to \Mor(\C F_i, \C F), \]
and if $\phi_0^{M^{\mt}}(i) := F_i \to F$ and $\phi_1^{M^{\mt}}(i, j) := \big(F_i \to F, F_j \to F, 
[(\lambda_{ij}^{\lt})^{+}]^*\big)$, then $S^{\lt} \to \C F$ is a contravariant $(I, \lt)$-spectrum
with Bishop spaces $(\C F_i \to \C F)_{i \in I}$ and Bishop morphisms 
$\big((\lambda_{ij}^{\lt})^{+}\big)_{(i, j) \in D^{\lt}(I)}$.\\[1mm]
\normalfont (ii)
\itshape If $\C F \to S(N^{\lt}) := (\nu_0, \nu_1^{\lt}, \phi_0^{N^{\lt}}, \phi_1^{N^{\lt}})$, where 
$N^{\lt} := (\nu_0, \nu_1^{\lt})$ is a direct family of sets over $(I, \lt)$ with $\nu_0 (i) := \Mor(\C F, \C F_i)$ and 
\[ \nu_1^{\lt}(i, j) := (\lambda_{ij}^{\lt})^{-} \colon \Mor(\C F, \C F_i) \to \Mor(\C F, \C F_j), \]
and if $\phi_0^{N^{\lt}}(i) := F \to F_i$ and $\phi_1^{N^{\lt}}(i, j) := \big(F \to F_j, F \to F_i,
[(\lambda_{ij}^{\lt})^{-}]^*\big)$, then $\C F \to S^{\lt}$ is a covariant $(I, \lt)$-spectrum with
Bishop spaces $(\C F \to \C F_i)_{i \in I}$ and Bishop morphisms 
$\big((\lambda_{ij}^{\lt})^{-}\big)_{(i, j) \in D^{\lt}(I)}$.\\[1mm]
\normalfont (B)
\itshape Let $S(\Lambda^{\mt}) := (\lambda_0, \lambda_1^{\mt}, \phi_0^{\Lambda^{\mt}}, \phi_1^{\Lambda^{\mt}})$
be a contravariant $(I, \lt)$-spectrum, and $\C F := (X, F)$ a Bishop space. \\[1mm]
\normalfont (i)
\itshape If $S(\Lambda^{\mt}) \to \C F := (\mu_0, \mu_1^{\lt}, \phi_0^{M^{\lt}}, \phi_1^{M^{\lt}})$, where 
$M^{\lt} := (\mu_0, \mu_1^{\lt})$ 
is a direct family of sets over $(I, \lt)$ with $\mu_0 (i) := \Mor(\C F_i, \C F)$ and 
\[ \mu_1^{\lt}(i, j) := (\lambda_{ji}^{\mt})^{+} \colon \Mor(\C F_i, \C F) \to \Mor(\C F_j, \C F), \]
and if $\phi_0^{M^{\lt}}(i) := F_i \to F$ and $\phi_1^{M^{\lt}}(i, j) := \big(F_j \to F, F_i \to F, 
[(\lambda_{ji}^{\mt})^{+}]^*\big)$, then $S^{\mt} \to \C F$ is an $(I, \lt)$-spectrum with
Bishop spaces $(\C F_i \to \C F)_{i \in I}$ and Bishop morphisms 
$\big((\lambda_{ji}^{\mt})^{+}\big)_{(i, j) \in D^{\lt}(I)}$.\\[1mm]
\normalfont (ii)
\itshape If $\C F \to S(N^{\mt}) := (\nu_0, \nu_1^{\mt}, \phi_0^{N^{\mt}}, \phi_1^{N^{\mt}})$, where 
$N^{\mt} := (\nu_0, \nu_1^{\mt})$ 
is a contravariant direct family of sets over $(I, \lt)$ with $\nu_0 (i) := \Mor(\C F, \C F_i)$ and 
\[ \nu_1^{\mt}(i, j) := (\lambda_{ji}^{\mt})^{-} \colon \Mor(\C F, \C F_j) \to \Mor(\C F, \C F_i), \]
and if $\phi_0^{N^{\mt}}(i) := F \to F_i$ and $\phi_1^{N^{\mt}}(i, j) := \big(F \to F_i, F \to F_j, 
[(\lambda_{ji}^{\mt})^{-}]^*\big)$, then $\C F \to S^{\lt}$ is a contravariant $(I, \lt)$-spectrum with
Bishop spaces $(\C F \to \C F_i)_{i \in I}$ and Bishop morphisms 
$\big((\lambda_{ij}^{\lt})^{-}\big)_{(i, j) \in D^{\lt}(I)}$.
\end{prop}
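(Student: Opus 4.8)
The statement packages four dual assertions, so the plan is to treat case (A)(i) carefully and then obtain the other three by the evident dualisations $\lt \leftrightarrow \mt$ and ${}^{+} \leftrightarrow {}^{-}$. In each case three things must be verified: that the displayed pair is a (contravariant, resp.\ covariant) direct family of sets in the sense of Definition~\ref{def: dfamilyofsets}, that its transport maps are Bishop morphisms between the indicated exponential Bishop spaces, and that the topology data $\phi_1$ is the induced map of the corresponding transport map, as clause (iii) of Definition~\ref{def: preorderedspectrum} demands.

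For (A)(i) I would start from the fact that the transport maps $\lambda_{ij}^{\lt} \in \Mor(\C F_i, \C F_j)$ of the given covariant spectrum satisfy $\lambda_{ii}^{\lt} = \id_{\lambda_0(i)}$ and $\lambda_{jk}^{\lt} \circ \lambda_{ij}^{\lt} = \lambda_{ik}^{\lt}$ for $i \lt j \lt k$. To check clause (a) of Definition~\ref{def: dfamilyofsets} for $M^{\mt}$, I would compute, for $\phi \in \Mor(\C F_i, \C F)$, that $\mu_1^{\mt}(i,i)(\phi) := (\lambda_{ii}^{\lt})^{+}(\phi) := \phi \circ \id_{\lambda_0(i)} = \phi$, whence $\mu_1^{\mt}(i,i) = \id$. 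For clause (b), observe first that when $i \lt j$ the map $\mu_1^{\mt}(i,j) = (\lambda_{ij}^{\lt})^{+}$ runs contravariantly, from $\mu_0(j) = \Mor(\C F_j, \C F)$ to $\mu_0(i) = \Mor(\C F_i, \C F)$; then for $i \lt j \lt k$ and $\phi \in \Mor(\C F_k, \C F)$ I would verify
\[ \big[(\lambda_{ij}^{\lt})^{+} \circ (\lambda_{jk}^{\lt})^{+}\big](\phi) := (\phi \circ \lambda_{jk}^{\lt}) \circ \lambda_{ij}^{\lt} = \phi \circ \big(\lambda_{jk}^{\lt} \circ \lambda_{ij}^{\lt}\big) = \phi \circ \lambda_{ik}^{\lt} =: (\lambda_{ik}^{\lt})^{+}(\phi), \]
so that the contravariant triangle of transport maps commutes and $M^{\mt}$ is a contravariant direct family over $(I, \lt)$.

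It then remains to see that each $(\lambda_{ij}^{\lt})^{+}$ lies in $\Mor(\C F_j \to \C F, \C F_i \to \C F)$ and that $\phi_1^{M^{\mt}}$ is correctly declared. The morphism property is exactly the well-typedness of the operation ${}^{+}$ in Proposition~\ref{prp: plus}, applied with $\C G := \C F_i$, $\C H := \C F_j$ and $\lambda := \lambda_{ij}^{\lt}$; alternatively it drops out of the $\bigvee$-lifting of morphisms, since for a subbasic function $\phi_{x,f}$ of $F_i \to F$ (with $x \in \lambda_0(i)$, $f \in F$) I would compute $\phi_{x,f} \circ (\lambda_{ij}^{\lt})^{+} = \phi_{\lambda_{ij}^{\lt}(x),\, f} \in F_j \to F$, using $\lambda_{ij}^{\lt}(x) \in \lambda_0(j)$. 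Clause (iii) of Definition~\ref{def: preorderedspectrum} then holds by construction: the induced map of the transport map $\mu_1^{\mt}(i,j) = (\lambda_{ij}^{\lt})^{+}$ is $[(\lambda_{ij}^{\lt})^{+}]^{*}$, running from the topology $F_i \to F$ to the topology $F_j \to F$, which is exactly the map named $\phi_1^{M^{\mt}}(i,j)$.

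The remaining three cases follow the same template. For (A)(ii) one uses $(\lambda_{ij}^{\lt})^{-}(\theta) := \lambda_{ij}^{\lt} \circ \theta$ and checks the covariant law $(\lambda_{jk}^{\lt})^{-} \circ (\lambda_{ij}^{\lt})^{-} = (\lambda_{ik}^{\lt})^{-}$, which again reduces to $\lambda_{jk}^{\lt} \circ \lambda_{ij}^{\lt} = \lambda_{ik}^{\lt}$ together with the associativity of composition, the morphism property coming from the operation ${}^{-}$ of Proposition~\ref{prp: plus}. For (B)(i) and (B)(ii) one starts instead from the contravariant transport maps $\lambda_{ji}^{\mt} \in \Mor(\C F_j, \C F_i)$ of the given contravariant spectrum, whose composition law reads $\lambda_{ji}^{\mt} \circ \lambda_{kj}^{\mt} = \lambda_{ki}^{\mt}$ for $i \lt j \lt k$, and the same two identities deliver the required functoriality of ${}^{+}$ and ${}^{-}$. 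I expect no single step to be hard; the one point that will demand care is the bookkeeping of variance — deciding which of ${}^{+}$, ${}^{-}$ yields a covariant and which a contravariant family, and accordingly in which order the transport maps must be composed so that the commuting triangles of Definition~\ref{def: dfamilyofsets} close up correctly.
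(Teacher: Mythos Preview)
Your proposal is correct and follows essentially the same approach as the paper: both prove case (A)(i) by verifying the composition law $(\lambda_{ij}^{\lt})^{+} \circ (\lambda_{jk}^{\lt})^{+} = (\lambda_{ik}^{\lt})^{+}$ via associativity and the identity $\lambda_{jk}^{\lt} \circ \lambda_{ij}^{\lt} = \lambda_{ik}^{\lt}$, then declare the remaining cases analogous. You are in fact more thorough than the paper, which checks only the commuting triangle and omits the explicit verification of the identity axiom and of the morphism property of $(\lambda_{ij}^{\lt})^{+}$ that you supply via Proposition~\ref{prp: plus}.
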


\begin{proof}
We prove only the case (A)(i) and for the other cases we work similarly. 
It suffices to show that if $i \lt j \lt k$, then the following diagram commutes
\begin{center}
\begin{tikzpicture}

\node (E) at (0,0) {$\Mor(F_j, \C F)$};
\node[right=of E] (F) {$\Mor(\C F_k, \C F).$};
%\node[above=of F] (A) {$B$};
\node [above=of E] (D) {$\Mor(\C F_i, \C F)$};
%\node [below=of D] (G) {$V$};

\draw[->] (F)--(E) node [midway,below] {$\big(\lambda_{jk}^{\lt}\big)^+$};
%\draw[->] (D)--(A) node [midway,above] {$f $};
\draw[->] (E)--(D) node [midway,left] {$\big(\lambda_{ij}^{\lt}\big)^+$};
\draw[->] (F)--(D) node [midway,right] {$\ \  \big(\lambda_{ik}^{\lt}\big)^+$};
%\draw[->] (D)--(E) node [midway,left] {$f$};

\end{tikzpicture}
\end{center}
If $\phi \in \Mor(\C F_k, \C F)$, then
$\big(\lambda_{ij}^{\lt}\big)^+\big[\big(\lambda_{jk}^{\lt}\big)^+(\phi)\big]  := 
\big(\lambda_{ij}^{\lt}\big)^+[\phi \circ \lambda_{jk}^{\lt}]
:= (\phi \circ \lambda_{jk}^{\lt}) \circ \lambda_{ij}^{\lt}
:= \phi \circ (\lambda_{jk}^{\lt} \circ \lambda_{ij}^{\lt})
= \phi \circ \lambda_{ik}^{\lt}
:= \big(\lambda_{ik}^{\lt}\big)^+ (\phi)$.
% \begin{align*}
% \big(\lambda_{ij}^{\lt}\big)^+\big[\big(\lambda_{jk}^{\lt}\big)^+(\phi)\big] & := 
% \big(\lambda_{ij}^{\lt}\big)^+[\phi \circ \lambda_{jk}^{\lt}]\\
% & := (\phi \circ \lambda_{jk}^{\lt}) \circ \lambda_{ij}^{\lt}\\
% & := \phi \circ (\lambda_{jk}^{\lt} \circ \lambda_{ij}^{\lt})\\
% & = \phi \circ \lambda_{ik}^{\lt}\\
% & := \big(\lambda_{ik}^{\lt}\big)^+ (\phi).
% \end{align*}
\end{proof}

Similarly to the $\bigvee$-lifting of the product topology, if 
$S(\Lambda^{\mt}) := (\lambda_0, \lambda_1^{\mt} ,\phi_0^{\Lambda^{\mt}},
\phi_1^{\Lambda^{\mt}})$ is a contravariant direct spectrum over $(I, \lt)$ with Bishop spaces
$\big(\C F_i = \big(\lambda_0(i), \bigvee F_{0i}\big)\big)_{i \in I}$, then 
\[ \prod_{ i \in I}^{\mt} F_i = \bigvee_{i \in I}^{f \in F_{0i}}\big(f \circ \pi_i^{\Lambda^{\mt}}\big). \]

\begin{thm}[Duality principle]\label{thm: duality1}
Let $S(\Lambda^{\lt}) := (\lambda_0, \lambda_1^{\lt}, \phi_0^{\Lambda^{\lt}},
\phi_1^{\Lambda^{\lt}}) \in \Spec(I, \lt)$ with Bishop spaces $(\C F_i)_{i \in I}$
and Bishop morphisms $(\lambda_{ij}^{\lt})_{(i, j) \in D^{\lt}(I)}$. If $\C F := (X, F)$ is a Bishop space and 
$S(\Lambda^{\lt}) \to \C F := (\mu_0, \mu_1^{\mt}, \phi_0^{M^{\mt}}, \phi_1^{M^{\mt}})$ is the contravariant 
direct spectrum over $(I, \lt)$ defined in Proposition~\ref{prp: dirtoproj} \normalfont (A)(i),
\itshape then 
\[ \underset{\ot} \Lim (\C F_i \to \C F) \simeq [(\underset{\to} \Lim \C F_i) \to \C F]. \]
\end{thm}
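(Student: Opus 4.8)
The plan is to exhibit an explicit Bishop isomorphism whose underlying bijection is supplied by the universal property of the direct limit. First I would unfold both sides. By Definition~\ref{def: inverselimit} applied to the contravariant spectrum $S(\Lambda^{\lt}) \to \C F$ of Proposition~\ref{prp: dirtoproj}(A)(i), the left-hand space has underlying set $\prod_{i \in I}^{\mt}\Mor(\C F_i, \C F)$ and topology $\bigvee_{i \in I}^{e \in F_i \to F} e \circ \pi_i^{M^{\mt}}$; by the remark preceding the theorem this topology is already generated by the functions $\phi_{x, f} \circ \pi_i^{M^{\mt}}$ with $i \in I$, $x \in \lambda_0(i)$ and $f \in F$, where $\phi_{x,f}(g) := f(g(x))$. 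The right-hand space $(\underset{\to} \Lim \C F_i) \to \C F$ has underlying set $\Mor(\underset{\to} \Lim \C F_i, \C F)$ and, by Definition~\ref{def: new}, topology $\bigvee_{w \in \underset{\to} \Lim \lambda_0(i)}^{f \in F}\phi_{w, f}$, where $\phi_{w,f}(h) := f(h(w))$. The crucial observation is that an element $\Theta \in \prod_{i \in I}^{\mt}\Mor(\C F_i, \C F)$ is, by the defining condition of the contravariant product together with $\mu_1^{\mt}(i,j) = (\lambda_{ij}^{\lt})^{+}$, exactly a compatible cocone: $\Theta_i \in \Mor(\C F_i, \C F)$ with $\Theta_i = \Theta_j \circ \lambda_{ij}^{\lt}$ for every $(i, j) \in D^{\lt}(I)$.

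This is precisely the hypothesis of the universal property, so I would define $\Xi \colon \prod_{i \in I}^{\mt}\Mor(\C F_i, \C F) \to \Mor(\underset{\to} \Lim \C F_i, \C F)$ by letting $\Xi(\Theta)$ be the unique $h_{\Theta} \in \Mor(\underset{\to} \Lim \C F_i, \C F)$ furnished by Proposition~\ref{prp: universaldirect}(iii) with $\varepsilon_i := \Theta_i$; concretely $h_{\Theta}(\eql_0(i, x)) := \Theta_i(x)$. Conversely I would set $\Xi^{-1}(h)_i := h \circ \eql_i$, which lies in $\Mor(\C F_i, \C F)$ as a composite of Bishop morphisms and lands in the contravariant product because $\eql_i = \eql_j \circ \lambda_{ij}^{\lt}$ for $i \lt j$ by Proposition~\ref{prp: universaldirect}(ii), whence $\Xi^{-1}(h)_i = \Xi^{-1}(h)_j \circ \lambda_{ij}^{\lt}$. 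That both assignments respect the pointwise equalities and that $\Xi \circ \Xi^{-1}$ and $\Xi^{-1} \circ \Xi$ are the identities is then a direct computation on representatives $\eql_0(i, x)$, using that every element of $\underset{\to} \Lim \lambda_0(i)$ has this form.

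It remains to see that $\Xi$ is a Bishop isomorphism. I would prove that both $\Xi$ and $\Xi^{-1}$ are Bishop morphisms by the $\bigvee$-lifting of morphisms, for which everything reduces to the single identity
\[ \phi_{\eql_0(i, x), f} \circ \Xi = \phi_{x, f} \circ \pi_i^{M^{\mt}}, \]
valid as functions on $\prod_{i \in I}^{\mt}\Mor(\C F_i, \C F)$, since both sides send $\Theta$ to $f(\Theta_i(x))$. Reading this identity from right to left shows that each generator $\phi_{x,f} \circ \pi_i^{M^{\mt}}$ of the left topology is the image under $\Xi^{-1}$ of the generator $\phi_{\eql_0(i,x), f}$ of the right topology, giving $\Xi^{-1} \in \Mor$; reading it from left to right shows that each generator $\phi_{\eql_0(i,x), f}$ composes with $\Xi$ into the generator $\phi_{x,f} \circ \pi_i^{M^{\mt}}$, giving $\Xi \in \Mor$ (here one uses that every $w \in \underset{\to} \Lim \lambda_0(i)$ is of the form $\eql_0(i,x)$). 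Hence $\Xi$ is a bijective morphism with morphism inverse, i.e.\ a Bishop isomorphism, and therefore $\underset{\ot} \Lim(\C F_i \to \C F) \simeq (\underset{\to} \Lim \C F_i) \to \C F$.

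The main obstacle is bookkeeping rather than conceptual: correctly matching the two subbases. One must unwind the inverse-limit topology on $\prod_{i \in I}^{\mt}\Mor(\C F_i, \C F)$ down to the generators $\phi_{x,f} \circ \pi_i^{M^{\mt}}$ and the exponential topology of $(\underset{\to} \Lim \C F_i) \to \C F$ down to the generators $\phi_{\eql_0(i,x), f}$, and recognise that $\Xi$ carries one subbase onto the other. The one genuine point to be careful about is the contravariant variance: verifying that the defining condition of $\prod_{i \in I}^{\mt}$ together with $\mu_1^{\mt}(i,j) = (\lambda_{ij}^{\lt})^{+}$ yields exactly the cocone compatibility $\Theta_i = \Theta_j \circ \lambda_{ij}^{\lt}$ demanded by the universal property of the direct limit, so that $\Xi$ is well defined and the two universal properties dovetail.
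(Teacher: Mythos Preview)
Your proposal is correct and follows essentially the same route as the paper: the map $\Xi$ you define is the paper's $\theta$, your $\Xi^{-1}$ is the paper's $\phi$, and the pivotal identity $\phi_{\eql_0(i,x),f}\circ\Xi=\phi_{x,f}\circ\pi_i^{M^{\mt}}$ is exactly what the paper isolates to verify both $\bigvee$-liftings. The only stylistic difference is that you invoke Proposition~\ref{prp: universaldirect}(iii) to \emph{produce} the forward map before giving its explicit formula, whereas the paper defines $\theta$ directly by that formula and verifies its properties by hand; since you immediately give the same concrete description, this is a difference in packaging rather than in substance.
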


\begin{proof}
First we determine the topologies involved in the required  Bishop isomorphism.
By definition and by the above remark on the $\bigvee$-lifting of the $\prod^{\mt}$-topology we have that
\[ \underset{\ot} \Lim (\C F_i \to \C F) := \bigg(\prod_{i \in I}^{\mt}\mu_0 (i), 
\bigvee_{i \in I}^{g \in F_i \to F}g \circ \pi_i^{S(\Lambda^{\lt}) \to \C F}\bigg), \]
\[ F_i \to F := \bigvee_{x \in \lambda_0 (i)}^{f \in F}\phi_{x, f}, \]
\[ \bigvee_{i \in I}^{g \in F_i \to F}g \circ \pi_i^{S^{\lt} \to \C F} = 
\bigvee_{i \in I}^{x \in \lambda_0 (i), f \in F}\phi_{x, f} \circ \pi_i^{S(\Lambda^{\lt}) \to \C F}, \]
\[ \underset{\to} \Lim \C F_i := \bigg(\underset{\to} \Lim \lambda_0 (i), \bigvee_{\mathsmaller{\Theta
\in \prod_{i \in I}^{\lt}F_i}}\eql_0 f_{\Theta}\bigg), \]
\[ \big(\underset{\to} \Lim \C F_i\big) \to \C F := \bigg(\Mor(\underset{\to} 
\Lim \C F_i, \C F), \bigvee_{\mathsmaller{\eql_0^{\Lambda^{\lt}}(i, x) \in 
\underset{\to} \Lim \lambda_0 (i)}}^{f \in F}
\phi_{\mathsmaller{\eql_0^{\Lambda^{\lt}}(i, x)}, f}\bigg), \]
\[ \phi_{\mathsmaller{\eql_0^{\Lambda^{\lt}}(i, x)}, f}(h) := (f \circ h)\big(\eql_0^{\Lambda^{\lt}}(i, x)\big) \]
\begin{center}
\begin{tikzpicture}

\node (E) at (0,0) {$\Real$.};
\node[above=of E] (F) {$X$};
\node[left=of F] (B) {};
\node[left=of B] (A) {$\underset{\to} \Lim \lambda_0 (i)$};

\draw[->] (F)--(E) node [midway,right] {$f \in F$};
\draw[->] (A)--(E) node [midway,left] {$f \circ h \ \ \ $};
\draw[->] (A)--(F) node [midway,above] {$\mathsmaller{h \in \Mor(\underset{\to} \Lim \C F_i, \C F)}$};

\end{tikzpicture}
\end{center}
If $H \in \prod_{i \in I}^{\mt}\Mor(\C F_i, \C F)$, let the operation $\theta(H) :
\underset{\to} \Lim \lambda_0(i) \sto X$, defined by
\[ \theta(H)\big(\eql_0^{\Lambda^{\lt}}(i, x)\big) := H_i(x); \ \ \ \ \eql_0^{\Lambda^{\lt}}(i, x) \in 
\underset{\to} \Lim \lambda_0(i). \]
We show that $\theta(H)$ is a function. If
\[ \eql_0^{\Lambda^{\lt}}(i, x) =_{\mathsmaller{\underset{\to} \Lim \lambda_0 (i)}} \eql_0^{\Lambda^{\lt}}(j, y) \TOT
\exists_{k \in I}\big(i, j \lt k \ \& \ \lambda_{ik}^{\lt}(x) =_{\mathsmaller{\lambda_0 (k)}} 
\lambda_{jk}^{\lt}(y)\big),\]
we show that
$ \theta(H)\big(\eql_0^{\Lambda^{\lt}}(i, x)\big) :=  H_i(x) =_X H_j(y) =:
\theta(H)\big(\eql_0^{\Lambda^{\lt}}(j, y)\big)$. 
By the equalities
$H_i = \big(\lambda_{ik}^{\lt}\big)^+(H_k) = H_k \circ \lambda_{ik}^{\lt}$ and  
$H_j = \big(\lambda_{jk}^{\lt}\big)^+(H_k) = H_k \circ \lambda_{jk}^{\lt}$ we get
\[ H_i(x) = \big(H_k \circ \lambda_{ik}^{\lt}\big)(x) := H_k\big(\lambda_{ik}^{\lt}(x)\big) =_X 
H_k\big(\lambda_{jk}^{\lt}(y)\big) := \big(H_k \circ \lambda_{jk}^{\lt}\big)(y) := H_j(y). \]
Next we show that 
$\theta(H) \in \Mor(\underset{\to} \Lim \C F_i, \C F) :\TOT \forall_{f \in F}\big(f 
\circ \theta(H) \in \underset{\to} \Lim  F_i\big)$. 
If $f \in F$, then the dependent operation
$\Theta :\bigcurlywedge_{i \in I}F_i$, defined by $\Theta_i := f \circ H_i$, for every $i \in I$ 
\begin{center}
\begin{tikzpicture}

\node (E) at (0,0) {$\Real$};
\node[above=of E] (F) {$X$};
\node[left=of F] (B) {};
\node[left=of B] (A) {$\lambda_0 (i)$};

\draw[->] (F)--(E) node [midway,right] {$f \in F$};
\draw[->] (A)--(E) node [midway,left] {$f \circ H_i \ \ \ $};
\draw[->] (A)--(F) node [midway,above] {$\mathsmaller{H_i \in \Mor(\C F_i, \C F)}$};

\end{tikzpicture}
\end{center}
is in $\prod_{i \in I}^{\lt}F_i$ i.e., if $i \lt j$, then $\Theta_i = \big(\lambda_{ij}^{\lt}\big)^*(\Theta_j)
= \Theta_j \circ \lambda_{ij}^{\lt}$, since 
$\Theta_i := f \circ H_i = f \circ \big(H_j \circ \lambda_{ij}^{\lt}\big) = (f \circ H_j) \circ 
\lambda_{ij}^{\lt} := \Theta_j \circ \lambda_{ij}^{\lt}$.
Hence $f \circ \theta(H) := \eql_0 f_{\Theta} \in \underset{\to} \Lim  F_i$, since 
\[
[f \circ \theta(H)]\big(\eql_0^{\Lambda^{\lt}}(i, x)\big) := f\big(H_i(x)\big)
:= (f \circ H_i)(x)
:= f_{\Theta}(i, x)
:= \eql_0 f_{\Theta}\big(\eql_0^{\Lambda^{\lt}}(i, x)\big).
\]
Consequently, the operation $\theta \colon \prod_{i \in I}^{\mt}\Mor(\C F_i, \C F)
\sto \Mor(\underset{\to} \Lim \C F_i, \C F)$, defined by the rule $H \mapsto \theta(H)$,
is well-defined. Next we show that $\theta$ is an embedding. 
\begin{align*}
\theta(H) = \theta(K) & : \TOT \forall_{\mathsmaller{\eql_0^{\Lambda^{\lt}}(i, x) \in 
\underset{\to} \Lim \lambda_0 (i)}}\big(\theta(H)(\eql_0^{\Lambda^{\lt}}(i, x)) =  
\theta(K)({\eql_0}_{S^{\lt}}(i, x))\big)\\
& : \TOT \forall_{i \in I}\big(H_i(x) =_X K_i(x)\big)\\
& : \TOT H = K.
\end{align*}
Next we show that $\theta \in \Mor\big(\underset{\ot} \Lim (\C F_i \to \C F), 
(\underset{\to} \Lim \C F_i) \to \C F\big)$ i.e.,
\[ \forall_{\mathsmaller{\eql_0^{\Lambda^{\lt}}(i, x) \in \underset{\to} \Lim 
\lambda_0 (i)}}\forall_{f \in F}\bigg(\phi_{\mathsmaller{\eql_0^{\Lambda^{\lt}}(i, x)}, f} \circ 
\theta \in  \bigvee_{i \in I, x \in \lambda_0 (i)}^{f \in F}\phi_{x, f} \circ \pi_i^{S(\Lambda^{\lt})
\to \C F} \bigg). \]
By the equalities
 \[ [\phi_{\mathsmaller{\eql_0^{\Lambda^{\lt}(i, x)}, f}} \circ \theta](H) := 
\phi_{\mathsmaller{\eql_0^{\Lambda^{\lt}}(i, x), f}}(\theta(H))
:= [f \circ \theta(H)]\big(\eql_0^{\Lambda^{\lt}}(i, x)\big) 
:= f \big(H_i(x)\big), \]
\[ [\phi_{x, f} \circ \pi_i^{S^{\lt} \to \C F}](H) := \phi_{x, f}\big(H_i\big)
:= f\big(H_i(x)\big),  \]
we get $\phi_{\mathsmaller{\eql_0^{\Lambda^{\lt}}(i, x)}, f} \circ \theta = \phi_{x, f} \circ
\pi_i^{S(\Lambda^{\lt}) \to \C F}$. Let $\phi \colon \Mor(\underset{\to} \Lim \C F_i, \C F) \sto
\prod_{i \in I}^{\mt}\Mor(\C F_i, \C F)$ be defined by $h \mapsto \phi(h) := H^h$, where 
$ H^{h} : \bigcurlywedge_{i \in I}\Mor(\C F_i, \C F)$ is defined by  
$H^{h}_i := h \circ \eql_i$, for every $i \in I$
\begin{center}
\begin{tikzpicture}

\node (E) at (0,0) {$X$.};
\node[above=of E] (F) {$\underset{\to} \Lim \lambda_0 (i)$};
\node[left=of F] (B) {};
\node[left=of B] (A) {$\lambda_0 (i)$};

\draw[->] (F)--(E) node [midway,right] {$h$};
\draw[->] (A)--(E) node [midway,left] {$H_i^h \ \ \ $};
\draw[->] (A)--(F) node [midway,above] {$\eql_{i}$};

\end{tikzpicture}
\end{center}
By Proposition~\ref{prp: universaldirect}(i) $H_i \in \Mor(\C F_i, \C F) $,
as a composition of Bishop morphisms. To show that $H^{h} \in 
\prod_{i \in I}\Mor(\C F_i, \C F)$, let $i \lt j$, and by Proposition~\ref{prp: universaldirect}(ii)
we get
$H^{h}_i := h \circ \eql_i = h \circ \big(\eql_j \circ \lambda_{ij}^{\lt}\big) :=
(h \circ \eql_j) \circ \lambda_{ij}^{\lt} := H_j \circ \lambda_{ij}^{\lt}$.
Moreover, $\theta(H_{h}) := h$, since
$ \theta(H_{h})\big(\eql_0^{\Lambda^{\lt}}(i, x)\big) := H_{i}(x) 
:= (h \circ \eql_{i}(x) := h \big(\eql_0^{\Lambda^{\lt}}(i, x)\big)$.
Clearly, $\phi$ is a function. Moreover $H^{\theta(H)} := H$, as, for every $i \in I$ we have that
$ \big(H_i^{\theta(H)}\big)(x) := (\theta(H) \circ \eql_i)(x) 
:= \theta(H)\big(\eql_0^{\Lambda^{\lt}}(i,x)\big) := H_i(x)$.
Finally we show that $\phi \in \Mor\big((\underset{\to} \Lim \C F_i) \to \C F, \underset{\ot}
\Lim (\C F_i \to \C F)\big)$
if and only if 
\[\forall_{i \in I}\forall_{x \in \lambda_0(i)}\forall_{f \in F}\bigg(\phi_{x,f} \circ \phi \in
\bigvee_{\eql_0^{\Lambda^{\lt}}(i,x) \in \underset{\to}  \Lim \lambda_0(i)}^{f \in F}
\phi_{\eql_0^{\Lambda^{\lt}}(i,x), f}\bigg). \]
If $h \in \Mor(\underset{\to} \Lim \C F_i, \C F)$, then
\[ \big[\phi_{x,f} \circ \pi_i^{S(\Lambda^{\lt}) \to F}) \circ \phi\big](h) := (\phi_{x,f} \circ \pi_i^{S(\Lambda^{\lt}) 
\to F})(H^h)
 := \phi_{x,f}\big(H^h_i\big) \]
\[ =: \phi_{x, f}(h \circ \eql_i)
:= f\big[(h \circ \eql_i)(x)\big]
 := (f \circ h)\big(\eql_0^{\Lambda^{\lt}}(i,x)\big)
  := \phi_{\eql_0^{\Lambda^{\lt}}(i,x),f}(h). \qedhere
\]
%\begin{align*}
%\big[\phi_{x,f} \circ \pi_i^{S(\Lambda^{\lt}) \to F}) \circ \phi\big](h) & := (\phi_{x,f} \circ \pi_i^{S(\Lambda^{\lt}) 
%\to F})(H^h)\\
%& := \phi_{x,f}\big(H^h_i\big)\\
%& := \phi_{x, f}(h \circ \eql_i)\\
%& := f\big[(h \circ \eql_i)(x)\big]\\
%& := (f \circ h)\big(\eql_0^{\Lambda^{\lt}}(i,x)\big)\\
%& := \phi_{\eql_0^{\Lambda^{\lt}}(i,x),f}(h).\qedhere
%\end{align*}
\end{proof}

With respect to the possible dual to the previous theorem i.e., the isomorphism 
$\underset{\to} \Lim (\C F_i \to \C F) \simeq [(\underset{\ot} \Lim \C F_i) \to \C F]$, what we can 
show is the following proposition.

\begin{prop}\label{prp: conversedual}
Let
$S(\Lambda^{\mt}) := (\lambda_0, \lambda_1^{\mt}, \phi_0^{\Lambda^{\mt}},
\phi_1^{\Lambda^{\mt}})$ be in $\Spec(I, \mt)$ with Bishop spaces 
$(\C F_i)_{i \in I}$ and Bishop morphisms
$(\lambda_{ji}^{\succ})_{(i,j) \in D^{\lt}(I)}$. If $\C F := (X, F)$ is a Bishop space and 
$S(\Lambda^{\mt}) \to \C F := (\mu_0, \mu_1^{\lt}, \phi_0^{M^{\lt}}, \phi_1^{M^{\lt}})$ is the $(I, \lt)$-directed
spectrum defined in Proposition~\ref{prp: dirtoproj} \normalfont (B)(i), 
\itshape
there is  a function 
$\ \widehat{} \ \colon \underset{\to} \Lim [\Mor(\C F_i, \C F)] \to \Mor(\underset{\ot} \Lim \C F_i, \C F)$
such that the following hold:\\[1mm]
\normalfont (i)
\itshape $\ \widehat{} \ \in \Mor\big(\underset{\to} \Lim (\C F_i \to \C F), 
(\underset{\ot} \Lim \C F_i) \to \C F\big)$.\\[1mm]
\normalfont (ii)
\itshape  If for every $j \in J$ and every $y \in \lambda_0 (j)$ there is 
$\Theta_y \in \prod_{i \in I}^{\mt}\lambda_0 (i)$ 
such that $\Theta_y(j) =_{\mathsmaller{\lambda_0 (j)}} y$, 
then $\ \widehat{} \ $ is an embedding of $\underset{\to} \Lim [\Mor(\C F_i, \C F)]$ into 
$\Mor(\underset{\ot} \Lim \C F_i, \C F)$.
\end{prop}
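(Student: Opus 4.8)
The plan is to define the operation $\widehat{\ }$ by sending a direct-limit element $\eql_0^{M^{\lt}}(i, g)$, where $g \in \mu_0(i) = \Mor(\C F_i, \C F)$, to the composite $g \circ \pi_i$, with $\pi_i \colon \underset{\ot}\Lim \C F_i \to \C F_i$ the canonical projection of the inverse limit. Since $\pi_i \in \Mor(\underset{\ot}\Lim\C F_i, \C F_i)$ by Proposition~\ref{prp: universalinverse}(i) and $g \in \Mor(\C F_i, \C F)$, the composite $g \circ \pi_i$ lies in $\Mor(\underset{\ot}\Lim \C F_i, \C F)$, so the codomain is correct. First I would check that this assignment is a function, i.e. that it respects $=_{\underset{\to}\Lim\mu_0(i)}$. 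If $\eql_0^{M^{\lt}}(i, g) =_{\underset{\to}\Lim\mu_0(i)} \eql_0^{M^{\lt}}(j, g')$, there is $k \in I$ with $i, j \lt k$ and $\mu_{ik}^{\lt}(g) = \mu_{jk}^{\lt}(g')$, i.e. $g \circ \lambda_{ki}^{\mt} = g' \circ \lambda_{kj}^{\mt}$. For any thread $\Theta \in \prod_{i \in I}^{\mt}\lambda_0(i)$ the defining condition gives $\Theta_i = \lambda_{ki}^{\mt}(\Theta_k)$ and $\Theta_j = \lambda_{kj}^{\mt}(\Theta_k)$, whence $g(\Theta_i) = (g \circ \lambda_{ki}^{\mt})(\Theta_k) = (g' \circ \lambda_{kj}^{\mt})(\Theta_k) = g'(\Theta_j)$, so $g \circ \pi_i = g' \circ \pi_j$ and $\widehat{\ }$ is well-defined.

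For (i) I would invoke the $\bigvee$-lifting of morphisms together with the $\bigvee$-liftings of the exponential and direct-limit topologies. It suffices to check that for each subbasic generator $\phi_{\Theta, f}$ of $(\underset{\ot}\Lim\C F_i) \to \C F$ (with $\Theta \in \prod_{i\in I}^{\mt}\lambda_0(i)$ and $f \in F$) the composite $\phi_{\Theta, f} \circ \widehat{\ }$ lies in $\underset{\to}\Lim(F_i \to F)$. Evaluating on $\eql_0^{M^{\lt}}(i, g)$ gives $f(g(\Theta_i))$, so I would set $\Xi_i := \phi_{\Theta_i, f} \in F_i \to F$ and verify that $\Xi \in \prod_{i \in I}^{\mt}(F_i \to F)$: for $i \lt j$ the transport condition $\Xi_i = \Xi_j \circ (\lambda_{ji}^{\mt})^+$ reduces, after evaluating on $g \in \Mor(\C F_i, \C F)$, to $f(g(\Theta_i)) = f(g(\lambda_{ji}^{\mt}(\Theta_j)))$, which holds because $\Theta_i = \lambda_{ji}^{\mt}(\Theta_j)$. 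Then $\phi_{\Theta, f} \circ \widehat{\ } = \eql_0 f_{\Xi} \in \underset{\to}\Lim(F_i \to F)$, which yields (i).

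Part (ii) is the main obstacle and the only place the extra hypothesis enters; reading ``$j \in J$'' as ``$j \in I$'', the hypothesis says every $y \in \lambda_0(j)$ is the $j$-component of some thread $\Theta_y \in \prod_{i\in I}^{\mt}\lambda_0(i)$, i.e. every projection is surjective. To show $\widehat{\ }$ reflects equality, suppose $g \circ \pi_i = g' \circ \pi_j$; I would pick $k$ with $i, j \lt k$ by directedness and prove $g \circ \lambda_{ki}^{\mt} = g' \circ \lambda_{kj}^{\mt}$ on all of $\lambda_0(k)$. Given $z \in \lambda_0(k)$, the hypothesis supplies $\Theta_z$ with $(\Theta_z)_k = z$, so $(\Theta_z)_i = \lambda_{ki}^{\mt}(z)$ and $(\Theta_z)_j = \lambda_{kj}^{\mt}(z)$; applying $g \circ \pi_i = g' \circ \pi_j$ to $\Theta_z$ gives $g(\lambda_{ki}^{\mt}(z)) = g'(\lambda_{kj}^{\mt}(z))$. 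Hence $g \circ \lambda_{ki}^{\mt} = g' \circ \lambda_{kj}^{\mt}$, and therefore $\eql_0^{M^{\lt}}(i, g) =_{\underset{\to}\Lim\mu_0(i)} \eql_0^{M^{\lt}}(j, g')$, which is the required embedding property. The conceptual difficulty is precisely that, absent surjectivity of the projections, the equality $g\circ\pi_i = g'\circ\pi_j$ only constrains $g, g'$ on the images of the threads, which need not exhaust the fibers $\lambda_0(i), \lambda_0(j)$; the hypothesis closes exactly this gap, and it is also why only a one-sided (embedding, not isomorphism) counterpart to Theorem~\ref{thm: duality1} is available here.
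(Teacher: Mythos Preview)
Your proposal is correct and follows essentially the same approach as the paper: since the paper's proof of this proposition consists only of the sentence ``We proceed similarly to the proof of Theorem~\ref{thm: duality1}'', your construction $\eql_0^{M^{\lt}}(i,g) \mapsto g \circ \pi_i$ is precisely the dual of the map $\theta$ defined there, and your verifications of well-definedness, of the morphism property via the $\bigvee$-lifting with the family $\Xi_i := \phi_{\Theta_i,f}$, and of the embedding property under the surjectivity-of-projections hypothesis are exactly the expected dualisations. Your reading of ``$j \in J$'' as ``$j \in I$'' is also the intended one.
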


\begin{proof}
We proceed similarly to the proof of Theorem~\ref{thm: duality1}. 
\end{proof}

\begin{thm}\label{thm: duality2}
Let $S(\Lambda^{\mt}) := (\lambda_0, \lambda_1^{\mt}, \phi_0^{\Lambda^{\mt}},
\phi_1^{\Lambda^{\mt}})$ be in $\Spec(I, \mt)$ with Bishop spaces 
$(\C F_i)_{i \in I}$ and Bishop morphisms $(\lambda_{ji}^{\prec})_{(i,j) \in D^{\lt}(I)}$.
If $\C F := (X, F)$ is a Bishop space and 
$\C F \to S(\Lambda^{\mt}) := (\nu_0, \nu_1^{\mt}, \phi_0^{^{\mt}}, \phi_1^{N^{\mt}})$ is the contravariant
direct spectrum over $(I, \lt)$, defined in Proposition~\ref{prp: dirtoproj} \normalfont (B)(ii), 
\itshape then 
\[ \underset{\ot} \Lim (\C F \to \C F_i) \simeq [\C F \to \underset{\ot} \Lim \C F_i]. \]
\end{thm}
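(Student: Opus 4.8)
The plan is to realize the claimed Bishop isomorphism as the familiar currying/uncurrying bijection and to verify that both directions are Bishop morphisms via the $\bigvee$-lifting of morphisms. First I would unfold the two topologies so that their subbases become visibly indexed by the same data. On the left, by Definition~\ref{def: inverselimit} together with the $\bigvee$-lifting of the $\prod^{\mt}$-topology recorded just before the theorem, and using $F \to F_i = \bigvee_{x \in X}^{f \in F_i}\phi_{x,f}$ with $\phi_{x,f}(\vartheta) := f(\vartheta(x))$,
\[ \underset{\ot} \Lim (\C F \to \C F_i) = \bigg(\prod_{i \in I}^{\mt}\Mor(\C F, \C F_i), \ \bigvee_{i \in I}^{x \in X, f \in F_i}\phi_{x,f} \circ \pi_i^{N^{\mt}}\bigg). \]
On the right, by Definition~\ref{def: new} and the $\bigvee$-lifting $F \to \bigvee G_0 = \bigvee_{x}^{g_0}\phi_{x,g_0}$ applied to $\underset{\ot}\Lim F_i = \bigvee_{i \in I}^{f \in F_i} f \circ \pi_i^{\Lambda^{\mt}}$,
\[ \C F \to \underset{\ot}\Lim \C F_i = \bigg(\Mor(\C F, \underset{\ot}\Lim\C F_i), \ \bigvee_{x \in X}^{i \in I, f \in F_i}\phi_{x, f \circ \pi_i^{\Lambda^{\mt}}}\bigg), \]
with $\phi_{x, f \circ \pi_i^{\Lambda^{\mt}}}(h) = f([h(x)]_i)$. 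The generators on the two sides are thus indexed by the same triples $(i,x,f)$, which is the crux of the argument.

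Next I would define the two mutually inverse operations. Let $\theta \colon \prod_{i \in I}^{\mt}\Mor(\C F, \C F_i) \sto \Mor(\C F, \underset{\ot}\Lim\C F_i)$ send $H$ to the map $\theta(H)$ with $[\theta(H)(x)]_i := H_i(x)$, and let $\phi \colon \Mor(\C F, \underset{\ot}\Lim\C F_i) \sto \prod_{i \in I}^{\mt}\Mor(\C F, \C F_i)$ send $h$ to $H^h$ with $H^h_i := \pi_i^{\Lambda^{\mt}} \circ h$. For well-definedness of $\theta$ I would first check that $\theta(H)(x) \in \prod_{i \in I}^{\mt}\lambda_0(i)$: for $i \lt j$ the compatibility $H_i = (\lambda_{ji}^{\mt})^{-}(H_j) = \lambda_{ji}^{\mt} \circ H_j$ gives $[\theta(H)(x)]_i = H_i(x) = \lambda_{ji}^{\mt}(H_j(x)) = \lambda_{ji}^{\mt}([\theta(H)(x)]_j)$; that $\theta(H)$ is a morphism then follows either from the universal property of the inverse limit (Proposition~\ref{prp: universalinverse}(iii) with $\varpi_i := H_i$) or directly, since $(f \circ \pi_i^{\Lambda^{\mt}}) \circ \theta(H) = f \circ H_i \in F$. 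For $\phi$, the commuting triangle $\lambda_{ji}^{\mt} \circ \pi_j^{\Lambda^{\mt}} = \pi_i^{\Lambda^{\mt}}$ of Proposition~\ref{prp: universalinverse}(ii) shows $H^h \in \prod_{i \in I}^{\mt}\Mor(\C F, \C F_i)$, while each $H^h_i$ is a morphism as a composite of morphisms. The identities $\phi(\theta(H)) = H$ and $\theta(\phi(h)) = h$ then follow by componentwise evaluation.

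Finally I would check that both maps are Bishop morphisms by applying the $\bigvee$-lifting of morphisms to the generators identified in the first step. For $\theta$ one computes $[\phi_{x, f \circ \pi_i^{\Lambda^{\mt}}} \circ \theta](H) = f(H_i(x)) = [\phi_{x,f} \circ \pi_i^{N^{\mt}}](H)$, so $\phi_{x, f \circ \pi_i^{\Lambda^{\mt}}} \circ \theta = \phi_{x,f} \circ \pi_i^{N^{\mt}}$ already lies in the left topology; reading the same computation backwards gives $(\phi_{x,f} \circ \pi_i^{N^{\mt}}) \circ \phi = \phi_{x, f \circ \pi_i^{\Lambda^{\mt}}}$ in the right topology. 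I expect the only genuine obstacle to be the bookkeeping that pins down the two subbases precisely, so that these generator-level equalities literally coincide: one must justify the iterated $\bigvee$-liftings (the exponential composed with the $\prod^{\mt}$-lifting on one side, the $\prod^{\mt}$-lifting of exponentials on the other) and confirm that the uncurried $\theta(H)$ is really a morphism \emph{into} the inverse limit and not merely a compatible family of morphisms. Once the generators are matched, the morphism property on both sides is immediate, and with the mutual inversion already established the Bishop isomorphism follows.
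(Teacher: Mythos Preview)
Your proposal is correct and follows essentially the same route as the paper: the paper's map $e$ is your $\theta$ (with $[e(H)(x)]_i := H_i(x)$), its inverse $\phi$ is defined by $H^{\mu}_i(x) := [\mu(x)]_i = (\pi_i^{\Lambda^{\mt}}\!\circ\mu)(x)$, and both morphism checks are done via the $\bigvee$-lifting using the generator-level identity $\phi_{x, f \circ \pi_i^{\Lambda^{\mt}}} \circ e = \phi_{x,f} \circ \pi_i^{\C F \to S(\Lambda^{\mt})}$ exactly as you compute. Your optional appeal to Proposition~\ref{prp: universalinverse}(iii) to get $\theta(H)\in\Mor(\C F,\underset{\ot}\Lim\C F_i)$ is a harmless shortcut the paper does not take, but otherwise the arguments coincide.
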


\begin{proof}

First we determine the topologies involved in the required Bishop isomorphism:
%By definition we have that 
\[ \underset{\ot} \Lim (\C F \to \C F_i) := \bigg(\prod_{i \in I}^{\mt}\Mor(\C F, \C F_i), 
\bigvee_{i \in I}^{g \in F \to F_i}g \circ \pi_i^{\C F \to S(\Lambda^{\mt})}\bigg), \]
%and as we explained at the beginning of the proof of the Theorem~\ref{thm: duality1}, 
\[ \bigvee_{i \in I}^{g \in F \to F_i}g \circ \pi_i^{\C F \to S^{\mt}} = \bigvee_{i \in I, x \in 
\lambda_0 (i)}^{f \in F_i}\phi_{x, f} \circ \pi_i^{\C F \to S(\Lambda^{\mt})}, \]
%Moreover, we have that
\[ \underset{\ot} \Lim \C F_i := \bigg(\prod_{i \in I}^{\mt}\lambda_0 (i), 
\bigvee_{i \in I}^{f \in F_i}f \circ \pi_i^{S(\Lambda^{\mt})}\bigg), \]
\[ \C F \to \underset{\ot} \Lim \C F_i := \bigg(\Mor(\C F, \underset{\ot} \Lim \C F_i), 
\bigvee_{x \in X}^{g \in \underset{\ot} \Lim \C F_i}\phi_{x, g}\bigg), \]
 \[ \bigvee_{x \in X}^{g \in \underset{\ot} \Lim \C F_i}\phi_{x, g} = 
\bigvee_{x \in X, i \in I}^{f \in F_i}\phi_{x, f \circ \pi_i^{S(\Lambda^{\mt})}}. \]
If $H \in \prod_{i \in I}^{\mt}\Mor(\C F, \C F_i)$, and if $i \lt j$, then
$H_i = \nu_{ji}^{\mt}(H_j) = \big(\lambda_{ji}^{\mt}\big)^-(H_j) = \lambda_{ji}^{\mt} \circ H_j $
\begin{center}
\begin{tikzpicture}

\node (E) at (0,0) {$\lambda_0(i)$.};
\node[above=of E] (F) {$\lambda_0 (j)$};
\node[left=of F] (A) {$X$};

\draw[->] (F)--(E) node [midway,right] {$\lambda_{ji}^{\mt}$};
\draw[->] (A)--(E) node [midway,left] {$H_i \  $};
\draw[->] (A)--(F) node [midway,above] {$H_j$};

\end{tikzpicture}
\end{center}
Let the operation $e(H) : X \sto \prod_{i \in I}^{\mt} \lambda_0 (i)$, defined by
$x \mapsto [e(H)](x)$, where $\big[[e(H)](x)\big]_i := H_i(x)$, for every $i \in I$.
First we show that $[e(H)](x) \in \prod_{i \in I}^{\mt} \lambda_0 (i)$. If $i \lt j$, then 
$\big[[e(H)](x)\big]_i := H_i(x) = \big(\lambda_{ji}^{\mt} \circ H_j\big)(x) := 
\lambda_{ji}^{\mt}\big(H_j(x)\big) := \lambda_{ji}^{\mt}\big(\big[[e(H)](x)\big]_j\big)$.
Next we show that $e(H)$ is a function. If $x =_X x{'}$, then
$\forall_{i \in I}\big(H_i(x) =_{\mathsmaller{\lambda_0 (i)}} H_i(x{'})\big) : \TOT
\forall_{i \in I}\big(\big[[e(H)](x)\big]_i  =_{\mathsmaller{\lambda_0 (i)}} \big[[e(H)](x{'})\big]_i\big) 
 : \TOT [e(H)](x) =_{\mathsmaller{\prod_{i \in I}^{\mt}\lambda_0 (i)}} [e(H)](x{'})$.
By the $\bigvee$-lifting of morphisms  
$e(H) \in \Mor(\C F, \underset{\ot} \Lim \C F_i) \TOT 
\forall_{i \in I}\forall_{f \in F_i}\big(\big(f \circ \pi_i^{S(\Lambda^{\mt})}\big) \circ e(H) \in F\big).$ 
Since $ [(f \circ \pi_i^{S(\Lambda^{\mt})}) \circ e(H)](x) := 
\big(f \circ \pi_i^{S(\Lambda^{\mt})}\big)\big([e(H)](x)\big) 
:= f \big(H_i(x)\big) := (f \circ H_i)(x)$, 
we get $\big(f \circ \pi_i^{S(\Lambda^{\mt})}\big) \circ e(H) :=  f \circ H_i \in F$, since $f \in F_i$ 
and $H_i \in \Mor(\C F, \C F_i)$. Hence, the 
operation $e : \prod_{i \in I}^{\mt}\Mor(\C F, \C F_i) \sto \Mor(\C F, 
\underset{\ot} \Lim \C F_i)$, defined by the rule $H \mapsto e(H)$, 
is well-defined. Next we show that $e$ is an embedding. If $H, K \in \prod_{i \in I}^{\mt}\Mor(\C F, \C F_i)$,
then
\begin{align*}
e(H) = e(K) & : \TOT \forall_{x \in X}\big([e(H)](x) =_{\mathsmaller{\prod_{i \in I}^{\mt}\lambda_0 (i)}}
[e(K)](x)\big)\\
& : \TOT \forall_{x \in X}\forall_{i \in I}\big(H_i(x)  =_{\mathsmaller{\lambda_0 (i)}} K_i(x)\big)\\
& : \TOT \forall_{i \in I}\forall_{x \in X}\big(H_i(x)  =_{\mathsmaller{\lambda_0 (i)}} K_i(x)\big)\\
& :\TOT \forall_{i \in I}\big(H_i =_{\mathsmaller{\Mor(\C F, \C F_i)}} K_i\big)\\
& :\TOT H =_{\mathsmaller{\prod_{i \in I}^{\mt}\Mor(\C F, \C F_i)}} K.
\end{align*}
By the $\bigvee$-lifting of morphisms we show that 
\[ e \in \Mor(\underset{\ot} \Lim (\C F \to \C F_i), \C F \to \underset{\ot} \Lim \C F_i) 
\TOT \forall_{i \in I}\forall_{f \in F_i}\big(\phi_{x, f \circ \pi_i^{S(\Lambda^{\mt})}} \circ e 
\in \underset{\ot} \Lim (F \to F_i)\big) \]
%Since
\begin{align*}
\big(\phi_{x, f \circ \pi_i^{S(\Lambda^{\mt})}} \circ e \big)(H) & := 
\phi_{x, f \circ \pi_i^{S(\Lambda^{\mt})}}\big(e(H)\big)\\
& =: \big[(f \circ \pi_i^{S(\Lambda^{\mt})}) \circ e(H)\big](x)\\
& := (f \circ \pi_i^{S(\Lambda^{\mt})})\big([e(H)](x)\big)\\
& := f \big([e(H)(x)]_i\big)\\
& := f \big(H_i(x)\big)\\
& =: (f \circ  H_i)(x)\\
& =: \phi_{x, f}\big(H_i\big)\\
& =: \big[\phi_{x, f} \circ \pi_i^{\C F \to S(\Lambda^{\mt})}\big](H)
\end{align*}
we get $\phi_{x, f \circ \pi_i^{S(\Lambda^{\mt})}} \circ e := \phi_{x, f} \circ \pi_i^{\C F \to S(\Lambda^{\mt})} 
\in \underset{\ot} \Lim (F \to F_i)$. Let 
$\phi \colon \Mor(\C F, \underset{\ot} \Lim \C F_i) \sto \prod_{i \in I}^{\mt} \Mor(\C F, \C F_i)$, 
defined by the rule $\mu \mapsto H^{\mu}$, where 
for every $\mu : X \to \prod_{i \in I}^{\mt}\lambda_0 (i) \in \Mor(\C F, \underset{\ot} \Lim \C F_i)$ i.e.,
$\forall_{i \in I}\forall_{f \in F_i}\big(\big(f \circ \pi_i^{S(\Lambda^{\mt})}\big) \circ \mu \in F\big)$, let
\[ H^{\mu} : \bigcurlywedge_{i \in I}\Mor(\C F, \C F_i), \ \ \ [H_{\mu}]_i \colon X \to \lambda_0(i),  \ \ \ \ 
 H^{\mu}_i(x) := [\mu(x)]_i;  \ \ \ \ x \in X, \ i \in I. \]
First we show that $H^{\mu}_i \in \Mor(\C F, \C F_i) :\TOT
\forall_{f \in F_i}\big(f \circ H^{\mu}_i \in F\big)$. If $f \in F_i$, and $x \in X$, then 
$[f \circ H^{\mu}_i(x) := f\big(H^{\mu}_i\big) := f\big([\mu(x)]_i\big)
=: \big[\big(f \circ \pi_i^{S^(\Lambda{\mt})}\big) \circ \mu\big](x)$
i.e., $f \circ H^{\mu}_i := \big(f \circ \pi_i^{S(\Lambda^{\mt})}\big) \circ \mu \in F$,
as $\mu \in \Mor(\C F, \underset{\ot} \Lim \C F_i)$.
Since $\mu(x) \in \prod_{i \in I}^{\mt}\lambda_0 (i)$, 
$[\mu(x)]_i = \lambda_{ji}^{\mt}\big([\mu(x)]_j\big),$
for every $i, j \in I$ such that $i \lt j$. To show that $H^{\mu} \in \prod_{i \in I}^{\mt}\Mor(\C F, \C F_i)$,
let $i \lt j$. Then
\begin{align*}
H^{\mu}_i = \lambda_{ji}^{\mt} \circ H^{\mu}_j & \TOT \forall_{x \in X}\big(H^{\mu}_i(x) 
=_{\mathsmaller{\lambda_0(i)}} \big[\lambda_{ji}^{\mt} \circ H^{\mu}_j\big](x)\big)\\
& : \TOT \forall_{x \in X}\big([\mu(x)]_i 
=_{\mathsmaller{\lambda_0(i)}} \big[\lambda_{ji}^{\mt}\big([\mu(x)]_j\big)\big),
\end{align*}
which holds by the previous remark on $\mu(x)$. It is immediate to show that $\phi$ is a function. 
To show that 
$\phi \in \Mor \big([ \C F \to \underset{\ot} \Lim \C F_i ], \underset{\ot} \Lim (\C F \to \C F_i)\big)$, 
we show that
\[ \forall_{i \in I}\forall_{f \in F_i}\forall_{x \in \lambda_0(i)}\bigg(\big[\phi_{x,f} \circ 
\pi_i^{\C F \to S(\Lambda^{\mt})}\big] \circ \phi \in \bigvee_{x \in X, i \in I}^{f \in F_i}\phi_{x, f 
\circ \pi_i^{S(\Lambda^{\mt})}}\bigg), \]
\begin{align*}
\big[\big[\phi_{x,f} \circ \pi_i^{\C F \to S(\Lambda^{\mt})}\big] \circ \phi \big](\mu) & := 
\big[\phi_{x,f} \circ \pi_i^{\C F \to S(\Lambda^{\mt})}\big](H^{\mu})\\
& := \phi_{x,f}\big(H^{\mu}_i\big)\\
& =: (f \circ H_i^{\mu})(x) \\
& := f \big(\mu(x)_i\big)\\
& =: (f \circ \pi_i^{S(\Lambda^{\mt})} \circ \mu)(x)\\
& =: \big[\phi_{x, f \circ \pi_i^{S(\Lambda^{\mt})}}\big](\mu).
\end{align*}
Moreover, $\phi(e(H)) := H$, as $H^{e(H)}_i(x) := [e(H)(x)]_i := H_i(x)$, and $e(\phi(\mu)) = \mu$, as
\begin{align*}
e(H^{\mu}) = \mu & :\TOT \forall_{x \in X}\big([e(H^{\mu})](x) 
=_{\mathsmaller{\prod_{i \in I}^{\mt}\lambda_0 (i)}}
\mu(x)\big)\\
& :\TOT \forall_{x \in X}\forall_{i \in I}\big(H^{\mu}_i(x) =_{\mathsmaller{\lambda_0 (i)}}
[\mu(x)]_i\big)\\
& :\TOT \forall_{x \in X}\forall_{i \in I}\big([\mu(x)]_i =_{\mathsmaller{\lambda_0 (i)}}
[\mu(x)]_i\big). \qedhere
\end{align*}
\end{proof}

With respect to the possible dual to the previous theorem i.e., the isomorphism
$\underset{\to} \Lim (\C F \to \C F_i) \simeq (\C F \to \underset{\to} \Lim \C F_i)$, 
what we can show is the following proposition.

\begin{prop}\label{prp: conversedual2}
Let $S(\Lambda^{\lt}) := (\lambda_0, \lambda_1^{\lt}, \phi_0^{\Lambda^{\lt}},
\phi_1^{\Lambda^{\lt}}) \in \Spec(I, \lt)$ with Bishop spaces $(\C F_i)_{i \in I}$
and Bishop morphisms $(\lambda_{ij}^{\lt})_{(i,j) \in D^{\lt}(I)}$. If $\C F := (X, F)$ is a Bishop space and 
$\C F \to S(\Lambda^{\lt}) := (\nu_0, \nu_1^{\lt}, \phi_0^{N^{\lt}}, \phi_1^{N^{\lt}})$ is the 
$(I, \lt)$-direct spectrum defined in Proposition~\ref{prp: dirtoproj} \normalfont (A)(ii),
\itshape 
there is  a map $\ \widehat{} \ : \underset{\to} \Lim [\Mor(\C F, \C F_i)] \to 
\Mor(\C F, \underset{\to} \Lim \C F_i)$
with $\ \widehat{} \ \in \Mor\big((\underset{\to} \Lim (\C F \to \C F_i), \C F \to \underset{\to} 
\Lim \C F_i\big)$.
\end{prop}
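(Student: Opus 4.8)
The plan is to construct $\ \widehat{} \ $ directly on representatives and then verify continuity via the $\bigvee$-lifting of morphisms, mirroring the proof of Theorem~\ref{thm: duality2}; the essential difference is that here we only aim at a Bishop morphism, not an isomorphism, so no inverse map is needed. Recall that by Proposition~\ref{prp: dirtoproj}~(A)(ii) the spectrum $\C F \to S(\Lambda^{\lt})$ has underlying sets $\nu_0(i) := \Mor(\C F, \C F_i)$, covariant transport maps $(\lambda_{ij}^{\lt})^{-}$ (with $(\lambda_{ij}^{\lt})^{-}(g) := \lambda_{ij}^{\lt} \circ g$), and associated topologies $\phi_0^{N^{\lt}}(i) := F \to F_i$. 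An element of $\underset{\to}\Lim[\Mor(\C F, \C F_i)]$ is therefore of the form $\eql_0^{N^{\lt}}(i, g)$ with $g \in \Mor(\C F, \C F_i)$. Since $\eql_i \in \Mor(\C F_i, \underset{\to}\Lim \C F_i)$ by Proposition~\ref{prp: universaldirect}(i), the composite $\eql_i \circ g \colon X \to \underset{\to}\Lim \lambda_0(i)$ is a Bishop morphism, so I would set
\[ \ \widehat{} \ \big(\eql_0^{N^{\lt}}(i, g)\big) := \eql_i \circ g \in \Mor(\C F, \underset{\to}\Lim \C F_i). \]

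First I would check that $\ \widehat{} \ $ is a function. If $\eql_0^{N^{\lt}}(i, g) =_{\underset{\to}\Lim \nu_0(i)} \eql_0^{N^{\lt}}(j, g{'})$, there is $k \in I$ with $i, j \lt k$ and $(\lambda_{ik}^{\lt})^{-}(g) = (\lambda_{jk}^{\lt})^{-}(g{'})$, i.e.\ $\lambda_{ik}^{\lt} \circ g = \lambda_{jk}^{\lt} \circ g{'}$. Evaluating at an arbitrary $x \in X$ yields $\lambda_{ik}^{\lt}(g(x)) =_{\lambda_0(k)} \lambda_{jk}^{\lt}(g{'}(x))$, which is exactly a witness (the same $k$) for $(i, g(x)) =_{\sum_{i \in I}^{\lt}\lambda_0(i)} (j, g{'}(x))$; hence $\eql_i(g(x)) =_{\underset{\to}\Lim \lambda_0(i)} \eql_j(g{'}(x))$ for every $x$, so $\eql_i \circ g = \eql_j \circ g{'}$ in $\Mor(\C F, \underset{\to}\Lim \C F_i)$.

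The core of the argument is continuity. By the $\bigvee$-lifting of the exponential topology, the subbase of $\C F \to \underset{\to}\Lim \C F_i$ is $\{\phi_{x, \eql_0 f_{\Theta}} \mid x \in X, \ \Theta \in \prod_{i \in I}^{\mt}F_i\}$, so by the $\bigvee$-lifting of morphisms it suffices to show $\phi_{x, \eql_0 f_{\Theta}} \circ \ \widehat{} \ \in \underset{\to}\Lim(F \to F_i)$ for each such $x, \Theta$. Computing on a representative, $[\phi_{x, \eql_0 f_{\Theta}} \circ \ \widehat{} \ ]\big(\eql_0^{N^{\lt}}(i, g)\big) = (\eql_0 f_{\Theta})\big(\eql_i(g(x))\big) = \Theta_i(g(x))$. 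I would then put $\Xi_i := \phi_{x, \Theta_i} \in F \to F_i$, so that $\Xi_i(g) = \Theta_i(g(x))$, and verify $\Xi \in \prod_{i \in I}^{\mt}(F \to F_i)$: for $i \lt j$ the map $\Xi_j \circ (\lambda_{ij}^{\lt})^{-}$ sends $g$ to $\Theta_j\big(\lambda_{ij}^{\lt}(g(x))\big)$, which equals $\Theta_i(g(x)) = \Xi_i(g)$ because $\Theta_i = \Theta_j \circ \lambda_{ij}^{\lt}$ (as $\Theta \in \prod_{i \in I}^{\mt}F_i$). Consequently $\phi_{x, \eql_0 f_{\Theta}} \circ \ \widehat{} \ = \eql_0 f_{\Xi} \in \underset{\to}\Lim(F \to F_i)$, which is the sole claim (i) of the proposition.

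The step I expect to be the main obstacle is precisely the verification that $\Xi = (\phi_{x, \Theta_i})_{i \in I}$ is a genuine element of $\prod_{i \in I}^{\mt}(F \to F_i)$: this requires matching the contravariance of the induced exponential transport maps $[(\lambda_{ij}^{\lt})^{-}]^{*}$ against the contravariance already carried by $\Theta$, while simultaneously keeping track of two nested $\bigvee$-liftings (of the exponential topology and of the direct-limit topology). Everything else reduces to bookkeeping with the definitions of $\eql_0$, of $f_{\Theta}$ (Remark~\ref{rem: beforetopondirectedsigma}) and of $\phi_{x, f}$. Since the statement asks only for a morphism, the construction stops here, which is exactly where this proposition falls short of the full duality established in Theorem~\ref{thm: duality2}.
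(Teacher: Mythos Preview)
Your proof is correct and follows essentially the approach the paper indicates: the paper's own proof is the single sentence ``We proceed similarly to the proof of Theorem~\ref{thm: duality2}'', and your construction of $\ \widehat{} \ $ via $\eql_0^{N^{\lt}}(i,g) \mapsto \eql_i \circ g$, together with the verification of well-definedness and of continuity through the $\bigvee$-lifting using $\Xi_i := \phi_{x,\Theta_i}$, is exactly such an analogue. The only cosmetic slip is your reference to ``the sole claim (i) of the proposition'', since this proposition has no enumerated parts.
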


\begin{proof}
We proceed similarly to the proof of Theorem~\ref{thm: duality2}. 
\end{proof}

\section{Concluding remarks}
\label{sec: concl}

In this paper we tried to show how some fundamental notions of Bishop Set Theory, a ``completion'' of Bishop's theory 
of sets that underlies Bishop-style constructive mathematics $\BISH$,
can be applied to the constructive topology of 
Bishop spaces, and especially in the theory of limits of Bishop spaces. 
For that we defined the notion of a direct family of sets, and by associating to such a family of sets
a family of Bishop spaces the notions of a direct and 
contravariant direct spectrum of Bishop spaces emerged. The definition of sum Bishop topology
(Definition~\ref{def: topondirectedsigma}) on the corresponding direct sum shows the 
harmonious relation between the notion of Bishop set
and that of Bishop space.  
In the subsequent sections we proved constructively the translation of all fundamental results in 
the theory of limits of topological spaces into the theory of Bishop spaces.

All notions of families of sets studied here have their generalised counterpart i.e., 
we can define generalised  $I$-families of sets, or generalised families of sets over a directed 
set $(I, \lt)$, where more than one transport maps from $\lambda_0(i)$ to $\lambda_0(j)$ are permitted
(see~\cite{Pe20}, section 3.9). 
The corresponding notions of generalised spectra of Bishop spaces and their
limits can be studied, and all major results of the
previous sections are expected to be extended to the case of generalised spectra of Bishop spaces.
In~\cite{Pe20}, section 6.9, we introduce the notion of a direct spectrum of Bishop subspaces. As in the case of 
set-indexed families of subsets, the main properties of the direct spectra of Bishop subspaces are determined internally.

We hope to examine interesting applications of the theory of spectra of Bishop spaces 
in future work. 

\section*{Acknowledgements}
A large part of this paper was completed during our research visit to the Department of Mathematical Sciences 
of the University of Cincinnati that was funded by the Eupean-Union project ``Computing with Infinite Data''.
We would like to thank Ning Zhong for hosting us in UC.

I would also like to thank the reviewers of this paper for their very helpful comments and suggestions.

%\vspace{3mm}

\end{document}